%There are \linebreak and \medskip
%
%
%
\documentclass[draft]{article}
\hyphenation{a-bo-ve ad-mis-si-bi-li-ty as-su-me be-low bet-ween Boo-le-an boun-ded clas-ses clus-ter clus-ter-fi-ni-te com-ple-te con-di-tion con-di-tions Con-se-quent-ly con-se-quent-ly Con-se-quent-ly con-si-der cons-tant cons-tants cons-truc-ti-ve cor-res-ponds cor-res-pon-den-ce cor-res-pon-ding De-ci-da-bi-li-ty de-ci-da-ble de-fi-na-bi-li-ty de-fi-na-ble de-fi-ning dif-fe-rent-ly ele-men-ta-ry fol-lo-wing fol-lows for-ma-li-ze for-mu-la fra-me fra-mes ga-me in-duc-ti-ve-ly in-fe-ren-ce ins-tan-ce in-tui-tio-nis-tic Lem-ma Lem-mas li-te-ra-tu-re lo-cal-ly lo-gics Lo-gic mi-ni-mal mi-ni-ma-li-ty mo-dal mo-dal-ly mo-del mo-dels more-over More-over Ne-ver-the-less non-empty non-ne-ga-ti-ve no-ti-ce nul-la-ri-ness ob-ser-ved or-der po-si-ti-ve pro-blem pro-blems pro-po-si-tion Pro-po-si-tion pro-po-si-tion Pro-po-si-tions quan-ti-fier ran-ges re-la-tion-ship re-la-tion-ships re-pre-sen-ted res-pect res-tric-tion re-sult Sahlq-vist sa-tis-fia-bi-li-ty sen-ten-ce sen-ten-ces sin-ce stan-dard stra-te-gy substi-tu-ti-on theo-ry the-re the-re-fo-re un-de-ci-da-bi-li-ty uni-fi-cation va-lua-tion wi-thin}
\usepackage{amsfonts,amsmath,amssymb,color,enumerate,epic,latexsym,times}
\definecolor{monblu}{cmyk}{0.80,0.00,0.00,0.70}
\definecolor{monmag}{cmyk}{0.00,1.00,0.00,0.00}
\newtheorem{claim}{Claim}
\newtheorem{lemma}{Lemma}
\newtheorem{proposition}{Proposition}
\newtheorem{definition}{Definition}
\newenvironment{proof}{{\bf Proof:}}{~$\dashv$\\}
\newenvironment{proofclaim}{{\em Proof:}\/}{~$\dashv$}
\def\N{\mathbb{N}}
\def\PEARL{\mathbf{PEARL}}
\def\Axiom{\mathbf{A}}
\def\Rule{\mathbf{R}}
\def\WK{\mathbf{WK}}
\def\FIK{\mathbf{FIK}}
\def\LIK{\mathbf{LIK}}
\def\Fo{\mathbf{Fo}}
\def\PSPACE{\mathbf{PSPACE}}
\def\reflexive{\mathbf{ref}}
\def\symmetric{\mathbf{sym}}
\def\transitive{\mathbf{tra}}
\def\ureflexive{\mathbf{uref}}
\def\usymmetric{\mathbf{usym}}
\def\utransitive{\mathbf{utra}}
\def\dreflexive{\mathbf{dref}}
\def\dsymmetric{\mathbf{dsym}}
\def\dtransitive{\mathbf{dtra}}
\def\partition{\mathbf{par}}
\def\allfra{\mathbf{all}}
\def\fbcfra{\mathbf{fbc}}
\def\bcfra{\mathbf{bc}}
\def\fcfra{\mathbf{fc}}
\def\qfcfra{\mathbf{qfc}}
\def\qbcfra{\mathbf{qbc}}
\def\qucfra{\mathbf{quc}}
\def\qdcfra{\mathbf{qdc}}
\def\fbcfra{\mathbf{fbc}}
\def\fdcfra{\mathbf{fdc}}
\def\fucfra{\mathbf{fuc}}
\def\bdcfra{\mathbf{bdc}}
\def\bucfra{\mathbf{buc}}
\def\ducfra{\mathbf{duc}}
\def\fbdcfra{\mathbf{fbdc}}
\def\fbucfra{\mathbf{fbuc}}
\def\fducfra{\mathbf{fduc}}
\def\bducfra{\mathbf{bduc}}
\def\fbducfra{\mathbf{fbduc}}
\def\dcfra{\mathbf{dc}}
\def\ucfra{\mathbf{uc}}
\def\card{\mathtt{Card}}
\def\At{\mathbf{At}}
\def\IS{\mathbf{IS}}
\def\K{\mathbf{K}}
\def\IK{\mathbf{IK}}
\def\S{\mathbf{S}}
\def\Log{\mathtt{Log}}
\def\CK{\mathbf{CK}}
\def\IPL{\mathbf{IPL}}
\def\L{\mathbf{L}}
\begin{document}
\title{Intuitionistic modal logics: a minimal setting}
\author{Philippe Balbiani$^{(1)}$\footnote{Email address: philippe.balbiani@irit.fr.}
\hspace{0.31cm}
\c{C}i\u{g}dem Gencer$^{(1,2)}$\footnote{Email addresses: cigdem.gencer@irit.fr and cigdemgencer@aydin.edu.tr.}}
\date{$^{(1)}$Toulouse Institute of Computer Science Research
\\
CNRS--INPT--UT3, Toulouse, France
\\
$^{(2)}$Faculty of Arts and Sciences
\\
Ayd\i n University, Istanbul, Turkey}
\maketitle
\begin{abstract}
We introduce an intuitionistic modal logic strictly contained in the intuitionistic modal logic $\IK$ and being an appropriate candidate for the title of ``minimal normal intuitionistic modal logic''.
\end{abstract}
{\bf Keywords:}
Intuitionistic Propositional Logic.
Modal connectives.
Intuitionistic modal logics.
Axiomatization and completeness.
Decidability and complexity.
\section{Introduction}\label{section:introduction}
Among researchers interested in mixing together Boolean concepts and modal concepts, the search for minimality has prompted multifarious debates.
Most of these debates have now faded.
See~\cite[Chapter~$1$]{Blackburn:et:al:2001} and~\cite[Chapter~$3$]{Chagrov:Zakharyaschev:1997} for an historical perspective.
In fact, they have not survived the end of the syntactic era of modal logic.
Before the advent, in the 1960s, of the concept of Kripke frames, it was unclear which modal logic is the most appropriate candidate for the title of ``minimal modal logic''.
Today, although the modal logic $\K$ does not contain any of the formulas ${\square}p{\rightarrow}{\square}{\square}p$ (``when a proposition is logically necessary, it is logically bound to be logically necessary'') and ${\lozenge}p{\rightarrow}{\square}{\lozenge}p$ (``when a proposition is logically possible, it is logically bound to be logically possible'') expressing the properties usually associated to the modal concepts of necessity and possibility, everyone accepts the fact that $\K$ is the minimal modal logic~\cite[Chapter~$2$]{Hughes:Cresswell:1996}.
Among researchers interested in combining together intuitionistic concepts and modal concepts, the quest for minimality has caused numerous discussions.
Some of these discussions are still alive.
See~\cite{Olivetti:2022,IMLA:2017,Stewart:et:al:2018} for a general introduction to them.
Indeed, two approaches are in direct opposition: the intuitionistic approach set out by Fischer Servi~\cite{FischerServi:1977,FischerServi:1978,FischerServi:1984} giving rise to the intuitionistic modal logic $\IK$ and the constructive approach expounded by Wijesekera~\cite{Wijesekera:1990} giving rise to the intuitionistic modal logics $\WK$ and $\CK$.
And there is no consensus of what should be the ``minimal intuitionistic modal logic''.
The {\em raison d'\^etre}\/ of the intuitionistic approach set out by Fischer Servi is mainly the fact that a minimal intuitionistic modal logic should contain the formulas whose standard translation in a first-order language are intuitionistically valid.
Therefore, the supporters of this approach consider intuitionistic modal logic $\IK$ that contains the formulas ${\lozenge}(p{\vee}q){\rightarrow}{\lozenge}p{\vee}{\lozenge}q$ and ${\neg}{\lozenge}{\bot}$ despite their non-constructive character~\cite{Amati:Pirri:1994,Ewald:1986,FischerServi:1977,FischerServi:1978,FischerServi:1984,Plotkin:Stirling:1986,Simpson:1994}.
The justification of the constructive approach expounded by Wijesekera chiefly rests on the fact that the formulas of a minimal intuitionistic modal logic should have a constructive character.
As a result, the upholders of this approach consider intuitionistic modal logic $\WK$ that does not contain the formula ${\lozenge}(p{\vee}q){\rightarrow}{\lozenge}p{\vee}{\lozenge}q$~\cite{Kojima:IGPL:2012,Wijesekera:1990}.
The most radical of them also consider intuitionistic modal logic $\CK$ that does not contain either the formula ${\neg}{\lozenge}{\bot}$~\cite{Alechina:et:al:2001,Arisaka:et:al:2015,Bierman:dePaiva:2000,Dalmonte:et:al:2021,Mendler:Scheele:2014}.
There is no sense in comparing the arguments for and against the intuitionistic approach giving rise to the intuitionistic modal logic $\IK$ and the constructive approach giving rise to the intuitionistic modal logics $\WK$ and $\CK$.
To convince the reader of this opinion, it suffices to mention how these approaches differently define the truth condition of $\lozenge$-formulas in their relational semantics, although they consider the same truth condition of $\square$-formulas.
For the intuitionistic approach, ${\lozenge}A$ holds at state $s$ in model $(W,{\leq},{R},V)$ if $A$ holds at some state $t$ such that $s{R}t$.
For the constructive approach, ${\lozenge}A$ holds at state $s$ in model $(W,{\leq},{R},V)$ if for all states $t$, if $s{\leq}t$ then $A$ holds at some state $u$ such that $t{R}u$.
This difference in the definition of the truth condition of $\lozenge$-formulas shows that the connective $\lozenge$ {\it \`a la}\/ Fischer Servi and the connective $\lozenge$ {\it \`a la}\/ Wijesekera are as separate as are, for example, the connectives $\vee$ and $\wedge$ in any intermediate logic.
%
%

%
%
%Let us consider a propositional language with $\rightarrow$, $\bot$, $\top$, $\vee$, $\wedge$, $\square$ and $\lozenge$.
Concerning our intuitionistic modal logics, if the criterion of minimality is based on the relation of inclusion then we are forced to accept that the minimal intuitionistic modal logic is axiomatically presented by considering the standard axiomatization of Intuitionistic Propositional Logic.\footnote{See~\cite[Chapter~$2$]{Chagrov:Zakharyaschev:1997} for an introduction to the standard axiomatization of Intuitionistic Propositional Logic.}
If we want to be able to define the Lindenbaum-Tarski Algebras of our intuitionistic modal logics then we can only accept that the minimal intuitionistic modal logic is axiomatically presented by adding the inference rules $\frac{p{\leftrightarrow}q}{{\square}p{\leftrightarrow}{\square}q}$ and $\frac{p{\leftrightarrow}q}{{\lozenge}p{\leftrightarrow}{\lozenge}q}$ to the standard axiomatization of Intuitionistic Propositional Logic.
%\footnote{Of course, in the propositional language with $\rightarrow$, $\bot$, $\top$, $\vee$, $\wedge$, $\square$ and $\lozenge$ that we consider here, for all formulas $A,B$, we write $A{\leftrightarrow}B$ as an abbreviation instead of $(A{\rightarrow}B){\wedge}(B{\rightarrow}A)$.}
In this paper, we want our intuitionistic modal logics to be normal.
This means that we want their axiomatical presentations to contain the axioms ${\square}p{\wedge}{\square}q{\rightarrow}{\square}(p{\wedge}q)$, ${\lozenge}(p{\vee}q){\rightarrow}{\lozenge}p{\vee}{\lozenge}q$, ${\square}{\top}$ and $\neg{\lozenge}{\bot}$ and the inference rules $\frac{p{\rightarrow}q}{{\square}p{\rightarrow}{\square}q}$ and $\frac{p{\rightarrow}q}{{\lozenge}p{\rightarrow}{\lozenge}q}$, seeing that they are usually associated to the concept of normality in modal logics.
By doing so, we therefore introduce an appropriate candidate for the title of minimal normal intuitionistic modal logic~---~an intuitionistic modal logic strictly contained in $\IK$ and comparable neither with $\WK$, nor with $\CK$.
Coming back to the relational semantics of intuitionistic modal logics, we make a clean sweep of the tradition by considering a new truth condition of $\lozenge$-formulas saying that ${\lozenge}A$ holds at state $s$ in model $(W,{\leq},{R},V)$ if there exists a state $t$ such that $s{\geq}t$ and there exists a state $u$ where $A$ holds and such that $t{R}u$.\footnote{This definition of the satisfiability of $\lozenge$-formulas has only been considered once in the literature~---~by P\v{r}enosil~\cite{Prenosil:2014}~---~and it was from the perspective of Duality Theory.}
While keeping the truth condition of $\square$-formulas that is commonly used in the intuitionistic approach and the constructive approach, we axiomatize validity in the class of all models.
The resulting axiomatization constitutes the intuitionistic modal logic $\L_{\min}$ that we put forward as an appropriate candidate for the title of ``minimal normal intuitionistic modal logic''.
We also axiomatize validity in classes of models satisfying some well-known conditions of confluence.
Among the resulting axiomatizations, one can find $\IK$ which corresponds to validity~---~with respect to our truth conditions of $\lozenge$-formulas and $\square$-formulas~---~in the class of all forward and backward confluent models.
To read Simpson's doctoral thesis~\cite[Page~$49$]{Simpson:1994}, the new truth condition of $\lozenge$-formulas that we consider has been broached in their time by Plotkin and Stirling who, keeping the truth condition of $\square$-formulas that is commonly used in the intuitionistic approach, the constructive approach and our new approach, have axiomatized validity in the class of all models.
Simpson's comment saying that the resulting axiomatization~---~that we have never seen~---~is ``rather complicated'' explains why Plotkin and Stirling have never presented it to a large public.
We just hope here that the reader does not apply the same comment to the axiomatization we present.
In Section~\ref{section:syntax}, we introduce the syntax of our intuitionistic modal logics.
In Section~\ref{section:semantics}, we introduce the relational semantics of our intuitionistic modal logics.
In Section~\ref{section:definability}, we study the correspondence between elementary conditions on frames and formulas.
In Section~\ref{section:axiomatization}, we axiomatically present different intuitionistic modal logics.
In Sections~\ref{section:theories}, \ref{section:existence:lemmas:and:lindenbaum:lemma} and \ref{section:canonical:frame:and:canonical:model}, we present results needed for the proofs of completeness of these intuitionistic modal logics given in Section~\ref{section:soundness:and:completeness}.
In Section~\ref{section:finite:frame:property}, we show that the membership problem in our minimal intuitionistic modal logic is decidable.
Section~\ref{section:miscellaneous} presents miscellaneous properties of our intuitionistic modal logics.
From now on, for all sets $\Sigma$, $\card(\Sigma)$ denotes the {\em cardinal of $\Sigma$.}
Moreover, for all sets $W$ and for all binary relations $R,S$ on $W$, ${R}{\circ}{S}$ denotes the {\em composition of $R$ and $S$,} i.e. the binary relation $T$ on $W$ such that for all $s,t{\in}W$, $s{T}t$ if and only if there exists $u{\in}W$ such that $s{R}u$ and $u{S}t$.
In other respect, for all sets $W$ and for all preorders $\leq$ on $W$, a subset $U$ of $W$ is {\em $\leq$-closed}\/ if for all $s,t{\in}W$, if $s{\in}U$ and $s{\leq}t$ then $t{\in}U$.
Finally, $\IPL$ denotes Intuitionistic Propositional Logic.
\section{Syntax}\label{section:syntax}
In this section, we introduce the syntax of our intuitionistic modal logics.
\begin{definition}[Atoms and formulas]
Let $\At$ be a countably infinite set (with typical members called {\em atoms}\/ and denoted $p$, $q$, etc).
Let $\Fo$ be the countably infinite set (with typical members called {\em formulas}\/ and denoted $A$, $B$, etc) of finite words over $\At{\cup}\{{\rightarrow},{\top},{\bot},{\vee},{\wedge},{\square},{\lozenge},(,)\}$ defined by
$$A\ {::=}\ p{\mid}(A{\rightarrow}A){\mid}{\top}{\mid}{\bot}{\mid}(A{\vee}A){\mid}(A{\wedge}A){\mid}{\square}A{\mid}{\lozenge}A$$
where $p$ ranges over $\At$.
For all formulas $A$, the {\em length of $A$}\/ (denoted ${\parallel}A{\parallel}$) is the number of symbols in $A$.
%By induction on $A$, the reader may easily define ${\parallel}A{\parallel}$.
%
%
\end{definition}
We follow the standard rules for omission of the parentheses.
For all formulas $A,B$, we write $\neg A$ as an abbreviation instead of $A{\rightarrow}{\bot}$ and $A{\leftrightarrow}B$ as an abbreviation instead of $(A{\rightarrow}B){\wedge}(B{\rightarrow}A)$.
\begin{definition}[Accessibility between sets of formulas]
Let ${\bowtie}$ be the binary relation of {\em accessibility between sets of formulas}\/ such that for all sets $\Delta,\Lambda$ of formulas, $\Delta{\bowtie}\Lambda$ if and only if for all formulas $B$, the following conditions hold:\footnote{Obviously, there exists sets $\Delta,\Lambda$ of formulas such that $\Delta{\bowtie}\Lambda$ and not $\Lambda{\bowtie}\Delta$.
However, we use a symmetric symbol to denote the binary relation of accessibility between sets of formulas, seeing that in its definition, $\square$-formulas and $\lozenge$-formulas play symmetric roles.}
\begin{itemize}
\item if ${\square}B{\in}\Delta$ then $B{\in}\Lambda$,
\item if $B{\in}\Lambda$ then ${\lozenge}B{\in}\Delta$.
\end{itemize}
For all sets $\Gamma$ of formulas, let ${\bowtie^{\Gamma}}$ be the binary relation between sets of formulas such that for all sets $\Delta,\Lambda$ of formulas, $\Delta{\bowtie^{\Gamma}}\Lambda$ if and only if for all formulas $A,B$, the following conditions hold:
\begin{itemize}
\item if $A{\not\in}\Gamma$ and $A{\vee}{\square}B{\in}\Delta$ then $B{\in}\Lambda$,
\item if $B{\in}\Lambda$ then ${\lozenge}B{\in}\Delta$.
\end{itemize}
\end{definition}
The binary relation ${\bowtie}$ between sets of formulas and for all sets $\Gamma$ of formulas, the binary relation ${\bowtie^{\Gamma}}$ between sets of formulas are used in Sections~\ref{section:existence:lemmas:and:lindenbaum:lemma} and \ref{section:canonical:frame:and:canonical:model} when we investigate the completeness of some intuitionistic modal logics.
In particular, the restriction of ${\bowtie}$ to prime theories is the accessibility relation of the canonical frame introduced in Section~\ref{section:canonical:frame:and:canonical:model}.
\begin{definition}[Closed sets of formulas]
A set $\Sigma$ of formulas is {\em closed}\/ if for all formulas $A,B$,
\begin{itemize}
\item if $A{\rightarrow}B{\in}\Sigma$ then $A{\in}\Sigma$ and $B{\in}\Sigma$,
\item if $A{\vee}B{\in}\Sigma$ then $A{\in}\Sigma$ and $B{\in}\Sigma$,
\item if $A{\wedge}B{\in}\Sigma$ then $A{\in}\Sigma$ and $B{\in}\Sigma$,
\item if ${\square}A{\in}\Sigma$ then $A{\in}\Sigma$,
\item if ${\lozenge}A{\in}\Sigma$ then $A{\in}\Sigma$.
\end{itemize}
For all formulas $A$, let $\Sigma_{A}$ be the least closed set of formulas containing $A$.
\end{definition}
Obviously, for all atoms $p$, $\Sigma_{p}{=}\{p\}$.
Moreover, for all formulas $A,B$,
\begin{itemize}
\item $\Sigma_{A{\rightarrow}B}{=}\{A{\rightarrow}B\}{\cup}\Sigma_{A}{\cup}\Sigma_{B}$,
\item $\Sigma_{\top}{=}\{\top\}$,
\item $\Sigma_{\bot}{=}\{\bot\}$,
\item $\Sigma_{A{\vee}B}{=}\{A{\vee}B\}{\cup}\Sigma_{A}{\cup}\Sigma_{B}$,
\item $\Sigma_{A{\wedge}B}{=}\{A{\wedge}B\}{\cup}\Sigma_{A}{\cup}\Sigma_{B}$,
\item $\Sigma_{{\square}A}{=}\{{\square}A\}{\cup}\Sigma_{A}$,
\item $\Sigma_{{\lozenge}A}{=}\{{\lozenge}A\}{\cup}\Sigma_{A}$.
\end{itemize}
Therefore, by induction on $A$, the reader may easily verify that $\Sigma_{A}$ is finite.
More precisely,
\begin{lemma}\label{size:of:least:closed:set:containing:some:given:formula}
For all formulas $A$, $\card(\Sigma_{A}){\leq}{\parallel}A{\parallel}$.
\end{lemma}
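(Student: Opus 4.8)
The plan is to prove the bound $\card(\Sigma_{A}){\leq}{\parallel}A{\parallel}$ by structural induction on $A$, using the explicit recursive description of $\Sigma_{A}$ given immediately before the statement together with the obvious recursion for the length function ${\parallel}\cdot{\parallel}$.

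First I would record the length recursion that mirrors the recursion for $\Sigma_A$: for an atom $p$ we have ${\parallel}p{\parallel}{=}1{=}\card(\{p\}){=}\card(\Sigma_p)$, and ${\parallel}{\top}{\parallel}{=}{\parallel}{\bot}{\parallel}{=}1$, matching $\card(\Sigma_{\top}){=}\card(\Sigma_{\bot}){=}1$; moreover ${\parallel}{\square}A{\parallel}{=}{\parallel}A{\parallel}{+}1$, ${\parallel}{\lozenge}A{\parallel}{=}{\parallel}A{\parallel}{+}1$, and ${\parallel}A{\circ}B{\parallel}{=}{\parallel}A{\parallel}{+}{\parallel}B{\parallel}{+}3$ for ${\circ}{\in}\{{\rightarrow},{\vee},{\wedge}\}$ (the $3$ accounting for the two parentheses and the binary connective). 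In fact the sharp inequality only requires ${\parallel}A{\circ}B{\parallel}{\geq}{\parallel}A{\parallel}{+}{\parallel}B{\parallel}{+}1$ and ${\parallel}{\square}A{\parallel}{\geq}{\parallel}A{\parallel}{+}1$, ${\parallel}{\lozenge}A{\parallel}{\geq}{\parallel}A{\parallel}{+}1$, which are immediate from the definition of length as the number of symbols.

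Next, the induction itself. The base cases $A{=}p$, $A{=}{\top}$, $A{=}{\bot}$ give equality. For the modal cases, since $\Sigma_{{\square}A}{=}\{{\square}A\}{\cup}\Sigma_{A}$ we get $\card(\Sigma_{{\square}A}){\leq}1{+}\card(\Sigma_{A}){\leq}1{+}{\parallel}A{\parallel}{=}{\parallel}{\square}A{\parallel}$ by the induction hypothesis, and symmetrically for ${\lozenge}A$. For the binary cases, say $A{=}B{\rightarrow}C$, we have $\Sigma_{B{\rightarrow}C}{=}\{B{\rightarrow}C\}{\cup}\Sigma_{B}{\cup}\Sigma_{C}$, so $\card(\Sigma_{B{\rightarrow}C}){\leq}1{+}\card(\Sigma_{B}){+}\card(\Sigma_{C}){\leq}1{+}{\parallel}B{\parallel}{+}{\parallel}C{\parallel}{\leq}{\parallel}B{\rightarrow}C{\parallel}$, again using the induction hypothesis on $B$ and $C$; the cases $A{=}B{\vee}C$ and $A{=}B{\wedge}C$ are identical.

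Since the recursion for $\Sigma_A$ and for ${\parallel}\cdot{\parallel}$ are both already established in the excerpt, there is really no hard part here; the only point requiring a word of care is that in the binary and modal cases the union on the right-hand side of the formula for $\Sigma_A$ need not be disjoint, but this only helps — the cardinal of a union is at most the sum of the cardinals — so the stated inequality (rather than an equality) is exactly what survives. I would therefore present the argument compactly as a one-paragraph induction, citing the displayed list of identities for $\Sigma_A$ that precedes the lemma.
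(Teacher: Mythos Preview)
Your proposal is correct and follows exactly the approach the paper intends: the paper's proof reads simply ``By induction on $A$,'' and your argument spells out precisely that induction using the displayed recursive identities for $\Sigma_A$ and the obvious length recursion. Your remark that the unions need not be disjoint (so only the inequality survives) is the one point worth making explicit, and you have handled it correctly.
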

\begin{proof}
By induction on $A$.
\medskip
\end{proof}
The concept of closed set of formulas is used in Section~\ref{section:finite:frame:property} when we investigate the complexity of some intuitionistic modal logics.
\section{Semantics}\label{section:semantics}
In this section, we introduce the relational semantics of our intuitionistic modal logics.
\begin{definition}[Frames]
A {\em frame}\/ is a relational structure of the form $(W,{\leq},{R})$ where $W$ is a nonempty set of {\em states,} $\leq$ is a preorder on $W$ and ${R}$ is a binary relation on $W$.
For all frames $(W,{\leq},{R})$ and for all $s,t{\in}W$, when we write ``$s{\geq}t$'' we mean ``$t{\leq}s$''.
Let ${\mathcal C}_{\allfra}$ be the class of all frames.
\end{definition}
\begin{definition}[Confluences]
A frame $(W,{\leq},{R})$ is {\em forward confluent}\/ if for all $s,t{\in}W$, if $s{\geq}{\circ}{R}t$ then $s{R}{\circ}{\geq}t$.
A frame $(W,{\leq},{R})$ is {\em backward confluent}\/ if for all $s,t{\in}W$, if $s{R}{\circ}{\leq}t$ then $s{\leq}{\circ}{R}t$.
A frame $(W,{\leq},{R})$ is {\em downward confluent}\/ if for all $s,t{\in}W$, if $s{\leq}{\circ}{R}t$ then $s{R}{\circ}{\leq}t$.
A frame $(W,{\leq},{R})$ is {\em upward confluent}\/ if for all $s,t{\in}W$, if $s{R}{\circ}{\geq}t$ then $s{\geq}{\circ}{R}t$.
Let ${\mathcal C}_{\fcfra}$ be the class of all forward confluent frames, ${\mathcal C}_{\bcfra}$ be the class of all backward confluent frames, ${\mathcal C}_{\dcfra}$ be the class of all downward confluent frames and ${\mathcal C}_{\ucfra}$ be the class of all upward confluent frames.
We also write ${\mathcal C}_{\fbcfra}$ to denote the class of all forward and backward confluent frames, ${\mathcal C}_{\fdcfra}$ to denote the class of all forward and downward confluent frames, etc.
\end{definition}
The elementary conditions characterizing forward confluent frames and backward confluent frames have been considered in~\cite{FischerServi:1984} where they have been called ``connecting properties'' and in~\cite[Chapter~$3$]{Simpson:1994} where they have been called ``$(\mathbf{F}1)$'' and ``$(\mathbf{F}2)$''.
They have also been considered in~\cite{Celani:2001,Marin:et:al:2021,Murai:Sano:2022,Nomura:Sano:Tojo:2015,Plotkin:Stirling:1986}.
The elementary conditions characterizing downward confluent frames and upward confluent frames have been considered in~\cite{Bozic:Dosen:1984,Dosen:1985} where they have received no specific name.
Other elementary conditions have also been considered in the literature~\cite{Bozic:Dosen:1984,Dosen:1985,Fairtlough:Mendler:1997,Kojima:Igarashi:2011,Sotirov:1980,Sotirov:1984,Vakarelov:1981}: ${\leq}{\circ}{R}{\subseteq}{R}$, ${R}{\circ}{\leq}{\subseteq}{R}$, ${\geq}{\circ}{R}{\subseteq}{R}$, ${R}{\circ}{\geq}{\subseteq}{R}$, ${\leq}{\circ}{R}{\circ}{\leq}{\subseteq}{R}$, ${\geq}{\circ}{R}{\circ}{\geq}{\subseteq}{R}$, etc.
In particular, we suggest the reader to consult the multifarious elementary conditions studied by Sotirov in his survey about intuitionistic modal logic~\cite{Sotirov:1984}.
\begin{definition}[Quasi-confluences]
A frame $(W,{\leq},{R})$ is {\em quasi-forward confluent}\/ if for all $s,t{\in}W$, if $s{\geq}{\circ}{R}t$ then $s(({\leq}{\circ}{R}{\circ}{\leq}){\cap}({\geq}{\circ}{R}{\circ}{\geq})){\circ}{\geq}t$.
A frame $(W,{\leq},{R})$ is {\em quasi-backward confluent}\/ if for all $s,t{\in}W$, if $s{R}{\circ}{\leq}t$ then $s{\leq}{\circ}{(({\leq}{\circ}{R}{\circ}{\leq}){\cap}({\geq}{\circ}{R}{\circ}{\geq}))}t$.
A frame $(W,{\leq},{R})$ is {\em quasi-downward confluent}\/ if for all $s,t{\in}W$, if $s{\leq}{\circ}{R}t$ then $s{(({\leq}{\circ}{R}{\circ}{\leq}){\cap}({\geq}{\circ}{R}{\circ}{\geq}))}{\circ}{\leq}t$.
A frame $(W,{\leq},{R})$ is {\em quasi-upward confluent}\/ if for all $s,t{\in}W$, if $s{R}{\circ}{\geq}t$ then $s{\geq}{\circ}(({\leq}{\circ}{R}{\circ}{\leq}){\cap}({\geq}{\circ}{R}{\circ}{\geq}))t$.
Let ${\mathcal C}_{\qfcfra}$ be the class of all quasi-forward confluent frames, ${\mathcal C}_{\qbcfra}$ be the class of all quasi-backward confluent frames, ${\mathcal C}_{\qdcfra}$ be the class of all quasi-downward confluent frames and ${\mathcal C}_{\qucfra}$ be the class of all quasi-upward confluent frames.
\end{definition}
Obviously, for all frames $(W,{\leq},{R})$, if $(W,{\leq},{R})$ is forward confluent then $(W,{\leq},{R})$ is quasi-forward confluent, if $(W,{\leq},{R})$ is backward confluent then $(W,{\leq},{R})$ is quasi-backward confluent, if $(W,{\leq},{R})$ is downward confluent then $(W,{\leq},{R})$ is quasi-downward confluent and if $(W,{\leq},{R})$ is upward confluent then $(W,{\leq},{R})$ is quasi-upward confluent.
\begin{definition}[Reflexivity, symmetry and transitivity]
A frame $(W,{\leq},{R})$ is {\em reflexive}\/ if for all $s{\in}W$, $s{R}s$.
A frame $(W,{\leq},{R})$ is {\em symmetric}\/ if for all $s,t{\in}W$, if $s{R}t$ then $t{R}s$.
A frame $(W,{\leq},{R})$ is {\em transitive}\/ if for all $s,t,u{\in}W$, if $s{R}t$ and $t{R}u$ then $s{R}u$.
Let ${\mathcal C}_{\reflexive}$ be the class of all reflexive frames, ${\mathcal C}_{\symmetric}$ be the class of all symmetric frames and ${\mathcal C}_{\transitive}$ be the class of all transitive frames.
A {\em partition}\/ is a reflexive, symmetric and transitive frame.
Let ${\mathcal C}_{\partition}$ be the class of all partitions.
\end{definition}
Obviously, for all symmetric frames $(W,{\leq},{R})$, $(W,{\leq},{R})$ is forward confluent if and only if $(W,{\leq},{R})$ is backward confluent and $(W,{\leq},{R})$ is downward confluent if and only if $(W,{\leq},{R})$ is upward confluent.
\begin{definition}[Up- and down- reflexivity, symmetry and transitivity]
A frame $(W,
$\linebreak$
{\leq},{R})$ is {\em up-reflexive}\/ if for all $s{\in}W$, $s{\leq}{\circ}{R}{\circ}{\leq}s$.
A frame $(W,{\leq},{R})$ is {\em down-reflexive}\/ if for all $s{\in}W$, $s{\geq}{\circ}{R}{\circ}{\geq}s$.
A frame $(W,{\leq},{R})$ is {\em up-symmetric}\/ if for all $s,t{\in}W$, if $s{R}t$ then $t{\leq}{\circ}{R}{\circ}{\leq}s$.
A frame $(W,{\leq},{R})$ is {\em down-symmetric}\/ if for all $s,t{\in}W$, if $s{R}t$ then $t{\geq}{\circ}{R}{\circ}{\geq}s$.
A frame $(W,{\leq},{R})$ is {\em up-transitive}\/ if for all $s,t,u,v{\in}W$, if $s{R}t$, $t{\leq}u$ and $u{R}v$ then $s{\leq}{\circ}{R}{\circ}{\leq}v$.
A frame $(W,{\leq},{R})$ is {\em down-transitive}\/ if for all $s,t,u,v{\in}W$, if $s{R}t$, $t{\geq}u$ and $u{R}v$ then $s{\geq}{\circ}{R}{\circ}{\geq}v$.
Let ${\mathcal C}_{\ureflexive}$ be the class of all up-reflexive frames, ${\mathcal C}_{\dreflexive}$ be the class of all down-reflexive frames, ${\mathcal C}_{\usymmetric}$ be the class of all up-symmetric frames, ${\mathcal C}_{\dsymmetric}$ be the class of all down-symmetric frames, ${\mathcal C}_{\utransitive}$ be the class of all up-transitive frames and ${\mathcal C}_{\dtransitive}$ be the class of all down-transitive frames.
\end{definition}
Obviously, for all frames $(W,{\leq},{R})$, if $(W,{\leq},{R})$ is reflexive then $(W,{\leq},{R})$ is up-reflexive and down-reflexive and if $(W,{\leq},{R})$ is symmetric then $(W,{\leq},{R})$ is up-symmetric and down-symmetric.
However, there exists transitive frames which are neither up-transitive, nor down-transitive.
Witness, the frame $(W^{\prime},{\leq^{\prime}},{R^{\prime}})$ defined by $W^{\prime}{=}\{a,b,c,d,e\}$, $b{\leq^{\prime}}c$ and $c{\leq^{\prime}}d$ and $a{R^{\prime}}c$, $b{R^{\prime}}e$ and $d{R^{\prime}}e$.
\begin{lemma}\label{lemma:about:up:and:down:frames:and:the:corresponding:intersectional:frames}
Let $(W,{\leq},{R})$ be a frame.
Let $(W^{\prime},{\leq^{\prime}},{R^{\prime}})$ be the frame defined by $W^{\prime}{=}
$\linebreak$
W$, ${\leq^{\prime}}{=}{\leq}$ and ${R^{\prime}}{=}({\leq}{\circ}{R}{\circ}{\leq}){\cap}({\geq}{\circ}{R}{\circ}{\geq})$.
\begin{enumerate}
\item If $(W,{\leq},{R})$ is up-reflexive and down-reflexive then $(W^{\prime},{\leq^{\prime}},{R^{\prime}})$ is reflexive,
\item If $(W,{\leq},{R})$ is up-symmetric and down-symmetric then $(W^{\prime},{\leq^{\prime}},{R^{\prime}})$ is symmetric,
\item If $(W,{\leq},{R})$ is up-transitive and down-transitive then $(W^{\prime},{\leq^{\prime}},{R^{\prime}})$ is transitive.
\end{enumerate}
\end{lemma}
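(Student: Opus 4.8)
The plan is to prove all three items by unfolding the definition of ${R^{\prime}}$ and performing short diagram chases, using transitivity of ${\leq}$ to glue witnesses together. Since ${R^{\prime}}{=}({\leq}{\circ}{R}{\circ}{\leq}){\cap}({\geq}{\circ}{R}{\circ}{\geq})$, proving $s{R^{\prime}}t$ always reduces to proving the two conjuncts $s({\leq}{\circ}{R}{\circ}{\leq})t$ and $s({\geq}{\circ}{R}{\circ}{\geq})t$ separately, and in each item one conjunct will come from the ``up'' hypothesis and the other from the ``down'' hypothesis. Item 1 is then immediate: $s{R^{\prime}}s$ unfolds to $s({\leq}{\circ}{R}{\circ}{\leq})s$ together with $s({\geq}{\circ}{R}{\circ}{\geq})s$, which are exactly up-reflexivity and down-reflexivity of $(W,{\leq},{R})$ at $s$.

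For item 2, I would assume $s{R^{\prime}}t$ and extract from the two conjuncts states $a,b$ with $s{\leq}a$, $a{R}b$, $b{\leq}t$ and states $g,h$ with $g{\leq}s$, $g{R}h$, $t{\leq}h$. Applying up-symmetry to $g{R}h$ produces states $i,j$ with $h{\leq}i$, $i{R}j$, $j{\leq}g$; since $t{\leq}h{\leq}i$ and $j{\leq}g{\leq}s$, this gives $t({\leq}{\circ}{R}{\circ}{\leq})s$. Applying down-symmetry to $a{R}b$ produces states $c,d$ with $c{\leq}b$, $c{R}d$, $a{\leq}d$; since $c{\leq}b{\leq}t$ and $s{\leq}a{\leq}d$, this gives $t({\geq}{\circ}{R}{\circ}{\geq})s$. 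Hence $t{R^{\prime}}s$.

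For item 3, I would assume $s{R^{\prime}}t$ and $t{R^{\prime}}u$. From the $({\leq}{\circ}{R}{\circ}{\leq})$-conjuncts I obtain $a{R}b$ and $c{R}d$ with $s{\leq}a$, $b{\leq}t{\leq}c$ and $d{\leq}u$; up-transitivity applied to $a{R}b$, $b{\leq}c$, $c{R}d$ then yields $e,f$ with $a{\leq}e$, $e{R}f$, $f{\leq}d$, so that $s{\leq}a{\leq}e$, $e{R}f$, $f{\leq}d{\leq}u$ witness $s({\leq}{\circ}{R}{\circ}{\leq})u$. Dually, from the $({\geq}{\circ}{R}{\circ}{\geq})$-conjuncts I obtain $a^{\prime}{R}b^{\prime}$ and $c^{\prime}{R}d^{\prime}$ with $a^{\prime}{\leq}s$, $c^{\prime}{\leq}t{\leq}b^{\prime}$ and $u{\leq}d^{\prime}$; down-transitivity applied to $a^{\prime}{R}b^{\prime}$, $b^{\prime}{\geq}c^{\prime}$, $c^{\prime}{R}d^{\prime}$ yields the required witness for $s({\geq}{\circ}{R}{\circ}{\geq})u$. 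Hence $s{R^{\prime}}u$.

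The only delicate point is the bookkeeping: for each of the two conjuncts of the goal one must select the matching ``up'' or ``down'' witness supplied by the hypotheses and invoke the matching ``up'' or ``down'' property of $(W,{\leq},{R})$, and then chain the ${\leq}$-steps in the correct order. I do not expect any substantial obstacle beyond keeping these indices straight.
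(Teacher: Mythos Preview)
Your proposal is correct and follows essentially the same approach as the paper: unfold the definition of $R'$ into its two conjuncts, extract the appropriate ${\leq}{\circ}R{\circ}{\leq}$ and ${\geq}{\circ}R{\circ}{\geq}$ witnesses, apply the matching up-/down-property of $(W,{\leq},R)$, and chain the ${\leq}$-steps using transitivity of the preorder. The only cosmetic difference is that the paper phrases each item as a proof by contradiction whereas you argue directly; the underlying diagram chases are identical.
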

\begin{proof}
$\mathbf{(1)}$~Suppose $(W,{\leq},{R})$ is up-reflexive and down-reflexive.
For the sake of the contradiction, suppose $(W^{\prime},{\leq^{\prime}},{R^{\prime}})$ is not reflexive.
Hence, there exists $s{\in}W^{\prime}$ such that not $s{R^{\prime}}s$.
Since $(W,{\leq},{R})$ is up-reflexive and down-reflexive, then $s{\leq}{\circ}{R}{\circ}{\leq}s$ and $s{\geq}{\circ}{R}{\circ}{\geq}s$.
Thus, $s{R^{\prime}}s$: a contradiction.
$\mathbf{(2)}$~Suppose $(W,{\leq},{R})$ is up-symmetric and down-symmetric.
For the sake of the contradiction, suppose $(W^{\prime},{\leq^{\prime}},{R^{\prime}})$ is not symmetric.
Consequently, there exists $s,t{\in}W^{\prime}$ such that $s{R^{\prime}}t$ and not $t{R^{\prime}}s$.
Hence, $s{\leq}{\circ}{R}{\circ}{\leq}t$ and $s{\geq}{\circ}{R}{\circ}{\geq}t$.
Thus, there exists $u,v{\in}W$ such that $s{\leq}u$, $u{R}v$ and $v{\leq}t$ and there exists $w,x{\in}W$ such that 
$s{\geq}w$, $w{R}x$ and $x{\geq}t$.
Since $(W,{\leq},{R})$ is up-symmetric and down-symmetric, then $v{\geq}{\circ}{R}{\circ}{\geq}u$ and $x{\leq}{\circ}{R}{\circ}{\leq}w$.
Since $s{\leq}u$, $v{\leq}t$, $s{\geq}w$ and $x{\geq}t$, then $t{\leq}{\circ}{R}{\circ}{\leq}s$ and $t{\geq}{\circ}{R}{\circ}{\geq}s$.
Consequently, $t{R^{\prime}}s$: a contradiction.
$\mathbf{(3)}$~Suppose $(W,{\leq},{R})$ is up-transitive and down-transitive.
For the sake of the contradiction, suppose $(W^{\prime},{\leq^{\prime}},{R^{\prime}})$ is not transitive.
Hence, there exists $s,t,u{\in}W^{\prime}$ such that $s{R^{\prime}}t$, $t{R^{\prime}}u$ and not $s{R^{\prime}}u$.
Thus, $s{\leq}{\circ}{R}{\circ}{\leq}t$, $s{\geq}{\circ}{R}{\circ}{\geq}t$, $t{\leq}{\circ}{R}{\circ}{\leq}u$ and $t{\geq}{\circ}{R}{\circ}{\geq}u$.
Consequently, $s{\leq}{\circ}{R}{\circ}{\leq}{\circ}{R}{\circ}{\leq}u$ and $s{\geq}{\circ}{R}{\circ}{\geq}{\circ}{R}{\circ}{\geq}u$.
Hence, there exists $v,w{\in}W$ such that $s{\leq}v$, $v{R}{\circ}{\leq}{\circ}{R}w$ and $w{\leq}u$ and there exists $x,y{\in}W$ such that $s{\geq}x$, $x{R}{\circ}{\geq}{\circ}{R}y$ and $y{\geq}u$.
Since $(W,{\leq},{R})$ is up-transitive and down-transitive, then $v{\leq}{\circ}{R}{\circ}{\leq}w$ and $x{\geq}{\circ}{R}{\circ}{\geq}y$.
Since $s{\leq}v$, $w{\leq}u$, $s{\geq}x$ and $y{\geq}u$, then $s{\leq}{\circ}{R}{\circ}{\leq}u$ and $s{\geq}{\circ}{R}{\circ}{\geq}u$.
Thus, $s{R^{\prime}}u$: a contradiction.
\medskip
\end{proof}
\begin{definition}[Valuations and models]
A {\em valuation on a frame $(W,{\leq},{R})$}\/ is a function $V\ :\ \At{\longrightarrow}\wp(W)$ such that for all atoms $p$, $V(p)$ is $\leq$-closed.
A {\em model}\/ is a $4$-tuple consisting of the $3$ components of a frame and a valuation on that frame.
A {\em model based on the frame $(W,{\leq},{R})$}\/ is a model of the form $(W,{\leq},{R},V)$.
\end{definition}
\begin{definition}[Satisfiability]
With respect to a model $(W,{\leq},{R},V)$, for all $s{\in}W$ and for all formulas $A$, the {\em satisfiability of $A$ at $s$ in $(W,{\leq},{R},V)$}\/ (in symbols $(W,{\leq},{R},V),
$\linebreak$
s{\models}A$) is inductively defined as follows:
\begin{itemize}
\item $(W,{\leq},{R},V),s{\models}p$ if and only if $s{\in}V(p)$,
\item $(W,{\leq},{R},V),s{\models}A{\rightarrow}B$ if and only if for all $t{\in}W$, if $s{\leq}t$ and $(W,{\leq},{R},V),t{\models}
$\linebreak$
A$ then $(W,{\leq},{R},V),t{\models}B$,
\item $(W,{\leq},{R},V),s{\models}{\top}$,
\item $(W,{\leq},{R},V),s{\not\models}{\bot}$,
\item $(W,{\leq},{R},V),s{\models}A{\vee}B$ if and only if either $(W,{\leq},{R},V),s{\models}A$, or $(W,{\leq},{R},
$\linebreak$
V),s{\models}B$,
\item $(W,{\leq},{R},V),s{\models}A{\wedge}B$ if and only if $(W,{\leq},{R},V),s{\models}A$ and $(W,{\leq},{R},V),s{\models}
$\linebreak$
B$,
\item $(W,{\leq},{R},V),s{\models}{\square}A$ if and only if for all $t{\in}W$, if $s{\leq}{\circ}{R}t$ then $(W,{\leq},{R},V),t
$\linebreak$
{\models}A$,
\item $(W,{\leq},{R},V),s{\models}{\lozenge}A$ if and only if there exists $t{\in}W$ such that $s{\geq}{\circ}{R}t$ and $(W,
$\linebreak$
{\leq},{R},V),t{\models}A$.
\end{itemize}
\end{definition}
For all models $(W,{\leq},{R},V)$, for all $s{\in}W$ and for all formulas $A$, we write $s{\models}A$ instead of $(W,{\leq},{R},V),s{\models}A$ when $(W,{\leq},{R},V)$ is clear from the context.
\begin{lemma}[Heredity Property]\label{lemma:HB:monotonicity}
Let $(W,{\leq},{R},V)$ be a model.
For all formulas $A$ and for all $s,t{\in}W$, if $s{\models}A$ and $s{\leq}t$ then $t{\models}A$.
\end{lemma}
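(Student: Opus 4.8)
The statement is the Heredity (or Persistence/Monotonicity) Property, standard in intuitionistic Kripke semantics but needing verification for the new $\lozenge$-clause. The plan is to proceed by induction on the structure of the formula $A$. Fix the model $(W,{\leq},{R},V)$ and suppose $s{\models}A$ and $s{\leq}t$; we must show $t{\models}A$. The base cases and the standard propositional/$\square$ cases are routine, and the only clause that requires any thought is the $\lozenge$-clause, because our truth condition for $\lozenge$ differs from the ones in the literature.

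\begin{proof}
By induction on $A$.

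\textbf{Base cases.} If $A$ is an atom $p$, then $s{\in}V(p)$; since $V(p)$ is $\leq$-closed and $s{\leq}t$, we get $t{\in}V(p)$, i.e.\ $t{\models}p$. If $A$ is ${\top}$, then $t{\models}{\top}$ trivially. If $A$ is ${\bot}$, then $s{\not\models}{\bot}$, so the implication holds vacuously.

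\textbf{Inductive step, $A{\rightarrow}B$.} Suppose $s{\models}A{\rightarrow}B$ and $s{\leq}t$. Let $u{\in}W$ with $t{\leq}u$ and $u{\models}A$. Since $\leq$ is a preorder and $s{\leq}t{\leq}u$, we have $s{\leq}u$, so by the satisfiability clause for $\rightarrow$ at $s$ we get $u{\models}B$. Hence $t{\models}A{\rightarrow}B$.

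\textbf{Inductive step, $A{\vee}B$ and $A{\wedge}B$.} These follow immediately from the induction hypotheses applied to $A$ and $B$.

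\textbf{Inductive step, ${\square}A$.} Suppose $s{\models}{\square}A$ and $s{\leq}t$. Let $u{\in}W$ with $t{\leq}{\circ}{R}u$. Since $s{\leq}t$, transitivity of $\leq$ gives $s{\leq}{\circ}{R}u$, hence $u{\models}A$ by the clause for $\square$ at $s$. Thus $t{\models}{\square}A$. (Here the induction hypothesis is not even needed.)

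\textbf{Inductive step, ${\lozenge}A$.} Suppose $s{\models}{\lozenge}A$ and $s{\leq}t$. By the clause for $\lozenge$, there exists $v{\in}W$ such that $s{\geq}{\circ}{R}v$ and $v{\models}A$. Since $s{\leq}t$ means $t{\geq}s$, and $s{\geq}{\circ}{R}v$, composing gives $t{\geq}{\circ}{R}v$. Therefore $v$ witnesses $t{\models}{\lozenge}A$. (Again the induction hypothesis is not needed, only transitivity of $\leq$ on the left.)
\medskip
\end{proof}

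\textbf{Where the work is.} There is essentially no obstacle here; the lemma is genuinely routine once the truth conditions are fixed. The only point worth a moment's attention is the $\lozenge$-case: because the new truth condition places the $\geq$ step \emph{before} the $R$ step, monotonicity under $\leq$ comes from prepending $s{\leq}t$ to the left of the witnessing path $s{\geq}{\circ}{R}v$, using transitivity of $\leq$ — no confluence assumption on the frame is required. This is in fact one of the design motivations for the new clause, and it is worth remarking that the proof uses only that $\leq$ is a preorder and that each $V(p)$ is $\leq$-closed.
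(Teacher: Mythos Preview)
Your proof is correct and follows exactly the approach the paper indicates: the paper's own proof is simply ``By induction on $A$,'' and your argument spells out each case of that induction accurately, including the one nontrivial observation that the ${\lozenge}$-clause is upward persistent because the $\geq$ precedes the $R$ in the truth condition.
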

\begin{proof}
By induction on $A$.
\medskip
\end{proof}
\begin{lemma}\label{lemma:satisfiability:does:not:change:if:intersectional:update:of:a:model}
Let $(W,{\leq},{R},V)$ be a model.
Let $(W^{\prime},{\leq^{\prime}},{R^{\prime}},V^{\prime})$ be the model defined by $W^{\prime}{=}W$, ${\leq^{\prime}}{=}{\leq}$, ${R^{\prime}}{=}({\leq}{\circ}{R}{\circ}{\leq}){\cap}({\geq}{\circ}{R}{\circ}{\geq})$ and $V^{\prime}{=}V$.
For all formulas $A$ and for all $s{\in}W^{\prime}$, $(W^{\prime},{\leq^{\prime}},{R^{\prime}},V^{\prime}),s{\models}A$ if and only if $(W,{\leq},{R},V),s{\models}A$.
\end{lemma}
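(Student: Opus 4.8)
The plan is to argue by induction on the structure of $A$, where the only cases needing real work are $A{=}{\square}B$ and $A{=}{\lozenge}B$. Indeed, when $A$ is an atom, $\top$, $\bot$, or has one of the forms $B{\rightarrow}C$, $B{\vee}C$, $B{\wedge}C$, the clause defining satisfiability mentions only $\leq$ and $V$, which coincide with $\leq^{\prime}$ and $V^{\prime}$, so the equivalence is immediate from the induction hypothesis. Before treating the modal cases I would record two inclusions concerning ${R^{\prime}}{=}({\leq}{\circ}{R}{\circ}{\leq}){\cap}({\geq}{\circ}{R}{\circ}{\geq})$: first, since $\leq$ is reflexive, $R{\subseteq}{\leq}{\circ}{R}{\circ}{\leq}$ and $R{\subseteq}{\geq}{\circ}{R}{\circ}{\geq}$, hence ${R}{\subseteq}{R^{\prime}}$; second, directly from the definition of $R^{\prime}$ as an intersection, ${R^{\prime}}{\subseteq}{\leq}{\circ}{R}{\circ}{\leq}$ and ${R^{\prime}}{\subseteq}{\geq}{\circ}{R}{\circ}{\geq}$.

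For $A{=}{\square}B$, suppose first $(W,{\leq},{R},V),s{\models}{\square}B$ and let $t$ be such that $s{\leq}{\circ}{R^{\prime}}t$; then there are $u,v,w{\in}W$ with $s{\leq}u$, $u{\leq}v$, $v{R}w$ and $w{\leq}t$, so by transitivity of $\leq$ we get $s{\leq}{\circ}{R}w$, whence $w{\models}B$ in $(W,{\leq},{R},V)$, and Lemma~\ref{lemma:HB:monotonicity} applied along $w{\leq}t$ gives $t{\models}B$ in $(W,{\leq},{R},V)$, so $t{\models}B$ in $(W^{\prime},{\leq^{\prime}},{R^{\prime}},V^{\prime})$ by the induction hypothesis; thus $s{\models}{\square}B$ in the updated model. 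Conversely, if $(W^{\prime},{\leq^{\prime}},{R^{\prime}},V^{\prime}),s{\models}{\square}B$ and $s{\leq}{\circ}{R}t$, then ${R}{\subseteq}{R^{\prime}}$ yields $s{\leq}{\circ}{R^{\prime}}t$, so $t{\models}B$ in the updated model and, by the induction hypothesis, in $(W,{\leq},{R},V)$.

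For $A{=}{\lozenge}B$, the passage from $(W,{\leq},{R},V)$ to $(W^{\prime},{\leq^{\prime}},{R^{\prime}},V^{\prime})$ is immediate: a witness $t$ with $s{\geq}{\circ}{R}t$ and $t{\models}B$ stays a witness, since ${R}{\subseteq}{R^{\prime}}$ and, by the induction hypothesis, $t{\models}B$ in the updated model. Conversely, from a witness $t$ with $s{\geq}{\circ}{R^{\prime}}t$ and $t{\models}B$ in the updated model, the inclusion ${R^{\prime}}{\subseteq}{\geq}{\circ}{R}{\circ}{\geq}$ produces $u,v,w{\in}W$ with $s{\geq}u$, $u{\geq}v$, $v{R}w$ and $w{\geq}t$; by transitivity of $\leq$ we get $s{\geq}{\circ}{R}w$, and since $t{\leq}w$ and $t{\models}B$ in $(W,{\leq},{R},V)$ by the induction hypothesis, Lemma~\ref{lemma:HB:monotonicity} gives $w{\models}B$ in $(W,{\leq},{R},V)$, so $w$ is the desired witness and $s{\models}{\lozenge}B$ in $(W,{\leq},{R},V)$.

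I expect the main subtlety to be the realization that one should \emph{not} try to prove the relational identities ${\leq}{\circ}{R^{\prime}}{=}{\leq}{\circ}{R}$ or ${\geq}{\circ}{R^{\prime}}{=}{\geq}{\circ}{R}$ — these genuinely fail — but instead to tolerate the surplus occurrences of $\leq$ (respectively $\geq$) arising from the definition of $R^{\prime}$ and absorb them using the Heredity Property. Once this is seen, the entire argument reduces to the chain of inclusions ${R}{\subseteq}{R^{\prime}}{\subseteq}({\leq}{\circ}{R}{\circ}{\leq}){\cap}({\geq}{\circ}{R}{\circ}{\geq})$ together with a couple of applications of Lemma~\ref{lemma:HB:monotonicity}.
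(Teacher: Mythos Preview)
Your proof is correct and follows exactly the approach the paper indicates: the paper's own proof is simply ``By induction on $A$,'' and you have supplied the details of that induction, including the crucial use of the Heredity Property (Lemma~\ref{lemma:HB:monotonicity}) to absorb the extra $\leq$ and $\geq$ steps coming from the definition of $R^{\prime}$. There is nothing to add.
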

\begin{proof}
By induction on $A$.
\medskip
\end{proof}
As mentioned in Footnote~$2$, our definition of the satisfiability of $\lozenge$-formulas is similar to the definition of the satisfiability of $\lozenge$-formulas considered by P\v{r}enosil~\cite{Prenosil:2014}.
However, our semantics is more general, seeing that we do not restrict the discussion to the class of all frames $(W,{\leq},{R})$ such that $({\leq}{\circ}{R}{\circ}{\leq}){\cap}({\geq}{\circ}{R}{\circ}{\geq}){=}{R}$.
As for Fischer Servi~\cite{FischerServi:1984} and Wijesekera~\cite{Wijesekera:1990}, they have defined the satisfiability of $\lozenge$-formulas as follows:
\begin{itemize}
\item $(W,{\leq},{R},V),s{\models_{\mathtt{FS}}}{\lozenge}A$ if and only if there exists $t{\in}W$ such that $s{R}t$ and $(W,{\leq},
$\linebreak$
{R},V),t{\models_{\mathtt{FS}}}A$,
\item $(W,{\leq},{R},V),s{\models_{\mathtt{W}}}{\lozenge}A$ if and only if for all $t{\in}W$, if $s{\leq}t$ then there exists $u{\in}W$ such that $t{R}u$ and $(W,{\leq},{R},V),u{\models_{\mathtt{W}}}A$.
\end{itemize}
The definition of the satisfiability of formulas considered by Fischer Servi necessitates to restrict the discussion to the class of all forward confluent frames, otherwise the Heredity Property described in Lemma~\ref{lemma:HB:monotonicity} would not hold.
The definition of the satisfiability of formulas considered by Wijesekera does not necessitate to restrict the discussion to a specific class of frames.
The reader may easily verify that in the class of all forward confluent frames, the definition of the satisfiability of formulas considered by Fischer Servi, the definition of the satisfiability of formulas considered by Wijesekera and our definition of the satisfiability of formulas are equivalent.
\begin{definition}[Truth and validity]
A formula $A$ is {\em true in a model $(W,{\leq},{R},V)$}\/ (in symbols $(W,{\leq},{R},V){\models}A$) if for all $s{\in}W$, $s{\models}A$.
A formula $A$ is {\em valid in a frame $(W,{\leq},{R})$}\/ (in symbols $(W,{\leq},{R}){\models}A$) if for all models $(W,{\leq},{R},V)$ based on $(W,{\leq},
$\linebreak$
{R})$, $(W,{\leq},{R},V){\models}A$.
A set $\Sigma$ of formulas is {\em valid in a frame $(W,{\leq},{R})$}\/ (in symbols $(W,{\leq},{R}){\models}\Sigma$) if for all formulas $A$, if $A{\in}\Sigma$ then $(W,{\leq},{R}){\models}A$.
A formula $A$ is {\em valid on a class ${\mathcal C}$ of frames}\/ (in symbols ${\mathcal C}{\models}A$) if for all frames $(W,{\leq},{R})$ in ${\mathcal C}$, $(W,{\leq},{R}){\models}A$.
For all classes ${\mathcal C}$ of frames, let $\Log({\mathcal C}){=}\{A{\in}\Fo\ :\ {\mathcal C}{\models}A\}$.
\end{definition}
In Section~\ref{section:axiomatization}, we propose finite axiomatizations of $\Log({\mathcal C}_{\allfra})$, $\Log({\mathcal C}_{\fcfra})$, $\Log({\mathcal C}_{\bcfra})$, $\Log({\mathcal C}_{\dcfra})$, $\Log({\mathcal C}_{\fbcfra})$, $\Log({\mathcal C}_{\fdcfra})$, $\Log({\mathcal C}_{\bdcfra})$, $\Log({\mathcal C}_{\fbdcfra})$, $\Log({\mathcal C}_{\qfcfra})$, $\Log({\mathcal C}_{\qbcfra})$,
\linebreak$
\Log({\mathcal C}_{\qdcfra})$, $\Log({\mathcal C}_{\reflexive})$, $\Log({\mathcal C}_{\symmetric})$, $\Log({\mathcal C}_{\transitive})$, $\Log({\mathcal C}_{\reflexive}{\cap}{\mathcal C}_{\symmetric})$, $\Log({\mathcal C}_{\reflexive}{\cap}{\mathcal C}_{\transitive})$,
\linebreak$
\Log({\mathcal C}_{\partition})$, $\Log({\mathcal C}_{\ureflexive})$, $\Log({\mathcal C}_{\dreflexive})$, $\Log({\mathcal C}_{\usymmetric})$, $\Log({\mathcal C}_{\dsymmetric})$, $\Log({\mathcal C}_{\utransitive})$ and
\linebreak$
\Log({\mathcal C}_{\dtransitive})$.
We do not know how to axiomatize $\Log({\mathcal C}_{\ucfra})$, $\Log({\mathcal C}_{\fucfra})$, $\Log({\mathcal C}_{\bucfra})$, $\Log({\mathcal C}_{\ducfra})$, $\Log({\mathcal C}_{\fbucfra})$, $\Log({\mathcal C}_{\fducfra})$, $\Log({\mathcal C}_{\bducfra})$, $\Log({\mathcal C}_{\fbducfra})$, $\Log({\mathcal C}_{\qucfra})$ and
\linebreak$
\Log({\mathcal C}_{\symmetric}{\cap}{\mathcal C}_{\transitive})$.
\begin{lemma}\label{lemma:some:specific:formulas:valid:non:valid}
Let ${\mathcal C}$ be a class of frames.
If ${\mathcal C}$ is contained in ${\mathcal C}_{\dcfra}$ and ${\mathcal C}$ contains ${\mathcal C}_{\fbducfra}$ then ${\square}{\bot}{\vee}{\lozenge}{\top}$ is in $\Log({\mathcal C})$, ${\square}{\bot}$ is not in $\Log({\mathcal C})$ and ${\lozenge}{\top}$ is not in $\Log({\mathcal C})$.
\end{lemma}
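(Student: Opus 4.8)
The plan is to treat the three conclusions separately; each is elementary once the truth conditions for $\square\bot$ and $\lozenge\top$ are unwound. Since no state satisfies $\bot$ and every state satisfies $\top$, for every model $(W,{\leq},{R},V)$ and every $s{\in}W$ we have $s{\models}\square\bot$ if and only if $s$ has no $({\leq}{\circ}{R})$-successor, and $s{\models}\lozenge\top$ if and only if $s$ has some $({\geq}{\circ}{R})$-successor.

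For ``$\square\bot{\vee}\lozenge\top{\in}\Log({\mathcal C})$'', I would fix a frame $(W,{\leq},{R})$ in ${\mathcal C}$, hence (by hypothesis) downward confluent, together with a model based on it and a state $s$, and show: if $s{\not\models}\square\bot$ then $s{\models}\lozenge\top$. Indeed, $s{\not\models}\square\bot$ yields $t{\in}W$ with $s({\leq}{\circ}{R})t$; downward confluence then gives $u{\in}W$ with $s{R}u$ and $u{\leq}t$; and since $\leq$ is reflexive we have $s{\geq}s$, so $s{R}u$ gives $s({\geq}{\circ}{R})u$, i.e. $s{\models}\lozenge\top$. As $s$, the model and the frame were arbitrary, $\square\bot{\vee}\lozenge\top$ is valid on ${\mathcal C}$.

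For the two non-membership claims, I would use the hypothesis ${\mathcal C}_{\fbducfra}{\subseteq}{\mathcal C}$ and exhibit one-state counter-frames lying in ${\mathcal C}_{\fbducfra}$. For $\square\bot{\not\in}\Log({\mathcal C})$, take $W{=}\{s\}$ with ${\leq}$ and ${R}$ both the identity relation on $\{s\}$; all four confluence conditions hold trivially, so this frame lies in ${\mathcal C}_{\fbducfra}{\subseteq}{\mathcal C}$, and since $s({\leq}{\circ}{R})s$ we get $s{\not\models}\square\bot$ in any model based on it. For $\lozenge\top{\not\in}\Log({\mathcal C})$, take $W{=}\{s\}$ with ${\leq}$ the identity and ${R}{=}\emptyset$; all four confluence conditions hold vacuously, so this frame also lies in ${\mathcal C}_{\fbducfra}{\subseteq}{\mathcal C}$, and since $s$ has no ${R}$-successor, hence no $({\geq}{\circ}{R})$-successor, we get $s{\not\models}\lozenge\top$ in any model based on it.

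I do not expect a genuine obstacle. The only points needing attention are the correct reading of the clauses for $\square\bot$ and $\lozenge\top$ (that $\square\bot$ is a ``no composed successor'' statement while $\lozenge\top$ asserts the existence of a $({\geq}{\circ}{R})$-successor), the fact that the first part really does rely on downward confluence together with reflexivity of $\leq$, and the routine check that the two singleton frames indeed belong to ${\mathcal C}_{\fbducfra}$.
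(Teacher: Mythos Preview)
Your proposal is correct and essentially matches the paper's proof: the paper also uses downward confluence plus reflexivity of $\leq$ to derive $s{\models}\lozenge\top$ from $s{\not\models}\square\bot$, and uses exactly the same two one-point frames (with $R$ the identity and with $R{=}\emptyset$) in ${\mathcal C}_{\fbducfra}$ to refute $\square\bot$ and $\lozenge\top$ respectively. The only cosmetic difference is that the paper phrases all three parts as arguments by contradiction.
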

\begin{proof}
Suppose ${\mathcal C}$ is contained in ${\mathcal C}_{\dcfra}$ and ${\mathcal C}$ contains ${\mathcal C}_{\fbducfra}$.
For the sake of the contradiction, suppose either ${\square}{\bot}{\vee}{\lozenge}{\top}$ is not in $\Log({\mathcal C})$, or ${\square}{\bot}$ is in $\Log({\mathcal C})$, or ${\lozenge}{\top}$ is in $\Log({\mathcal C})$.
In the first case, there exists a frame $(W,{\leq},{R})$ in ${\mathcal C}$ such that $(W,{\leq},{R}){\not\models}{\square}{\bot}
$\linebreak$
{\vee}{\lozenge}{\top}$.
Since ${\mathcal C}$ is contained in ${\mathcal C}_{\dcfra}$, then $(W,{\leq},{R})$ is in ${\mathcal C}_{\dcfra}$.
Moreover, there exists a valuation $V\ :\ \At{\longrightarrow}\wp(W)$ on $(W,{\leq},{R})$ such that $(W,{\leq},{R},V){\not\models}{\square}{\bot}{\vee}{\lozenge}{\top}$.
Hence, there exists $s{\in}W$ such that $s{\not\models}{\square}{\bot}{\vee}{\lozenge}{\top}$.
Thus, $s{\not\models}{\square}{\bot}$ and $s{\not\models}{\lozenge}{\top}$.
Consequently, there exists $t{\in}W$ such that $s{\leq}{\circ}{R}t$.
Since $(W,{\leq},{R})$ is in ${\mathcal C}_{\dcfra}$, then $s{R}{\circ}{\leq}t$.
Hence, there exists $u{\in}W$ such that $s{R}u$.
Thus, $s{\models}{\lozenge}{\top}$: a contradiction.
In the second case, let $(W^{\prime},{\leq^{\prime}},{R^{\prime}})$ be the frame defined by $W^{\prime}{=}\{a\}$ and ${R^{\prime}}{=}\{(a,a)\}$.
Obviously, $(W^{\prime},{\leq^{\prime}},{R^{\prime}})$ is in ${\mathcal C}_{\fbducfra}$.
Since ${\mathcal C}$ contains ${\mathcal C}_{\fbducfra}$, then $(W^{\prime},{\leq^{\prime}},{R^{\prime}})$ is in ${\mathcal C}$.
Since ${\square}{\bot}$ is in $\Log({\mathcal C})$, then $(W^{\prime},{\leq^{\prime}},{R^{\prime}}){\models}{\square}{\bot}$.
Let $V^{\prime}\ :\ \At{\longrightarrow}\wp(W^{\prime})$ be a valuation on $(W^{\prime},{\leq^{\prime}},{R^{\prime}})$.
Since $(W^{\prime},{\leq^{\prime}},{R^{\prime}}){\models}{\square}{\bot}$, then $(W^{\prime},{\leq^{\prime}},{R^{\prime}},V^{\prime}){\models}{\square}{\bot}$.
Consequently, $a{\models}{\square}{\bot}$.
Hence, not $a{R^{\prime}}a$: a contradiction.
In the third case, let $(W^{\prime\prime},{\leq^{\prime\prime}},{R^{\prime\prime}})$ be the frame defined by $W^{\prime\prime}{=}\{a\}$ and ${R^{\prime\prime}}{=}\emptyset$.
Obviously, $(W^{\prime\prime},{\leq^{\prime\prime}},{R^{\prime\prime}})$ is in ${\mathcal C}_{\fbducfra}$.
Since ${\mathcal C}$ contains ${\mathcal C}_{\fbducfra}$, then $(W^{\prime\prime},{\leq^{\prime\prime}},{R^{\prime\prime}})$ is in ${\mathcal C}$.
Since ${\lozenge}{\top}$ is in $\Log({\mathcal C})$, then $(W^{\prime\prime},{\leq^{\prime\prime}},{R^{\prime\prime}}){\models}{\lozenge}{\top}$.
Let $V^{\prime\prime}\ :\ \At{\longrightarrow}\wp(W^{\prime\prime})$ be a valuation on $(W^{\prime\prime},{\leq^{\prime\prime}},{R^{\prime\prime}})$.
Since $(W^{\prime\prime},{\leq^{\prime\prime}},{R^{\prime\prime}}){\models}{\lozenge}{\top}$, then $(W^{\prime\prime},{\leq^{\prime\prime}},{R^{\prime\prime}},V^{\prime\prime}){\models}{\lozenge}{\top}$.
Thus, $a{\models}{\lozenge}{\top}$.
Consequently, $a{R^{\prime\prime}}a$: a contradiction.
\medskip
\end{proof}
\begin{lemma}\label{lemma:same:valid:formulas}
\begin{enumerate}
\item $\Log({\mathcal C}_{\reflexive}){=}\Log({\mathcal C}_{\ureflexive}{\cap}{\mathcal C}_{\dreflexive})$,
\item $\Log({\mathcal C}_{\symmetric}){=}\Log({\mathcal C}_{\usymmetric}{\cap}{\mathcal C}_{\dsymmetric})$,
\item $\Log({\mathcal C}_{\transitive}){=}\Log({\mathcal C}_{\allfra})$,
\item $\Log({\mathcal C}_{\reflexive}{\cap}{\mathcal C}_{\symmetric}){=}\Log({\mathcal C}_{\ureflexive}{\cap}{\mathcal C}_{\dreflexive}{\cap}{\mathcal C}_{\usymmetric}{\cap}{\mathcal C}_{\dsymmetric})$,
\item $\Log({\mathcal C}_{\reflexive}{\cap}{\mathcal C}_{\transitive}){=}\Log({\mathcal C}_{\ureflexive}{\cap}{\mathcal C}_{\dreflexive})$,
\item $\Log({\mathcal C}_{\partition}){=}\Log({\mathcal C}_{\ureflexive}{\cap}{\mathcal C}_{\dreflexive}{\cap}{\mathcal C}_{\usymmetric}{\cap}{\mathcal C}_{\dsymmetric})$.
\end{enumerate}
\end{lemma}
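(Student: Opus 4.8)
The plan is to prove each of the six identities by two inclusions, using throughout the obvious anti-monotonicity of $\Log$: if ${\mathcal C}{\subseteq}{\mathcal C}'$ then $\Log({\mathcal C}'){\subseteq}\Log({\mathcal C})$. For items~$(1)$, $(2)$ and $(4)$, one inclusion is free. From the remarks following the relevant definitions, ${\mathcal C}_{\reflexive}{\subseteq}{\mathcal C}_{\ureflexive}{\cap}{\mathcal C}_{\dreflexive}$ and ${\mathcal C}_{\symmetric}{\subseteq}{\mathcal C}_{\usymmetric}{\cap}{\mathcal C}_{\dsymmetric}$, hence also ${\mathcal C}_{\reflexive}{\cap}{\mathcal C}_{\symmetric}{\subseteq}{\mathcal C}_{\ureflexive}{\cap}{\mathcal C}_{\dreflexive}{\cap}{\mathcal C}_{\usymmetric}{\cap}{\mathcal C}_{\dsymmetric}$; anti-monotonicity then gives $\Log({\mathcal C}_{\ureflexive}{\cap}{\mathcal C}_{\dreflexive}){\subseteq}\Log({\mathcal C}_{\reflexive})$ and the two analogues.

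For the reverse inclusions of $(1)$, $(2)$, $(4)$ I would use the intersectional update of Lemmas~\ref{lemma:about:up:and:down:frames:and:the:corresponding:intersectional:frames} and~\ref{lemma:satisfiability:does:not:change:if:intersectional:update:of:a:model}. For instance, to prove $\Log({\mathcal C}_{\reflexive}){\subseteq}\Log({\mathcal C}_{\ureflexive}{\cap}{\mathcal C}_{\dreflexive})$: let $A{\in}\Log({\mathcal C}_{\reflexive})$, let $(W,{\leq},{R})$ be up-reflexive and down-reflexive, let $V$ be a valuation on it and let $s{\in}W$. Pass to the model $(W',{\leq'},{R'},V')$ with ${R'}{=}({\leq}{\circ}{R}{\circ}{\leq}){\cap}({\geq}{\circ}{R}{\circ}{\geq})$ of Lemma~\ref{lemma:satisfiability:does:not:change:if:intersectional:update:of:a:model}. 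By Lemma~\ref{lemma:about:up:and:down:frames:and:the:corresponding:intersectional:frames}$(1)$, $(W',{\leq'},{R'})$ is reflexive, so $(W',{\leq'},{R'},V'),s{\models}A$, and by Lemma~\ref{lemma:satisfiability:does:not:change:if:intersectional:update:of:a:model}, $(W,{\leq},{R},V),s{\models}A$. Since $V$ and $s$ were arbitrary, $(W,{\leq},{R}){\models}A$, i.e. $A{\in}\Log({\mathcal C}_{\ureflexive}{\cap}{\mathcal C}_{\dreflexive})$. Items~$(2)$ and $(4)$ go the same way, invoking clause~$(2)$, respectively clauses~$(1)$ and $(2)$, of Lemma~\ref{lemma:about:up:and:down:frames:and:the:corresponding:intersectional:frames}.

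Items~$(3)$, $(5)$ and $(6)$ all reduce to one point: \emph{enlarging a class of frames by imposing transitivity of ${R}$ does not change the set of valid formulas}. Indeed, since ${\mathcal C}_{\transitive}{\subseteq}{\mathcal C}_{\allfra}$, since ${\mathcal C}_{\reflexive}{\cap}{\mathcal C}_{\transitive}{\subseteq}{\mathcal C}_{\reflexive}{\subseteq}{\mathcal C}_{\ureflexive}{\cap}{\mathcal C}_{\dreflexive}$, and since ${\mathcal C}_{\partition}{\subseteq}{\mathcal C}_{\reflexive}{\cap}{\mathcal C}_{\symmetric}$, anti-monotonicity together with items~$(1)$ and $(4)$ already yields $\Log({\mathcal C}_{\allfra}){\subseteq}\Log({\mathcal C}_{\transitive})$, $\Log({\mathcal C}_{\ureflexive}{\cap}{\mathcal C}_{\dreflexive}){\subseteq}\Log({\mathcal C}_{\reflexive}{\cap}{\mathcal C}_{\transitive})$ and $\Log({\mathcal C}_{\ureflexive}{\cap}{\mathcal C}_{\dreflexive}{\cap}{\mathcal C}_{\usymmetric}{\cap}{\mathcal C}_{\dsymmetric}){\subseteq}\Log({\mathcal C}_{\partition})$. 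For the converse inclusions the task is: starting from a model $(W,{\leq},{R},V)$ with $(W,{\leq},{R},V),s_0{\not\models}A$ — where $(W,{\leq},{R})$ is, as the case requires, reflexive, or reflexive and symmetric — to produce a model whose frame is transitive (and still reflexive, resp.\ still a partition) in which $A$ is still refuted. The plan is to build such a model by ``spreading'' each ${R}$-edge over fresh, pairwise $\leq$-incomparable auxiliary states so that no two edges of the new accessibility relation are composable, whence transitivity holds trivially, while arranging through the preorder that the truth conditions of $\square$-formulas and $\lozenge$-formulas at the images of the original states remain unchanged. There is room to do this precisely because, as the proof of Lemma~\ref{lemma:satisfiability:does:not:change:if:intersectional:update:of:a:model} and the Heredity Property (Lemma~\ref{lemma:HB:monotonicity}) show, satisfiability constrains ${R}$ only up to pre- and post-composition with $\leq$.

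The hard part is exactly this last construction: the prefix $\leq{\circ}{R}$ governing $\square$ and the prefix $\geq{\circ}{R}$ governing $\lozenge$ push the auxiliary states in opposite directions, so one must duplicate states both above and below each endpoint and then verify that the projection onto $(W,{\leq},{R})$ is a bisimulation (so that ${\models}$ transfers in both directions) and that reflexivity and symmetry, when assumed, are preserved. A shorter alternative is to postpone items~$(3)$, $(5)$, $(6)$ and read them off the soundness and completeness theorems of Section~\ref{section:soundness:and:completeness}: the finite axiomatizations of Section~\ref{section:axiomatization} for $\Log({\mathcal C}_{\transitive})$, $\Log({\mathcal C}_{\reflexive}{\cap}{\mathcal C}_{\transitive})$ and $\Log({\mathcal C}_{\partition})$ turn out to be the same as those for $\Log({\mathcal C}_{\allfra})$, $\Log({\mathcal C}_{\reflexive})$ and $\Log({\mathcal C}_{\reflexive}{\cap}{\mathcal C}_{\symmetric})$ respectively, and the three equalities follow.
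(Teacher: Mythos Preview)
Your treatment of items~$(1)$, $(2)$ and $(4)$ is correct and coincides with the paper's proof: anti-monotonicity for one inclusion, and the intersectional update of Lemmas~\ref{lemma:about:up:and:down:frames:and:the:corresponding:intersectional:frames} and~\ref{lemma:satisfiability:does:not:change:if:intersectional:update:of:a:model} for the other.

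For items~$(3)$, $(5)$ and $(6)$, however, there are two problems. First, your ``shorter alternative'' is circular: Proposition~\ref{lemma:table:7:lemma} in Section~\ref{section:soundness:and:completeness} --- which is where the axiomatizations of $\Log({\mathcal C}_{\transitive})$, $\Log({\mathcal C}_{\reflexive}{\cap}{\mathcal C}_{\transitive})$ and $\Log({\mathcal C}_{\partition})$ are established --- explicitly invokes Lemma~\ref{lemma:same:valid:formulas} in its proof. The canonical frame of $\L_{\min}$ is not shown to be transitive, so without the present lemma you cannot conclude $\L_{\min}{=}\Log({\mathcal C}_{\transitive})$. Second, your sketch of the direct construction (``duplicate states both above and below each endpoint'' in the preorder) is not what works, and you leave it unexecuted. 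The paper's constructions are simpler than you anticipate and do not touch the $\leq$-structure vertically: for~$(3)$, take $W'{=}W{\times}\{0,1\}$ with $(t,j){\leq'}(u,k)$ iff $t{\leq}u$, and $(t,j){R'}(u,k)$ iff $t{R}u$, $j{=}0$, $k{=}1$; then $R'$ is vacuously transitive and an easy induction shows $(W',\leq',R',V'),(t,j){\models}B$ iff $(W,\leq,R,V),t{\models}B$. For~$(5)$, the same two-layer frame with the diagonal added to $R'$ yields a reflexive transitive frame. For~$(6)$, the paper indexes states by $R$-edges, taking $W'{=}\{(t,\{t,u\}):t{R}u\}$ with $(t,\{t,u\}){R'}(v,\{v,w\})$ iff $t{R}v$ and $\{t,u\}{=}\{v,w\}$; this makes every $R'$-class have at most two elements, hence $R'$ is an equivalence. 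In each case the claimed satisfaction-equivalence is a routine induction on formulas.
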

\begin{proof}
$\mathbf{(1)}$~Since ${\mathcal C}_{\ureflexive}{\cap}{\mathcal C}_{\dreflexive}$ contains ${\mathcal C}_{\reflexive}$, then it suffices to demonstrate that
\linebreak$
\Log({\mathcal C}_{\reflexive}){\subseteq}\Log({\mathcal C}_{\ureflexive}{\cap}{\mathcal C}_{\dreflexive})$.
For the sake of the contradiction, suppose $\Log({\mathcal C}_{\reflexive}){\not\subseteq}
$\linebreak$
\Log({\mathcal C}_{\ureflexive}{\cap}{\mathcal C}_{\dreflexive})$.
Hence, there exists a formula $A$ such that ${\mathcal C}_{\reflexive}{\models}A$ and ${\mathcal C}_{\ureflexive}{\cap}
$\linebreak$
{\mathcal C}_{\dreflexive}{\not\models}A$.
Thus, there exists an up-reflexive and down-reflexive frame $(W,{\leq},{R})$ such that $(W,{\leq},{R}){\not\models}A$.
Consequently, there exists a valuation $V\ :\ \At{\longrightarrow}\wp(W)$ on $(W,{\leq},{R})$ such that $(W,{\leq},{R},V){\not\models}A$.
Hence, there exists $s{\in}W$ such that $(W,{\leq},{R},
$\linebreak$
V),s{\not\models}A$.
Let $(W^{\prime},{\leq^{\prime}},{R^{\prime}})$ be the frame defined by $W^{\prime}{=}W$, ${\leq^{\prime}}{=}\leq$ and ${R^{\prime}}{=}({\leq}{\circ}{R}{\circ}
$\linebreak$
{\leq}){\cap}({\geq}{\circ}{R}{\circ}{\geq})$.
Since $(W,{\leq},{R})$ is up-reflexive and down-reflexive, then by Lemma~\ref{lemma:about:up:and:down:frames:and:the:corresponding:intersectional:frames}, $(W^{\prime},{\leq^{\prime}},{R^{\prime}})$ is reflexive.
Let $V^{\prime}\ :\ \At{\longrightarrow}\wp(W^{\prime})$ be the valuation on $(W^{\prime},{\leq^{\prime}},{R^{\prime}})$ such that for all atoms $p$, $V^{\prime}(p){=}V(p)$.
Since $(W,{\leq},{R},V),s{\not\models}A$, then by Lemma~\ref{lemma:satisfiability:does:not:change:if:intersectional:update:of:a:model}, $(W^{\prime},{\leq^{\prime}},{R^{\prime}},V^{\prime}),s{\not\models}A$.
Thus, $(W^{\prime},{\leq^{\prime}},{R^{\prime}},V^{\prime}){\not\models}A$.
Consequently, $(W^{\prime},{\leq^{\prime}},{R^{\prime}}){\not\models}A$.
Since $(W^{\prime},{\leq^{\prime}},{R^{\prime}})$ is reflexive, then ${\mathcal C}_{\reflexive}{\not\models}A$: a contradiction.
$\mathbf{(2)}$~Similar to the proof of Item~$\mathbf{(1)}$.
$\mathbf{(3)}$~Since ${\mathcal C}_{\allfra}$ contains ${\mathcal C}_{\transitive}$, then it suffices to demonstrate that $\Log({\mathcal C}_{\transitive}){\subseteq}
$\linebreak$
\Log({\mathcal C}_{\allfra})$.
For the sake of the contradiction, suppose $\Log({\mathcal C}_{\transitive}){\not\subseteq}\Log({\mathcal C}_{\allfra})$.
Hence, there exists a formula $A$ such that ${\mathcal C}_{\transitive}{\models}A$ and ${\mathcal C}_{\allfra}{\not\models}A$ and .
Thus, there exists a frame $(W,{\leq},{R})$ such that $(W,{\leq},{R}){\not\models}A$.
Consequently, there exists a valuation $V\ :\ \At{\longrightarrow}\wp(W)$ on $(W,{\leq},{R})$ such that $(W,{\leq},{R},V){\not\models}A$.
Hence, there exists $s{\in}W$ such that $(W,{\leq},{R},V),s{\not\models}A$.
Let $(W^{\prime},{\leq^{\prime}},{R^{\prime}})$ be the frame defined by $W^{\prime}{=}W{\times}\{0,1\}$, ${\leq^{\prime}}{=}\{((t,j),(u,k)):\ (t,j),(u,k){\in}W^{\prime}$ are such that $t{\leq}u\}$ and ${R^{\prime}}{=}\{((t,j),(u,k)):\ (t,j),(u,k){\in}W^{\prime}$ are such that $t{R}u$, $j{=}0$ and $k{=}1\}$.
Obviously, $(W^{\prime},{\leq^{\prime}},{R^{\prime}})$ is transitive.
Let $V^{\prime}\ :\ \At{\longrightarrow}\wp(W^{\prime})$ be the valuation on $(W^{\prime},{\leq^{\prime}},{R^{\prime}})$ such that for all atoms $p$, $V^{\prime}(p){=}V(p){\times}\{0,1\}$.
\begin{claim}
For all formulas $B$ and for all $(t,j){\in}W^{\prime}$, $(W^{\prime},{\leq^{\prime}},{R^{\prime}},V^{\prime}),(t,j){\models}B$ if and only if $(W,{\leq},{R},V),t{\models}B$.
\end{claim}
\begin{proofclaim}
By induction on $B$.
\end{proofclaim}
Since $(W,{\leq},{R},V),s{\not\models}A$, then $(W^{\prime},{\leq^{\prime}},{R^{\prime}},V^{\prime}),(s,0){\not\models}A$.
Thus, $(W^{\prime},{\leq^{\prime}},{R^{\prime}},
$\linebreak$
V^{\prime}){\not\models}A$.
Consequently, $(W^{\prime},{\leq^{\prime}},{R^{\prime}}){\not\models}A$.
Since $(W^{\prime},{\leq^{\prime}},{R^{\prime}})$ is transitive, then ${\mathcal C}_{\transitive}{\not\models}
$\linebreak$
A$: a contradiction.
$\mathbf{(4)}$~Similar to the proof of Item~$\mathbf{(1)}$.
$\mathbf{(5)}$~Since ${\mathcal C}_{\reflexive}$ contains ${\mathcal C}_{\reflexive}{\cap}{\mathcal C}_{\transitive}$, then by Item~$\mathbf{(1)}$, it suffices to demonstrate that $\Log({\mathcal C}_{\reflexive}{\cap}{\mathcal C}_{\transitive}){\subseteq}\Log({\mathcal C}_{\reflexive})$.
For the sake of the contradiction, suppose $\Log({\mathcal C}_{\reflexive}{\cap}
$\linebreak$
{\mathcal C}_{\transitive}){\not\subseteq}\Log({\mathcal C}_{\reflexive})$.
Hence, there exists a formula $A$ such that ${\mathcal C}_{\reflexive}{\cap}{\mathcal C}_{\transitive}{\models}A$ and ${\mathcal C}_{\reflexive}{\not\models}
$\linebreak$
A$.
Thus, there exists a reflexive frame $(W,{\leq},{R})$ such that $(W,{\leq},{R}){\not\models}A$.
Consequently, there exists a valuation $V\ :\ \At{\longrightarrow}\wp(W)$ on $(W,{\leq},{R})$ such that $(W,{\leq},{R},
$\linebreak$
V){\not\models}A$.
Hence, there exists $s{\in}W$ such that $(W,{\leq},{R},V),s{\not\models}A$.
Let $(W^{\prime},{\leq^{\prime}},{R^{\prime}})$ be the frame defined by $W^{\prime}{=}W{\times}\{0,1\}$, ${\leq^{\prime}}{=}\{((t,j),(u,k)):\ (t,j),(u,k){\in}W^{\prime}$ are such that $t{\leq}u\}$ and ${R^{\prime}}{=}\{((t,j),(u,k)):\ (t,j),(u,k){\in}W^{\prime}$ are such that either $(t,j){=}(u,k)$, or $t{R}u$, $j{=}0$ and $k{=}1\}$.
Obviously, $(W^{\prime},{\leq^{\prime}},{R^{\prime}})$ is reflexive and transitive.
Let $V^{\prime}\ :\ \At{\longrightarrow}\wp(W^{\prime})$ be the valuation on $(W^{\prime},{\leq^{\prime}},{R^{\prime}})$ such that for all atoms $p$, $V^{\prime}(p){=}V(p){\times}\{0,1\}$.
\begin{claim}
For all formulas $B$ and for all $(t,j){\in}W^{\prime}$, $(W^{\prime},{\leq^{\prime}},{R^{\prime}},V^{\prime}),(t,j){\models}B$ if and only if $(W,{\leq},{R},V),t{\models}B$.
\end{claim}
\begin{proofclaim}
By induction on $B$.
\end{proofclaim}
Since $(W,{\leq},{R},V),s{\not\models}A$, then $(W^{\prime},{\leq^{\prime}},{R^{\prime}},V^{\prime}),(s,0){\not\models}A$.
Thus, $(W^{\prime},{\leq^{\prime}},{R^{\prime}},
$\linebreak$
V^{\prime}){\not\models}A$.
Consequently, $(W^{\prime},{\leq^{\prime}},{R^{\prime}}){\not\models}A$.
Since $(W^{\prime},{\leq^{\prime}},{R^{\prime}})$ is reflexive and transitive, then ${\mathcal C}_{\reflexive}{\cap}{\mathcal C}_{\transitive}{\not\models}A$: a contradiction.
$\mathbf{(6)}$~Since ${\mathcal C}_{\reflexive}{\cap}{\mathcal C}_{\symmetric}$ contains ${\mathcal C}_{\partition}$, then by Item~$\mathbf{(4)}$, it suffices to demonstrate that $\Log({\mathcal C}_{\partition}){\subseteq}\Log({\mathcal C}_{\reflexive}{\cap}{\mathcal C}_{\symmetric})$.
For the sake of the contradiction, suppose
\linebreak$
\Log({\mathcal C}_{\partition}){\not\subseteq}\Log({\mathcal C}_{\reflexive}{\cap}{\mathcal C}_{\symmetric})$.
Hence, there exists a formula $A$ such that ${\mathcal C}_{\partition}{\models}A$ and ${\mathcal C}_{\reflexive}{\cap}{\mathcal C}_{\symmetric}{\not\models}A$.
Thus, there exists a reflexive and symmetric frame $(W,{\leq},{R})$ such that $(W,{\leq},{R}){\not\models}A$.
Consequently, there exists a valuation $V\ :\ \At{\longrightarrow}\wp(W)$ on $(W,{\leq},{R})$ such that $(W,{\leq},{R},V){\not\models}A$.
Hence, there exists $s{\in}W$ such that $(W,{\leq},{R},
$\linebreak$
V),s{\not\models}A$.
Let $(W^{\prime},{\leq^{\prime}},{R^{\prime}})$ be the frame defined by $W^{\prime}{=}\{(t,\{t,u\}):\ t,u{\in}W$ are such that $t{R}u\}$, ${\leq^{\prime}}{=}\{((t,\{t,u\}),(v,\{v,w\})):\ (t,\{t,u\}),(v,\{v,w\}){\in}W^{\prime}$ are such that $t{\leq}v\}$ and ${R^{\prime}}{=}\{((t,\{t,u\}),(v,\{v,w\})):\ (t,\{t,u\}),(v,\{v,w\}){\in}W^{\prime}$ are such that $t{R}v$ and $\{t,u\}{=}\{v,w\}\}$.
Obviously, $(W^{\prime},{\leq^{\prime}},{R^{\prime}})$ is a partition.
Let $V^{\prime}\ :\ \At{\longrightarrow}\wp(W^{\prime})$ be the valuation on $(W^{\prime},{\leq^{\prime}},{R^{\prime}})$ such that for all atoms $p$, $V^{\prime}(p){=}\{(t,
$\linebreak$
\{t,u\}):\ (t,\{t,u\}){\in}W^{\prime}$ is such that $t{\in}V(p)\}$.
\begin{claim}
For all formulas $B$ and for all $(t,\{t,u\}){\in}W^{\prime}$, $(W^{\prime},{\leq^{\prime}},{R^{\prime}},V^{\prime}),(t,\{t,u\}){\models}
$\linebreak$
B$ if and only if $(W,{\leq},{R},V),t{\models}B$.
\end{claim}
\begin{proofclaim}
By induction on $B$.
\end{proofclaim}
Since $(W,{\leq},{R},V),s{\not\models}A$, then $(W^{\prime},{\leq^{\prime}},{R^{\prime}},V^{\prime}),(s,\{s\}){\not\models}A$.
Thus, $(W^{\prime},{\leq^{\prime}},{R^{\prime}},
$\linebreak$
V^{\prime}){\not\models}A$.
Consequently, $(W^{\prime},{\leq^{\prime}},{R^{\prime}}){\not\models}A$.
Since $(W^{\prime},{\leq^{\prime}},{R^{\prime}})$ is a partition, then ${\mathcal C}_{\partition}
$\linebreak$
{\not\models}A$: a contradiction.
\medskip
\end{proof}
We do not know whether ${\mathcal C}_{\symmetric}{\cap}{\mathcal C}_{\transitive}$ and ${\mathcal C}_{\usymmetric}{\cap}{\mathcal C}_{\dsymmetric}$ validate the same formulas.
\begin{definition}[Generated subframes]
A frame $(W^{\prime},{\leq^{\prime}},{R^{\prime}})$ is a {\em generated subframe of a frame $(W,{\leq},{R})$}\/ if the following conditions are satisfied:
\begin{itemize}
\item $W^{\prime}{\subseteq}W$,
\item ${\leq^{\prime}}{\subseteq}{\leq}$,
\item ${R^{\prime}}{\subseteq}{R}$,
\item for all $s^{\prime}{\in}W^{\prime}$ and for all $t{\in}W$, if $s^{\prime}{\leq}t$ then $t{\in}W^{\prime}$ and $s^{\prime}{\leq^{\prime}}t$,
\item for all $s^{\prime}{\in}W^{\prime}$ and for all $t{\in}W$, if $s^{\prime}{\geq}t$ then $t{\in}W^{\prime}$ and $s^{\prime}{\geq^{\prime}}t$,
\item for all $s^{\prime}{\in}W^{\prime}$ and for all $t{\in}W$, if $s^{\prime}{R}t$ then $t{\in}W^{\prime}$ and $s^{\prime}{R^{\prime}}t$.
\end{itemize}
For all frames $(W,{\leq},{R})$ and for all $s{\in}W$, let $(W_{s},{\leq_{s}},{R_{s}})$ be the least generated subframe of $(W,{\leq},{R})$ containing $s$.
\end{definition}
\begin{lemma}\label{subframe:lemma}
Let $(W,{\leq},{R})$ be a frame and $s{\in}W$.
Let $V\ :\ \At{\longrightarrow}\wp(W)$ be a valuation on $(W,{\leq},{R})$ and $V_{s}\ :\ \At{\longrightarrow}\wp(W_{s})$ be the valuation on $(W_{s},{\leq_{s}},{R_{s}})$ such that for all atoms $p$, $V_{s}(p){=}V(p){\cap}W_{s}$.
Let $A$ be a formula.
For all $t{\in}W_{s}$, $(W_{s},{\leq_{s}},{R_{s}},V_{s}),t{\models}A$ if and only if $(W,{\leq},{R},V),t{\models}A$.
\end{lemma}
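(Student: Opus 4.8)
The plan is to argue by induction on the formula $A$, in the style of the classical generated-submodel theorem for modal logic; the only genuine novelty is the presence of the composed accessibility relations ${\leq}{\circ}{R}$ and ${\geq}{\circ}{R}$ in the truth conditions for $\square$-formulas and $\lozenge$-formulas, and I will have to check that passing to $(W_{s},{\leq_{s}},{R_{s}})$ does not change the witnesses available for these composites. Before starting the induction I would note that $(W_{s},{\leq_{s}},{R_{s}},V_{s})$ is indeed a model: since $W_{s}$ is $\leq$-closed (a defining clause of generated subframes) and each $V(p)$ is $\leq$-closed, the set $V_{s}(p){=}V(p){\cap}W_{s}$ is $\leq_{s}$-closed, so $V_{s}$ is a valuation on $(W_{s},{\leq_{s}},{R_{s}})$.

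For the base cases: when $A$ is an atom $p$, the equivalence holds because $t{\in}W_{s}$ and $V_{s}(p){=}V(p){\cap}W_{s}$; the cases $A{=}{\top}$ and $A{=}{\bot}$ are immediate. For $A{=}B{\wedge}C$ and $A{=}B{\vee}C$ the claim follows directly from the induction hypothesis applied to $B$ and $C$ at the same state $t$. For $A{=}B{\rightarrow}C$, the key point is that the set $\{u{\in}W:\ t{\leq}u\}$ of $\leq$-successors of $t$ coincides with $\{u{\in}W_{s}:\ t{\leq_{s}}u\}$: this is exactly the clause saying that if $t{\in}W_{s}$ and $t{\leq}u$ then $u{\in}W_{s}$ and $t{\leq_{s}}u$, combined with ${\leq_{s}}{\subseteq}{\leq}$. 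Hence the universally quantified condition defining satisfiability of $B{\rightarrow}C$ ranges over the same states in both models, and the induction hypothesis closes the case.

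The interesting steps are $\square$ and $\lozenge$, and both rest on one observation: for every $t{\in}W_{s}$ and every $u$, $t({\leq}{\circ}{R})u$ holds in $(W,{\leq},{R})$ if and only if $t({\leq_{s}}{\circ}{R_{s}})u$ holds in $(W_{s},{\leq_{s}},{R_{s}})$, and similarly with ${\geq}$, ${\geq_{s}}$ in place of ${\leq}$, ${\leq_{s}}$. For the left-to-right direction, if $t{\leq}v$ and $v{R}u$ with $t{\in}W_{s}$, then $v{\in}W_{s}$ and $t{\leq_{s}}v$ by $\leq$-closure of $W_{s}$, and then $u{\in}W_{s}$ and $v{R_{s}}u$ by the $R$-closure clause; the $\geq$ version is identical, using the $\geq$-closure clause. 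The right-to-left direction is trivial from ${\leq_{s}}{\subseteq}{\leq}$, ${\geq_{s}}{\subseteq}{\geq}$ and ${R_{s}}{\subseteq}{R}$. Given this, for $A{=}{\square}B$ the set of states $u$ quantified over is the same in both models, and for $A{=}{\lozenge}B$ the set of candidate witnesses $u$ is the same; since every such $u$ lies in $W_{s}$, the induction hypothesis applied to $B$ yields the equivalence for $A$.

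I do not expect a real obstacle: the whole argument is a routine structural induction. The only thing requiring care is ensuring that the witnesses appearing in the truth conditions of $\square B$ and $\lozenge B$ never leave $W_{s}$, and this is guaranteed precisely by the three closure clauses ($\leq$-closure, $\geq$-closure and $R$-closure) built into the definition of a generated subframe~---~which is why that definition was formulated with all three clauses, rather than with only the $R$-closure clause familiar from classical modal logic.
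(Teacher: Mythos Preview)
Your proof is correct and follows exactly the approach the paper takes: the paper's proof is simply ``By induction on $A$,'' and you have spelled out precisely that induction, including the key observation that the closure clauses for $\leq$, $\geq$ and $R$ in the definition of generated subframe ensure the witnesses for the $\square$- and $\lozenge$-clauses are the same in both models.
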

\begin{proof}
By induction on $A$.
\medskip
\end{proof}
\section{Definability}\label{section:definability}
In this section, we study the correspondence between elementary conditions on frames and formulas.
In particular, we prove the modal definability results described in Tables~\ref{table:modal:definability:results:a}, \ref{table:modal:definability:results:b}, \ref{table:modal:definability:results:c} and~\ref{table:modal:definability:results:d}.
\begin{definition}[Modal definability]
A class ${\mathcal C}$ of frames is {\em modally definable}\/ if there exists a formula $A$ such that for all frames $(W,{\leq},{R})$, $(W,{\leq},{R})$ is in ${\mathcal C}$ if and only if $(W,{\leq},{R}){\models}A$.
In that case, we say that the formula $A$ is a {\em modal definition of ${\mathcal C}$.}
\end{definition}
\begin{table}[ht]
\begin{center}
\begin{tabular}{|c|c|}
\hline
class of frames&modal definition
\\
\hline
${\mathcal C}_{\fcfra}$&does not exist
\\
\hline
${\mathcal C}_{\bcfra}$&does not exist
\\
\hline
${\mathcal C}_{\dcfra}$&does not exist
\\
\hline
${\mathcal C}_{\ucfra}$&does not exist
\\
\hline
${\mathcal C}_{\fbcfra}$&does not exist
\\
\hline
${\mathcal C}_{\fdcfra}$&does not exist
\\
\hline
${\mathcal C}_{\fucfra}$&does not exist
\\
\hline
${\mathcal C}_{\bdcfra}$&does not exist
\\
\hline
${\mathcal C}_{\bucfra}$&does not exist
\\
\hline
${\mathcal C}_{\ducfra}$&does not exist
\\
\hline
${\mathcal C}_{\fbdcfra}$&does not exist
\\
\hline
${\mathcal C}_{\fbucfra}$&does not exist
\\
\hline
${\mathcal C}_{\fducfra}$&does not exist
\\
\hline
${\mathcal C}_{\bducfra}$&does not exist
\\
\hline
${\mathcal C}_{\fbducfra}$&does not exist
\\
\hline
\end{tabular}
\caption{Modal definability results proved in Lemma~\ref{lemma:fc:bc:fbc:not:definable}}\label{table:modal:definability:results:a}
\end{center}
\end{table}
\begin{table}[ht]
\begin{center}
\begin{tabular}{|c|c|}
\hline
class of frames&modal definition
\\
\hline
${\mathcal C}_{\qfcfra}$&${\lozenge}(p{\rightarrow}q){\rightarrow}({\square}p{\rightarrow}{\lozenge}q)$
\\
\hline
${\mathcal C}_{\qbcfra}$&$({\lozenge}p{\rightarrow}{\square}q){\rightarrow}{\square}(p{\rightarrow}q)$
\\
\hline
${\mathcal C}_{\qdcfra}$&${\square}(p{\vee}q){\rightarrow}{\lozenge}p{\vee}{\square}q$
\\
\hline
${\mathcal C}_{\qucfra}$&?
\\
\hline
\end{tabular}
\caption{Modal definability results proved in Lemma~\ref{lemma:about:modal:definability:quasi:fbd:frames}}\label{table:modal:definability:results:b}
\end{center}
\end{table}
\begin{table}[ht]
\begin{center}
\begin{tabular}{|c|c|}
\hline
class of frames&modal definition
\\
\hline
${\mathcal C}_{\reflexive}$&does not exist
\\
\hline
${\mathcal C}_{\symmetric}$&does not exist
\\
\hline
${\mathcal C}_{\transitive}$&does not exist
\\
\hline
${\mathcal C}_{\reflexive}{\cap}{\mathcal C}_{\symmetric}$&does not exist
\\
\hline
${\mathcal C}_{\reflexive}{\cap}{\mathcal C}_{\transitive}$&does not exist
\\
\hline
${\mathcal C}_{\symmetric}{\cap}{\mathcal C}_{\transitive}$&does not exist
\\
\hline
${\mathcal C}_{\partition}$&does not exist
\\
\hline
\end{tabular}
\caption{Modal definability results proved in Lemma~\ref{lemma:about:modal:definability:of:ref:sym:tra:par}}\label{table:modal:definability:results:c}
\end{center}
\end{table}
\begin{table}[ht]
\begin{center}
\begin{tabular}{|c|c|}
\hline
class of frames&modal definition
\\
\hline
${\mathcal C}_{\ureflexive}$&${\square}p{\rightarrow}p$
\\
\hline
${\mathcal C}_{\dreflexive}$&$p{\rightarrow}{\lozenge}p$
\\
\hline
${\mathcal C}_{\usymmetric}$&${\lozenge}{\square}p{\rightarrow}p$
\\
\hline
${\mathcal C}_{\dsymmetric}$&$p{\rightarrow}{\square}{\lozenge}p$
\\
\hline
${\mathcal C}_{\utransitive}$&${\square}p{\rightarrow}{\square}{\square}p$
\\
\hline
${\mathcal C}_{\dtransitive}$&${\lozenge}{\lozenge}p{\rightarrow}{\lozenge}p$
\\
\hline
\end{tabular}
\caption{Modal definability results proved in Lemma~\ref{lemma:correspondence:formulas:T:B:4:elementary:conditions}}\label{table:modal:definability:results:d}
\end{center}
\end{table}
%
%
%As mentioned in Footnote~\ref{footnote:prenosil}, our definition of the satisfiability of $\lozenge$-formulas is similar to the definition of the satisfiability of $\lozenge$-formulas considered by P\v{r}enosil~\cite{Prenosil:2014}.
%Questions that P\v{r}enosil left open concern the modal definability of the following classes of frames: ${\mathcal C}_{\fcfra}$, ${\mathcal C}_{\bcfra}$, ${\mathcal C}_{\dcfra}$ and ${\mathcal C}_{\ucfra}$.
Questions that P\v{r}enosil~\cite{Prenosil:2014} does not address concern the modal definability of the following classes of frames: ${\mathcal C}_{\fcfra}$, ${\mathcal C}_{\bcfra}$, ${\mathcal C}_{\dcfra}$ and ${\mathcal C}_{\ucfra}$.
%In Lemmas~\ref{lemma:fc:bc:fbc:not:definable} and~\ref{lemma:dc:uc:duc:not:definable}, we negatively answer to these questions.
In Lemma~\ref{lemma:fc:bc:fbc:not:definable}, we negatively answer to these questions.
\begin{lemma}\label{lemma:fc:bc:fbc:not:definable}
%
%
%The following classes of frames are not modally definable: ${\mathcal C}_{\fcfra}$, ${\mathcal C}_{\bcfra}$ and ${\mathcal C}_{\fbcfra}$.
The following classes of frames are not modally definable: ${\mathcal C}_{\fcfra}$, ${\mathcal C}_{\bcfra}$, ${\mathcal C}_{\dcfra}$, ${\mathcal C}_{\ucfra}$, ${\mathcal C}_{\fbcfra}$, ${\mathcal C}_{\fdcfra}$, ${\mathcal C}_{\fucfra}$, ${\mathcal C}_{\bdcfra}$, ${\mathcal C}_{\bucfra}$, ${\mathcal C}_{\ducfra}$, ${\mathcal C}_{\fbdcfra}$, ${\mathcal C}_{\fbucfra}$, ${\mathcal C}_{\fducfra}$, ${\mathcal C}_{\bducfra}$ and ${\mathcal C}_{\fbducfra}$.
\end{lemma}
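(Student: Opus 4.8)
The plan is to derive all fifteen non-definability statements from one construction centred on Lemma~\ref{lemma:satisfiability:does:not:change:if:intersectional:update:of:a:model}. For a frame $(W,{\leq},{R})$ write $S{=}({\leq}{\circ}{R}{\circ}{\leq}){\cap}({\geq}{\circ}{R}{\circ}{\geq})$. The first thing I would prove is a strengthening of the observations following the definition of quasi-confluence: if $(W,{\leq},{R})$ is quasi-forward confluent, then $(W,{\leq},{S})$ is forward confluent, and likewise for the backward, downward and upward variants. In the forward case: given $s\,({\geq}{\circ}{S})\,t$, choose $u$ with $s{\geq}u$ and $u{S}t$; since $S{\subseteq}{\geq}{\circ}{R}{\circ}{\geq}$ there are $w,x$ with $u{\geq}w$, $w{R}x$ and $x{\geq}t$, so $s{\geq}w$ and hence $s\,({\geq}{\circ}{R})\,x$; quasi-forward confluence applied to $s$ and $x$ then gives $y$ with $s{S}y$ and $y{\geq}x{\geq}t$, that is, $s\,({S}{\circ}{\geq})\,t$. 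The remaining three cases use the same manipulation, drawing on the $\leq$-half of $S$ rather than the $\geq$-half where appropriate. Consequently, a frame that is simultaneously quasi-forward, quasi-backward, quasi-downward and quasi-upward confluent has its associated frame $(W,{\leq},{S})$ in ${\mathcal C}_{\fbducfra}$, hence in every one of the fifteen classes named in the statement.

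The second ingredient is a single fixed frame $F$ that is quasi-forward, quasi-backward, quasi-downward and quasi-upward confluent but is neither forward, nor backward, nor downward, nor upward confluent; since each of the fifteen classes is obtained by demanding at least one of these four confluence properties, such an $F$ belongs to none of them. I would take $F$ to be the disjoint union of the four-state frame on $\{a,b,c,e\}$ whose preorder is the reflexive--transitive closure of $\{(b,a),(a,e)\}$ and whose accessibility relation is $\{(b,c),(e,c)\}$ --- inspection of the relevant compositions shows it is quasi-confluent in all four senses while forward confluence fails via $a{\geq}b{R}c$ and downward confluence fails via $a{\leq}e{R}c$ --- together with the frame obtained from it by replacing $R$ by its converse, which symmetrically is quasi-confluent in all four senses but fails backward and upward confluence. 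Disjointness guarantees that the four quasi-confluence conditions, being local, are inherited by the union, while each of the four genuine confluence properties already fails in one of the two summands.

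To conclude, suppose for contradiction that one of the fifteen classes ${\mathcal C}$ were modally definable, by a formula $A$. Let $F_{S}{=}(W,{\leq},{S})$ be the frame associated with $F$. Since $F$ is quasi-forward, quasi-backward, quasi-downward and quasi-upward confluent, the first ingredient gives $F_{S}{\in}{\mathcal C}_{\fbducfra}{\subseteq}{\mathcal C}$, hence $F_{S}{\models}A$. By Lemma~\ref{lemma:satisfiability:does:not:change:if:intersectional:update:of:a:model}, $F$ and $F_{S}$ satisfy exactly the same formulas at the same states under the same valuations, so every formula valid in $F_{S}$ is valid in $F$; therefore $F{\models}A$, and since $A$ defines ${\mathcal C}$ this forces $F{\in}{\mathcal C}$, contradicting the choice of $F$. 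I expect the one genuinely delicate point to be the choice of $F$ together with the bookkeeping confirming all four quasi-confluence conditions while refuting all four confluence conditions --- an ill-chosen frame would quietly turn out to be, say, downward confluent --- whereas the relation-algebraic step, although it amounts to four nearly identical computations, is routine, and Lemma~\ref{lemma:satisfiability:does:not:change:if:intersectional:update:of:a:model} takes care of the rest.
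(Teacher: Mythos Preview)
Your argument is correct, and at its heart it is the same idea as the paper's, though organised more conceptually. Both proofs exploit the fact (Lemma~\ref{lemma:satisfiability:does:not:change:if:intersectional:update:of:a:model}) that replacing $R$ by $S=({\leq}{\circ}{R}{\circ}{\leq})\cap({\geq}{\circ}{R}{\circ}{\geq})$ preserves validity while potentially creating confluence. The paper writes down a single six-state frame $(W',{\leq'},R')$ lying outside all fifteen classes and a second frame $(W'',{\leq''},R'')$ lying inside all fifteen; one can check that in fact $R''$ equals the $S$ associated with $R'$, and the paper's inductive Claim that the two frames satisfy the same formulas is precisely the instance of Lemma~\ref{lemma:satisfiability:does:not:change:if:intersectional:update:of:a:model} for that frame, re-proved from scratch rather than cited.

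Your route differs in two ways. First, you invoke Lemma~\ref{lemma:satisfiability:does:not:change:if:intersectional:update:of:a:model} directly rather than reproving its special case. Second, you isolate a reusable general observation --- that each quasi-confluence condition on $(W,{\leq},R)$ forces the corresponding genuine confluence on $(W,{\leq},S)$ --- which reduces the problem to exhibiting a frame that is quasi-confluent in all four senses yet confluent in none; you then build such a frame as a disjoint union of two four-state pieces. This is slightly longer but more transparent about \emph{why} the construction succeeds, and the quasi-confluence lemma is of independent interest (it explains, for instance, why the paper's $R''$ had to be confluent). The paper's version is shorter because it skips the abstraction and simply verifies by inspection that its two concrete frames have the required properties.
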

\begin{proof}
For the sake of the contradiction, suppose either ${\mathcal C}_{\fcfra}$ is modally definable, or ${\mathcal C}_{\bcfra}$ is modally definable, or ${\mathcal C}_{\dcfra}$ is modally definable, or ${\mathcal C}_{\ucfra}$ is modally definable, or ${\mathcal C}_{\fbcfra}$ is modally definable, or ${\mathcal C}_{\fdcfra}$ is modally definable, or ${\mathcal C}_{\fucfra}$ is modally definable, or ${\mathcal C}_{\bdcfra}$ is modally definable, or ${\mathcal C}_{\bucfra}$ is modally definable, or ${\mathcal C}_{\ducfra}$ is modally definable, or ${\mathcal C}_{\fbdcfra}$ is modally definable, or ${\mathcal C}_{\fbucfra}$ is modally definable, or ${\mathcal C}_{\fducfra}$ is modally definable, or ${\mathcal C}_{\bducfra}$, or ${\mathcal C}_{\fbducfra}$ is modally definable.
Hence, there exists a formula $A$ such that either $A$ modally defines ${\mathcal C}_{\fcfra}$, or $A$ modally defines ${\mathcal C}_{\bcfra}$, or $A$ modally defines ${\mathcal C}_{\dcfra}$, or $A$ modally defines ${\mathcal C}_{\ucfra}$, or $A$ modally defines ${\mathcal C}_{\fbcfra}$, or $A$ modally defines ${\mathcal C}_{\fdcfra}$, or $A$ modally defines ${\mathcal C}_{\fucfra}$, or $A$ modally defines ${\mathcal C}_{\bdcfra}$, or $A$ modally defines ${\mathcal C}_{\bucfra}$, or $A$ modally defines ${\mathcal C}_{\ducfra}$, or $A$ modally defines ${\mathcal C}_{\fbdcfra}$, or $A$ modally defines ${\mathcal C}_{\fbucfra}$, or $A$ modally defines ${\mathcal C}_{\fducfra}$, or $A$ modally defines ${\mathcal C}_{\bducfra}$, or $A$ modally defines ${\mathcal C}_{\fbducfra}$.
%Let $(W^{\prime},{\leq^{\prime}},{R^{\prime}})$ be the frame defined by $W^{\prime}{=}\{a,b,c,d\}$, $a{\leq^{\prime}}c$ and $b{\leq^{\prime}}d$ and $a{R^{\prime}}d$ and $c{R^{\prime}}b$ and $(W^{\prime\prime},{\leq^{\prime\prime}},{R^{\prime\prime}})$ be the frame defined by $W^{\prime\prime}{=}\{a,b,c,d\}$, $a{\leq^{\prime\prime}}c$ and $b{\leq^{\prime\prime}}d$ and $a{R^{\prime\prime}}d$, $c{R^{\prime\prime}}b$ and $c{R^{\prime\prime}}d$.
%Obviously, $(W^{\prime},{\leq^{\prime}},{R^{\prime}})$ is neither forward confluent, nor backward confluent and $(W^{\prime\prime},{\leq^{\prime\prime}},{R^{\prime\prime}})$ is forward confluent and backward confluent.
%Since either $A$ modally defines ${\mathcal C}_{\fcfra}$, or $A$ modally defines ${\mathcal C}_{\bcfra}$, or $A$ modally defines ${\mathcal C}_{\fbcfra}$, then $(W^{\prime},{\leq^{\prime}},{R^{\prime}}){\not\models}A$ and $(W^{\prime\prime},{\leq^{\prime\prime}},{R^{\prime\prime}}){\models}A$.
%Thus, there exists a valuation $V^{\prime}\ :\ \At{\longrightarrow}\wp(W^{\prime})$ on $(W^{\prime},{\leq^{\prime}},{R^{\prime}})$ such that $(W^{\prime},{\leq^{\prime}},{R^{\prime}},V^{\prime}){\not\models}A$.
%Consequently, there exists $s{\in}W^{\prime}$ such that $(W^{\prime},{\leq^{\prime}},{R^{\prime}},V^{\prime}),s{\not\models}A$.
%Let $V^{\prime\prime}\ :\ \At{\longrightarrow}\wp(W^{\prime\prime})$ be the valuation on $(W^{\prime\prime},{\leq^{\prime\prime}},{R^{\prime\prime}})$ such that for all atoms $p$, $V^{\prime\prime}(p){=}V^{\prime}(p)$.
Let $(W^{\prime},{\leq^{\prime}},{R^{\prime}})$ be the frame defined by $W^{\prime}{=}\{a,b,c,d,e,f\}$, $a{\leq^{\prime}}c$, $b{\leq^{\prime}}d$, $c{\leq^{\prime}}e$ and $d{\leq^{\prime}}f$ and $a{R^{\prime}}b$, $a{R^{\prime}}f$ and $e{R^{\prime}}f$ and $(W^{\prime\prime},{\leq^{\prime\prime}},{R^{\prime\prime}})$ be the frame defined by $W^{\prime\prime}{=}\{a,b,c,d,e,f\}$, $a{\leq^{\prime\prime}}c$, $b{\leq^{\prime\prime}}d$, $c{\leq^{\prime\prime}}e$ and $d{\leq^{\prime\prime}}f$ and $a{R^{\prime\prime}}b$, $a{R^{\prime\prime}}d$, $a{R^{\prime\prime}}f$, $c{R^{\prime\prime}}f$ and $e{R^{\prime\prime}}f$.
Obviously, $(W^{\prime},{\leq^{\prime}},{R^{\prime}})$ is neither forward confluent, nor backward confluent, nor downward confluent, nor upward confluent and $(W^{\prime\prime},{\leq^{\prime\prime}},{R^{\prime\prime}})$ is forward confluent, backward confluent downward confluent and upward confluent.
Since either $A$ modally defines ${\mathcal C}_{\fcfra}$, or $A$ modally defines ${\mathcal C}_{\bcfra}$, or $A$ modally defines ${\mathcal C}_{\dcfra}$, or $A$ modally defines ${\mathcal C}_{\ucfra}$, or $A$ modally defines ${\mathcal C}_{\fbcfra}$, or $A$ modally defines ${\mathcal C}_{\fdcfra}$, or $A$ modally defines ${\mathcal C}_{\fucfra}$, or $A$ modally defines ${\mathcal C}_{\bdcfra}$, or $A$ modally defines ${\mathcal C}_{\bucfra}$, or $A$ modally defines ${\mathcal C}_{\ducfra}$, or $A$ modally defines ${\mathcal C}_{\fbdcfra}$, or $A$ modally defines ${\mathcal C}_{\fbucfra}$, or $A$ modally defines ${\mathcal C}_{\fducfra}$, or $A$ modally defines ${\mathcal C}_{\bducfra}$, or $A$ modally defines ${\mathcal C}_{\fbducfra}$, then $(W^{\prime},{\leq^{\prime}},{R^{\prime}}){\not\models}A$ and $(W^{\prime\prime},{\leq^{\prime\prime}},{R^{\prime\prime}}){\models}A$.
Thus, there exists a valuation $V^{\prime}\ :\ \At{\longrightarrow}\wp(W^{\prime})$ on $(W^{\prime},{\leq^{\prime}},{R^{\prime}})$ such that $(W^{\prime},{\leq^{\prime}},{R^{\prime}},V^{\prime}){\not\models}A$.
Consequently, there exists $s{\in}W^{\prime}$ such that $(W^{\prime},{\leq^{\prime}},{R^{\prime}},V^{\prime}),s{\not\models}A$.
Let $V^{\prime\prime}\ :\ \At{\longrightarrow}\wp(W^{\prime\prime})$ be the valuation on $(W^{\prime\prime},{\leq^{\prime\prime}},{R^{\prime\prime}})$ such that for all atoms $p$, $V^{\prime\prime}(p){=}V^{\prime}(p)$.
\begin{claim}
For all formulas $B$,
\begin{itemize}
\item $(W^{\prime\prime},{\leq^{\prime\prime}},{R^{\prime\prime}},V^{\prime\prime}),a{\models}B$ if and only if $(W^{\prime},{\leq^{\prime}},{R^{\prime}},V^{\prime}),a{\models}B$,
\item $(W^{\prime\prime},{\leq^{\prime\prime}},{R^{\prime\prime}},V^{\prime\prime}),b{\models}B$ if and only if $(W^{\prime},{\leq^{\prime}},{R^{\prime}},V^{\prime}),b{\models}B$,
\item $(W^{\prime\prime},{\leq^{\prime\prime}},{R^{\prime\prime}},V^{\prime\prime}),c{\models}B$ if and only if $(W^{\prime},{\leq^{\prime}},{R^{\prime}},V^{\prime}),c{\models}B$,
\item $(W^{\prime\prime},{\leq^{\prime\prime}},{R^{\prime\prime}},V^{\prime\prime}),d{\models}B$ if and only if $(W^{\prime},{\leq^{\prime}},{R^{\prime}},V^{\prime}),d{\models}B$,
\item $(W^{\prime\prime},{\leq^{\prime\prime}},{R^{\prime\prime}},V^{\prime\prime}),e{\models}B$ if and only if $(W^{\prime},{\leq^{\prime}},{R^{\prime}},V^{\prime}),e{\models}B$,
\item $(W^{\prime\prime},{\leq^{\prime\prime}},{R^{\prime\prime}},V^{\prime\prime}),f{\models}B$ if and only if $(W^{\prime},{\leq^{\prime}},{R^{\prime}},V^{\prime}),f{\models}B$.
\end{itemize}
\end{claim}
\begin{proofclaim}
By induction on $B$.
\end{proofclaim}
Since $(W^{\prime},{\leq^{\prime}},{R^{\prime}},V^{\prime}),s{\not\models}A$, then $(W^{\prime\prime},{\leq^{\prime\prime}},{R^{\prime\prime}},V^{\prime\prime}),s{\not\models}A$.
Hence, $(W^{\prime\prime},{\leq^{\prime\prime}},
$\linebreak$
{R^{\prime\prime}},V^{\prime\prime}){\not\models}A$.
Thus, $(W^{\prime\prime},{\leq^{\prime\prime}},{R^{\prime\prime}}){\not\models}A$: a contradiction.
\medskip
\end{proof}
However,
\begin{lemma}\label{lemma:about:modal:definability:quasi:fbd:frames}\footnote{The formulas ${\lozenge}(p{\rightarrow}q){\rightarrow}({\square}p{\rightarrow}{\lozenge}q)$, $({\lozenge}p{\rightarrow}{\square}q){\rightarrow}{\square}(p{\rightarrow}q)$ and ${\square}(p{\vee}q){\rightarrow}{\lozenge}p{\vee}{\square}q$ are used in Section~\ref{section:axiomatization} to axiomatize $\Log({\mathcal C}_{\fcfra})$, $\Log({\mathcal C}_{\bcfra})$ and $\Log({\mathcal C}_{\dcfra})$.}
\begin{enumerate}
\item ${\lozenge}(p{\rightarrow}q){\rightarrow}({\square}p{\rightarrow}{\lozenge}q)$ modally defines ${\mathcal C}_{\qfcfra}$,
\item $({\lozenge}p{\rightarrow}{\square}q){\rightarrow}{\square}(p{\rightarrow}q)$ modally defines ${\mathcal C}_{\qbcfra}$,
\item ${\square}(p{\vee}q){\rightarrow}{\lozenge}p{\vee}{\square}q$ modally defines ${\mathcal C}_{\qdcfra}$.
\end{enumerate}
\end{lemma}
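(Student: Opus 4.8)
The plan is to prove each of the three claims by establishing the two halves of modal definability separately, using throughout the elementary fact that a conditional $A{\rightarrow}B$ is valid on a frame if and only if, in every model on that frame, every state satisfying $A$ also satisfies $B$. For the ``soundness'' half (every frame in the class validates the formula) I unfold the truth conditions of the $\square$- and $\lozenge$-subformulas, feed the appropriate confluence property with a witness extracted from a $\lozenge$-subformula or from a negated $\square$-subformula, and use the Heredity Property (Lemma~\ref{lemma:HB:monotonicity}) to move satisfaction along $\leq$. For the converse half I argue contrapositively: from states $s,t$ witnessing the failure of the confluence condition I build $\leq$-closed valuations of $p$ and $q$ and check that the formula is refuted at $s$.

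I would start with item~$\mathbf{(1)}$. For soundness, suppose the frame is quasi-forward confluent and $s{\models}{\lozenge}(p{\rightarrow}q)$ and $s{\models}{\square}p$; pick $r$ with $s{\geq}{\circ}{R}\,r$ and $r{\models}p{\rightarrow}q$ and apply quasi-forward confluence to get $b$ with $s(({\leq}{\circ}{R}{\circ}{\leq}){\cap}({\geq}{\circ}{R}{\circ}{\geq}))b$ and $r{\leq}b$; the first conjunct with $s{\models}{\square}p$ yields $b{\models}p$, whence $b{\models}q$ (as $r{\leq}b$ and $r{\models}p{\rightarrow}q$), and the second conjunct carries $q$ back to a state witnessing $s{\models}{\lozenge}q$. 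For the converse, suppose $s{\geq}{\circ}{R}\,t$ while no $b$ with $s(({\leq}{\circ}{R}{\circ}{\leq}){\cap}({\geq}{\circ}{R}{\circ}{\geq}))b$ satisfies $t{\leq}b$; put $V(p){=}\{x:s({\leq}{\circ}{R}{\circ}{\leq})x\}$ and $V(q){=}\{x:t{\leq}x\}{\cap}V(p)$. Both are $\leq$-closed; $s{\models}{\square}p$ is immediate; $t{\models}p{\rightarrow}q$ holds by the shape of $V(q)$, so $s{\models}{\lozenge}(p{\rightarrow}q)$ via the witness for $s{\geq}{\circ}{R}\,t$; and $s{\not\models}{\lozenge}q$, because a state $v$ with $s{\geq}{\circ}{R}\,v$ lying in $V(q)$ would give $s(({\leq}{\circ}{R}{\circ}{\leq}){\cap}({\geq}{\circ}{R}{\circ}{\geq}))v$ and $t{\leq}v$, contradicting the failure of quasi-forward confluence.

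Items~$\mathbf{(2)}$ and~$\mathbf{(3)}$ use the same template with the conjuncts and the $\square/\lozenge$ roles permuted. For~$\mathbf{(2)}$: soundness unfolds a potential failure of ${\square}(p{\rightarrow}q)$ at a state satisfying ${\lozenge}p{\rightarrow}{\square}q$ into an $({R}{\circ}{\leq})$-pair, feeds it to quasi-backward confluence, and applies the hypothesis at the intermediate state produced (which satisfies ${\lozenge}p$); for the converse, given $s{R}{\circ}{\leq}t$ with no $b$ satisfying $s{\leq}b$ and $b(({\leq}{\circ}{R}{\circ}{\leq}){\cap}({\geq}{\circ}{R}{\circ}{\geq}))t$, take $V(p){=}\{x:t{\leq}x\}$ and $V(q){=}\{x:\text{there is }\sigma\text{ with }s{\leq}\sigma,\ \sigma({\geq}{\circ}{R}{\circ}{\geq})t\text{ and }\sigma({\leq}{\circ}{R}{\circ}{\leq})x\}$. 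For~$\mathbf{(3)}$: soundness, starting from $s{\models}{\square}(p{\vee}q)$ and $s{\not\models}{\lozenge}p$, runs a potential failure of ${\square}q$ through quasi-downward confluence, uses ${\square}(p{\vee}q)$ on the forward successor so exposed, discards the disjunct $p$ by $s{\not\models}{\lozenge}p$, and transfers $q$ down; for the converse, given $s{\leq}{\circ}{R}\,t$ with no $b$ satisfying $s(({\leq}{\circ}{R}{\circ}{\leq}){\cap}({\geq}{\circ}{R}{\circ}{\geq}))b$ and $b{\leq}t$, set $E{=}\{x:s{\geq}{\circ}{R}\,x\}$, $V(p){=}\{x:y{\notin}E\text{ whenever }x{\leq}y\}$, and let $V(q)$ be the least $\leq$-closed set containing $\{x:s{\leq}{\circ}{R}\,x\}{\setminus}V(p)$; then ${\square}(p{\vee}q)$ holds at $s$ because every forward successor of $s$ lies in $V(p){\cup}V(q)$, ${\lozenge}p$ fails at $s$ because $V(p){\cap}E{=}\emptyset$, and ${\square}q$ fails at $s$ because the forward successor $t$ lies outside $V(q)$.

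The routine part is checking $\leq$-closedness of each $V(p),V(q)$ and the heredity bookkeeping in the soundness arguments. The step I would write out fully in every item is the refutation observation that the non-confluence witness $(s,t)$ really blocks the ``good'' $(({\leq}{\circ}{R}{\circ}{\leq}){\cap}({\geq}{\circ}{R}{\circ}{\geq}))$-successor of $s$: the recurring device is that one of the two conjuncts of the intersection already relates $s$ to the candidate state, just from the witnessing $R$-edge together with reflexivity of $\leq$, so that membership in the intersection collapses to the remaining conjunct, where the negated confluence hypothesis bites. Keeping the orientations of $\leq$ and $\geq$ straight in these reductions is where essentially all the attention goes.
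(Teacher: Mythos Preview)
Your plan is correct and follows essentially the same route as the paper: for each item the paper also argues by contradiction, dismisses the soundness direction as routine, and for the converse builds exactly the valuations you describe for items~$\mathbf{(1)}$ and~$\mathbf{(2)}$. The only noticeable variation is item~$\mathbf{(3)}$, where the paper takes the simpler $V(q)=\{u: \text{not } t{\geq}u\}$ and uses the non-confluence hypothesis to verify $s{\models}{\square}(p{\vee}q)$, whereas you force ${\square}(p{\vee}q)$ by construction and spend the hypothesis on $t{\notin}V(q)$; both choices work and encode the same reduction.
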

\begin{proof}
$\mathbf{(1)}$~For the sake of the contradiction, suppose ${\lozenge}(p{\rightarrow}q){\rightarrow}({\square}p{\rightarrow}{\lozenge}q)$ does not modally define ${\mathcal C}_{\qfcfra}$.
Hence, there exists a frame $(W,{\leq},{R})$ such that either $(W,{\leq},{R})$ is in ${\mathcal C}_{\qfcfra}$ and $(W,{\leq},{R}){\not\models}{\lozenge}(p{\rightarrow}q){\rightarrow}({\square}p{\rightarrow}{\lozenge}q)$, or $(W,{\leq},{R})$ is not in ${\mathcal C}_{\qfcfra}$ and $(W,{\leq},
$\linebreak$
{R}){\models}{\lozenge}(p{\rightarrow}q){\rightarrow}({\square}p{\rightarrow}{\lozenge}q)$.
The reader may easily verify that the former case leads to a contradiction.
In the latter case, there exists $s,t{\in}W$ such that $s{\geq}{\circ}{R}t$ and not $s(({\leq}{\circ}{R}{\circ}{\leq}){\cap}({\geq}{\circ}{R}{\circ}{\geq})){\circ}{\geq}t$.
Let $V\ :\ \At{\longrightarrow}\wp(W)$ be a valuation on $(W,{\leq},{R})$ such that $V(p){=}\{u{\in}W\ :\ s{\leq}{\circ}{R}{\circ}{\leq}u\}$ and $V(q){=}\{u{\in}W\ :\ s{\leq}{\circ}{R}{\circ}{\leq}u$ and $t{\leq}u\}$.
Suppose for a while that $s{\not\models}{\lozenge}(p{\rightarrow}q)$.
Since $s{\geq}{\circ}{R}t$, then $t{\not\models}p{\rightarrow}q$.
Thus, there exists $v{\in}W$ such that $t{\leq}v$, $v{\models}p$ and $v{\not\models}q$.
Consequently, $s{\leq}{\circ}{R}{\circ}{\leq}v$ and either not $s{\leq}{\circ}{R}{\circ}{\leq}v$, or not $t{\leq}v$.
Hence, not $t{\leq}v$: a contradiction.
Thus, $s{\models}{\lozenge}(p{\rightarrow}q)$.
Suppose for a while that $s{\not\models}{\square}p$.
Consequently, there exists $v{\in}W$ such that $s{\leq}{\circ}{R}v$ and $v{\not\models}p$.
Hence, not $s{\leq}{\circ}{R}{\circ}{\leq}v$: a contradiction.
Thus, $s{\models}{\square}p$.
Since $(W,{\leq},{R}){\models}{\lozenge}(p{\rightarrow}q){\rightarrow}({\square}p{\rightarrow}{\lozenge}q)$ and $s{\models}{\lozenge}(p{\rightarrow}q)$, then $s{\models}{\lozenge}q$.
Consequently, there exists $v{\in}W$ such that $s{\geq}{\circ}{R}v$ and $v{\models}q$.
Hence, $s{\leq}{\circ}{R}{\circ}{\leq}v$ and $t{\leq}v$.
Since $s{\geq}{\circ}{R}v$, then $s{(({\leq}{\circ}}{R}{{\circ}{\leq}){\cap}({\geq}{\circ}{R}{\circ}{\geq}))}{\circ}{\geq}t$: a contradiction.
$\mathbf{(2)}$~For the sake of the contradiction, suppose $({\lozenge}p{\rightarrow}{\square}q){\rightarrow}{\square}(p{\rightarrow}q)$ does not modally define ${\mathcal C}_{\qbcfra}$.
Thus, there exists a frame $(W,{\leq},{R})$ such that either $(W,{\leq},{R})$ is in ${\mathcal C}_{\qbcfra}$ and $(W,{\leq},{R}){\not\models}({\lozenge}p{\rightarrow}{\square}q){\rightarrow}{\square}(p{\rightarrow}q)$, or $(W,{\leq},{R})$ is not in ${\mathcal C}_{\qbcfra}$ and $(W,{\leq},{R}){\models}({\lozenge}p{\rightarrow}{\square}q){\rightarrow}{\square}(p{\rightarrow}q)$.
The reader may easily verify that the former case leads to a contradiction.
In the latter case, there exists $s,t{\in}W$ such that $s{R}{\circ}{\leq}t$ and not $s{\leq}{\circ}(({\leq}{\circ}{R}{\circ}{\leq}){\cap}({\geq}{\circ}{R}{\circ}{\geq}))t$.
Let $V\ :\ \At{\longrightarrow}\wp(W)$ be a valuation on $(W,{\leq},{R})$ such that $V(p){=}\{u{\in}W\ :\ t{\leq}u\}$ and $V(q){=}\{u{\in}W\ :$ there exists $v{\in}W$ such that $s{\leq}v$, $v{\leq}{\circ}{R}{\circ}{\leq}u$ and $v{\geq}{\circ}{R}{\circ}{\geq}t\}$.
Suppose for a while that $s{\not\models}{\lozenge}p{\rightarrow}{\square}q$.
Consequently, there exists $w{\in}W$ such that $s{\leq}w$, $w{\models}{\lozenge}p$ and $w{\not\models}{\square}q$.
Hence, there exists $x{\in}W$ such that $w{\geq}{\circ}{R}x$ and $x{\models}p$.
Moreover, there exists $y{\in}W$ such that $w{\leq}{\circ}{R}y$ and $y{\not\models}q$.
Thus, $t{\leq}x$.
Since $w{\geq}{\circ}{R}x$, then $w{\geq}{\circ}{R}{\circ}{\geq}t$.
Since $s{\leq}w$ and $w{\leq}{\circ}{R}y$, then $y{\in}V(q)$.
Consequently, $y{\models}q$: a contradiction.
Hence, $s{\models}{\lozenge}p{\rightarrow}{\square}q$.
Since $(W,{\leq},{R}){\models}({\lozenge}p{\rightarrow}{\square}q){\rightarrow}{\square}(p{\rightarrow}q)$, then $s{\models}{\square}(p{\rightarrow}q)$.
Since $s{R}{\circ}{\leq}t$, then there exists $z{\in}W$ such that $s{R}z$ and $z{\leq}t$.
Since $s{\models}{\square}(p{\rightarrow}q)$, then $z{\models}p{\rightarrow}q$.
Since $t{\models}p$ and $z{\leq}t$, then $t{\models}q$.
Thus, there exists $v{\in}W$ such that $s{\leq}v$, $v{\leq}{\circ}{R}{\circ}{\leq}t$ and $v{\geq}{\circ}{R}{\circ}{\geq}t$.
Consequently, $s{\leq}{\circ}{(({\leq}{\circ}{R}{\circ}{\leq}){\cap}({\geq}{\circ}{R}{\circ}{\geq}))}t$: a contradiction.
$\mathbf{(3)}$~For the sake of the contradiction, suppose ${\square}(p{\vee}q){\rightarrow}{\lozenge}p{\vee}{\square}q$ does not modally define ${\mathcal C}_{\qdcfra}$.
Hence, there exists a frame $(W,{\leq},{R})$ such that either $(W,{\leq},{R})$ is in ${\mathcal C}_{\qdcfra}$ and $(W,{\leq},{R}){\not\models}{\square}(p{\vee}q){\rightarrow}{\lozenge}p{\vee}{\square}q$, or $(W,{\leq},{R})$ is not in ${\mathcal C}_{\qbcfra}$ and $(W,{\leq},{R}){\models}
$\linebreak$
{\square}(p{\vee}q){\rightarrow}{\lozenge}p{\vee}{\square}q$.
The reader may easily verify that the former case leads to a contradiction.
In the latter case, there exists $s,t{\in}W$ such that $s{\leq}{\circ}{R}t$ and not $s(({\leq}{\circ}{R}{\circ}{\leq}){\cap}
$\linebreak$
({\geq}{\circ}{R}{\circ}{\geq})){\circ}{\leq}t$.
Let $V\ :\ \At{\longrightarrow}\wp(W)$ be a valuation on $(W,{\leq},{R})$ such that $V(p){=}
$\linebreak$
\{u{\in}W\ :$ not $s{\geq}{\circ}{R}{\circ}{\geq}u\}$ and $V(q){=}\{u{\in}W\ :$ not $t{\geq}u\}$.
Consequently, $t{\not\models}q$.
Suppose for a while that $s{\not\models}{\square}(p{\vee}q)$.
Thus, there exists $v{\in}W$ such that $s{\leq}{\circ}{R}v$ and $v{\not\models}p{\vee}q$.
Consequently, $v{\not\models}p$ and $v{\not\models}q$.
Hence, $s{\geq}{\circ}{R}{\circ}{\geq}v$ and $t{\geq}v$.
Since $s{\leq}{\circ}{R}v$, then $s(({\leq}{\circ}{R}{\circ}{\leq}){\cap}({\geq}{\circ}{R}{\circ}{\geq})){\circ}{\leq}t$: a contradiction.
Thus, $s{\models}{\square}(p{\vee}q)$.
Since $(W,{\leq},
$\linebreak$
{R}){\models}{\square}(p{\vee}q){\rightarrow}{\lozenge}p{\vee}{\square}q$, then $s{\models}{\lozenge}p{\vee}{\square}q$.
Consequently, either $s{\models}{\lozenge}p$, or $s{\models}{\square}q$.
In the former case, there exists $w{\in}W$ such that $s{\geq}{\circ}{R}w$ and $w{\models}p$.
Hence, not $s{\geq}{\circ}{R}{\circ}{\geq}w$: a contradiction.
In the latter case, since $s{\leq}{\circ}{R}t$, then $t{\models}q$: a contradiction.
\medskip
\end{proof}
Thanks to Lemma~\ref{lemma:about:modal:definability:quasi:fbd:frames}, we obtain the following results.
\begin{lemma}\label{lemma:about:axiom:fc:soundness}
Let $(W,{\leq},{R})$ be a frame.
\begin{itemize}
\item If $(W,{\leq},{R})$ is forward confluent then $(W,{\leq},{R}){\models}{\lozenge}(p{\rightarrow}q){\rightarrow}({\square}p{\rightarrow}{\lozenge}q)$.
\item if $(W,{\leq},{R})$ is backward confluent then $(W,{\leq},{R}){\models}({\lozenge}p{\rightarrow}{\square}q){\rightarrow}{\square}(p{\rightarrow}q)$,
\item if $(W,{\leq},{R})$ is downward confluent then $(W,{\leq},{R}){\models}{\square}(p{\vee}q){\rightarrow}{\lozenge}p{\vee}{\square}q$.
\end{itemize}
\end{lemma}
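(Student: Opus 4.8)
The plan is to reduce everything to Lemma~\ref{lemma:about:modal:definability:quasi:fbd:frames} via the elementary implications recorded just after the definition of quasi-confluences, namely that every forward confluent frame is quasi-forward confluent, every backward confluent frame is quasi-backward confluent, and every downward confluent frame is quasi-downward confluent.

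For the first item, first I would observe that if $(W,{\leq},{R})$ is forward confluent then $(W,{\leq},{R})$ is quasi-forward confluent, i.e. $(W,{\leq},{R})$ belongs to ${\mathcal C}_{\qfcfra}$. Since by Item~$\mathbf{(1)}$ of Lemma~\ref{lemma:about:modal:definability:quasi:fbd:frames} the formula ${\lozenge}(p{\rightarrow}q){\rightarrow}({\square}p{\rightarrow}{\lozenge}q)$ modally defines ${\mathcal C}_{\qfcfra}$, it follows that $(W,{\leq},{R}){\models}{\lozenge}(p{\rightarrow}q){\rightarrow}({\square}p{\rightarrow}{\lozenge}q)$. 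The second and third items are obtained in exactly the same way: a backward confluent frame is quasi-backward confluent, hence in ${\mathcal C}_{\qbcfra}$, hence validates $({\lozenge}p{\rightarrow}{\square}q){\rightarrow}{\square}(p{\rightarrow}q)$ by Item~$\mathbf{(2)}$ of Lemma~\ref{lemma:about:modal:definability:quasi:fbd:frames}; and a downward confluent frame is quasi-downward confluent, hence in ${\mathcal C}_{\qdcfra}$, hence validates ${\square}(p{\vee}q){\rightarrow}{\lozenge}p{\vee}{\square}q$ by Item~$\mathbf{(3)}$.

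There is no real obstacle here: the whole content is absorbed by the modal definability results for the quasi-confluent classes together with the trivial inclusions ${\mathcal C}_{\fcfra}{\subseteq}{\mathcal C}_{\qfcfra}$, ${\mathcal C}_{\bcfra}{\subseteq}{\mathcal C}_{\qbcfra}$ and ${\mathcal C}_{\dcfra}{\subseteq}{\mathcal C}_{\qdcfra}$. If one preferred a self-contained semantic argument, the first item could also be proved directly: given a model on a forward confluent frame with $s{\models}{\lozenge}(p{\rightarrow}q)$ and $s{\models}{\square}p$, pick $t$ with $s{\geq}{\circ}{R}t$ and $t{\models}p{\rightarrow}q$, apply forward confluence to obtain $u$ with $s{R}u$ and $u{\geq}t$; then reflexivity of $\leq$ gives both $s{\leq}{\circ}{R}u$ and $s{\geq}{\circ}{R}u$, so $u{\models}p$ by the truth condition of $\square$, $u{\models}p{\rightarrow}q$ by Lemma~\ref{lemma:HB:monotonicity}, hence $u{\models}q$, and therefore $s{\models}{\lozenge}q$. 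The analogous direct arguments work for the other two items, but the reduction above is shorter and keeps the development modular.
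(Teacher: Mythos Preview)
Your proposal is correct and follows exactly the same approach as the paper: reduce to Lemma~\ref{lemma:about:modal:definability:quasi:fbd:frames} via the inclusions ${\mathcal C}_{\fcfra}{\subseteq}{\mathcal C}_{\qfcfra}$, ${\mathcal C}_{\bcfra}{\subseteq}{\mathcal C}_{\qbcfra}$, ${\mathcal C}_{\dcfra}{\subseteq}{\mathcal C}_{\qdcfra}$. The direct semantic argument you sketch as an alternative is a nice bonus but not needed.
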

\begin{proof}
By Lemma~\ref{lemma:about:modal:definability:quasi:fbd:frames}, using the fact that for all frames $(W,{\leq},{R})$, if $(W,{\leq},{R})$ is forward confluent then $(W,{\leq},{R})$ is quasi-forward confluent, if $(W,{\leq},{R})$ is backward confluent then $(W,{\leq},{R})$ is quasi-backward confluent and if $(W,{\leq},{R})$ is backward confluent then $(W,{\leq},{R})$ is quasi-backward confluent.
%For the sake of the contradiction, suppose $(W,{\leq},{R})$ is downward confluent and $(W,{\leq},{R}){\not\models}{\square}(p{\vee}q){\rightarrow}{\lozenge}p{\vee}{\square}q$.
%Hence, there exists a model $(W,{\leq},{R},V)$ based on $(W,{\leq},{R})$ such that $(W,{\leq},{R},V){\not\models}{\square}(p{\vee}q){\rightarrow}{\lozenge}p{\vee}{\square}q$.
%Thus, there exists $s{\in}W$ such that $s{\not\models}{\square}(p{\vee}q){\rightarrow}{\lozenge}p{\vee}{\square}q$.
%Consequently, there exists $t{\in}W$ such that $s{\leq}t$, $t{\models}{\square}(p{\vee}q)$ and $t{\not\models}{\lozenge}p{\vee}{\square}q$.
%Hence, $t{\not\models}{\lozenge}p$ and $t{\not\models}{\square}q$.
%Thus, there exists $u{\in}W$ such that $t{\leq}{\circ}{R}u$ and $u{\not\models}q$.
%Since $(W,{\leq},{R})$ is downward confluent, then $t{R}{\circ}{\leq}u$.
%Consequently, there exists $v{\in}W$ such that $t{R}v$ and $v{\leq}u$.
%Since $t{\not\models}{\lozenge}p$ and $u{\not\models}q$, then $v{\not\models}p$ and $v{\not\models}q$.
%Hence, $v{\not\models}p{\vee}q$.
%Since $t{\models}{\square}(p{\vee}q)$ and $t{R}v$, then $v{\models}p{\vee}q$: a contradiction.
%
%
\medskip
\end{proof}
Leaving unexplored the task of determining whether ${\mathcal C}_{\qucfra}$ is modally definable, let us consider the questions of the modal definability of the following classes of frames: ${\mathcal C}_{\reflexive}$, ${\mathcal C}_{\symmetric}$ and ${\mathcal C}_{\transitive}$.
\begin{lemma}\label{lemma:about:modal:definability:of:ref:sym:tra:par}
The following classes of frames are not modally definable: $\mathbf{(1)}$~${\mathcal C}_{\reflexive}$,
\linebreak$
\mathbf{(2)}$~${\mathcal C}_{\symmetric}$, $\mathbf{(3)}$~${\mathcal C}_{\transitive}$, $\mathbf{(4)}$~${\mathcal C}_{\reflexive}{\cap}{\mathcal C}_{\symmetric}$, $\mathbf{(5)}$~${\mathcal C}_{\reflexive}{\cap}{\mathcal C}_{\transitive}$, $\mathbf{(6)}$~${\mathcal C}_{\symmetric}{\cap}{\mathcal C}_{\transitive}$ and $\mathbf{(7)}$~${\mathcal C}_{\partition}$.
\end{lemma}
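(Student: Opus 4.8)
The plan is to handle all seven cases by a uniform reduction to Lemma~\ref{lemma:same:valid:formulas}: each of the classes under consideration validates exactly the same formulas as a strictly larger class, so if a formula $A$ modally defined one of them, say ${\mathcal C}$, then $A{\in}\Log({\mathcal C})$ (every frame in ${\mathcal C}$ validates $A$), hence by the appropriate item of Lemma~\ref{lemma:same:valid:formulas} $A$ would lie in $\Log({\mathcal C}^{\prime})$ for some ${\mathcal C}^{\prime}\supsetneq{\mathcal C}$. It would then suffice to produce a single concrete frame lying in ${\mathcal C}^{\prime}$ but not in ${\mathcal C}$: it validates $A$, hence by modal definability it lies in ${\mathcal C}$, a contradiction. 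Concretely, for ${\mathcal C}_{\reflexive}$, ${\mathcal C}_{\reflexive}{\cap}{\mathcal C}_{\symmetric}$, ${\mathcal C}_{\reflexive}{\cap}{\mathcal C}_{\transitive}$ and ${\mathcal C}_{\partition}$ one uses items $\mathbf{(1)}$, $\mathbf{(4)}$, $\mathbf{(5)}$ and $\mathbf{(6)}$ of Lemma~\ref{lemma:same:valid:formulas}; for ${\mathcal C}_{\transitive}$ one uses item $\mathbf{(3)}$, which gives $\Log({\mathcal C}_{\transitive}){=}\Log({\mathcal C}_{\allfra})$; and for ${\mathcal C}_{\symmetric}{\cap}{\mathcal C}_{\transitive}$ one first notes that every partition is symmetric and transitive, so $A{\in}\Log({\mathcal C}_{\partition})$, and then applies item $\mathbf{(6)}$.

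For six of the seven cases a single witness frame does the job. Let $(W_{1},{\leq_{1}},{R_{1}})$ be given by $W_{1}{=}\{a,b\}$, ${\leq_{1}}{=}\{(a,a),(b,b),(a,b)\}$ and ${R_{1}}{=}\{(a,b),(b,a)\}$. A short finite verification shows that this frame is up-reflexive and down-reflexive~---~the symmetric ${R_{1}}$-edge between the ${\leq_{1}}$-comparable points $a$ and $b$ supplies, in both directions, the witnesses demanded by the compositions ${\leq}{\circ}{R}{\circ}{\leq}$ and ${\geq}{\circ}{R}{\circ}{\geq}$~---~and up-symmetric and down-symmetric, since ${R_{1}}$ is already symmetric; yet it is neither reflexive (${R_{1}}$ contains no loop) nor transitive ($a{R_{1}}b$ and $b{R_{1}}a$ but not $a{R_{1}}a$). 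Consequently $(W_{1},{\leq_{1}},{R_{1}})$ belongs to ${\mathcal C}_{\ureflexive}{\cap}{\mathcal C}_{\dreflexive}{\cap}{\mathcal C}_{\usymmetric}{\cap}{\mathcal C}_{\dsymmetric}$, hence also to ${\mathcal C}_{\ureflexive}{\cap}{\mathcal C}_{\dreflexive}$ and to ${\mathcal C}_{\allfra}$, while lying outside each of ${\mathcal C}_{\reflexive}$, ${\mathcal C}_{\transitive}$, ${\mathcal C}_{\reflexive}{\cap}{\mathcal C}_{\symmetric}$, ${\mathcal C}_{\reflexive}{\cap}{\mathcal C}_{\transitive}$, ${\mathcal C}_{\symmetric}{\cap}{\mathcal C}_{\transitive}$ and ${\mathcal C}_{\partition}$; this disposes of cases $\mathbf{(1)}$, $\mathbf{(3)}$, $\mathbf{(4)}$, $\mathbf{(5)}$, $\mathbf{(6)}$ and $\mathbf{(7)}$.

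I expect case $\mathbf{(2)}$, namely ${\mathcal C}_{\symmetric}$, to be the main obstacle, since $(W_{1},{\leq_{1}},{R_{1}})$ is symmetric and hence of no use there: what is needed is a frame that is up-symmetric and down-symmetric but not symmetric. Trying to build one with a partial-order-like $\leq$ appears to fail, because an ${R}$-edge reaching a $\leq$-maximal state (or leaving a $\leq$-minimal state) cannot be ``symmetrised modulo $\leq$'' without inserting a genuine converse edge, which then requires symmetrising in turn. The remedy is to take $\leq$ to be the total preorder: let $(W_{2},{\leq_{2}},{R_{2}})$ be given by $W_{2}{=}\{a,b\}$, ${\leq_{2}}{=}W_{2}{\times}W_{2}$ and ${R_{2}}{=}\{(a,b)\}$. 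Since $a$ and $b$ are ${\leq_{2}}$-indistinguishable, the witnesses required for up-symmetry and for down-symmetry of the single edge $a{R_{2}}b$ are present vacuously, so $(W_{2},{\leq_{2}},{R_{2}})$ lies in ${\mathcal C}_{\usymmetric}{\cap}{\mathcal C}_{\dsymmetric}$; but $b{R_{2}}a$ does not hold, so it is not symmetric. Applying item $\mathbf{(2)}$ of Lemma~\ref{lemma:same:valid:formulas} to a hypothetical modal definition $A$ of ${\mathcal C}_{\symmetric}$ and evaluating $A$ on $(W_{2},{\leq_{2}},{R_{2}})$ then produces the contradiction exactly as in the other cases. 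The only computational content of the whole proof is the routine check that $(W_{1},{\leq_{1}},{R_{1}})$ and $(W_{2},{\leq_{2}},{R_{2}})$ have the asserted frame properties.
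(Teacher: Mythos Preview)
Your proof is correct but takes a genuinely different route from the paper. The paper argues each case by exhibiting two concrete frames on a common carrier---one inside the class ${\mathcal C}$ in question and one outside---and then proves, by an induction on formulas, that the two models satisfy exactly the same formulas at every state; a putative modal definition would then be valid on one and invalid on the other, a contradiction. You instead factor through Lemma~\ref{lemma:same:valid:formulas}: since $\Log({\mathcal C}){=}\Log({\mathcal C}^{\prime})$ for a strictly larger ${\mathcal C}^{\prime}$, a modal definition $A$ of ${\mathcal C}$ lies in $\Log({\mathcal C}^{\prime})$, and a single frame in ${\mathcal C}^{\prime}{\setminus}{\mathcal C}$ suffices. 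Your approach is more economical---one witness $(W_{1},{\leq_{1}},{R_{1}})$ disposes of six cases, and no fresh inductive argument is needed because the bisimulation-style work was already absorbed into the proof of Lemma~\ref{lemma:same:valid:formulas} via Lemmas~\ref{lemma:about:up:and:down:frames:and:the:corresponding:intersectional:frames} and~\ref{lemma:satisfiability:does:not:change:if:intersectional:update:of:a:model}. The paper's approach, by contrast, makes each case self-contained at the cost of repeating the same trick. Your treatment of case~$\mathbf{(6)}$ via the inclusion ${\mathcal C}_{\partition}{\subseteq}{\mathcal C}_{\symmetric}{\cap}{\mathcal C}_{\transitive}$ and item~$\mathbf{(6)}$ of Lemma~\ref{lemma:same:valid:formulas} is a neat workaround, since the paper itself remarks (just after Lemma~\ref{lemma:same:valid:formulas}) that it does not know whether $\Log({\mathcal C}_{\symmetric}{\cap}{\mathcal C}_{\transitive}){=}\Log({\mathcal C}_{\usymmetric}{\cap}{\mathcal C}_{\dsymmetric})$.
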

\begin{proof}
$\mathbf{(1)}$~For the sake of the contradiction, suppose ${\mathcal C}_{\reflexive}$ is modally definable.
Hence, there exists a formula $A$ such that $A$ modally defines ${\mathcal C}_{\reflexive}$.
Let $(W^{\prime},{\leq^{\prime}},{R^{\prime}})$ be the frame defined by $W^{\prime}{=}\{a,b\}$, $a{\leq^{\prime}}b$ and $b{\leq^{\prime}}a$ and $a{R^{\prime}}b$ and $b{R^{\prime}}a$ and $(W^{\prime\prime},{\leq^{\prime\prime}},{R^{\prime\prime}})$ be the frame defined by $W^{\prime\prime}{=}\{a,b\}$ and $a{\leq^{\prime\prime}}b$ and $b{\leq^{\prime\prime}}a$ and $a{R^{\prime\prime}}a$, $a{R^{\prime\prime}}b$, $b{R^{\prime\prime}}a$ and $b{R^{\prime\prime}}b$.
Obviously, $(W^{\prime},{\leq^{\prime}},{R^{\prime}})$ is not reflexive and $(W^{\prime\prime},{\leq^{\prime\prime}},{R^{\prime\prime}})$ is reflexive.
Since $A$ modally defines ${\mathcal C}_{\reflexive}$, then $(W^{\prime},{\leq^{\prime}},{R^{\prime}}){\not\models}A$ and $(W^{\prime\prime},{\leq^{\prime\prime}},{R^{\prime\prime}}){\models}A$.
Thus, there exists a valuation $V^{\prime}\ :\ \At{\longrightarrow}\wp(W^{\prime})$ on $(W^{\prime},{\leq^{\prime}},{R^{\prime}})$ such that $(W^{\prime},{\leq^{\prime}},{R^{\prime}},V^{\prime}){\not\models}A$.
Consequently, there exists $s{\in}W^{\prime}$ such that $(W^{\prime},{\leq^{\prime}},{R^{\prime}},V^{\prime}),s{\not\models}A$.
Let $V^{\prime\prime}\ :\ \At{\longrightarrow}
$\linebreak$
\wp(W^{\prime\prime})$ be the valuation on $(W^{\prime\prime},{\leq^{\prime\prime}},{R^{\prime\prime}})$ such that for all atoms $p$, $V^{\prime\prime}(p){=}V^{\prime}(p)$.
\begin{claim}
For all formulas $B$,
\begin{itemize}
\item $(W^{\prime\prime},{\leq^{\prime\prime}},{R^{\prime\prime}},V^{\prime\prime}),a{\models}B$ if and only if $(W^{\prime},{\leq^{\prime}},{R^{\prime}},V^{\prime}),a{\models}B$,
\item $(W^{\prime\prime},{\leq^{\prime\prime}},{R^{\prime\prime}},V^{\prime\prime}),b{\models}B$ if and only if $(W^{\prime},{\leq^{\prime}},{R^{\prime}},V^{\prime}),b{\models}B$.
\end{itemize}
\end{claim}
\begin{proofclaim}
By induction on $B$.
\end{proofclaim}
Since $(W^{\prime},{\leq^{\prime}},{R^{\prime}},V^{\prime}),s{\not\models}A$, then $(W^{\prime\prime},{\leq^{\prime\prime}},{R^{\prime\prime}},V^{\prime}),s{\not\models}A$.
Hence, $(W^{\prime\prime},{\leq^{\prime\prime}},
$\linebreak$
{R^{\prime\prime}},V^{\prime\prime}){\not\models}A$.
Thus, $(W^{\prime\prime},{\leq^{\prime\prime}},{R^{\prime\prime}}){\not\models}A$: a contradiction.
$\mathbf{(2)}$~For the sake of the contradiction, suppose ${\mathcal C}_{\symmetric}$ is modally definable.
Hence, there exists a formula $A$ such that $A$ modally defines ${\mathcal C}_{\symmetric}$.
Let $(W^{\prime},{\leq^{\prime}},{R^{\prime}})$ be the frame defined by $W^{\prime}{=}\{a,b,c,d\}$, $a{\leq^{\prime}}b$ and $b{\leq^{\prime}}c$ and $a{R^{\prime}}d$, $c{R^{\prime}}d$, $d{R^{\prime}}a$, $d{R^{\prime}}b$ and $d{R^{\prime}}c$ and $(W^{\prime\prime},{\leq^{\prime\prime}},{R^{\prime\prime}})$ be the frame defined by $W^{\prime\prime}{=}\{a,b,c,d\}$ and $a{\leq^{\prime\prime}}b$ and $b{\leq^{\prime\prime}}c$ and $a{R^{\prime\prime}}d$, $b{R^{\prime\prime}}d$, $c{R^{\prime\prime}}d$, $d{R^{\prime\prime}}a$, $d{R^{\prime\prime}}b$ and $d{R^{\prime\prime}}c$.
Obviously, $(W^{\prime},{\leq^{\prime}},{R^{\prime}})$ is not symmetric and $(W^{\prime\prime},{\leq^{\prime\prime}},{R^{\prime\prime}})$ is symmetric.
Since $A$ modally defines ${\mathcal C}_{\symmetric}$, then $(W^{\prime},{\leq^{\prime}},{R^{\prime}}){\not\models}A$ and $(W^{\prime\prime},{\leq^{\prime\prime}},{R^{\prime\prime}}){\models}A$.
Thus, there exists a valuation $V^{\prime}\ :\ \At{\longrightarrow}
$\linebreak$
\wp(W^{\prime})$ on $(W^{\prime},{\leq^{\prime}},{R^{\prime}})$ such that $(W^{\prime},{\leq^{\prime}},{R^{\prime}},V^{\prime}){\not\models}A$.
Consequently, there exists $s{\in}W^{\prime}$ such that $(W^{\prime},{\leq^{\prime}},{R^{\prime}},V^{\prime}),s{\not\models}A$.
Let $V^{\prime\prime}\ :\ \At{\longrightarrow}\wp(W^{\prime\prime})$ be the valuation on $(W^{\prime\prime},{\leq^{\prime\prime}},{R^{\prime\prime}})$ such that for all atoms $p$, $V^{\prime\prime}(p){=}V^{\prime}(p)$.
\begin{claim}
For all formulas $B$,
\begin{itemize}
\item $(W^{\prime\prime},{\leq^{\prime\prime}},{R^{\prime\prime}},V^{\prime\prime}),a{\models}B$ if and only if $(W^{\prime},{\leq^{\prime}},{R^{\prime}},V^{\prime}),a{\models}B$,
\item $(W^{\prime\prime},{\leq^{\prime\prime}},{R^{\prime\prime}},V^{\prime\prime}),b{\models}B$ if and only if $(W^{\prime},{\leq^{\prime}},{R^{\prime}},V^{\prime}),b{\models}B$,
\item $(W^{\prime\prime},{\leq^{\prime\prime}},{R^{\prime\prime}},V^{\prime\prime}),c{\models}B$ if and only if $(W^{\prime},{\leq^{\prime}},{R^{\prime}},V^{\prime}),c{\models}B$,
\item $(W^{\prime\prime},{\leq^{\prime\prime}},{R^{\prime\prime}},V^{\prime\prime}),d{\models}B$ if and only if $(W^{\prime},{\leq^{\prime}},{R^{\prime}},V^{\prime}),d{\models}B$.
\end{itemize}
\end{claim}
\begin{proofclaim}
By induction on $B$.
\end{proofclaim}
Since $(W^{\prime},{\leq^{\prime}},{R^{\prime}},V^{\prime}),s{\not\models}A$, then $(W^{\prime\prime},{\leq^{\prime\prime}},{R^{\prime\prime}},V^{\prime}),s{\not\models}A$.
Hence, $(W^{\prime\prime},{\leq^{\prime\prime}},
$\linebreak$
{R^{\prime\prime}},V^{\prime\prime}){\not\models}A$.
Thus, $(W^{\prime\prime},{\leq^{\prime\prime}},{R^{\prime\prime}}){\not\models}A$: a contradiction.
$\mathbf{(3)}$~Similar to the proof of Item~$\mathbf{(1)}$.
$\mathbf{(4)}$~Similar to the proof of Item~$\mathbf{(1)}$.
$\mathbf{(5)}$~Similar to the proof of Item~$\mathbf{(1)}$.
$\mathbf{(6)}$~Similar to the proof of Item~$\mathbf{(1)}$.
$\mathbf{(7)}$~Similar to the proof of Item~$\mathbf{(1)}$.
\medskip
\end{proof}
Therefore, one may ask whether the following formulas are modal definitions of elementary conditions on frames: ${\square}p{\rightarrow}p$, $p{\rightarrow}{\lozenge}p$, ${\lozenge}{\square}p{\rightarrow}p$, $p{\rightarrow}{\square}{\lozenge}p$, ${\square}p{\rightarrow}{\square}{\square}p$ and ${\lozenge}{\lozenge}p{\rightarrow}{\lozenge}p$.\footnote{Here, the reader should remind that as far as classical modal validity is concerned, the formulas ${\square}p{\rightarrow}p$ and $p{\rightarrow}{\lozenge}p$ correspond to reflexivity, the formulas ${\lozenge}{\square}p{\rightarrow}p$ and $p{\rightarrow}{\square}{\lozenge}p$ correspond to symmetry and the formulas ${\square}p{\rightarrow}{\square}{\square}p$ and ${\lozenge}{\lozenge}p{\rightarrow}{\lozenge}p$ correspond to transitivity.
See~\cite[Chapter~$4$]{Blackburn:et:al:2001} and~\cite[Chapter~$3$]{Chagrov:Zakharyaschev:1997}.}
\begin{lemma}\label{lemma:correspondence:formulas:T:B:4:elementary:conditions}
\begin{enumerate}
\item ${\square}p{\rightarrow}p$ modally defines ${\mathcal C}_{\ureflexive}$,
\item $p{\rightarrow}{\lozenge}p$ modally defines ${\mathcal C}_{\dreflexive}$,
\item ${\lozenge}{\square}p{\rightarrow}p$ modally defines ${\mathcal C}_{\usymmetric}$,
\item $p{\rightarrow}{\square}{\lozenge}p$ modally defines ${\mathcal C}_{\dsymmetric}$,
\item ${\square}p{\rightarrow}{\square}{\square}p$ modally defines ${\mathcal C}_{\utransitive}$,
\item ${\lozenge}{\lozenge}p{\rightarrow}{\lozenge}p$ modally defines ${\mathcal C}_{\dtransitive}$.
\end{enumerate}
\end{lemma}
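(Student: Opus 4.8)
The plan is to prove, for each of the six items, the two halves hidden in the phrase ``modally defines'': (a)~every frame in the stated class validates the displayed formula; and (b)~every frame validating it lies in the class. All twelve arguments follow one recipe and differ only in the bookkeeping dictated by the shape of the formula and of the matching elementary condition. The recurring tool is the Heredity Property (Lemma~\ref{lemma:HB:monotonicity}), which transports satisfaction upward along $\leq$.

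For direction~(a) I would fix a frame $(W,{\leq},{R})$ in the class together with a valuation and a state, assume the antecedent holds there (for the formulas of the form $A{\rightarrow}B$ this formally means ``at every $\leq$-successor of the chosen state'', but since we quantify over all states it reduces to ``at the chosen state''), unfold the truth conditions of $\square$ and $\lozenge$, invoke the elementary condition to manufacture the intermediate witnesses, and finally slide satisfaction of $p$ from a witness to the target state via Heredity. Item~$\mathbf{(1)}$ is typical: if $s{\models}{\square}p$ and the frame is up-reflexive, then $s{\leq}{\circ}{R}{\circ}{\leq}s$ produces $t,u$ with $s{\leq}t$, $t{R}u$, $u{\leq}s$; since $s{\leq}{\circ}{R}u$ we get $u{\models}p$, and $u{\leq}s$ gives $s{\models}p$. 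Items~$\mathbf{(3)}$ and~$\mathbf{(5)}$ run the same pattern with up-symmetry and up-transitivity, while Items~$\mathbf{(2)}$, $\mathbf{(4)}$ and~$\mathbf{(6)}$ are the dual $\lozenge$-versions with down-reflexivity, down-symmetry and down-transitivity, Heredity now being applied along a $\geq$-step read as a $\leq$-step.

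For direction~(b) I would argue contrapositively: assume the elementary condition fails, say for Item~$\mathbf{(5)}$ that there are $s,t,u,v$ with $s{R}t$, $t{\leq}u$, $u{R}v$ and not $s{\leq}{\circ}{R}{\circ}{\leq}v$, and then refute validity with a single carefully chosen valuation on the given frame. For the $\square$-driven formulas (Items~$\mathbf{(1)},\mathbf{(3)},\mathbf{(5)}$) I would take $V(p){=}\{w\ :\ x{\leq}{\circ}{R}{\circ}{\leq}w\}$ for the appropriate source point $x$ extracted from the failed instance; for the $\lozenge$-driven ones (Items~$\mathbf{(2)},\mathbf{(4)},\mathbf{(6)}$) the complementary cone $V(p){=}\{w\ :$ not $x{\geq}{\circ}{R}{\circ}{\geq}w\}$. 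Both are $\leq$-closed~---~the first because a $\leq$-step appended to $x{\leq}{\circ}{R}{\circ}{\leq}w$ stays inside, the second because if $w{\leq}w^{\prime}$ and $x{\geq}{\circ}{R}{\circ}{\geq}w^{\prime}$ then composing the trailing $\geq$-step with $w^{\prime}{\geq}w$ yields $x{\geq}{\circ}{R}{\circ}{\geq}w$, contradicting $w{\in}V(p)$. With such a valuation one reads straight off the truth conditions that the antecedent holds at a suitable state while the consequent fails there, the failure of the elementary condition being precisely what denies membership of the critical point in $V(p)$. For Item~$\mathbf{(5)}$ concretely: $s{\models}{\square}p$ since every $({\leq}{\circ}{R})$-successor of $s$ lies in $V(p)$; $v{\notin}V(p)$ by hypothesis, so $v{\not\models}p$, whence $t{\not\models}{\square}p$ because $t{\leq}u{R}v$, whence $s{\not\models}{\square}{\square}p$ because $s{R}t$; so ${\square}p{\rightarrow}{\square}{\square}p$ fails at $s$.

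These proofs are pure bookkeeping. The only points requiring attention are, in direction~(b), selecting the right source point $x$ and the right cone for each formula so that the antecedent comes out true and the consequent false, and verifying $\leq$-closedness of the complementary cones; neither is a genuine obstacle, and in contrast to Lemma~\ref{lemma:same:valid:formulas} no auxiliary frame surgery is needed, a well-chosen valuation on the given frame doing all the work.
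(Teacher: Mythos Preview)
Your proposal is correct and follows the same overall recipe as the paper: for direction~(a) unfold the truth conditions and invoke the elementary property together with Heredity (the paper leaves this direction to the reader), and for direction~(b) refute validity on a frame where the condition fails by choosing a single strategic valuation. For the $\square$-driven items $\mathbf{(1)},\mathbf{(3)},\mathbf{(5)}$ your cone $V(p){=}\{w:x{\leq}{\circ}{R}{\circ}{\leq}w\}$ is exactly the paper's choice.

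The one point of divergence is the valuation for the $\lozenge$-driven items $\mathbf{(2)},\mathbf{(4)},\mathbf{(6)}$. You use the complementary cone $V(p){=}\{w:\text{not }x{\geq}{\circ}{R}{\circ}{\geq}w\}$, whereas the paper uses the simpler upward closure $V(p){=}\{w:x{\leq}w\}$ of a single witness point (namely $s$ in~$\mathbf{(2)}$ and~$\mathbf{(4)}$, and $v$ in~$\mathbf{(6)}$). Both work: the paper's valuation is easier to write down and its $\leq$-closedness is immediate, while your choice has the aesthetic benefit of making the six proofs perfectly dual to one another. Neither buys anything substantive over the other.
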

\begin{proof}
$\mathbf{(1)}$~For the sake of the contradiction, suppose ${\square}p{\rightarrow}p$ does not modally define ${\mathcal C}_{\ureflexive}$.
Hence, there exists a frame $(W,{\leq},{R})$ such that either for all $s{\in}W$, $s{\leq}{\circ}{R}{\circ}{\leq}s$ and $(W,{\leq},{R}){\not\models}{\square}p{\rightarrow}p$, or there exists $s{\in}W$ such that not $s{\leq}{\circ}{R}{\circ}{\leq}s$ and $(W,{\leq},{R}){\models}
$\linebreak$
{\square}p{\rightarrow}p$.
The reader may easily verify that the former case leads to a contradiction.
In the latter case, let $V\ :\ \At{\longrightarrow}\wp(W)$ be a valuation on $(W,{\leq},{R})$ such that $V(p){=}\{t{\in}W\ :\ s{\leq}{\circ}{R}{\circ}{\leq}t\}$.
Since not $s{\leq}{\circ}{R}{\circ}{\leq}s$, then $s{\not\in}V(p)$.
Thus, $s{\not\models}p$.
Since for all $u{\in}W$, if $s{\leq}{\circ}{R}u$ then $u{\in}V(p)$, then for all $u{\in}W$, if $s{\leq}{\circ}{R}u$ then $u{\models}p$.
Consequently, $s{\models}{\square}p$.
Since $s{\not\models}p$, then $s{\not\models}{\square}p{\rightarrow}p$.
Hence, $(W,{\leq},{R},V){\not\models}{\square}p{\rightarrow}p$.
Thus, $(W,{\leq},{R}){\not\models}{\square}p{\rightarrow}p$: a contradiction.
$\mathbf{(2)}$~For the sake of the contradiction, suppose $p{\rightarrow}{\lozenge}p$ does not modally define ${\mathcal C}_{\dreflexive}$.
Consequently, there exists a frame $(W,{\leq},{R})$ such that either for all $s{\in}W$, $s{\geq}{\circ}{R}{\circ}{\geq}s$ and $(W,{\leq},{R}){\not\models}p{\rightarrow}{\lozenge}p$, or there exists $s{\in}W$ such that not $s{\geq}{\circ}{R}{\circ}{\geq}s$ and $(W,{\leq},{R}){\models}
$\linebreak$
p{\rightarrow}{\lozenge}p$.
The reader may easily verify that the former case leads to a contradiction.
In the latter case, let $V\ :\ \At{\longrightarrow}\wp(W)$ be a valuation on $(W,{\leq},{R})$ such that $V(p){=}\{t{\in}W\ :\ s{\leq}t\}$.
Since not $s{\geq}{\circ}{R}{\circ}{\geq}s$, then for all $u{\in}W$, if $s{\geq}{\circ}{R}u$ then $u{\not\in}V(p)$.
Hence, for all $u{\in}W$, if $s{\geq}{\circ}{R}u$ then $u{\not\models}p$.
Thus, $s{\not\models}{\lozenge}p$.
Since $s{\in}V(p)$, then $s{\models}p$.
Since $s{\not\models}{\lozenge}p$, then $s{\not\models}p{\rightarrow}{\lozenge}p$.
Consequently, $(W,{\leq},{R},V){\not\models}p{\rightarrow}{\lozenge}p$.
Hence, $(W,{\leq},{R}){\not\models}p{\rightarrow}{\lozenge}p$: a contradiction.
$\mathbf{(3)}$~For the sake of the contradiction, suppose ${\lozenge}{\square}p{\rightarrow}p$ does not modally define ${\mathcal C}_{\usymmetric}$.
Thus, there exists a frame $(W,{\leq},{R})$ such that either for all $s,t{\in}W$, if $s{R}t$ then $t{\leq}{\circ}{R}{\circ}{\leq}s$ and $(W,{\leq},{R}){\not\models}{\lozenge}{\square}p{\rightarrow}p$, or there exists $s,t{\in}W$ such that $s{R}t$ and not $t{\leq}{\circ}{R}{\circ}{\leq}s$ and $(W,{\leq},{R}){\models}{\lozenge}{\square}p{\rightarrow}p$.
The reader may easily verify that the former case leads to a contradiction.
In the latter case, let $V\ :\ \At{\longrightarrow}\wp(W)$ be a valuation on $(W,{\leq},{R})$ such that $V(p){=}\{u{\in}W\ :\ t{\leq}{\circ}{R}{\circ}{\leq}u\}$.
Since not $t{\leq}{\circ}{R}{\circ}{\leq}s$, then $s{\not\in}V(p)$.
Consequently, $s{\not\models}p$.
Since for all $v{\in}W$, if $t{\leq}{\circ}{R}v$ then $v{\in}V(p)$, then for all $v{\in}W$, if $t{\leq}{\circ}{R}v$ then $v{\models}p$.
Hence, $t{\models}{\square}p$.
Since $s{R}t$, then $s{\models}{\lozenge}{\square}p$.
Since $s{\not\models}p$, then $s{\not\models}{\lozenge}{\square}p{\rightarrow}p$.
Thus, $(W,{\leq},{R},V){\not\models}{\lozenge}{\square}p{\rightarrow}p$.
Consequently, $(W,{\leq},{R}){\not\models}{\lozenge}{\square}p{\rightarrow}p$: a contradiction.
$\mathbf{(4)}$~For the sake of the contradiction, suppose $p{\rightarrow}{\square}{\lozenge}p$ does not modally define ${\mathcal C}_{\dsymmetric}$.
Hence, there exists a frame $(W,{\leq},{R})$ such that either for all $s,t{\in}W$, if $s{R}t$ then $t{\geq}{\circ}{R}{\circ}{\geq}s$ and $(W,{\leq},{R}){\not\models}p{\rightarrow}{\square}{\lozenge}p$, or there exists $s,t{\in}W$ such that $s{R}t$ and not $t{\geq}{\circ}{R}{\circ}{\geq}s$ and $(W,{\leq},{R}){\models}p{\rightarrow}{\square}{\lozenge}p$.
The reader may easily verify that the former case leads to a contradiction.
In the latter case, let $V\ :\ \At{\longrightarrow}\wp(W)$ be a valuation on $(W,{\leq},{R})$ such that $V(p){=}\{u{\in}W\ :\ s{\leq}u\}$.
Thus, $s{\in}V(p)$.
Consequently, $s{\models}p$.
Since not $t{\geq}{\circ}{R}{\circ}{\geq}s$, then for all $v{\in}W$, if $t{\geq}{\circ}{R}v$ then $v{\not\in}V(p)$.
Hence, for all $v{\in}W$, if $t{\geq}{\circ}{R}v$ then $v{\not\models}p$.
Thus, $t{\not\models}{\lozenge}p$.
Since $s{R}t$, then $s{\not\models}{\square}{\lozenge}p$.
Since $s{\models}p$, then $s{\not\models}p{\rightarrow}{\square}{\lozenge}p$.
Consequently, $(W,{\leq},{R},V){\not\models}p{\rightarrow}{\square}{\lozenge}p$.
Hence, $(W,{\leq},{R}){\not\models}p{\rightarrow}{\square}{\lozenge}p$: a contradiction.
$\mathbf{(5)}$~For the sake of the contradiction, suppose ${\square}p{\rightarrow}{\square}{\square}p$ does not modally define ${\mathcal C}_{\utransitive}$.
Thus, there exists a frame $(W,{\leq},{R})$ such that either for all $s,t,u,v{\in}W$, if $s{R}t$, $t{\leq}u$ and $u{R}v$ then $s{\leq}{\circ}{R}{\circ}{\leq}v$ and $(W,{\leq},{R}){\not\models}{\square}p{\rightarrow}{\square}{\square}p$, or there exists $s,t,u,v{\in}W$ such that $s{R}t$, $t{\leq}u$, $u{R}v$ and not $s{\leq}{\circ}{R}{\circ}{\leq}v$ and $(W,{\leq},{R}){\models}{\square}p{\rightarrow}{\square}{\square}p$.
The reader may easily verify that the former case leads to a contradiction.
In the latter case, let $V\ :\ \At{\longrightarrow}\wp(W)$ be a valuation on $(W,{\leq},{R})$ such that $V(p){=}\{w{\in}W\ :\ s{\leq}{\circ}{R}{\circ}{\leq}w\}$.
Since not $s{\leq}{\circ}{R}{\circ}{\leq}v$, then $v{\not\in}V(p)$.
Consequently, $v{\not\models}p$.
Since $u{R}v$, then $u{\not\models}{\square}p$.
Since $t{\leq}u$, then $t{\not\models}{\square}p$.
Since $s{R}t$, then $s{\not\models}{\square}{\square}p$.
Since for all $x{\in}W$, if $s{\leq}{\circ}{R}x$ then $x{\in}V(p)$,then for all $x{\in}W$, if $s{\leq}{\circ}{R}x$ then $x{\models}p$.
Hence, $s{\models}{\square}p$.
Since $s{\not\models}{\square}{\square}p$, then $s{\not\models}{\square}p{\rightarrow}{\square}{\square}p$.
Thus, $(W,{\leq},{R},V){\not\models}{\square}p{\rightarrow}{\square}{\square}p$.
Consequently, $(W,{\leq},{R}){\not\models}{\square}p{\rightarrow}{\square}{\square}p$: a contradiction.
$\mathbf{(6)}$~For the sake of the contradiction, suppose ${\lozenge}{\lozenge}p{\rightarrow}{\lozenge}p$ does not modally define ${\mathcal C}_{\dtransitive}$.
Hence, there exists a frame $(W,{\leq},{R})$ such that either for all $s,t,u,v
$\linebreak$
{\in}W$, if $s{R}t$, $t{\geq}u$ and $u{R}v$ then $s{\geq}{\circ}{R}{\circ}{\geq}v$ and $(W,{\leq},{R}){\not\models}{\lozenge}{\lozenge}p{\rightarrow}{\lozenge}p$, or there exists $s,t,u,v{\in}W$ such that $s{R}t$, $t{\geq}u$, $u{R}v$ and not $s{\geq}{\circ}{R}{\circ}{\geq}v$ and $(W,{\leq},{R}){\models}{\lozenge}{\lozenge}p{\rightarrow}{\lozenge}p$.
The reader may easily verify that the former case leads to a contradiction.
In the latter case, let $V\ :\ \At{\longrightarrow}\wp(W)$ be a valuation on $(W,{\leq},{R})$ such that $V(p){=}\{w{\in}W\ :\ v{\leq}w\}$.
Thus, $v{\in}V(p)$.
Consequently, $v{\models}p$.
Since $u{R}v$, then $u{\models}{\lozenge}p$.
Since $t{\geq}u$, then $t{\models}{\lozenge}p$.
Since $s{R}t$, then $s{\models}{\lozenge}{\lozenge}p$.
Since not $s{\geq}{\circ}{R}{\circ}{\geq}v$, then for all $x{\in}W$, if $s{\geq}{\circ}{R}x$ then $x{\not\in}V(p)$.
Hence, for all $x{\in}W$, if $s{\geq}{\circ}{R}x$ then $x{\not\models}p$.
Thus, $s{\not\models}{\lozenge}p$.
Since $s{\models}{\lozenge}{\lozenge}p$, then $s{\not\models}{\lozenge}{\lozenge}p{\rightarrow}{\lozenge}p$.
Consequently, $(W,{\leq},{R},V){\not\models}{\lozenge}{\lozenge}p{\rightarrow}{\lozenge}p$.
Hence, $(W,{\leq},{R}){\not\models}{\lozenge}{\lozenge}p{\rightarrow}{\lozenge}p$: a contradiction.
\medskip
\end{proof}
\section{Axiomatization}\label{section:axiomatization}
In this section, we axiomatically present different intuitionistic modal logics.
\begin{definition}[Intuitionistic modal logics]
In our language, an {\em intuitionistic modal logic}\/ is a set of formulas closed under uniform substitution, containing the standard axioms of $\IPL$, closed under the standard inference rules of $\IPL$, containing the axioms
\begin{description}
\item[$(\Axiom1)$] ${\square}(p{\rightarrow}q){\rightarrow}({\square}p{\rightarrow}{\square}q)$,
\item[$(\Axiom2)$] ${\square}(p{\vee}q){\rightarrow}(({\lozenge}p{\rightarrow}{\square}q){\rightarrow}{\square}q)$,
\item[$(\Axiom3)$] ${\lozenge}(p{\vee}q){\rightarrow}{\lozenge}p{\vee}{\lozenge}q$,
\item[$(\Axiom4)$] $\neg{\lozenge}{\bot}$,
\end{description}
and closed under the inference rules
\begin{description}
\item[$(\Rule1)$] $\frac{p}{{\square}p}$,
\item[$(\Rule2)$] $\frac{p{\rightarrow}q}{{\lozenge}p{\rightarrow}{\lozenge}q}$,
\item[$(\Rule3)$] $\frac{{\lozenge}p{\rightarrow}q{\vee}{\square}(p{\rightarrow}r)}{{\lozenge}p{\rightarrow}q{\vee}{\lozenge}r}$.
\end{description}
\end{definition}
The reader may easily verify that axiom $(\Axiom1)$ and inference rule $(\Rule1)$ can be replaced by the axioms
\begin{description}
\item[$(\Axiom5)$] ${\square}p{\wedge}{\square}q{\rightarrow}{\square}(p{\wedge}q)$,
\item[$(\Axiom6)$] ${\square}{\top}$,
\end{description}
and the inference rule
\begin{description}
\item[$(\Rule4)$] $\frac{p{\rightarrow}q}{{\square}p{\rightarrow}{\square}q}$,
\end{description}
without affecting the above definition of intuitionistic modal logics.\footnote{As mentioned in Section~\ref{section:introduction}, the axioms ${\square}p{\wedge}{\square}q{\rightarrow}{\square}(p{\wedge}q)$, ${\lozenge}(p{\vee}q){\rightarrow}{\lozenge}p{\vee}{\lozenge}q$, ${\square}{\top}$ and $\neg{\lozenge}{\bot}$ and the inference rules $\frac{p{\rightarrow}q}{{\square}p{\rightarrow}{\square}q}$ and $\frac{p{\rightarrow}q}{{\lozenge}p{\rightarrow}{\lozenge}q}$ are usually associated to the concept of normality in modal logics.}
In other respect, the reader may easily see that axiom $(\Axiom2)$ has similarities with P\v{r}enosil's equation ${\lozenge}a{\rightarrow}{\square}b{\leq}{\square}(a{\vee}b){\rightarrow}{\square}b$ and inference rule $(\Rule3)$ has similarities with P\v{r}enosil's {\em positive modal law}\/ ${\lozenge}b{\leq}{\square}a{\vee}c{\Rightarrow}{\lozenge}b{\leq}{\lozenge}(a{\wedge}b){\vee}c$.
It is worth mentioning here that our axiomatization does not require an inference rule having similarities with P\v{r}enosil's {\em negative modal law}\/ ${\lozenge}a{\wedge}c{\leq}{\square}b{\Rightarrow}{\square}(a{\vee}b){\wedge}c{\leq}{\square}b$.
See~\cite{Prenosil:2014} for details about the above-mentioned equation and the above-mentioned modal laws.
\begin{definition}[Additional axioms]
We also consider the axioms
\begin{description}
\item[$(\Axiom\mathbf{f})$] ${\lozenge}(p{\rightarrow}q){\rightarrow}({\square}p{\rightarrow}{\lozenge}q)$,
\item[$(\Axiom\mathbf{b})$] $({\lozenge}p{\rightarrow}{\square}q){\rightarrow}{\square}(p{\rightarrow}q)$,
\item[$(\Axiom\mathbf{d})$] ${\square}(p{\vee}q){\rightarrow}{\lozenge}p{\vee}{\square}q$,
\item[$(\Axiom\mathbf{uref})$] ${\square}p{\rightarrow}p$,
\item[$(\Axiom\mathbf{dref})$] $p{\rightarrow}{\lozenge}p$,
\item[$(\Axiom\mathbf{usym})$] ${\lozenge}{\square}p{\rightarrow}p$,
\item[$(\Axiom\mathbf{dsym})$] $p{\rightarrow}{\square}{\lozenge}p$,
\item[$(\Axiom\mathbf{utra})$] ${\square}p{\rightarrow}{\square}{\square}p$,
\item[$(\Axiom\mathbf{dtra})$] ${\lozenge}{\lozenge}p{\rightarrow}{\lozenge}p$.
\end{description}
\end{definition}
All the above axioms but axioms $(\Axiom2)$ and $(\Axiom\mathbf{d})$ and all the above inference rules but inference rule $(\Rule3)$ have been already considered in the above-mentioned literature about intuitionistic modal logics~\cite{FischerServi:1984,Wijesekera:1990}.
Notice that axiom $(\Axiom\mathbf{d})$ has been already considered in~\cite{Balbiani:et:al:2021}.
Obviously, axioms $(\Axiom1)$, $(\Axiom3)$ and $(\Axiom4)$ are valid in any frame.
Concerning axiom $(\Axiom2)$,
\begin{lemma}\label{lemma:about:axiom:2}
Let $A$, $B$ be formulas.
For all frames $(W,{\leq},{R})$, $(W,{\leq},{R}){\models}{\square}(A{\vee}B){\rightarrow}
$\linebreak$
(({\lozenge}A{\rightarrow}{\square}B){\rightarrow}{\square}B)$.
\end{lemma}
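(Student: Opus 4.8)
The plan is to verify validity directly, by unpacking the satisfaction clauses for $\square$ and $\lozenge$ together with the Heredity Property (Lemma~\ref{lemma:HB:monotonicity}) and the reflexivity of $\leq$. Fix a model $(W,{\leq},{R},V)$ based on an arbitrary frame $(W,{\leq},{R})$ and a state $s{\in}W$. To show $s{\models}{\square}(A{\vee}B){\rightarrow}(({\lozenge}A{\rightarrow}{\square}B){\rightarrow}{\square}B)$, I would take arbitrary $s_{1},s_{2}{\in}W$ with $s{\leq}s_{1}$, $s_{1}{\models}{\square}(A{\vee}B)$, $s_{1}{\leq}s_{2}$ and $s_{2}{\models}{\lozenge}A{\rightarrow}{\square}B$, and aim to derive $s_{2}{\models}{\square}B$. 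First I would note that, by Lemma~\ref{lemma:HB:monotonicity} and $s_{1}{\leq}s_{2}$, we also have $s_{2}{\models}{\square}(A{\vee}B)$.

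Next I would argue by contradiction: assume $s_{2}{\not\models}{\square}B$, so there is $t{\in}W$ with $s_{2}{\leq}{\circ}{R}t$ and $t{\not\models}B$; unfolding the composition, pick $u{\in}W$ with $s_{2}{\leq}u$ and $u{R}t$. From $s_{2}{\models}{\square}(A{\vee}B)$ and $s_{2}{\leq}{\circ}{R}t$ we get $t{\models}A{\vee}B$, and since $t{\not\models}B$ this forces $t{\models}A$. Then, using reflexivity of $\leq$ (so $u{\geq}u$) and $u{R}t$, we obtain $u{\geq}{\circ}{R}t$, hence $u{\models}{\lozenge}A$. Now $s_{2}{\leq}u$ and $s_{2}{\models}{\lozenge}A{\rightarrow}{\square}B$ give $u{\models}{\square}B$. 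Finally, $u{\leq}u$ and $u{R}t$ yield $u{\leq}{\circ}{R}t$, so $u{\models}{\square}B$ gives $t{\models}B$, contradicting $t{\not\models}B$. Hence $s_{2}{\models}{\square}B$, and since $s$, $s_{1}$, $s_{2}$ and the valuation were arbitrary, the formula is valid in $(W,{\leq},{R})$.

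I do not expect a serious obstacle here: the only points that need care are (i) correctly threading the two outer implications through $\leq$-successors and invoking Heredity to push $\square(A{\vee}B)$ from $s_{1}$ up to $s_{2}$, and (ii) remembering that the ${\lozenge}$-clause and the ${\square}$-clause both go through compositions with $\leq$, for which the reflexivity of the preorder is exactly what lets us turn the bare edge $u{R}t$ into $u{\geq}{\circ}{R}t$ and $u{\leq}{\circ}{R}t$. No confluence hypothesis on the frame is used, which is consistent with the statement quantifying over all frames.
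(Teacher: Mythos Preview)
Your proof is correct and follows essentially the same argument as the paper's: unpack the failure of $\square B$ to a witness reached via $\leq\circ R$, use $\square(A\vee B)$ to force $A$ there, produce $\lozenge A$ at the intermediate state, apply the hypothesis $\lozenge A\rightarrow\square B$, and derive the contradiction $t\models B$. The only cosmetic difference is that you push $\square(A\vee B)$ up to $s_{2}$ via Heredity, whereas the paper keeps it at the earlier state and uses transitivity of $\leq$ to reach the witness directly; both are equally valid.
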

\begin{proof}
If not, there exists a frame $(W,{\leq},{R})$ such that $(W,{\leq},{R}){\not\models}{\square}(A{\vee}B){\rightarrow}(({\lozenge}A{\rightarrow}
$\linebreak$
{\square}B){\rightarrow}{\square}B)$.
Hence, there exists a model $(W,{\leq},{R},V)$ based on $(W,{\leq},{R})$ such that $(W,{\leq},{R},V){\not\models}{\square}(A{\vee}B){\rightarrow}(({\lozenge}A{\rightarrow}{\square}B){\rightarrow}{\square}B)$.
Thus, there exists $s{\in}W$ such that $s{\not\models}{\square}(A{\vee}B){\rightarrow}(({\lozenge}A{\rightarrow}{\square}B){\rightarrow}{\square}B)$.
Consequently, there exists $t{\in}W$ such that $s{\leq}t$, $t{\models}{\square}(A{\vee}B)$ and $t\not\models({\lozenge}A{\rightarrow}{\square}B){\rightarrow}{\square}B$.
Hence, there exists $u{\in}W$ such that $t{\leq}u$, $u{\models}{\lozenge}A{\rightarrow}{\square}B$ and $u{\not\models}{\square}B$.
Thus, there exists $v{\in}W$ such that $u{\leq}{\circ}Rv$ and $v{\not\models}B$.
Since $t{\leq}u$, then $t{\leq}{\circ}{R}v$.
Since $t{\models}{\square}(A{\vee}B)$, then $v{\models}A{\vee}B$.
Consequently, either $v{\models}A$, or $v{\models}B$.
Since $v{\not\models}B$, then $v{\models}A$.
Since $u{\leq}{\circ}Rv$, then there exists $w{\in}W$ such that $u{\leq}w$ and $wRv$.
Since $v{\models}A$, then $w{\models}{\lozenge}A$.
Since $u{\models}{\lozenge}A{\rightarrow}{\square}B$ and $u{\leq}w$, then $w{\models}{\square}B$.
Since $wRv$, then $v{\models}B$: a contradiction.
\medskip
\end{proof}
Therefore, axiom $(\Axiom2)$ is valid in any frame.
Obviously, inference rules $(\Rule1)$ and $(\Rule2)$ preserve validity in any frame.
Concerning inference rule $(\Rule3)$,
\begin{lemma}\label{lemma:about:rule:3}
Let $A$, $B$, $C$ be formulas.
For all frames $(W,{\leq},{R})$, if $(W,{\leq},{R}){\models}{\lozenge}A{\rightarrow}B
$\linebreak$
{\vee}{\square}(A{\rightarrow}C)$ then $(W,{\leq},{R}){\models}{\lozenge}A{\rightarrow}B{\vee}{\lozenge}C$.
\end{lemma}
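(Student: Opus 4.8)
The plan is to argue by contradiction, unwinding the satisfiability clauses for $\lozenge$ and $\square$ and exploiting that $\leq$ is a preorder (so both $\leq$ and $\geq$ are reflexive). Suppose $(W,{\leq},{R}){\models}{\lozenge}A{\rightarrow}B{\vee}{\square}(A{\rightarrow}C)$ but $(W,{\leq},{R}){\not\models}{\lozenge}A{\rightarrow}B{\vee}{\lozenge}C$. Then there is a model $M{=}(W,{\leq},{R},V)$ and a state $s$ with $M,s{\not\models}{\lozenge}A{\rightarrow}B{\vee}{\lozenge}C$, hence a state $t$ with $s{\leq}t$, $M,t{\models}{\lozenge}A$, $M,t{\not\models}B$ and $M,t{\not\models}{\lozenge}C$. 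Unfolding $M,t{\models}{\lozenge}A$ gives states $t_{1},v$ with $t{\geq}t_{1}$, $t_{1}{R}v$ and $M,v{\models}A$. The key observation is that, since $t_{1}{\leq}t_{1}$ and $t_{1}{\geq}t_{1}$, we have both $t_{1}{\leq}{\circ}{R}v$ and $t_{1}{\geq}{\circ}{R}v$; in particular $M,t_{1}{\models}{\lozenge}A$.

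Next I would use the hypothesis. Since $(W,{\leq},{R}){\models}{\lozenge}A{\rightarrow}B{\vee}{\square}(A{\rightarrow}C)$, this formula is true in the very same model $M$ at every state, in particular at $t_{1}$; combined with $M,t_{1}{\models}{\lozenge}A$ this yields $M,t_{1}{\models}B{\vee}{\square}(A{\rightarrow}C)$. I split into two cases. If $M,t_{1}{\models}B$, then since $t_{1}{\leq}t$ the Heredity Property (Lemma~\ref{lemma:HB:monotonicity}) gives $M,t{\models}B$, contradicting $M,t{\not\models}B$. Otherwise $M,t_{1}{\models}{\square}(A{\rightarrow}C)$; applying this to the witness $t_{1}{\leq}{\circ}{R}v$ yields $M,v{\models}A{\rightarrow}C$, and then $v{\leq}v$ together with $M,v{\models}A$ gives $M,v{\models}C$. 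But $t{\geq}t_{1}$ and $t_{1}{R}v$, so $t{\geq}{\circ}{R}v$, whence $M,t{\models}{\lozenge}C$, contradicting $M,t{\not\models}{\lozenge}C$. Either way we reach a contradiction, which proves the lemma.

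Note that no change of valuation is needed: because the premise is \emph{validity in the frame} (not merely truth in one model), it is already available with the same $V$ that witnesses the failure of the conclusion. The only delicate point --- and the step I expect to be the real content --- is the bookkeeping with the composite relations ${\leq}{\circ}{R}$ and ${\geq}{\circ}{R}$: one must notice that an $R$-successor $v$ of the ``point just below $t$'' counts simultaneously as a ${\leq}{\circ}{R}$-successor of $t_{1}$ (so that ${\square}(A{\rightarrow}C)$ reaches it) and as a ${\geq}{\circ}{R}$-successor of $t$ (so that ${\lozenge}C$ is witnessed at $t$), and likewise that $t_{1}$ itself already satisfies ${\lozenge}A$. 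Everything else is a direct appeal to the satisfiability clauses and to Heredity.
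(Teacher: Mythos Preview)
Your proof is correct and is essentially the same argument as the paper's, with only a relabelling of variables (your $t_{1},v$ are the paper's $v,u$). Both proceed by contradiction, descend from $t$ to the intermediate point below it, observe that this point already satisfies ${\lozenge}A$, apply the frame-valid premise there, and then split on the disjunction using Heredity and the $\square$/$\lozenge$ clauses exactly as you do.
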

\begin{proof}
If not, there exists a frame $(W,{\leq},{R})$ such that $(W,{\leq},{R}){\models}{\lozenge}A{\rightarrow}B{\vee}{\square}(A{\rightarrow}C)$ and $(W,{\leq},{R}){\not\models}{\lozenge}A{\rightarrow}B{\vee}{\lozenge}C$.
Hence, there exists a model $(W,{\leq},{R},V)$ based on $(W,{\leq},{R})$ such that $(W,{\leq},{R},V){\not\models}{\lozenge}A{\rightarrow}B{\vee}{\lozenge}C$.
Thus, there exists $s{\in}W$ such that $s{\not\models}{\lozenge}A{\rightarrow}B{\vee}{\lozenge}C$.
Consequently, there exists $t{\in}W$ such that $s{\leq}t$, $t{\models}{\lozenge}A$ and $t{\not\models}B{\vee}
$\linebreak$
{\lozenge}C$.
Hence, there exists $u{\in}W$ such that $t{\geq}{\circ}Ru$ and $u{\models}A$.
Thus, there exists $v{\in}W$ such that $t{\geq}v$ and $vRu$.
Since $u{\models}A$, then $v{\models}{\lozenge}A$.
Since $(W,{\leq},{R}){\models}{\lozenge}A{\rightarrow}B{\vee}{\square}(A{\rightarrow}
$\linebreak$
C)$, then $(W,{\leq},{R},V){\models}{\lozenge}A{\rightarrow}B{\vee}{\square}(A{\rightarrow}C)$.
Consequently, $v{\models}{\lozenge}A{\rightarrow}B{\vee}{\square}(A{\rightarrow}C)$.
Since $v{\models}{\lozenge}A$, then $v{\models}B{\vee}{\square}(A{\rightarrow}C)$.
Hence, either $v{\models}B$, or $v{\models}{\square}(A{\rightarrow}C)$.
In the former case, since $t{\geq}v$, then $t{\models}B$.
Thus, $t{\models}B{\vee}{\lozenge}C$: a contradiction.
In the latter case, since $vRu$, then $u{\models}A{\rightarrow}C$.
Since $u{\models}A$, then $u{\models}C$.
Since $t{\geq}{\circ}Ru$, then $t{\models}{\lozenge}C$.
Consequently, $t{\models}B{\vee}{\lozenge}C$: a contradiction.
\medskip
\end{proof}
Therefore, inference rule $(\Rule3)$ preserves validity in any frame.
\begin{definition}[Consistency]
An intuitionistic modal logic $\L$ is {\em consistent}\/ if ${\bot}{{\not\in}}\L$.
\end{definition}
Obviously, for all intuitionistic modal logics $\L$, using axioms and inference rules of $\IPL$, $\L$ is consistent if and only if $\L{\not=}\Fo$.
Moreover, for all consistent intuitionistic modal logics $\L$, using axiom $(\Axiom4)$, ${\lozenge}{\bot}{\not\in}\L$.
Obviously, for all indexed families $(\L_{i})_{i{\in}I}$ of intuitionistic modal logics, $\bigcap\{\L_{i}\ :\ i{\in}I\}$ is an intuitionistic modal logic and for all nonempty chains $(\L_{i})_{i{\in}I}$ of intuitionistic modal logics, $\bigcup\{\L_{i}\ :\ i{\in}I\}$ is an intuitionistic modal logic.
As a result, there exists a least intuitionistic modal logic.
In other respect, obviously, $\Fo$ is the greatest intuitionistic modal logic.
\begin{definition}[Least intuitionistic modal logic]
Let $\L_{\min}$ be the least intuitionistic modal logic.
\end{definition}
Obviously, for all intuitionistic modal logics $\L$ and for all sets $\Sigma$ of formulas, there exists a least intuitionistic modal logic containing $\L$ and $\Sigma$.
\begin{definition}[Finite axiomatizability]
For all intuitionistic modal logics $\L$ and for all sets $\Sigma$ of formulas, let $\L{\oplus}\Sigma$ be the least intuitionistic modal logic containing $\L$ and $\Sigma$.
An intuitionistic modal logic $\L$ is {\em finitely axiomatizable}\/ if there exists a finite set $\Sigma$ of formulas such that $\L{=}\L_{\min}{\oplus}\Sigma$.
For all intuitionistic modal logics $\L$ and for all formulas $A$, we write $\L{\oplus}A$ instead of $\L{\oplus}\{A\}$.
Let $\L_{\fcfra}{=}\L_{\min}{\oplus}(\Axiom\mathbf{f})$, $\L_{\bcfra}{=}\L_{\min}{\oplus}(\Axiom\mathbf{b})$ and $\L_{\dcfra}{=}\L_{\min}{\oplus}(\Axiom\mathbf{d})$.
We also write $\L_{\fbcfra}$ to denote $\L_{\min}{\oplus}(\Axiom\mathbf{f}){\oplus}(\Axiom\mathbf{b})$, $\L_{\fdcfra}$ to denote $\L_{\min}{\oplus}(\Axiom\mathbf{f}){\oplus}(\Axiom\mathbf{d})$, etc.
Let $\L_{\reflexive}{=}\L_{\min}{\oplus}(\Axiom\mathbf{uref}){\oplus}(\Axiom\mathbf{dref})$ and $\L_{\symmetric}{=}\L_{\min}
$\linebreak$
{\oplus}(\Axiom\mathbf{usym}){\oplus}(\Axiom\mathbf{usym})$.
Let $\L_{\ureflexive}{=}\L_{\min}{\oplus}(\Axiom\mathbf{uref})$, $\L_{\dreflexive}{=}\L_{\min}{\oplus}(\Axiom\mathbf{dref})$,
\linebreak$
\L_{\usymmetric}{=}\L_{\min}{\oplus}(\Axiom\mathbf{usym})$, $\L_{\dsymmetric}{=}\L_{\min}{\oplus}(\Axiom\mathbf{dsym})$, $\L_{\utransitive}{=}\L_{\min}{\oplus}(\Axiom\mathbf{utra})$ and $\L_{\dtransitive}{=}\L_{\min}{\oplus}(\Axiom\mathbf{dtra})$.
\end{definition}
In the proof of Lemma~\ref{lemma:about:a:useful:application:of:rule:R3}, inference rule $(\Rule3)$ is used in a crucial way.\footnote{Lemma~\ref{lemma:about:a:useful:application:of:rule:R3} is used in the proof of Lemma~\ref{first:lemma:about:the:use:of:the:special:inference:rule}.}
\begin{lemma}\label{lemma:about:a:useful:application:of:rule:R3}
Let $A,B,C$ be formulas.
For all intuitionistic modal logics $\L$, if ${\lozenge}A{\rightarrow}{\square}B
$\linebreak$
{\vee}C{\in}\L$ then ${\lozenge}A{\rightarrow}{\lozenge}(B{\wedge}A){\vee}C{\in}\L$.
\end{lemma}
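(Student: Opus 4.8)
The plan is to reduce the statement to a single application of inference rule $(\Rule3)$, after reshaping the hypothesis by $\IPL$-reasoning together with the derived rule $(\Rule4)$ (recall that, as noted after the definition of intuitionistic modal logics, $(\Axiom1)$ and $(\Rule1)$ may be replaced by $(\Axiom5)$, $(\Axiom6)$ and $(\Rule4)$, so $(\Rule4)$ is available in every intuitionistic modal logic $\L$).

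First I would observe that $B{\rightarrow}(A{\rightarrow}(B{\wedge}A))$ is a theorem of $\IPL$, hence an element of $\L$; applying $(\Rule4)$ to it yields ${\square}B{\rightarrow}{\square}(A{\rightarrow}(B{\wedge}A)){\in}\L$. Combining this with the assumption ${\lozenge}A{\rightarrow}{\square}B{\vee}C{\in}\L$ by pure $\IPL$-reasoning — monotonicity of $\vee$ in its left argument, transitivity of $\rightarrow$, and commutativity of $\vee$, all available since $\L$ contains the standard axioms of $\IPL$ and is closed under its inference rules — gives ${\lozenge}A{\rightarrow}C{\vee}{\square}(A{\rightarrow}(B{\wedge}A)){\in}\L$.

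Now this last formula is exactly the substitution instance of the premise of $(\Rule3)$ obtained by substituting $A$ for $p$, $C$ for $q$ and $B{\wedge}A$ for $r$. Since $\L$ is closed under uniform substitution and under $(\Rule3)$, the corresponding instance of the conclusion of $(\Rule3)$, namely ${\lozenge}A{\rightarrow}C{\vee}{\lozenge}(B{\wedge}A)$, also belongs to $\L$. A final appeal to the commutativity of $\vee$ in $\IPL$ delivers ${\lozenge}A{\rightarrow}{\lozenge}(B{\wedge}A){\vee}C{\in}\L$, as required.

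There is no serious obstacle here; the only point deserving attention is the bookkeeping of which substitution instance of $(\Rule3)$ to invoke, together with the observation that the $\IPL$-validity $B{\rightarrow}(A{\rightarrow}(B{\wedge}A))$ — equivalently, that ${\square}B$ entails ${\square}(A{\rightarrow}(B{\wedge}A))$ — is precisely what is needed to rewrite the hypothesis into the exact shape of the premise of $(\Rule3)$. Everything else is routine propagation of $\IPL$-theorems through the closure conditions defining an intuitionistic modal logic.
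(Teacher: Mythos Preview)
Your proof is correct and follows essentially the same route as the paper's: start from the $\IPL$-theorem $B{\rightarrow}(A{\rightarrow}B{\wedge}A)$, pass to ${\square}B{\rightarrow}{\square}(A{\rightarrow}B{\wedge}A)$, combine with the hypothesis to obtain ${\lozenge}A{\rightarrow}C{\vee}{\square}(A{\rightarrow}B{\wedge}A)$, and then apply $(\Rule3)$. The only cosmetic difference is that the paper cites $(\Rule1)$ (together, implicitly, with $(\Axiom1)$) where you cite the equivalent derived rule $(\Rule4)$.
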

\begin{proof}
Let $\L$ be an intuitionistic modal logic.
By considering the following sequence of formulas, the reader may easily construct a proof that if ${\lozenge}A{\rightarrow}{\square}B{\vee}C{\in}\L$ then ${\lozenge}A{\rightarrow}{\lozenge}(B{\wedge}A){\vee}C{\in}\L$:
\begin{enumerate}
\item ${\lozenge}A{\rightarrow}{\square}B{\vee}C$ (hypothesis),
\item $B{\rightarrow}(A{\rightarrow}B{\wedge}A)$ ($\IPL$-reasoning),
\item ${\square}B{\rightarrow}{\square}(A{\rightarrow}B{\wedge}A)$ (rule $(\Rule1)$ on~$\mathbf{(2)}$),
\item ${\lozenge}A{\rightarrow}C{\vee}{\square}(A{\rightarrow}B{\wedge}A)$ ($\IPL$-reasoning on~$\mathbf{(1)}$ and~$\mathbf{(3)}$),
\item ${\lozenge}A{\rightarrow}C{\vee}{\lozenge}(B{\wedge}A)$ (rule $(\Rule3)$ on~$\mathbf{(4)}$).
\end{enumerate}
\medskip
\end{proof}
Lemma~\ref{first:lemma:about:the:use:of:the:special:inference:rule} is used in the proofs of Lemmas~\ref{lemma:about:what:happens:if:T:lozenge:is:in:the:logic}, \ref{lemma:about:what:happens:if:B:lozenge:is:in:the:logic} and~\ref{lemma:about:what:happens:if:4:lozenge:is:in:the:logic}.
\begin{lemma}\label{first:lemma:about:the:use:of:the:special:inference:rule}
Let $A,B,C,D,E,F,G,H,I,J$ be formulas.
For all intuitionistic modal logics $\L$, if $A{\wedge}{\lozenge}G{\rightarrow}{\square}E{\vee}F{\in}\L$, ${\lozenge}H{\rightarrow}{\square}B{\vee}D{\in}\L$ and ${\lozenge}I{\wedge}{\lozenge}C{\rightarrow}A{\in}\L$, then ${\lozenge}(G{\wedge}
$\linebreak$
(B{\rightarrow}C){\wedge}H{\wedge}I{\wedge}(E{\rightarrow}J)){\rightarrow}F{\vee}D{\vee}{\lozenge}J{\in}\L$.
\end{lemma}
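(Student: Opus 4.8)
The plan is to derive the conclusion inside $\L$ by iterating Lemma~\ref{lemma:about:a:useful:application:of:rule:R3} twice, threading the auxiliary conjuncts $G$, $B{\rightarrow}C$, $H$, $I$, $E{\rightarrow}J$ through the $\lozenge$ by means of inference rule $(\Rule2)$ together with ordinary $\IPL$-reasoning. Write $K$ for the formula $G{\wedge}(B{\rightarrow}C){\wedge}H{\wedge}I{\wedge}(E{\rightarrow}J)$, so that the goal is to show ${\lozenge}K{\rightarrow}F{\vee}D{\vee}{\lozenge}J{\in}\L$.

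First I would exploit the hypothesis ${\lozenge}H{\rightarrow}{\square}B{\vee}D{\in}\L$. Since $K{\rightarrow}H$ is an $\IPL$-theorem, rule $(\Rule2)$ gives ${\lozenge}K{\rightarrow}{\lozenge}H{\in}\L$, and $\IPL$-reasoning then yields ${\lozenge}K{\rightarrow}{\square}B{\vee}D{\in}\L$. Applying Lemma~\ref{lemma:about:a:useful:application:of:rule:R3} to this (with $K$ in the role of the ``$A$'', $B$ in the role of the ``$B$'' and $D$ in the role of the ``$C$'') produces ${\lozenge}K{\rightarrow}{\lozenge}(B{\wedge}K){\vee}D{\in}\L$. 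Now $B{\wedge}K{\rightarrow}M$ is an $\IPL$-theorem, where $M$ abbreviates $C{\wedge}G{\wedge}H{\wedge}I{\wedge}(E{\rightarrow}J)$ — here the conjunct $C$ is recovered from $B$ and $B{\rightarrow}C$ — so rule $(\Rule2)$ and $\IPL$-reasoning give ${\lozenge}K{\rightarrow}{\lozenge}M{\vee}D{\in}\L$.

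Next I would extract from ${\lozenge}M$ the premisses needed for the two remaining hypotheses. Since $M{\rightarrow}I$, $M{\rightarrow}C$ and $M{\rightarrow}G$ are $\IPL$-theorems, rule $(\Rule2)$ gives ${\lozenge}M{\rightarrow}{\lozenge}I$, ${\lozenge}M{\rightarrow}{\lozenge}C$ and ${\lozenge}M{\rightarrow}{\lozenge}G$ in $\L$; combining the first two with ${\lozenge}I{\wedge}{\lozenge}C{\rightarrow}A{\in}\L$ gives ${\lozenge}M{\rightarrow}A{\in}\L$, and then with ${\lozenge}M{\rightarrow}{\lozenge}G{\in}\L$ and the hypothesis $A{\wedge}{\lozenge}G{\rightarrow}{\square}E{\vee}F{\in}\L$ we obtain ${\lozenge}M{\rightarrow}{\square}E{\vee}F{\in}\L$. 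A second application of Lemma~\ref{lemma:about:a:useful:application:of:rule:R3} yields ${\lozenge}M{\rightarrow}{\lozenge}(E{\wedge}M){\vee}F{\in}\L$; since $E{\wedge}M{\rightarrow}J$ is an $\IPL$-theorem (now $J$ is recovered from $E$ and $E{\rightarrow}J$), rule $(\Rule2)$ gives ${\lozenge}(E{\wedge}M){\rightarrow}{\lozenge}J{\in}\L$, hence ${\lozenge}M{\rightarrow}F{\vee}{\lozenge}J{\in}\L$. Chaining this with ${\lozenge}K{\rightarrow}{\lozenge}M{\vee}D{\in}\L$ and rearranging disjuncts by $\IPL$-reasoning gives ${\lozenge}K{\rightarrow}F{\vee}D{\vee}{\lozenge}J{\in}\L$.

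The only genuinely modal ingredients are rule $(\Rule2)$ and Lemma~\ref{lemma:about:a:useful:application:of:rule:R3} (which itself rests on rule $(\Rule3)$); everything else is $\IPL$-bookkeeping. The point to get right, and the thing I would be most careful about, is the order in which the conjuncts of $K$ are consumed: $H$ feeds the ${\lozenge}H$-hypothesis first, $B{\rightarrow}C$ converts the surviving $B$ into $C$, then $I$ and $C$ feed the ${\lozenge}I{\wedge}{\lozenge}C$-hypothesis, $G$ feeds the $A{\wedge}{\lozenge}G$-hypothesis, and $E{\rightarrow}J$ is used at the very end under the diamond. I expect no obstacle beyond keeping this accounting straight, and I would present the argument, as in Lemma~\ref{lemma:about:a:useful:application:of:rule:R3}, as an explicit numbered sequence of formulas in $\L$.
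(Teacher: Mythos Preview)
Your proposal is correct and follows essentially the same route as the paper: project conjuncts out of $K$ via $(\Rule2)$, apply Lemma~\ref{lemma:about:a:useful:application:of:rule:R3} twice (first with the ${\lozenge}H{\rightarrow}{\square}B{\vee}D$ hypothesis, then with the $A{\wedge}{\lozenge}G{\rightarrow}{\square}E{\vee}F$ hypothesis after recovering $A$ from ${\lozenge}I{\wedge}{\lozenge}C$), and finish by extracting $J$ from $E$ and $E{\rightarrow}J$ under the diamond. The only cosmetic difference is that you simplify $B{\wedge}K$ to your $M$ before continuing, whereas the paper carries the full conjunction $B{\wedge}K$ along unchanged; either way the same nineteen-line derivation results.
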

\begin{proof}
Let $\L$ be an intuitionistic modal logic.
By considering the following sequence of formulas, the reader may easily construct a proof that if $A{\wedge}{\lozenge}G{\rightarrow}{\square}E{\vee}F{\in}\L$, ${\lozenge}H{\rightarrow}
$\linebreak$
{\square}B{\vee}D{\in}\L$ and ${\lozenge}I{\wedge}{\lozenge}C{\rightarrow}A{\in}\L$ then ${\lozenge}(G{\wedge}(B{\rightarrow}C){\wedge}H{\wedge}I{\wedge}(E{\rightarrow}J)){\rightarrow}F{\vee}D{\vee}{\lozenge}J{\in}
$\linebreak$
\L$:
\begin{enumerate}
\item $A{\wedge}{\lozenge}G{\rightarrow}{\square}E{\vee}F$ (hypothesis),
\item ${\lozenge}H{\rightarrow}{\square}B{\vee}D$ (hypothesis),
\item ${\lozenge}I{\wedge}{\lozenge}C{\rightarrow}A$ (hypothesis),
\item $G{\wedge}(B{\rightarrow}C){\wedge}H{\wedge}I{\wedge}(E{\rightarrow}J){\rightarrow}H$ ($\IPL$-reasoning),
\item ${\lozenge}(G{\wedge}(B{\rightarrow}C){\wedge}H{\wedge}I{\wedge}(E{\rightarrow}J)){\rightarrow}{\lozenge}H$ (rule $(\Rule2)$ on~$\mathbf{(4)}$),
\item ${\lozenge}(G{\wedge}(B{\rightarrow}C){\wedge}H{\wedge}I{\wedge}(E{\rightarrow}J)){\rightarrow}{\square}B{\vee}D$ ($\IPL$-reasoning on~$\mathbf{(2)}$ and~$\mathbf{(5)}$),
\item ${\lozenge}(G{\wedge}(B{\rightarrow}C){\wedge}H{\wedge}I{\wedge}(E{\rightarrow}J)){\rightarrow}{\lozenge}(B{\wedge}G{\wedge}(B{\rightarrow}C){\wedge}H{\wedge}I{\wedge}(E{\rightarrow}J)){\vee}D
$\linebreak
(Lemma~\ref{lemma:about:a:useful:application:of:rule:R3} on~$\mathbf{(6)}$),
\item $B{\wedge}G{\wedge}(B{\rightarrow}C){\wedge}H{\wedge}I{\wedge}(E{\rightarrow}J){\rightarrow}G$ ($\IPL$-reasoning),
\item $B{\wedge}G{\wedge}(B{\rightarrow}C){\wedge}H{\wedge}I{\wedge}(E{\rightarrow}J){\rightarrow}I$ ($\IPL$-reasoning),
\item $B{\wedge}G{\wedge}(B{\rightarrow}C){\wedge}H{\wedge}I{\wedge}(E{\rightarrow}J){\rightarrow}C$ ($\IPL$-reasoning),
\item ${\lozenge}(B{\wedge}G{\wedge}(B{\rightarrow}C){\wedge}H{\wedge}I{\wedge}(E{\rightarrow}J)){\rightarrow}{\lozenge}G$ (rule $(\Rule2)$ on~$\mathbf{(8)}$),
\item ${\lozenge}(B{\wedge}G{\wedge}(B{\rightarrow}C){\wedge}H{\wedge}I{\wedge}(E{\rightarrow}J)){\rightarrow}{\lozenge}I$ (rule $(\Rule2)$ on~$\mathbf{(9)}$),
\item ${\lozenge}(B{\wedge}G{\wedge}(B{\rightarrow}C){\wedge}H{\wedge}I{\wedge}(E{\rightarrow}J)){\rightarrow}{\lozenge}C$ (rule $(\Rule2)$ on~$\mathbf{(10)}$),
\item ${\lozenge}(B{\wedge}G{\wedge}(B{\rightarrow}C){\wedge}H{\wedge}I{\wedge}(E{\rightarrow}J)){\rightarrow}A$ ($\IPL$-reasoning on~$\mathbf{(3)}$, $\mathbf{(12)}
$\linebreak
and~$\mathbf{(13)}$),
\item ${\lozenge}(B{\wedge}G{\wedge}(B{\rightarrow}C){\wedge}H{\wedge}I{\wedge}(E{\rightarrow}J)){\rightarrow}{\square}E{\vee}F$ ($\IPL$-reasoning on~$\mathbf{(1)}$, $\mathbf{(11)}
$\linebreak
and~$\mathbf{(14)}$),
\item ${\lozenge}(B{\wedge}G{\wedge}(B{\rightarrow}C){\wedge}H{\wedge}I{\wedge}(E{\rightarrow}J)){\rightarrow}{\lozenge}(E{\wedge}B{\wedge}G{\wedge}(B{\rightarrow}C){\wedge}H{\wedge}I{\wedge}(E{\rightarrow}J)){\vee}
$\linebreak$
F$ (Lemma~\ref{lemma:about:a:useful:application:of:rule:R3} on~$\mathbf{(15)}$),
\item $E{\wedge}B{\wedge}G{\wedge}(B{\rightarrow}C){\wedge}H{\wedge}I{\wedge}(E{\rightarrow}J){\rightarrow}J$ ($\IPL$-reasoning),
\item ${\lozenge}(E{\wedge}B{\wedge}G{\wedge}(B{\rightarrow}C){\wedge}H{\wedge}I{\wedge}(E{\rightarrow}J)){\rightarrow}{\lozenge}J$ (rule $(\Rule2)$ on~$\mathbf{(17)}$),
\item ${\lozenge}(G{\wedge}(B{\rightarrow}C){\wedge}H{\wedge}I{\wedge}(E{\rightarrow}J)){\rightarrow}F{\vee}D{\vee}{\lozenge}J$ ($\IPL$-reasoning on~$\mathbf{(7)}$, $\mathbf{(16)}
$\linebreak
and~$\mathbf{(18)}$).
\end{enumerate}
\medskip
\end{proof}
Lemma~\ref{lemma:inference:rule:R3:can:be:eliminated:if:AxiomFORWARDCONFLUENCE:is:used} says that inference rule $(\Rule3)$ can be eliminated when axiom $(\Axiom\mathbf{f})$ is used.
\begin{lemma}\label{lemma:inference:rule:R3:can:be:eliminated:if:AxiomFORWARDCONFLUENCE:is:used}
For all intuitionistic modal logics $\L$, if $(\Axiom\mathbf{f}){\in}\L$ then $\L$ is equal to the least set of formulas closed under uniform substitution, containing the standard axioms of $\IPL$, closed under the standard inference rules of $\IPL$, containing the axioms $(\Axiom1)$, $(\Axiom2)$, $(\Axiom3)$ and $(\Axiom4)$, closed under the inference rules $(\Rule1)$ and $(\Rule2)$ and containing $\L$.
\end{lemma}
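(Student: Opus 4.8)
The plan is to prove the two inclusions between $\L$ and the set described on the right, which I will call $\L'$. The inclusion $\L{\subseteq}\L'$ is immediate from the definition of $\L'$, since $\L'$ is required to contain $\L$. For the reverse inclusion, I would note that $\L$, being an intuitionistic modal logic, is itself closed under uniform substitution, contains the standard axioms of $\IPL$, is closed under the standard inference rules of $\IPL$, contains $(\Axiom1)$, $(\Axiom2)$, $(\Axiom3)$, $(\Axiom4)$, and is closed under $(\Rule1)$ and $(\Rule2)$ — indeed it is closed under the larger battery of rules defining an intuitionistic modal logic, in particular under $(\Rule3)$. Hence $\L$ is one of the sets of which $\L'$ is the least, so $\L'{\subseteq}\L$, and therefore $\L'{=}\L$.

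What makes the hypothesis $(\Axiom\mathbf{f}){\in}\L$ pertinent — and what justifies the lemma's title — is the accompanying observation that $\L'$ is \emph{already} closed under $(\Rule3)$, so that striking $(\Rule3)$ from the list of closure conditions costs nothing. I would establish this by a direct derivation, in the style of Lemmas~\ref{lemma:about:a:useful:application:of:rule:R3} and~\ref{first:lemma:about:the:use:of:the:special:inference:rule}. Suppose ${\lozenge}A{\rightarrow}B{\vee}{\square}(A{\rightarrow}C){\in}\L'$; I want ${\lozenge}A{\rightarrow}B{\vee}{\lozenge}C{\in}\L'$. Since $(\Axiom\mathbf{f}){\in}\L{\subseteq}\L'$ and $\L'$ is closed under uniform substitution, the instance ${\lozenge}((A{\rightarrow}C){\rightarrow}C){\rightarrow}({\square}(A{\rightarrow}C){\rightarrow}{\lozenge}C)$, obtained by substituting $p{\mapsto}A{\rightarrow}C$ and $q{\mapsto}C$, lies in $\L'$. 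As $A{\rightarrow}((A{\rightarrow}C){\rightarrow}C)$ is an $\IPL$-theorem, rule $(\Rule2)$ yields ${\lozenge}A{\rightarrow}{\lozenge}((A{\rightarrow}C){\rightarrow}C){\in}\L'$. Chaining these two implications by $\IPL$-reasoning gives ${\lozenge}A{\rightarrow}({\square}(A{\rightarrow}C){\rightarrow}{\lozenge}C){\in}\L'$, and combining this with the hypothesis by a propositional case split on $B{\vee}{\square}(A{\rightarrow}C)$ produces ${\lozenge}A{\rightarrow}B{\vee}{\lozenge}C{\in}\L'$. Thus $\L'$ is closed under $(\Rule3)$, hence is itself an intuitionistic modal logic containing $\L$ (which re-confirms $\L'{=}\L$ via minimality).

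The main obstacle is precisely the choice of that substitution instance of $(\Axiom\mathbf{f})$: one must feed the available $\square$-formula ${\square}(A{\rightarrow}C)$ into the ``${\square}p$'' slot while discharging the ``${\lozenge}(p{\rightarrow}q)$'' antecedent using nothing stronger than $(\Rule2)$ on the intuitionistic tautology $A{\rightarrow}((A{\rightarrow}C){\rightarrow}C)$. Once this step is spotted, everything else is routine $\IPL$-bookkeeping; the only care needed is to verify that none of those ``$\IPL$-reasoning'' steps covertly reintroduces $(\Rule3)$ (they use only propositional inference and $(\Rule2)$). The structural part of the argument — the two inclusions via the minimality of $\L'$ and the fact that an intuitionistic modal logic satisfies every closure condition in the definition of $\L'$ — is then purely formal.
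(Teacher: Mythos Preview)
Your proof is correct and follows the paper's approach exactly: the same substitution instance of $(\Axiom\mathbf{f})$ (namely ${\lozenge}((A{\rightarrow}C){\rightarrow}C){\rightarrow}({\square}(A{\rightarrow}C){\rightarrow}{\lozenge}C)$), the same application of $(\Rule2)$ to the $\IPL$-tautology $A{\rightarrow}((A{\rightarrow}C){\rightarrow}C)$, and the same propositional chaining to reach ${\lozenge}A{\rightarrow}B{\vee}{\lozenge}C$. You are additionally explicit about the two-inclusion structure and the observation that the bare equality $\L{=}\L'$ is immediate from minimality (the paper compresses this into ``Obviously, it suffices to demonstrate\ldots''), but this only makes your argument more transparent.
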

\begin{proof}
Let $\L$ be an intuitionistic modal logic.
Suppose $(\Axiom\mathbf{f}){\in}\L$.
Let $\L^{\prime}$ be the least set of formulas closed under uniform substitution, containing the standard axioms of $\IPL$, closed under the standard inference rules of $\IPL$, containing the axioms $(\Axiom1)$, $(\Axiom2)$, $(\Axiom3)$ and $(\Axiom4)$, closed under the inference rules $(\Rule1)$ and $(\Rule2)$ and containing $\L$.
Obviously, it suffices to demonstrate that for all formulas $A,B,C$, if ${\lozenge}A{\rightarrow}B{\vee}{\square}(A{\rightarrow}C){\in}\L^{\prime}$ then ${\lozenge}A{\rightarrow}B{\vee}{\lozenge}C{\in}\L^{\prime}$.\footnote{That is to say, it suffices to demonstrate that inference rule $(\Rule3)$ is $\L^{\prime}$-admissible.}
Let $A,B,C$ be formulas.
By considering the following sequence of formulas, the reader may easily construct a proof that if ${\lozenge}A{\rightarrow}B{\vee}{\square}(A{\rightarrow}C){\in}\L^{\prime}$ then ${\lozenge}A{\rightarrow}B{\vee}{\lozenge}C{\in}\L^{\prime}$:
\begin{enumerate}
\item ${\lozenge}A{\rightarrow}B{\vee}{\square}(A{\rightarrow}C)$ (hypothesis),
\item $A{\rightarrow}((A{\rightarrow}C){\rightarrow}C)$ ($\IPL$-reasoning),
\item ${\lozenge}A{\rightarrow}{\lozenge}((A{\rightarrow}C){\rightarrow}C)$ (rule $(\Rule2)$ on~$\mathbf{(2)}$),
\item ${\lozenge}((A{\rightarrow}C){\rightarrow}C){\rightarrow}({\square}(A{\rightarrow}C){\rightarrow}{\lozenge}C)$ (axiom $(\Axiom\mathbf{f})$),
\item ${\lozenge}A{\rightarrow}B{\vee}{\lozenge}C$ ($\IPL$-reasoning on~$\mathbf{(1)}$, $\mathbf{(3)}$ and~$\mathbf{(4)}$).
\end{enumerate}
\medskip
\end{proof}
%
%
%In Section~\ref{section:soundness:and:completeness}, we prove the completeness results described in Tables~\ref{table:completeness:results:a}, \ref{table:completeness:results:b}, \ref{table:completeness:results:c} and~\ref{table:completeness:results:d}.
In Section~\ref{section:soundness:and:completeness}, we prove the completeness results described in Tables~\ref{table:completeness:results:a}, \ref{table:completeness:results:b}, \ref{table:completeness:results:c} and~\ref{table:completeness:results:d}.
Our proofs are based on the canonical model construction which needs the results presented in Sections~\ref{section:existence:lemmas:and:lindenbaum:lemma} and~\ref{section:canonical:frame:and:canonical:model}.
\begin{table}[ht]
\begin{center}
\begin{tabular}{|c|c|}
\hline
class of frames&intuitionistic modal logic
\\
\hline
${\mathcal C}_{\fcfra}$&$\L_{\fcfra}$
\\
\hline
${\mathcal C}_{\bcfra}$&$\L_{\bcfra}$
\\
\hline
${\mathcal C}_{\dcfra}$&$\L_{\dcfra}$
\\
\hline
${\mathcal C}_{\ucfra}$&?
\\
\hline
${\mathcal C}_{\fbcfra}$&$\L_{\fbcfra}$
\\
\hline
${\mathcal C}_{\fdcfra}$&$\L_{\fdcfra}$
\\
\hline
${\mathcal C}_{\fucfra}$&?
\\
\hline
${\mathcal C}_{\bdcfra}$&$\L_{\bdcfra}$
\\
\hline
${\mathcal C}_{\bucfra}$&?
\\
\hline
${\mathcal C}_{\ducfra}$&?
\\
\hline
${\mathcal C}_{\fbdcfra}$&$\L_{\fbdcfra}$
\\
\hline
${\mathcal C}_{\fbucfra}$&?
\\
\hline
${\mathcal C}_{\fducfra}$&?
\\
\hline
${\mathcal C}_{\bducfra}$&?
\\
\hline
${\mathcal C}_{\fbducfra}$&?
\\
\hline
\end{tabular}
\caption{Completeness results proved in Proposition~\ref{proposition:soundness:completeness}}\label{table:completeness:results:a}
\end{center}
\end{table}
\begin{table}[ht]
\begin{center}
\begin{tabular}{|c|c|}
\hline
class of frames&intuitionistic modal logic
\\
\hline
${\mathcal C}_{\qfcfra}$&$\L_{\fcfra}$
\\
\hline
${\mathcal C}_{\qbcfra}$&$\L_{\bcfra}$
\\
\hline
${\mathcal C}_{\qdcfra}$&$\L_{\dcfra}$
\\
\hline
${\mathcal C}_{\qucfra}$&?
\\
\hline
\end{tabular}
\caption{Completeness results proved in Proposition~\ref{proposition:soundness:completeness}}\label{table:completeness:results:b}
\end{center}
\end{table}
\begin{table}[ht]
\begin{center}
\begin{tabular}{|c|c|}
\hline
class of frames&intuitionistic modal logic
\\
\hline
${\mathcal C}_{\reflexive}$&$\L_{\reflexive}$
\\
\hline
${\mathcal C}_{\symmetric}$&$\L_{\symmetric}$
\\
\hline
${\mathcal C}_{\transitive}$&$\L_{\min}$
\\
\hline
${\mathcal C}_{\reflexive}{\cap}{\mathcal C}_{\symmetric}$&$\L_{\reflexive}{\oplus}\L_{\symmetric}$
\\
\hline
${\mathcal C}_{\reflexive}{\cap}{\mathcal C}_{\transitive}$&$\L_{\reflexive}$
\\
\hline
${\mathcal C}_{\symmetric}{\cap}{\mathcal C}_{\transitive}$&?
\\
\hline
${\mathcal C}_{\partition}$&$\L_{\reflexive}{\oplus}\L_{\symmetric}$
\\
\hline
\end{tabular}
\caption{Completeness results proved in Proposition~\ref{lemma:table:7:lemma}}\label{table:completeness:results:c}
\end{center}
\end{table}
\begin{table}[ht]
\begin{center}
\begin{tabular}{|c|c|}
\hline
class of frames&intuitionistic modal logic
\\
\hline
${\mathcal C}_{\ureflexive}$&$\L_{\ureflexive}$
\\
\hline
${\mathcal C}_{\dreflexive}$&$\L_{\dreflexive}$
\\
\hline
${\mathcal C}_{\usymmetric}$&$\L_{\usymmetric}$
\\
\hline
${\mathcal C}_{\dsymmetric}$&$\L_{\dsymmetric}$
\\
\hline
${\mathcal C}_{\utransitive}$&$\L_{\utransitive}$
\\
\hline
${\mathcal C}_{\dtransitive}$&$\L_{\dtransitive}$
\\
\hline
\end{tabular}
\caption{Completeness results proved in Proposition~\ref{proposition:completeness:uref:dref:etc}}\label{table:completeness:results:d}
\end{center}
\end{table}
\section{Theories}\label{section:theories}
Let $\L$ be an intuitionistic modal logic.
\begin{definition}[Theories]
A {\em $\L$-theory}\/ is a set of formulas containing $\L$ and closed under modus ponens.
\end{definition}
Obviously, for all indexed families $(\Gamma_{i})_{i{\in}I}$ of $\L$-theories, $\bigcap\{\Gamma_{i}\ :\ i{\in}I\}$ is a $\L$-theory and for all nonempty chains $(\Gamma_{i})_{i{\in}I}$ of $\L$-theories, $\bigcup\{\Gamma_{i}\ :\ i{\in}I\}$ is a $\L$-theory.
As a result, there exists a least $\L$-theory (which is nothing but $\L$).
In other respect, obviously, $\Fo$ is the greatest $\L$-theory.
\begin{definition}[Proper theories]
A $\L$-theory $\Gamma$ is {\em proper}\/ if ${\bot}{\not\in}\Gamma$.
\end{definition}
Obviously, for all $\L$-theories $\Gamma$, using axioms and inference rules of $\IPL$, $\Gamma$ is proper if and only if $\Gamma{\not=}\Fo$.
Moreover, for all proper $\L$-theories $\Gamma$, using axiom $(\Axiom4)$, ${\lozenge}{\bot}{\not\in}\Gamma$.
\begin{definition}[Prime theories]
A proper $\L$-theory $\Gamma$ is {\em prime}\/ if for all formulas $A,B$, if $A{\vee}B{\in}\Gamma$ then either $A{\in}\Gamma$, or $B{\in}\Gamma$.
\end{definition}
\begin{definition}[Operations on theories]
For all $\L$-theories $\Gamma$ and for all sets $\Delta$ of formulas, let $\Gamma{+}\Delta{=}\{B{\in}\Fo\ :$ there exists $m{\in}\N$ and there exists $A_{1},\ldots,A_{m}{\in}\Delta$ such that $A_{1}{\wedge}\ldots{\wedge}A_{m}{\rightarrow}B{\in}\Gamma\}$.
For all $\L$-theories $\Gamma$ and and for all formulas $A$, we write $\Gamma{+}A$ instead of $\Gamma{+}\{A\}$.
For all $\L$-theories $\Gamma$, let ${\square}\Gamma{=}\{A{\in}\Fo\ :\ {\square}A{\in}\Gamma\}$.
\end{definition}
\begin{lemma}\label{lemma:theory:gamma:plus:A}
For all $\L$-theories $\Gamma$ and for all sets $\Delta$ of formulas,
\begin{enumerate}
\item $\Gamma{+}\Delta$ is a $\L$-theory,
\item $\Gamma{\subseteq}\Gamma{+}\Delta$,
\item $\Delta{\subseteq}\Gamma{+}\Delta$,
\item for all $\L$-theories $\Lambda$, if $\Gamma{\subseteq}\Lambda$ and $\Delta{\subseteq}\Lambda$ then $\Gamma{+}\Delta{\subseteq}\Lambda$,
\item $\Gamma{+}\Delta$ is proper if and only if for all $m{\in}\N$ and for all $A_{1},\ldots,A_{m}{\in}\Delta$, ${\neg}(A_{1}{\wedge}\ldots
$\linebreak$
{\wedge}A_{m}){\not\in}\Gamma$.
\end{enumerate}
\end{lemma}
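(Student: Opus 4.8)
The plan is to verify the five items in turn, each time reducing the claim to pure $\IPL$-reasoning together with the two defining features of an $\L$-theory: it contains $\L$ and it is closed under modus ponens. Before starting, I would record two elementary observations that will be used repeatedly. First, if $\Lambda$ is an $\L$-theory, $A_{1},\ldots,A_{m}{\in}\Lambda$ and $A_{1}{\wedge}\ldots{\wedge}A_{m}{\rightarrow}C{\in}\Lambda$, then $C{\in}\Lambda$: indeed $\IPL\vdash A_{1}{\rightarrow}(A_{2}{\rightarrow}(\cdots{\rightarrow}(A_{m}{\rightarrow}A_{1}{\wedge}\ldots{\wedge}A_{m})\cdots))$, so $A_{1}{\wedge}\ldots{\wedge}A_{m}{\in}\Lambda$ by iterated modus ponens, and then $C{\in}\Lambda$. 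Second, if $A_{1}{\wedge}\ldots{\wedge}A_{m}{\rightarrow}B{\in}\Lambda$ and $B{\wedge}A_{1}'{\wedge}\ldots{\wedge}A_{n}'{\rightarrow}C{\in}\Lambda$, then $A_{1}{\wedge}\ldots{\wedge}A_{m}{\wedge}A_{1}'{\wedge}\ldots{\wedge}A_{n}'{\rightarrow}C{\in}\Lambda$, again by $\IPL$-reasoning inside $\Lambda$. I would also note the degenerate case $m{=}0$: the empty conjunction is ${\top}$, and since $\IPL\vdash B{\leftrightarrow}({\top}{\rightarrow}B)$, having ${\top}{\rightarrow}B$ in an $\L$-theory is the same as having $B$ in it; in particular $\Gamma{\subseteq}\Gamma{+}\Delta$ will come for free.

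For Item~1 I would first show $\L{\subseteq}\Gamma{+}\Delta$ by taking $m{=}0$: for $B{\in}\L{\subseteq}\Gamma$ we have ${\top}{\rightarrow}B{\in}\Gamma$. Then closure of $\Gamma{+}\Delta$ under modus ponens: given $B{\in}\Gamma{+}\Delta$ and $B{\rightarrow}C{\in}\Gamma{+}\Delta$, pick witnessing lists $A_{1},\ldots,A_{m}{\in}\Delta$ for $B$ and $A_{1}',\ldots,A_{n}'{\in}\Delta$ for $B{\rightarrow}C$; combining the two implications in $\Gamma$ as in the second observation yields $A_{1}{\wedge}\ldots{\wedge}A_{m}{\wedge}A_{1}'{\wedge}\ldots{\wedge}A_{n}'{\rightarrow}C{\in}\Gamma$, and since the concatenated list still consists of members of $\Delta$, $C{\in}\Gamma{+}\Delta$. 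Item~2 is the $m{=}0$ remark just made. For Item~3, given $A{\in}\Delta$ take $m{=}1$ with $A_{1}{=}A$ and use $A{\rightarrow}A{\in}\L{\subseteq}\Gamma$. For Item~4, if $\Gamma{\subseteq}\Lambda$, $\Delta{\subseteq}\Lambda$ and $B{\in}\Gamma{+}\Delta$ is witnessed by $A_{1},\ldots,A_{m}{\in}\Delta$, then $A_{1}{\wedge}\ldots{\wedge}A_{m}{\rightarrow}B{\in}\Gamma{\subseteq}\Lambda$ and $A_{1},\ldots,A_{m}{\in}\Lambda$, so $B{\in}\Lambda$ by the first observation. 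Finally, for Item~5 I would argue by contraposition: by the remark already made for proper theories, $\Gamma{+}\Delta$ fails to be proper iff ${\bot}{\in}\Gamma{+}\Delta$, i.e.\ iff there are $A_{1},\ldots,A_{m}{\in}\Delta$ with $A_{1}{\wedge}\ldots{\wedge}A_{m}{\rightarrow}{\bot}{\in}\Gamma$, and by definition $A_{1}{\wedge}\ldots{\wedge}A_{m}{\rightarrow}{\bot}$ is exactly ${\neg}(A_{1}{\wedge}\ldots{\wedge}A_{m})$; this is the negation of the stated condition.

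I do not expect any genuine obstacle: every step is an instance of $\IPL$-reasoning performed inside a set containing $\L$ and closed under modus ponens. The only points needing a little care are the handling of the $m{=}0$ case (so that Item~2 is literally an instance of the definition) and the bookkeeping when merging two witness lists, which is harmless because one simply concatenates them and stays inside $\Delta$. I would present the two $\IPL$-reasoning observations as a preamble and then dispatch Items~1--5 in order, each in a line or two.
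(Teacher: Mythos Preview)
Your proposal is correct and takes essentially the same approach as the paper: the paper's entire proof is the single sentence ``Use axioms and inference rules of $\IPL$,'' and your argument is a careful unpacking of precisely that. Your handling of the $m{=}0$ case and the concatenation of witness lists is exactly the routine $\IPL$-reasoning the paper is gesturing at.
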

\begin{proof}
Use axioms and inference rules of $\IPL$.
\medskip
\end{proof}
\begin{lemma}\label{lemma:theory:square:gamma}
For all $\L$-theories $\Gamma$, ${\square}\Gamma$ is a $\L$-theory.
\end{lemma}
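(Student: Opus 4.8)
The plan is to verify directly the two defining properties of a $\L$-theory for the set ${\square}\Gamma{=}\{A{\in}\Fo\ :\ {\square}A{\in}\Gamma\}$, namely that it contains $\L$ and that it is closed under modus ponens. Both checks reduce to a single application of an inference rule, respectively an axiom, of the intuitionistic modal logic $\L$, combined with the hypothesis that $\Gamma$ is a $\L$-theory.

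First I would show $\L{\subseteq}{\square}\Gamma$. Let $A{\in}\L$. Since $\L$, being an intuitionistic modal logic, is closed under the inference rule $(\Rule1)$, we get ${\square}A{\in}\L$. Because $\Gamma$ is a $\L$-theory, $\L{\subseteq}\Gamma$, so ${\square}A{\in}\Gamma$, which by the definition of ${\square}\Gamma$ means exactly $A{\in}{\square}\Gamma$.

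Next I would show ${\square}\Gamma$ is closed under modus ponens. Assume $A{\in}{\square}\Gamma$ and $A{\rightarrow}B{\in}{\square}\Gamma$; the goal is $B{\in}{\square}\Gamma$. By the definition of ${\square}\Gamma$ this means ${\square}A{\in}\Gamma$ and ${\square}(A{\rightarrow}B){\in}\Gamma$. By axiom $(\Axiom1)$ and the closure of $\L$ under uniform substitution, ${\square}(A{\rightarrow}B){\rightarrow}({\square}A{\rightarrow}{\square}B){\in}\L{\subseteq}\Gamma$. Applying the closure of $\Gamma$ under modus ponens twice~---~first to ${\square}(A{\rightarrow}B)$ and this implication, then to ${\square}A$ and the resulting ${\square}A{\rightarrow}{\square}B$~---~yields ${\square}B{\in}\Gamma$, i.e. $B{\in}{\square}\Gamma$.

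I do not expect any genuine obstacle here: the statement is a routine bookkeeping lemma whose entire content is the normality of ${\square}$ as witnessed by $(\Rule1)$ and $(\Axiom1)$, packaged so that it can be reused when the canonical model is built in Section~\ref{section:canonical:frame:and:canonical:model}. The only point worth stating carefully is that $(\Rule1)$ is applied to a theorem $A{\in}\L$ (not to an arbitrary member of $\Gamma$), so no confluence or further axioms beyond those in the definition of intuitionistic modal logic are needed.
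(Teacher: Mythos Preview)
Your proof is correct and is precisely an expanded version of the paper's own one-line proof, which simply says ``Use axiom $(\Axiom1)$ and inference rule $(\Rule1)$.'' You have spelled out exactly the two verifications those tools are meant to cover.
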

\begin{proof}
Use axiom $(\Axiom1)$ and inference rule $(\Rule1)$.
\medskip
\end{proof}
From now on, we use Lemmas~\ref{lemma:theory:gamma:plus:A} and~\ref{lemma:theory:square:gamma} without explicitly mentioning them.
\section{Existence Lemmas}\label{section:existence:lemmas:and:lindenbaum:lemma}
Let $\L$ be an intuitionistic modal logic.
\begin{lemma}\label{lemma:Zorn:lemma}
Let ${\mathcal S}$ be a set of $\L$-theories.
If ${\mathcal S}$ is nonempty and for all nonempty chains $(\Gamma_{i})_{i{\in}I}$ of elements of ${\mathcal S}$, $\bigcup\{\Gamma_{i}\ :\ i{\in}I\}$ is an element of ${\mathcal S}$ then ${\mathcal S}$ possesses a maximal element.
\end{lemma}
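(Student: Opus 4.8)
The plan is to apply Zorn's Lemma to the partially ordered set $({\mathcal S},{\subseteq})$, where $\subseteq$ denotes ordinary set inclusion. First I would note that $\subseteq$ is a partial order on ${\mathcal S}$, so it remains only to check that every chain of elements of ${\mathcal S}$ possesses an upper bound in ${\mathcal S}$. For a nonempty chain $(\Gamma_{i})_{i{\in}I}$ of elements of ${\mathcal S}$, the hypothesis guarantees that $\bigcup\{\Gamma_{i}\ :\ i{\in}I\}$ is again an element of ${\mathcal S}$, and since $\Gamma_{i}{\subseteq}\bigcup\{\Gamma_{j}\ :\ j{\in}I\}$ for each $i{\in}I$, this union is an upper bound of the chain inside ${\mathcal S}$. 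For the empty chain, any element of ${\mathcal S}$ is vacuously an upper bound, and such an element exists because ${\mathcal S}$ is assumed nonempty. Hence all chains in $({\mathcal S},{\subseteq})$ have upper bounds in ${\mathcal S}$, and Zorn's Lemma delivers a maximal element of ${\mathcal S}$.

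I do not anticipate any real obstacle here, since the statement is a routine repackaging of Zorn's Lemma tailored to the way it will be used in the later existence lemmas (typically with ${\mathcal S}$ taken to be a set of proper $\L$-theories avoiding some prescribed formula, so that the hypothesis on unions is precisely what makes the construction go through). The only point deserving a word of care is the treatment of the empty chain, which is exactly why the nonemptiness of ${\mathcal S}$ is included among the assumptions; depending on the preferred formulation of Zorn's Lemma one may instead phrase the argument as ``${\mathcal S}$ is a nonempty poset in which every nonempty chain has an upper bound, hence has a maximal element,'' which amounts to the same thing.
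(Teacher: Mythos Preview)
Your proposal is correct and matches the paper's approach exactly: the paper's own proof simply says ``This is a standard application of Zorn's Lemma'' with a reference, and what you have written is precisely that standard application spelled out. Your handling of the empty chain via the nonemptiness hypothesis is the right detail to mention.
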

\begin{proof}
This is a standard application of Zorn's Lemma.
See~\cite[Chapter~$1$]{Wechler:1992}.
\medskip
\end{proof}
\begin{lemma}[Existence Lemma for ${\rightarrow}$]\label{lemma:prime:proper:for:implication}
Let $\Gamma$ be a prime $\L$-theory.
Let $B,C$ be formulas.
If $B{\rightarrow}C{\not\in}\Gamma$ then there exists a prime $\L$-theory $\Delta$ such that $\Gamma{\subseteq}\Delta$, $B{\in}\Delta$ and $C{\not\in}\Delta$.
\end{lemma}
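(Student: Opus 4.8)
The plan is to obtain $\Delta$ by a Lindenbaum-style maximalisation of the theory $\Gamma{+}B$, using Zorn's Lemma in the form of Lemma~\ref{lemma:Zorn:lemma}. First I would check that $C{\not\in}\Gamma{+}B$: if $C{\in}\Gamma{+}B$, then by Lemma~\ref{lemma:theory:gamma:plus:A} there is $m{\in}\N$ with $B{\wedge}\ldots{\wedge}B{\rightarrow}C{\in}\Gamma$ ($m$ occurrences of $B$), and $\IPL$-reasoning (distinguishing the degenerate case $m{=}0$, where this amounts to $C{\in}\Gamma$ together with the $\IPL$-theorem $C{\rightarrow}(B{\rightarrow}C)$, from $m{\geq}1$, where $B{\wedge}\ldots{\wedge}B{\leftrightarrow}B$) gives $B{\rightarrow}C{\in}\Gamma$, contradicting the hypothesis. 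By Lemma~\ref{lemma:theory:gamma:plus:A}, $\Gamma{+}B$ is then a $\L$-theory with $\Gamma{\subseteq}\Gamma{+}B$, $B{\in}\Gamma{+}B$ and $C{\not\in}\Gamma{+}B$.

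Next I would let $\mathcal{S}$ be the set of all $\L$-theories $\Theta$ such that $\Gamma{+}B{\subseteq}\Theta$ and $C{\not\in}\Theta$. This set contains $\Gamma{+}B$, so it is nonempty, and the union of any nonempty chain of its members is again a $\L$-theory (unions of nonempty chains of $\L$-theories being $\L$-theories), still contains $\Gamma{+}B$, and still omits $C$; hence $\mathcal{S}$ is closed under unions of nonempty chains. Lemma~\ref{lemma:Zorn:lemma} therefore yields a maximal element $\Delta$ of $\mathcal{S}$. By construction $\Gamma{\subseteq}\Gamma{+}B{\subseteq}\Delta$, $B{\in}\Delta$ and $C{\not\in}\Delta$, and in particular $\Delta{\not=}\Fo$, so $\Delta$ is proper.

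The key remaining step is to show that $\Delta$ is prime. Suppose $D{\vee}E{\in}\Delta$ while $D{\not\in}\Delta$ and $E{\not\in}\Delta$. Consider $\Delta{+}D$; it is a $\L$-theory, it contains $\Delta$ hence $\Gamma{+}B$, and it contains $D$, so $\Delta{+}D{\not\subseteq}\Delta$. If $C{\not\in}\Delta{+}D$ then $\Delta{+}D{\in}\mathcal{S}$ and $\Delta{\subsetneq}\Delta{+}D$, contradicting the maximality of $\Delta$; hence $C{\in}\Delta{+}D$, which by Lemma~\ref{lemma:theory:gamma:plus:A} and $\IPL$-reasoning (again handling the degenerate conjunction case, which would force $C{\in}\Delta$) gives $D{\rightarrow}C{\in}\Delta$. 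Symmetrically $E{\rightarrow}C{\in}\Delta$, so $(D{\vee}E){\rightarrow}C{\in}\Delta$ by $\IPL$-reasoning; since $D{\vee}E{\in}\Delta$ and $\Delta$ is closed under modus ponens, $C{\in}\Delta$: a contradiction. Thus $\Delta$ is prime, and it satisfies $\Gamma{\subseteq}\Delta$, $B{\in}\Delta$ and $C{\not\in}\Delta$, as required.

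I do not foresee any genuine obstacle here; the proof is routine. The only points deserving care are the bookkeeping around the finite-conjunction definition of ${+}$ (including the empty-conjunction edge case), and the observation that $\Delta{+}D$ is a \emph{strict} extension of $\Delta$ whenever $D{\not\in}\Delta$, which is what lets the maximality of $\Delta$ be invoked.
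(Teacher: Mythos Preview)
Your proof is correct and follows essentially the same Lindenbaum-style maximalisation as the paper: both start from $\Gamma{+}B$, apply Zorn (Lemma~\ref{lemma:Zorn:lemma}) to the family of $\L$-theories extending it and omitting $C$, and establish primeness of the maximal element via the $D{\rightarrow}C$, $E{\rightarrow}C$ argument. The only cosmetic difference is that the paper phrases the family $\mathcal{S}$ by the three conditions $\Gamma{\subseteq}\Delta$, $B{\in}\Delta$, $C{\not\in}\Delta$ rather than your equivalent $\Gamma{+}B{\subseteq}\Theta$, $C{\not\in}\Theta$.
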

\begin{proof}
Suppose $B{\rightarrow}C{\not\in}\Gamma$.
Let ${\mathcal S}{=}\{\Delta\ :\ \Delta$ is a $\L$-theory such that {\bf (1)}~$\Gamma{\subseteq}\Delta$, {\bf (2)}~$B{\in}\Delta$ and {\bf (3)}~$C{\not\in}\Delta\}$.
Since $B{\rightarrow}C{\not\in}\Gamma$, then $C{\not\in}\Gamma{+}B$.
Hence, $\Gamma{+}B\in{\mathcal S}$.
Thus, ${\mathcal S}$ is nonempty.
Moreover, for all nonempty chains $(\Delta_{i})_{i{\in}I}$ of elements of ${\mathcal S}$, $\bigcup\{\Delta_{i}\ :\ i{\in}I\}$ is an element of ${\mathcal S}$.
Consequently, by Lemma~\ref{lemma:Zorn:lemma}, ${\mathcal S}$ possesses a maximal element $\Delta$.
Hence, $\Delta$ is a $\L$-theory such that $\Gamma{\subseteq}\Delta$, $B{\in}\Delta$ and $C{\not\in}\Delta$.
Thus, it only remains to be proved that $\Delta$ is proper and prime.
We claim that $\Delta$ is proper.
If not, $\Delta{=}\Fo$.
Consequently, $C{\in}\Delta$: a contradiction.
We claim that $\Delta$ is prime.
If not, there exists formulas $D,E$ such that $D{\vee}E{\in}\Delta$, $D{\not\in}\Delta$ and $E{\not\in}\Delta$.
Thus, by the maximality of $\Delta$ in ${\mathcal S}$, $\Delta{+}D{\not\in}{\mathcal S}$ and $\Delta{+}E{\not\in}{\mathcal S}$.
Consequently, $C{\in}\Delta{+}D$ and $C{\in}\Delta{+}E$.
Hence, $D{\rightarrow}C{\in}\Delta$ and $E{\rightarrow}C{\in}\Delta$.
Thus, using axioms and inference rules of $\IPL$, $D{\vee}E{\rightarrow}C{\in}\Delta$.
Since $D{\vee}E{\in}\Delta$, then $C{\in}\Delta$: a contradiction.
\medskip
\end{proof}
\begin{lemma}[Existence Lemma for ${\square}$]\label{lemma:prime:proper:for:square}
Let $\Gamma$ be a prime $\L$-theory.
Let $B$ be a formula.
If ${\square}B{\not\in}\Gamma$ then there exists prime $\L$-theories $\Delta,\Lambda$ such that $\Gamma{\subseteq}\Delta$, $\Delta{\bowtie}\Lambda$ and $B{\not\in}\Lambda$.
\end{lemma}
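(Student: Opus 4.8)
The plan is to realise $\Delta$ and $\Lambda$ simultaneously as the two components of a maximal pair produced by Zorn's Lemma (Lemma~\ref{lemma:Zorn:lemma}), and to read off both primeness properties from maximality. Concretely, I would let $\mathcal{S}$ be the set of all pairs $(\Delta,\Lambda)$ of $\L$-theories such that $\Gamma{\subseteq}\Delta$, ${\square}\Delta{\subseteq}\Lambda$ (that is, ${\square}C{\in}\Delta$ implies $C{\in}\Lambda$), $C{\in}\Lambda$ implies ${\lozenge}C{\in}\Delta$, and $B{\not\in}\Lambda$; the ordering being componentwise inclusion. The last two clauses say exactly that such a pair forms a ${\bowtie}$-edge with $B$ absent on the far side, and $B{\not\in}\Lambda$ forces ${\square}B{\not\in}\Delta$. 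The set $\mathcal{S}$ is closed under unions of chains (each of its four defining clauses passes to directed unions), so once it is shown to be non-empty, Lemma~\ref{lemma:Zorn:lemma} supplies a maximal $(\Delta,\Lambda)$.

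Non-emptiness is where axiom $(\Axiom2)$ enters. Starting from $\Gamma$, I would iterate the operation $\Theta{\mapsto}\Theta{+}\{{\lozenge}C\ :\ {\square}C{\in}\Theta\}$ and take the union $\hat{\Gamma}$; then $\hat{\Gamma}$ is an $\L$-theory containing $\Gamma$ that is saturated in the sense that ${\square}C{\in}\hat{\Gamma}$ implies ${\lozenge}C{\in}\hat{\Gamma}$. The crucial point is ${\square}B{\not\in}\hat{\Gamma}$, which, by induction on the number of iterations, reduces to the following fact about an arbitrary $\L$-theory $\Theta$: if ${\square}C_{1},\ldots,{\square}C_{m}{\in}\Theta$ and ${\lozenge}C_{1}{\wedge}\ldots{\wedge}{\lozenge}C_{m}{\rightarrow}{\square}B{\in}\Theta$ then ${\square}B{\in}\Theta$. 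This follows from $(\Axiom2)$ by induction on $m$: from ${\square}C_{i}{\in}\Theta$ one gets ${\square}(C_{i}{\vee}B){\in}\Theta$, hence $(({\lozenge}C_{i}{\rightarrow}{\square}B){\rightarrow}{\square}B){\in}\Theta$ by $(\Axiom2)$, and one peels off the conjuncts one at a time. Consequently $(\hat{\Gamma},{\square}\hat{\Gamma}){\in}\mathcal{S}$ (Lemma~\ref{lemma:theory:square:gamma} gives that ${\square}\hat{\Gamma}$ is an $\L$-theory).

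Given the maximal pair $(\Delta,\Lambda)$, properness is immediate ($B{\not\in}\Lambda$; and if ${\bot}{\in}\Delta$ then ${\square}{\bot}{\in}\Delta$ forces ${\bot}{\in}\Lambda$). For primeness of $\Delta$: if $D{\vee}E{\in}\Delta$ but $D,E{\not\in}\Delta$, then since ${\square}B{\not\in}\Delta$ we cannot have both $D{\rightarrow}{\square}B{\in}\Delta$ and $E{\rightarrow}{\square}B{\in}\Delta$, so, say, ${\square}B{\not\in}\Delta{+}D$; closing the pair $(\Delta{+}D,\Lambda)$ under the two ${\bowtie}$-clauses then yields a member of $\mathcal{S}$ strictly above $(\Delta,\Lambda)$, a contradiction, provided this closure does not drag $B$ into the second component. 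For primeness of $\Lambda$: if $D{\vee}E{\in}\Lambda$ but $D,E{\not\in}\Lambda$, then ${\lozenge}(D{\vee}E){\in}\Delta$, hence ${\lozenge}D{\vee}{\lozenge}E{\in}\Delta$ by axiom $(\Axiom3)$, hence (by primeness of $\Delta$) say ${\lozenge}D{\in}\Delta$; closing $(\Delta,\Lambda{+}D)$ under the two ${\bowtie}$-clauses again produces, unless it introduces $B$, a member of $\mathcal{S}$ strictly above $(\Delta,\Lambda)$, a contradiction. Axiom $(\Axiom4)$ is used only to keep the theories proper throughout.

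The main obstacle is precisely the ``conservativity'' claim invoked twice above: closing such a pair under the ${\bowtie}$-clauses (which repeatedly turns a ${\square}C$ in the $\Delta$-component into a ${\lozenge}C$ there, via $C{\in}\Lambda$) does not put $B$ into the $\Lambda$-component. Because $\{C\ :\ {\lozenge}C{\in}\Delta\}$ is not closed under modus ponens, enlarging $\Lambda$ and re-saturating $\Delta$ could a priori force ${\square}B$ into $\Delta$ and hence $B$ into $\Lambda$; ruling this out requires a closure lemma generalising the $(\Axiom2)$-computation above and, crucially, also using inference rule $(\Rule3)$ through Lemma~\ref{lemma:about:a:useful:application:of:rule:R3} to trade a ${\square}$ occurring under a ${\lozenge}$ for a ${\lozenge}$ occurring under a ${\lozenge}$ — exactly the step the closure performs. (This is also why the analogous argument is much easier over $\IK$, where axiom $(\Axiom\mathbf{f})$ lets $(\Rule3)$ be eliminated, cf. Lemma~\ref{lemma:inference:rule:R3:can:be:eliminated:if:AxiomFORWARDCONFLUENCE:is:used}.)
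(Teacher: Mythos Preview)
Your pair-based strategy is genuinely different from the paper's, and the obstacle you flag in your last paragraph is a real gap, not a technicality. You need that closing $(\Delta{+}D,\Lambda)$ under the ${\bowtie}$-clauses never forces $B$ into the second component, but you do not prove this, and your guess that inference rule $(\Rule3)$ (via Lemma~\ref{lemma:about:a:useful:application:of:rule:R3}) is the missing ingredient is off the mark: the paper's proof of this lemma uses \emph{only} axiom $(\Axiom2)$, once, together with $(\Axiom3)$, and not $(\Rule3)$ at all --- $(\Rule3)$ is reserved for the Existence Lemma for ${\lozenge}$. So either your closure claim admits a proof from $(\Axiom2)$ alone that you have not supplied, or the simultaneous-pair approach does not work as stated.

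The paper sidesteps the whole issue by decoupling the two constructions. First it takes $\Delta$ maximal among $\L$-theories containing $\Gamma$ with ${\square}B{\not\in}\Delta$; primeness of $\Delta$ follows by the routine argument, with no $\Lambda$ in sight and hence no feedback loop. Maximality of $\Delta$ alone then yields, via a single application of $(\Axiom2)$, the key claim $(\ast)$: whenever $C{\vee}B{\in}{\square}\Delta$ one has ${\lozenge}C{\in}\Delta$. Second, with $\Delta$ now fixed, it builds $\Lambda$ maximal among $\L$-theories satisfying ${\square}\Delta{\subseteq}\Lambda$, $B{\not\in}\Lambda$, and the invariant ``$C{\vee}B{\in}\Lambda\Rightarrow{\lozenge}C{\in}\Delta$''. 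This invariant is the decisive trick: it implies the ${\bowtie}$-clause ``$C{\in}\Lambda\Rightarrow{\lozenge}C{\in}\Delta$'' that you impose, but is calibrated so that the primeness-of-$\Lambda$ case analysis closes using only $(\Axiom3)$, since each failure of $\Lambda{+}C$ or $\Lambda{+}D$ to lie in $\mathcal{T}$ produces either a formula $E{\vee}B$ with ${\lozenge}E{\not\in}\Delta$ or the formula $B$ itself, and these combine cleanly without ever touching $\Delta$.
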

\begin{proof}
Suppose ${\square}B{\not\in}\Gamma$.
Let ${\mathcal S}{=}\{\Delta\ :\ \Delta$ is a $\L$-theory such that {\bf (1)}~$\Gamma{\subseteq}\Delta$ and {\bf (2)}~${\square}B{\not\in}\Delta\}$.
Since ${\square}B{\not\in}\Gamma$, then $\Gamma\in{\mathcal S}$.
Hence, ${\mathcal S}$ is nonempty.
Moreover, for all nonempty chains $(\Delta_{i})_{i{\in}I}$ of elements of ${\mathcal S}$, $\bigcup\{\Delta_{i}\ :\ i{\in}I\}$ is an element of ${\mathcal S}$.
Thus, by Lemma~\ref{lemma:Zorn:lemma}, ${\mathcal S}$ possesses a maximal element $\Delta$.
Consequently, $\Delta$ is a $\L$-theory such that $\Gamma{\subseteq}\Delta$ and ${\square}B{\not\in}\Delta$.
We claim that $\Delta$ is proper.
If not, $\Delta{=}\Fo$.
Hence, ${\square}B{\in}\Delta$: a contradiction.
We claim that $\Delta$ is prime.
If not, there exists formulas $C,D$ such that $C{\vee}D{\in}\Delta$, $C{\not\in}\Delta$ and $D{\not\in}\Delta$.
Consequently, by the maximality of $\Delta$ in ${\mathcal S}$, $\Delta{+}C{\not\in}{\mathcal S}$ and $\Delta{+}D{\not\in}{\mathcal S}$.
Hence, ${\square}B{\in}\Delta{+}C$ and ${\square}B{\in}\Delta{+}D$.
Thus, $C{\rightarrow}{\square}B{\in}\Delta$ and $D{\rightarrow}{\square}B{\in}\Delta$.
Consequently, using axioms and inference rules of $\IPL$, $C{\vee}D{\rightarrow}{\square}B{\in}\Delta$.
Since $C{\vee}D{\in}\Delta$, then ${\square}B{\in}\Delta$: a contradiction.
We claim that $(\ast)$~for all formulas $C$, if $C{\vee}B{\in}{\square}\Delta$ then ${\lozenge}C{\in}\Delta$.
If not, there exists a formula $C$ such that $C{\vee}B{\in}{\square}\Delta$ and ${\lozenge}C{\not\in}\Delta$.
Thus, by the maximality of $\Delta$ in ${\mathcal S}$, $\Delta{+}{\lozenge}C{\not\in}{\mathcal S}$.
Consequently, ${\square}B{\in}\Delta{+}{\lozenge}C$.
Hence, ${\lozenge}C{\rightarrow}{\square}B{\in}\Delta$.
Since $C{\vee}B{\in}{\square}\Delta$, then ${\square}(C{\vee}B){\in}\Delta$.
Since ${\lozenge}C{\rightarrow}{\square}B{\in}\Delta$, then using axiom $(\Axiom2)$, ${\square}B{\in}\Delta$: a contradiction.\footnote{This is our only use of axiom $(\Axiom2)$ in this section.}
Let ${\mathcal T}{=}\{\Lambda\ :\ \Lambda$ is a $\L$-theory such that {\bf (1)}~${\square}\Delta{\subseteq}\Lambda$, {\bf (2)}~for all formulas $C$, if $C{\vee}B{\in}\Lambda$ then ${\lozenge}C{\in}\Delta$ and {\bf (3)}~$B{\not\in}\Lambda\}$.
Since ${\square}B{\not\in}\Delta$, then $B{\not\in}{\square}\Delta$.
Consequently, by~$(\ast)$, ${\square}\Delta\in{\mathcal T}$.
Hence, ${\mathcal T}$ is nonempty.
Moreover, for all nonempty chains $(\Lambda_{i})_{i{\in}I}$ of elements of ${\mathcal T}$, $\bigcup\{\Lambda_{i}\ :\ i{\in}I\}$ is an element of ${\mathcal T}$.
Thus, by Lemma~\ref{lemma:Zorn:lemma}, ${\mathcal T}$ possesses a maximal element $\Lambda$.
Consequently, $\Lambda$ is a $\L$-theory such that ${\square}\Delta{\subseteq}\Lambda$, for all formulas $C$, if $C{\vee}B{\in}\Lambda$ then ${\lozenge}C{\in}\Delta$ and $B{\not\in}\Lambda$.
Hence, it only remains to be proved that $\Lambda$ is proper and prime and $\Delta{\bowtie}\Lambda$.
We claim that $\Lambda$ is proper.
If not, $\Lambda{=}\Fo$.
Thus, $B{\in}\Lambda$: a contradiction.
We claim that $\Lambda$ is prime.
If not, there exists formulas $C,D$ such that $C{\vee}D{\in}\Lambda$, $C{\not\in}\Lambda$ and $D{\not\in}\Lambda$.
Hence, by the maximality of $\Lambda$ in ${\mathcal T}$, $\Lambda{+}C{\not\in}{\mathcal T}$ and $\Lambda{+}D{\not\in}{\mathcal T}$.
Thus, either there exists a formula $E$ such that $E{\vee}B{\in}\Lambda{+}C$ and ${\lozenge}E{\not\in}\Delta$, or $B{\in}\Lambda{+}C$ and either there exists a formula $F$ such that $F{\vee}B{\in}\Lambda{+}D$ and ${\lozenge}F{\not\in}\Delta$, or $B{\in}\Lambda{+}D$.
Consequently, we have to consider the following $4$ cases.
$\mathbf{(1)}$ Case ``there exists a formula $E$ such that $E{\vee}B{\in}\Lambda{+}C$ and ${\lozenge}E{\not\in}\Delta$ and there exists a formula $F$ such that $F{\vee}B{\in}\Lambda{+}D$ and ${\lozenge}F{\not\in}\Delta$'':
Hence, $C{\rightarrow}E{\vee}B{\in}\Lambda$ and $D{\rightarrow}F{\vee}B{\in}\Lambda$.
Thus, using axioms and inference rules of $\IPL$, $C{\vee}D{\rightarrow}E{\vee}F{\vee}B{\in}\Lambda$.
Since $C{\vee}D{\in}\Lambda$, then $E{\vee}F{\vee}B{\in}\Lambda$.
Consequently, ${\lozenge}(E{\vee}F){\in}\Delta$.
Hen\-ce, using axiom $(\Axiom3)$, either ${\lozenge}E{\in}\Delta$, or ${\lozenge}F{\in}\Delta$: a contradiction.
$\mathbf{(2)}$ Case ``there exists a formula $E$ such that $E{\vee}B{\in}\Lambda{+}C$ and ${\lozenge}E{\not\in}\Delta$ and $B{\in}\Lambda{+}
$\linebreak$
D$'':
Thus, $C{\rightarrow}E{\vee}B{\in}\Lambda$ and $D{\rightarrow}B{\in}\Lambda$.
Consequently, using axioms and inference rules of $\IPL$, $C{\vee}D{\rightarrow}E{\vee}B{\in}\Lambda$.
Since $C{\vee}D{\in}\Lambda$, then $E{\vee}B{\in}\Lambda$.
Hence, ${\lozenge}E{\in}\Delta$: a contradiction.
$\mathbf{(3)}$ Case ``$B{\in}\Lambda{+}C$ and there exists a formula $F$ such that $F{\vee}B{\in}\Lambda{+}D$ and ${\lozenge}F{\not\in}
$\linebreak$
\Delta$'':
Thus, $C{\rightarrow}B{\in}\Lambda$ and $D{\rightarrow}F{\vee}B{\in}\Lambda$.
Consequently, using axioms and inference rules of $\IPL$, $C{\vee}D{\rightarrow}F{\vee}B{\in}\Lambda$.
Since $C{\vee}D{\in}\Lambda$, then $F{\vee}B{\in}\Lambda$.
Hence, ${\lozenge}F{\in}\Delta$: a contradiction.
$\mathbf{(4)}$ Case ``$B{\in}\Lambda{+}C$ and $B{\in}\Lambda{+}D$'':
Thus, $C{\rightarrow}B{\in}\Lambda$ and $D{\rightarrow}B{\in}\Lambda$.
Consequently, using axioms and inference rules of $\IPL$, $C{\vee}D{\rightarrow}B{\in}\Lambda$.
Since $C{\vee}D{\in}\Lambda$, then $B{\in}\Lambda$: a contradiction.
We claim that $\Delta{\bowtie}\Lambda$.
If not, there exists a formula $C$ such that $C{\in}\Lambda$ and ${\lozenge}C{\not\in}\Delta$.
Thus, using axioms and inference rules of $\IPL$, $C{\vee}B{\in}\Lambda$.
Consequently ${\lozenge}C{\in}\Delta$: a contradiction.
\medskip
\end{proof}
\begin{lemma}[Existence Lemma for ${\lozenge}$]\label{lemma:prime:proper:for:lozenge}
Let $\Gamma$ be a prime $\L$-theory.
Let $B$ be a formula.
If ${\lozenge}B{\in}\Gamma$ then there exists prime $\L$-theories $\Delta,\Lambda$ such that $\Gamma{{\supseteq}}\Delta$, $\Delta{\bowtie}\Lambda$ and $B{\in}\Lambda$.
\end{lemma}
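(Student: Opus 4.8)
The statement is the ``descending'' counterpart of the Existence Lemma for ${\square}$ (Lemma~\ref{lemma:prime:proper:for:square}), and the plan is to dualise that proof, replacing upward extensions of $\Gamma$ by downward ones and using the relativised accessibility relation ${\bowtie^{\Gamma}}$ of Section~\ref{section:syntax}. Since $\Gamma$ is a prime, hence proper, $\L$-theory we have ${\bot}{\notin}\Gamma$, so instantiating the first clause of ${\bowtie^{\Gamma}}$ with $A{=}{\bot}$ shows that $\Delta{\bowtie^{\Gamma}}\Lambda$ implies $\Delta{\bowtie}\Lambda$; hence it suffices to produce prime $\L$-theories $\Delta,\Lambda$ with $\Delta{\subseteq}\Gamma$, $\Delta{\bowtie^{\Gamma}}\Lambda$ and $B{\in}\Lambda$.

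\emph{Step 1: the source theory $\Delta$ below $\Gamma$.} Let ${\mathcal S}$ be the set of $\L$-theories $\Delta{\subseteq}\Gamma$ satisfying the saturation schema: for all formulas $X,A$, if $A{\notin}\Gamma$ and $A{\vee}{\square}(B{\rightarrow}X){\in}\Delta$ then ${\lozenge}X{\vee}A{\in}\Delta$. (The disjunct $A$ in the conclusion is the $\lozenge$-analogue of the disjunct $B$ in the condition ``$C{\vee}B{\in}\Lambda$'' used in Lemma~\ref{lemma:prime:proper:for:square}, and is what makes the schema survive the operation $\Delta{\mapsto}\Delta{+}C$.) The crucial point is that ${\mathcal S}$ is nonempty: one shows $\L{+}{\lozenge}B{\in}{\mathcal S}$, and this is where Lemma~\ref{lemma:about:a:useful:application:of:rule:R3} --- hence inference rule $(\Rule3)$ --- enters, playing the role that axiom $(\Axiom2)$ plays in the property $(\ast)$ inside Lemma~\ref{lemma:prime:proper:for:square}: from ${\lozenge}B{\rightarrow}A{\vee}{\square}(B{\rightarrow}X){\in}\L$ one obtains, via that lemma together with rule $(\Rule2)$, ${\lozenge}B{\rightarrow}{\lozenge}X{\vee}A{\in}\L$. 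Closure of ${\mathcal S}$ under unions of chains is immediate, so Lemma~\ref{lemma:Zorn:lemma} yields a maximal element $\Delta$, which is proper because $\Delta{\subseteq}\Gamma$. The delicate step is that $\Delta$ is prime: assuming ${D}{\vee}{E}{\in}\Delta$ with ${D},{E}{\notin}\Delta$, the $\L$-theory $\{C:{D}{\vee}C{\in}\Delta\}$ properly extends $\Delta$, hence leaves ${\mathcal S}$, and whichever of its two defining conditions fails yields a contradiction --- using primeness of $\Gamma$ (and the fact that ${D}{\vee}{E}{\in}\Gamma$ forces ${D}{\in}\Gamma$ or ${E}{\in}\Gamma$) when ``${\subseteq}\Gamma$'' fails, and the disjunction-robust saturation schema together with axiom $(\Axiom3)$ and rule $(\Rule4)$ when saturation fails.

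\emph{Step 2: the target theory $\Lambda$.} Let $\Theta$ be the least $\L$-theory containing ${\square}\Delta{\cup}\{B\}$, i.e. $\Theta{=}{\square}\Delta{+}B$; then $B{\in}\Theta$, ${\square}\Delta{\subseteq}\Theta$, and since $X{\in}\Theta$ iff ${\square}(B{\rightarrow}X){\in}\Delta$, the saturation property of $\Delta$ and rule $(\Rule2)$ give ${\lozenge}X{\in}\Delta$ for every $X{\in}\Theta$. Now apply Lemma~\ref{lemma:Zorn:lemma} to the set of $\L$-theories $\Lambda{\supseteq}\Theta$ with $\{{\lozenge}C:C{\in}\Lambda\}{\subseteq}\Delta$ (nonempty since $\Theta$ belongs to it, and closed under unions of chains); a maximal element $\Lambda$ is proper because ${\lozenge}{\bot}{\notin}\Delta$ (as $\Delta$ is proper and $(\Axiom4){\in}\L$), and prime by the argument of Lemma~\ref{lemma:prime:proper:for:square}, using axiom $(\Axiom3)$ and primeness of $\Delta$ to split disjunctions while preserving $\{{\lozenge}C:C{\in}\Lambda\}{\subseteq}\Delta$. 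Finally $\Gamma{\supseteq}\Delta$ and $B{\in}\Theta{\subseteq}\Lambda$ by construction, and $\Delta{\bowtie^{\Gamma}}\Lambda$: the second clause is just $\{{\lozenge}C:C{\in}\Lambda\}{\subseteq}\Delta$; for the first, if $A{\notin}\Gamma$ and $A{\vee}{\square}C{\in}\Delta$ then $A{\vee}{\square}(B{\rightarrow}C){\in}\Delta$ (by $C{\rightarrow}(B{\rightarrow}C){\in}\L$ and $(\Rule4)$), whence ${\square}(B{\rightarrow}C){\in}\Delta$ by primeness of $\Delta$ and $A{\notin}\Delta$, i.e. $C{\in}{\square}\Delta{+}B{=}\Theta{\subseteq}\Lambda$.

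I expect the main obstacle to be calibrating the saturation schema in Step~1: it must be strong enough that $\L{+}{\lozenge}B$ satisfies it (forcing the use of Lemma~\ref{lemma:about:a:useful:application:of:rule:R3}) and that maximality of $\Delta$ forces the first clause of ${\bowtie^{\Gamma}}$, yet robust enough under $\Delta{\mapsto}\Delta{+}C$ for the four-case primeness argument to go through --- in particular in the mixed cases, where one of the extensions $\Delta{+}{D}$, $\Delta{+}{E}$ escapes $\Gamma$ while the other breaks saturation. This is the $\lozenge$-side analogue of the bookkeeping around the formula ``$C{\vee}B$'' in Lemma~\ref{lemma:prime:proper:for:square}.
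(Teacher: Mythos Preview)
Your overall plan reverses the paper's order: you build $\Delta$ first (below $\Gamma$, with a saturation schema) and then $\Lambda$, whereas the paper builds $\Lambda$ first (maximal among $\L$-theories containing $B$ whose every element has its $\lozenge$-image in $\Gamma$) and only afterwards maximises $\Delta$ among theories $\subseteq\Gamma$ with $\Delta{\bowtie^{\Gamma}}\Lambda$. This difference in order is exactly where your argument breaks.

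The gap is in the primeness of $\Delta$. Work through the case where both extensions fail saturation. With your schema, from $D{\rightarrow}A_{1}{\vee}{\square}(B{\rightarrow}X_{1}){\in}\Delta$ and $E{\rightarrow}A_{2}{\vee}{\square}(B{\rightarrow}X_{2}){\in}\Delta$ (with $A_{1},A_{2}{\notin}\Gamma$) and $D{\vee}E{\in}\Delta$ you obtain $A_{1}{\vee}A_{2}{\vee}{\square}(B{\rightarrow}X_{1}{\vee}X_{2}){\in}\Delta$, hence by saturation and $(\Axiom3)$ you get ${\lozenge}X_{1}{\vee}{\lozenge}X_{2}{\vee}A_{1}{\vee}A_{2}{\in}\Delta$. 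But to reach a contradiction you would need to split this disjunction inside $\Delta$ so as to contradict $D{\rightarrow}{\lozenge}X_{1}{\vee}A_{1}{\notin}\Delta$ or $E{\rightarrow}{\lozenge}X_{2}{\vee}A_{2}{\notin}\Delta$ --- and that splitting is precisely the primeness you are trying to prove. Using the alternative extensions $\{C:D{\vee}C{\in}\Delta\}$ does not help: you correctly observe that if $D{\notin}\Gamma$ then this extension stays in $\mathcal S$ (contradicting maximality), so $D{\in}\Gamma$, and symmetrically $E{\in}\Gamma$; but $D,E{\in}\Gamma$ together with $D,E{\notin}\Delta$ is not a contradiction, and you are back in the same unresolved case. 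Your parenthetical claim that the schema ``survives $\Delta{\mapsto}\Delta{+}C$'' is not true in general: from $C{\rightarrow}A{\vee}{\square}(B{\rightarrow}X){\in}\Delta$ the saturation of $\Delta$ gives nothing, since the hypothesis is not of the required form.

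The paper's order sidesteps this circularity. Once $\Lambda$ is already prime, the analogous ``both fail'' case in the primeness proof for $\Delta$ produces $G{\vee}I{\in}\Lambda$ (not a ${\lozenge}$-disjunction in $\Delta$), and $\Lambda$'s established primeness splits it. Your Step~2, and your verification of $\Delta{\bowtie^{\Gamma}}\Lambda$ at the end, are fine \emph{given} a prime $\Delta$; the problem is solely that your maximisation in Step~1 does not deliver one. The fix is to build $\Lambda$ first --- and for that the natural collection is $\{\Lambda: B{\in}\Lambda$ and $C{\in}\Lambda{\Rightarrow}{\lozenge}C{\in}\Gamma\}$, which needs only $(\Rule2)$ and $(\Axiom3)$ for nonemptiness and primeness; rule $(\Rule3)$ is then used once, exactly as you anticipated, but in constructing the seed for $\Delta$ (the union of the theories $\L{+}{\lozenge}(C_{1}{\wedge}\ldots{\wedge}C_{n})$ with $C_{i}{\in}\Lambda$) rather than in establishing that $\L{+}{\lozenge}B$ lies in your $\mathcal S$.
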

\begin{proof}
Suppose ${\lozenge}B{\in}\Gamma$.
Let ${\mathcal S}{=}\{\Lambda\ :\ \Lambda$ is a $\L$-theory such that {\bf (1)}~for all formulas $C$, if $C{\in}\Lambda$ then ${\lozenge}C{\in}\Gamma$ and {\bf (2)}~$B{\in}\Lambda\}$.
We claim that $\L{+}B\in{\mathcal S}$.
If not, there exists a formula $C$ such that $C{\in}\L{+}B$ and ${\lozenge}C{\not\in}\Gamma$.
Hence, $B{\rightarrow}C{\in}\L$.
Thus, using inference rule $(\Rule2)$, ${\lozenge}B{\rightarrow}{\lozenge}C{\in}\L$.
Since ${\lozenge}B{\in}\Gamma$, then ${\lozenge}C{\in}\Gamma$: a contradiction.
Hence, ${\mathcal S}$ is nonempty.
Moreover, for all nonempty chains $(\Lambda_{i})_{i{\in}I}$ of elements of ${\mathcal S}$, $\bigcup\{\Lambda_{i}\ :\ i{\in}I\}$ is an element of ${\mathcal S}$.
Thus, by Lemma~\ref{lemma:Zorn:lemma}, ${\mathcal S}$ possesses a maximal element $\Lambda$.
Consequently, $\Lambda$ is a $\L$-theory such that for all formulas $C$, if $C{\in}\Lambda$ then ${\lozenge}C{\in}\Gamma$ and $B{\in}\Lambda$.
We claim that $\Lambda$ is proper.
If not, ${\bot}{\in}\Lambda$.
Hence, ${\lozenge}{\bot}{\in}\Gamma$: a contradiction.
We claim that $\Lambda$ is prime.
If not, there exists formulas $C,D$ such that $C{\vee}D{\in}\Lambda$, $C{\not\in}\Lambda$ and $D{\not\in}\Lambda$.
Consequently, by the maximality of $\Lambda$ in ${\mathcal S}$, $\Lambda{+}C{\not\in}{\mathcal S}$ and $\Lambda{+}D{\not\in}{\mathcal S}$.
Hence, there exists a formula $E$ such that $E{\in}\Lambda{+}C$ and ${\lozenge}E{\not\in}\Gamma$ and there exists a formula $F$ such that $F{\in}\Lambda{+}D$ and ${\lozenge}F{\not\in}\Gamma$.
Thus, $C{\rightarrow}E{\in}\Lambda$ and $D{\rightarrow}F{\in}\Lambda$.
Consequently, using axioms and inference rules of $\IPL$, $C{\vee}D{\rightarrow}E{\vee}F{\in}\Lambda$.
Since $C{\vee}D{\in}\Lambda$, then $E{\vee}F{\in}\Lambda$.
Hence, ${\lozenge}(E{\vee}F){\in}\Gamma$.
Thus, using axiom $(\Axiom3)$, either ${\lozenge}E{\in}\Gamma$, or ${\lozenge}F{\in}\Gamma$: a contradiction.
Let $C_{1},C_{2},\ldots$ be an enumeration of $\Lambda$.
Obviously, using axioms and inference rules of $\IPL$, for all $n{\in}\N$, $C_{1}{\wedge}\ldots{\wedge}C_{n}{\in}\Lambda$.
For all $n{\in}\N$, let $\Delta_{n}{=}\L{+}{\lozenge}(C_{1}{\wedge}\ldots{\wedge}
$\linebreak$
C_{n})$.
Obviously, using inference rule $(\Rule2)$, $(\Delta_{n})_{n{\in}\N}$ is a nonempty chain of $\L$-theories.
We claim that $(\ast)$~$\Gamma{\supseteq}\bigcup\{\Delta_{n}\ :\ n{\in}\N\}$.
If not, there exists $n{\in}\N$ and there exists a formula $D$ such that $D{\not\in}\Gamma$ and $D{\in}\Delta_{n}$.
Hence, ${\lozenge}(C_{1}{\wedge}\ldots{\wedge}C_{n}){\rightarrow}D{\in}\L$.
Since $D{\not\in}\Gamma$, then ${\lozenge}(C_{1}{\wedge}\ldots{\wedge}C_{n}){\not\in}\Gamma$.
Thus, $C_{1}{\wedge}\ldots{\wedge}C_{n}{\not\in}\Lambda$: a contradiction.
We claim that $(\ast\ast)$~for all formulas $D,E$, if $D{\not\in}\Gamma$ and $D{\vee}{\square}E{\in}\bigcup\{\Delta_{n}\ :\ n{\in}\N\}$ then $E{\in}\Lambda$.
If not, there exists $n{\in}\N$ and there exists formulas $D,E$ such that $D{\not\in}\Gamma$, $D{\vee}{\square}E{\in}\Delta_{n}$ and $E{\not\in}\Lambda$.
Hence, by the maximality of $\Lambda$ in ${\mathcal S}$, $\Lambda{+}E{\not\in}{\mathcal S}$.
Thus, there exists a formula $F$ such that $F{\in}\Lambda{+}E$ and ${\lozenge}F{\not\in}\Gamma$.
Consequently, $E{\rightarrow}F{\in}\Lambda$.
Hence, let $m{\in}\N$ be such that $C_{m}{=}E{\rightarrow}F$.
Let $o{=}\max\{m,n\}$.
Since $D{\vee}{\square}E{\in}\Delta_{n}$, then $D{\vee}{\square}E{\in}\Delta_{o}$.
Thus, ${\lozenge}(C_{1}{\wedge}\ldots{\wedge}C_{o}){\rightarrow}D{\vee}{\square}E{\in}\L$.
Since $C_{m}{=}E{\rightarrow}F$, then using axioms and inference rules of $\IPL$, $E{\rightarrow}(C_{1}{\wedge}\ldots{\wedge}C_{o}{\rightarrow}F){\in}\L$.
Since ${\lozenge}(C_{1}{\wedge}\ldots{\wedge}C_{o})
$\linebreak$
{\rightarrow}D{\vee}{\square}E{\in}\L$, then using axiom $(\Axiom1)$ and inference rule $(\Rule1)$, ${\lozenge}(C_{1}{\wedge}\ldots{\wedge}C_{o}){\rightarrow}D{\vee}
$\linebreak$
{\square}(C_{1}{\wedge}\ldots{\wedge}C_{o}{\rightarrow}F){\in}\L$.
Consequently, using inference rule $(\Rule3)$, ${\lozenge}(C_{1}{\wedge}\ldots{\wedge}C_{o}){\rightarrow}
$\linebreak$
D{\vee}{\lozenge}F{\in}\L$.\footnote{This is our only use of inference rule $(\Rule3)$ in this section.}
Sin\-ce $C_{1}{\wedge}\ldots{\wedge}C_{o}{\in}\Lambda$, then ${\lozenge}(C_{1}{\wedge}\ldots{\wedge}C_{o}){\in}\Gamma$.
Sin\-ce ${\lozenge}(C_{1}{\wedge}\ldots{\wedge}C_{o})
$\linebreak$
{\rightarrow}D{\vee}{\lozenge}F{\in}\L$, then $D{\vee}{\lozenge}F{\in}\Gamma$.
Sin\-ce $D{\not\in}\Gamma$ and ${\lozenge}F{\not\in}\Gamma$, then $D{\vee}{\lozenge}F{\not\in}\Gamma$: a contradiction.
We claim that $(\ast\ast\ast)$~for all formulas $D$, if $D{\in}\Lambda$ then ${\lozenge}D{\in}\bigcup\{\Delta_{n}\ :\ n{\in}\N\}$.
If not, there exists a formula $D$ such that $D{\in}\Lambda$ and ${\lozenge}D{\not\in}\bigcup\{\Delta_{n}\ :\ n{\in}\N\}$.
Hence, let $n{\in}\N$ be such that $C_{n}{=}D$.
Thus, using axioms and inference rules of $\IPL$, $C_{1}{\wedge}\ldots{\wedge}C_{n}{\rightarrow}D{\in}
$\linebreak$
\L$.
Consequently, using inference rule $(\Rule2)$, ${\lozenge}(C_{1}{\wedge}\ldots{\wedge}C_{n}){\rightarrow}{\lozenge}D{\in}\L$.
Hence, ${\lozenge}D{\in}
$\linebreak$
\Delta_{n}$.
Thus, ${\lozenge}D{\in}\bigcup\{\Delta_{n}\ :\ n{\in}\N\}$: a contradiction.
Let ${\mathcal T}{=}\{\Delta\ :\ \Delta$ is a $\L$-theory such that {\bf (1)}~$\Gamma{\supseteq}\Delta$ and {\bf (2)}~$\Delta{\bowtie^{\Gamma}}\Lambda\}$.
Obviously, by~$(\ast)$, $(\ast\ast)$ and~$(\ast\ast\ast)$, $\bigcup\{\Delta_{n}\ :\ n{\in}\N\}\in{\mathcal T}$.
Consequently, ${\mathcal T}$ is nonempty.
Moreover, for all nonempty chains $(\Theta_{i})_{i{\in}I}$ of elements of ${\mathcal T}$, $\bigcup\{\Theta_{i}\ :\ i{\in}I\}$ is an element of ${\mathcal T}$.
Hence, by Lemma~\ref{lemma:Zorn:lemma}, ${\mathcal T}$ possesses a maximal element $\Delta$.
Thus, $\Delta$ is a $\L$-theory such that $\Gamma{\supseteq}\Delta$ and $\Delta{\bowtie^{\Gamma}}\Lambda$.
Consequently, it only remains to be proved that $\Delta$ is proper and prime and $\Delta{\bowtie}\Lambda$.
We claim that $\Delta$ is proper.
If not, ${\bot}{\in}\Delta$.
Since, $\Gamma{\supseteq}\Delta$, then ${\bot}{\in}\Gamma$: a contradiction.
We claim that $\Delta$ is prime.
If not, there exists formulas $D,E$ such that $D{\vee}E{\in}\Delta$, $D{\not\in}\Delta$ and $E{\not\in}\Delta$.
Thus, by the maximality of $\Delta$ in ${\mathcal T}$, $\Delta{+}D{\not\in}{\mathcal T}$ and $\Delta{+}E{\not\in}{\mathcal T}$.
Consequently, either there exists a formula $F$ such that $F{\not\in}\Gamma$ and $F{\in}\Delta{+}D$, or there exists formulas $F,G$ such that $F{\not\in}\Gamma$, $F{\vee}{\square}G{\in}\Delta{+}D$ and $G{\not\in}\Lambda$ and either there exists a formula $H$ such that $H{\not\in}\Gamma$ and $H{\in}\Delta{+}E$, or there exists formulas $H,I$ such that $H{\not\in}\Gamma$, $H{\vee}{\square}I{\in}\Delta{+}E$ and $I{\not\in}\Lambda$.
Hence, we have to consider the following $4$ cases.
$\mathbf{(1)}$ Case ``there exists a formula $F$ such that $F{\not\in}\Gamma$ and $F{\in}\Delta{+}D$ and there exists a formula $H$ such that $H{\not\in}\Gamma$ and $H{\in}\Delta{+}E$'':
Thus, $D{\rightarrow}F{\in}\Delta$ and $E{\rightarrow}H{\in}\Delta$.
Consequently, using axioms and inference rules of $\IPL$, $D{\vee}E{\rightarrow}F{\vee}H{\in}\Delta$.
Since $D{\vee}E{\in}\Delta$, then $F{\vee}H{\in}\Delta$.
Since $F{\not\in}\Gamma$ and $H{\not\in}\Gamma$, then $F{\vee}H{\not\in}\Gamma$.
Since $\Gamma{\supseteq}\Delta$, then $F{\vee}H{\not\in}\Delta$: a contradiction.
$\mathbf{(2)}$ Case ``there exists a formula $F$ such that $F{\not\in}\Gamma$ and $F{\in}\Delta{+}D$ and there exists formulas $H,I$ such that $H{\not\in}\Gamma$, $H{\vee}{\square}I{\in}\Delta{+}E$ and $I{\not\in}\Lambda$'':
Hence, $D{\rightarrow}F{\in}\Delta$ and $E{\rightarrow}H{\vee}{\square}I{\in}\Delta$.
Thus, using axioms and inference rules of $\IPL$, $D{\vee}E{\rightarrow}F{\vee}H{\vee}{\square}I{\in}
$\linebreak$
\Delta$.
Since $D{\vee}E{\in}\Delta$, then $F{\vee}H{\vee}{\square}I{\in}\Delta$.
Since $F{\not\in}\Gamma$ and $H{\not\in}\Gamma$, then $F{\vee}H{\not\in}\Gamma$.
Since $F{\vee}H{\vee}{\square}I{\in}\Delta$, then $I{\in}\Lambda$: a contradiction.
$\mathbf{(3)}$ Case ``there exists formulas $F,G$ such that $F{\not\in}\Gamma$, $F{\vee}{\square}G{\in}\Delta{+}D$ and $G{\not\in}\Lambda$ and there exists a formula $H$ such that $H{\not\in}\Gamma$ and $H{\in}\Delta{+}E$'':
Consequently, $D{\rightarrow}F{\vee}{\square}G{\in}
$\linebreak$
\Delta$ and $E{\rightarrow}H{\in}\Delta$.
Hence, using axioms and inference rules of $\IPL$, $D{\vee}E{\rightarrow}F{\vee}H{\vee}
$\linebreak$
{\square}G{\in}\Delta$.
Since $D{\vee}E{\in}\Delta$, then $F{\vee}H{\vee}{\square}G{\in}\Delta$.
Since $F{\not\in}\Gamma$ and $H{\not\in}\Gamma$, then $F{\vee}H{\not\in}\Gamma$.
Since $F{\vee}H{\vee}{\square}G{\in}\Delta$, then $G{\in}\Lambda$: a contradiction.
$\mathbf{(4)}$ Case ``there exists formulas $F,G$ such that $F{\not\in}\Gamma$, $F{\vee}{\square}G{\in}\Delta{+}D$ and $G{\not\in}\Lambda$ and there exists formulas $H,I$ such that $H{\not\in}\Gamma$, $H{\vee}{\square}I{\in}\Delta{+}E$ and $I{\not\in}\Lambda$'':
Thus, $D{\rightarrow}F{\vee}
$\linebreak$
{\square}G{\in}\Delta$ and $E{\rightarrow}H{\vee}{\square}I{\in}\Delta$.
Moreover, $G{\vee}I{\not\in}\Lambda$.
Consequently, using axioms and inference rules of $\IPL$, $D{\vee}E{\rightarrow}F{\vee}H{\vee}{\square}G{\vee}{\square}I{\in}\Delta$.
Since $D{\vee}E{\in}\Delta$, then $F{\vee}H{\vee}{\square}G{\vee}
$\linebreak$
{\square}I{\in}\Delta$.
Obviously, using axiom $(\Axiom1)$ and inference rule $(\Rule1)$, $F{\vee}H{\vee}{\square}G{\vee}{\square}I{\rightarrow}F{\vee}
$\linebreak$
H{\vee}{\square}(G{\vee}I){\in}\L$.
Since $F{\vee}H{\vee}{\square}G{\vee}{\square}I{\in}\Delta$, then $F{\vee}H{\vee}{\square}(G{\vee}I){\in}\Delta$.
Since $F{\not\in}\Gamma$ and $H{\not\in}\Gamma$, then $F{\vee}H{\not\in}\Gamma$.
Since $F{\vee}H{\vee}{\square}(G{\vee}I){\in}\Delta$, then $G{\vee}I{\in}\Lambda$: a contradiction.
We claim that $\Delta{\bowtie}\Lambda$.
If not, there exists a formula $D$ such that ${\square}D{\in}\Delta$ and $D{\not\in}\Lambda$.
Thus, using axioms and inference rules of $\IPL$, ${\bot}{\vee}{\square}D{\in}\Delta$.
Since ${\bot}{\not\in}\Gamma$ and $\Delta{\bowtie^{\Gamma}}\Lambda$, then $D{\in}\Lambda$: a contradiction.
\medskip
\end{proof}
\section{Canonical frame and canonical model}\label{section:canonical:frame:and:canonical:model}
Let $\L$ be an intuitionistic modal logic.
\begin{lemma}[Lindenbaum Lemma]\label{lemma:almost:completeness}
Let $A$ be a formula.
If $A{\not\in}\L$ then there exists a prime $\L$-theory $\Gamma$ such that $A{\not\in}\Gamma$.
\end{lemma}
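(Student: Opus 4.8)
The plan is to run a standard Lindenbaum-style maximization argument, entirely parallel to the Existence Lemmas of Section~\ref{section:existence:lemmas:and:lindenbaum:lemma}. First I would assume $A{\not\in}\L$ and set ${\mathcal S}{=}\{\Gamma\ :\ \Gamma$ is a $\L$-theory such that $A{\not\in}\Gamma\}$. Since $\L$ is itself a $\L$-theory and $A{\not\in}\L$ by hypothesis, we have $\L{\in}{\mathcal S}$, so ${\mathcal S}$ is nonempty. Moreover, for every nonempty chain $(\Gamma_{i})_{i{\in}I}$ of elements of ${\mathcal S}$, the union $\bigcup\{\Gamma_{i}\ :\ i{\in}I\}$ is a $\L$-theory (as recorded in Section~\ref{section:theories}) and it does not contain $A$, since no $\Gamma_{i}$ does; hence it is an element of ${\mathcal S}$. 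By Lemma~\ref{lemma:Zorn:lemma}, ${\mathcal S}$ possesses a maximal element $\Gamma$, so $\Gamma$ is a $\L$-theory with $A{\not\in}\Gamma$. It then remains to check that $\Gamma$ is proper and prime.

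For properness, if $\Gamma{=}\Fo$ then $A{\in}\Gamma$, contradicting $A{\not\in}\Gamma$; so $\Gamma$ is proper. For primeness, I would argue by contradiction: suppose there are formulas $B,C$ with $B{\vee}C{\in}\Gamma$, $B{\not\in}\Gamma$ and $C{\not\in}\Gamma$. By the maximality of $\Gamma$ in ${\mathcal S}$, neither $\Gamma{+}B$ nor $\Gamma{+}C$ belongs to ${\mathcal S}$; since both are $\L$-theories containing $\Gamma$, this forces $A{\in}\Gamma{+}B$ and $A{\in}\Gamma{+}C$, i.e. $B{\rightarrow}A{\in}\Gamma$ and $C{\rightarrow}A{\in}\Gamma$. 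Using axioms and inference rules of $\IPL$, $B{\vee}C{\rightarrow}A{\in}\Gamma$, and since $B{\vee}C{\in}\Gamma$ we get $A{\in}\Gamma$: a contradiction. Hence $\Gamma$ is prime, which completes the argument.

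Since every step is an instance of moves already performed in Lemmas~\ref{lemma:prime:proper:for:implication}, \ref{lemma:prime:proper:for:square} and~\ref{lemma:prime:proper:for:lozenge}, there is no real obstacle here; the only point deserving attention is the verification that ${\mathcal S}$ is closed under unions of nonempty chains, which is immediate from the closure properties of $\L$-theories stated in Section~\ref{section:theories}. In particular, unlike the Existence Lemmas, this proof invokes none of the modal axioms $(\Axiom1)$--$(\Axiom4)$ or inference rules $(\Rule1)$--$(\Rule3)$ beyond what is already built into the definition of a $\L$-theory: only propositional reasoning is needed to establish primeness.
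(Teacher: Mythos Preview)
Your argument is correct and self-contained: the set ${\mathcal S}$ is nonempty, closed under chain unions, Zorn's Lemma applies, and the maximality-plus-$\IPL$ reasoning establishes primeness exactly as you describe.

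The paper, however, takes a shorter route. It simply observes that $A{\not\in}\L$ implies ${\top}{\rightarrow}A{\not\in}\L$ (by $\IPL$) and then invokes the Existence Lemma for ${\rightarrow}$ (Lemma~\ref{lemma:prime:proper:for:implication}) with $B{=}{\top}$ and $C{=}A$ to produce a prime $\L$-theory $\Delta$ with $A{\not\in}\Delta$. In other words, the paper treats the Lindenbaum Lemma as a corollary of Lemma~\ref{lemma:prime:proper:for:implication} rather than re-running the maximization argument. Your proof is essentially the special case $B{=}{\top}$, $C{=}A$ of the proof of Lemma~\ref{lemma:prime:proper:for:implication} written out in full, which explains why every step you perform has a parallel there. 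What the paper's reduction buys is brevity; what your direct argument buys is that it sidesteps a small technical wrinkle in the paper's invocation---Lemma~\ref{lemma:prime:proper:for:implication} is \emph{stated} for a prime $\L$-theory $\Gamma$, whereas the paper applies it with $\Gamma{=}\L$, which is a $\L$-theory but not assumed prime. The proof of Lemma~\ref{lemma:prime:proper:for:implication} in fact never uses primeness of $\Gamma$, so the invocation is harmless, but your self-contained version avoids the issue entirely.
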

\begin{proof}
Suppose $A{\not\in}\L$.
Hence, using axioms and inference rules of $\IPL$, ${\top}{\rightarrow}A{\not\in}\L$.
Thus, by Existence Lemma for ${\rightarrow}$, there exists a prime $\L$-theory $\Gamma$ such that $A{\not\in}\Gamma$.
\medskip
\end{proof}
\begin{lemma}\label{lemma:about:consistent:logic:and:proper:theories}
If $\L$ is consistent then there exists a prime $\L$-theory.
\end{lemma}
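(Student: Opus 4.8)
The plan is to apply the Lindenbaum Lemma (Lemma~\ref{lemma:almost:completeness}) to the formula ${\bot}$. First I would unfold the definition of consistency: $\L$ is consistent means precisely that ${\bot}{\not\in}\L$. This is exactly the hypothesis needed to invoke Lemma~\ref{lemma:almost:completeness} with $A{=}{\bot}$.

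Next I would quote that lemma: since ${\bot}{\not\in}\L$, there exists a prime $\L$-theory $\Gamma$ such that ${\bot}{\not\in}\Gamma$. The clause ``${\bot}{\not\in}\Gamma$'' is in fact redundant for the present statement~---~it merely records that $\Gamma$ is proper, which is part of being prime anyway~---~but it costs nothing to carry it along. The mere existence of $\Gamma$ already gives the conclusion: there exists a prime $\L$-theory.

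There is essentially no obstacle here; the result is an immediate corollary of the Lindenbaum Lemma, and the only thing to be careful about is matching the definitions (``consistent'' $\Leftrightarrow$ ``${\bot}{\not\in}\L$'', and ``prime'' entailing ``proper''). One could alternatively derive it from Existence Lemma for ${\rightarrow}$ directly applied to ${\top}{\rightarrow}{\bot}$, but routing through Lemma~\ref{lemma:almost:completeness} is the cleanest path and avoids repeating the ${\top}{\rightarrow}A$ trick already used in its proof.
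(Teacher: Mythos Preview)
Your proposal is correct and matches the paper's own proof essentially line for line: the paper also unfolds consistency as ${\bot}{\not\in}\L$ and immediately invokes the Lindenbaum Lemma to obtain a prime $\L$-theory. Your additional remarks about the redundancy of the clause ${\bot}{\not\in}\Gamma$ and the alternative route via the Existence Lemma for ${\rightarrow}$ are accurate side comments, but the core argument is identical.
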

\begin{proof}
Suppose $\L$ is consistent.
Hence, ${\bot}{\not\in}\L$.
Thus, by Lindenbaum Lemma, there exists a prime $\L$-theory.
\medskip
\end{proof}
From now on in this section, we assume that $\L$ is consistent.
\begin{definition}[Canonical frame]
Let $(W_{\L},{\leq_{\L}},{R_{\L}})$ be the frame such that
\begin{itemize}
\item $W_{\L}$ is the nonempty set of all prime $\L$-theories,
\item $\leq_{\L}$ is the preorder on $W_{\L}$ such that for all $\Gamma,\Delta{\in}W_{\L}$, $\Gamma{\leq_{\L}}\Delta$ if and only if $\Gamma{\subseteq}\Delta$,
\item $R_{\L}$ is the binary relation on $W_{\L}$ such that for all $\Gamma,\Delta{\in}W_{\L}$, $\Gamma{R_{\L}}\Delta$ if and only if $\Gamma{\bowtie}\Delta$.
\end{itemize}
%
%
%Notice that, indeed, $\leq_{\L}$ is a partial order on $W_{\L}$.
The frame $(W_{\L},{\leq_{\L}},{R_{\L}})$ is called {\em canonical frame of $\L$.}
\end{definition}
%
%
%Notice that, indeed, for all atoms $p$, $\{\Gamma{\in}W_{\L}\ :\ p{\in}\Gamma\}$ is $\leq_{L}$-closed.
%
%
\begin{definition}[Canonical model]
Let $V_{\L}\ :\ \At{\longrightarrow}\wp(W_{\L})$ be the valuation on $(W_{\L},
$\linebreak$
{\leq_{\L}},{R_{\L}})$ such that for all atoms $p$, $V_{\L}(p){=}\{\Gamma{\in}W_{\L}\ :\ p{\in}\Gamma\}$.
The valuation $V_{\L}\ :\ \At{\longrightarrow}\wp(W_{\L})$ on $(W_{\L},{\leq_{\L}},{R_{\L}})$ is called {\em canonical valuation of $\L$.}
The model $(W_{\L},
$\linebreak$
{\leq_{\L}},{R_{\L}},V_{\L})$ is called {\em canonical model of $\L$.}
\end{definition}
\begin{lemma}[Truth Lemma]\label{lemma:truth:lemma}
For all formulas $A$ and for all $\Gamma{\in}W_{\L}$, $A{\in}\Gamma$ if and only if $\Gamma{\models}A$.
\end{lemma}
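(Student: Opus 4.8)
The plan is to prove the Truth Lemma by induction on the structure of $A$, as is standard for canonical-model completeness arguments. The base cases are immediate: for an atom $p$, $p{\in}\Gamma$ iff $\Gamma{\in}V_{\L}(p)$ iff $\Gamma{\models}p$ by the definition of the canonical valuation; for $\top$ both sides always hold (every $\L$-theory contains $\L$, hence $\top$); for $\bot$ both sides always fail (prime theories are proper). The propositional connectives $\wedge$ and $\vee$ are routine: the conjunction case uses closure under modus ponens together with $\IPL$-reasoning in both directions, and the disjunction case uses primeness of $\Gamma$ for the left-to-right direction and $\IPL$-reasoning for the converse, in each case invoking the induction hypothesis on the immediate subformulas.

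The three substantial cases are $\rightarrow$, $\square$ and $\lozenge$, and each has one direction that follows directly from definitions and one direction that consumes exactly one of the Existence Lemmas of Section~\ref{section:existence:lemmas:and:lindenbaum:lemma}. For $A{=}B{\rightarrow}C$: if $B{\rightarrow}C{\in}\Gamma$ and $\Delta{\in}W_{\L}$ satisfies $\Gamma{\subseteq}\Delta$ and $\Delta{\models}B$, then $B{\in}\Delta$ by the induction hypothesis, so $C{\in}\Delta$ by modus ponens, so $\Delta{\models}C$; conversely, if $B{\rightarrow}C{\not\in}\Gamma$, then Lemma~\ref{lemma:prime:proper:for:implication} (Existence Lemma for $\rightarrow$) yields a prime $\L$-theory $\Delta{\supseteq}\Gamma$ with $B{\in}\Delta$ and $C{\not\in}\Delta$, whence $\Delta{\models}B$ and $\Delta{\not\models}C$ by the induction hypothesis, so $\Gamma{\not\models}B{\rightarrow}C$. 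For $A{=}{\square}B$: if ${\square}B{\in}\Gamma$ and $\Gamma{\leq_{\L}}{\circ}{R_{\L}}\Lambda$, pick $\Delta$ with $\Gamma{\subseteq}\Delta$ and $\Delta{\bowtie}\Lambda$; then ${\square}B{\in}\Delta$ forces $B{\in}\Lambda$ by the first clause of $\bowtie$, so $\Lambda{\models}B$; conversely, if ${\square}B{\not\in}\Gamma$, Lemma~\ref{lemma:prime:proper:for:square} gives prime $\L$-theories $\Delta,\Lambda$ with $\Gamma{\subseteq}\Delta$, $\Delta{\bowtie}\Lambda$ and $B{\not\in}\Lambda$, which exhibits a $\Gamma{\leq_{\L}}{\circ}{R_{\L}}$-successor falsifying $B$.

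For $A{=}{\lozenge}B$: if $\Gamma{\models}{\lozenge}B$, there is $\Lambda{\in}W_{\L}$ with $\Gamma{\geq_{\L}}{\circ}{R_{\L}}\Lambda$ and $\Lambda{\models}B$, so choosing $\Delta$ with $\Delta{\subseteq}\Gamma$ and $\Delta{\bowtie}\Lambda$ and using the induction hypothesis to get $B{\in}\Lambda$, the second clause of $\bowtie$ gives ${\lozenge}B{\in}\Delta{\subseteq}\Gamma$; conversely, if ${\lozenge}B{\in}\Gamma$, Lemma~\ref{lemma:prime:proper:for:lozenge} (Existence Lemma for $\lozenge$) yields prime $\L$-theories $\Delta,\Lambda$ with $\Delta{\subseteq}\Gamma$, $\Delta{\bowtie}\Lambda$ and $B{\in}\Lambda$, so that $\Gamma{\geq_{\L}}{\circ}{R_{\L}}\Lambda$ and $\Lambda{\models}B$ by the induction hypothesis, giving $\Gamma{\models}{\lozenge}B$. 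I expect the only real subtlety to be bookkeeping: matching the semantic clauses for $\square$ and $\lozenge$ (which quantify over ${\leq}{\circ}{R}$-successors and ${\geq}{\circ}{R}$-successors respectively) against the relational structure of the canonical frame, where $\leq_{\L}$ is inclusion and $R_{\L}$ is $\bowtie$; once the Existence Lemmas are available the argument is mechanical, so the heart of the work has already been done in the previous section, and here it only remains to assemble the induction cleanly.
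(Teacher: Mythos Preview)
Your proposal is correct and matches the paper's own proof essentially step for step: induction on $A$, with the three nontrivial cases $\rightarrow$, $\square$, $\lozenge$ each handled by unfolding definitions in one direction and invoking the corresponding Existence Lemma (Lemmas~\ref{lemma:prime:proper:for:implication}, \ref{lemma:prime:proper:for:square}, \ref{lemma:prime:proper:for:lozenge}) in the other. The only cosmetic difference is that the paper phrases each nontrivial direction as a proof by contradiction, whereas you state them directly; the content is identical.
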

\begin{proof}
By induction on $A$.
We only consider the following $3$ cases.
$\mathbf{(1)}$ Case ``there exists formulas $B,C$ such that $A{=}B{\rightarrow}C$'':
Let $\Gamma{\in}W_{\L}$.
From left to right, suppose $B{\rightarrow}C{\in}\Gamma$ and $\Gamma{\not\models}B{\rightarrow}C$.
Hence, there exists $\Delta{\in}W_{\L}$ such that $\Gamma\leq_{\L}\Delta$, $\Delta{\models}B$ and $\Delta{\not\models}C$.
Thus, $\Gamma{\subseteq}\Delta$.
Moreover, by induction hypothesis, $B{\in}\Delta$ and $C{\not\in}\Delta$.
Since $B{\rightarrow}C{\in}\Gamma$, then $B{\rightarrow}C{\in}\Delta$.
Since $B{\in}\Delta$, then $C{\in}\Delta$: a contradiction.
From right to left, suppose $\Gamma{\models}B{\rightarrow}C$ and $B{\rightarrow}C{\not\in}\Gamma$.
Consequently, by Existence Lemma for ${\rightarrow}$, there exists a prime $\L$-theory $\Delta$ such that $\Gamma{\subseteq}\Delta$, $B{\in}\Delta$ and $C{\not\in}\Delta$.
Hence, $\Gamma\leq_{\L}\Delta$.
Moreover, by induction hypothesis, $\Delta{\models}B$ and $\Delta{\not\models}C$.
Thus, $\Gamma{\not\models}B{\rightarrow}C$: a contradiction.
$\mathbf{(2)}$ Case ``there exists a formula $B$ such that $A{=}{\square}B$'':
Let $\Gamma{\in}W_{\L}$.
From left to right, suppose ${\square}B{\in}\Gamma$ and $\Gamma{\not\models}{\square}B$.
Thus, there exists $\Delta,\Lambda{\in}W_{\L}$ such that $\Gamma\leq_{\L}\Delta$, $\Delta R_{\L}\Lambda$ and $\Lambda{\not\models}B$.
Consequently, $\Gamma{\subseteq}\Delta$ and $\Delta{\bowtie}\Lambda$.
Moreover, by induction hypothesis, $B{\not\in}\Lambda$.
Since ${\square}B{\in}\Gamma$, then $B{\in}\Lambda$: a contradiction.
From right to left, suppose $\Gamma{\models}{\square}B$ and ${\square}B{\not\in}\Gamma$.
Hence, by Existence Lemma for ${\square}$, there exists prime $\L$-theories $\Delta,\Lambda$ such that $\Gamma{\subseteq}\Delta$, $\Delta{\bowtie}\Lambda$ and $B{\not\in}\Lambda$.
Thus, $\Gamma\leq_{\L}\Delta$ and $\Delta R_{\L}\Lambda$.
Moreover, by induction hypothesis, $\Lambda{\not\models}B$.
Consequently, $\Gamma{\not\models}{\square}B$: a contradiction.
$\mathbf{(3)}$ Case ``there exists a formula $B$ such that $A{=}{\lozenge}B$'':
Let $\Gamma{\in}W_{\L}$.
From left to right, suppose ${\lozenge}B{\in}\Gamma$ and $\Gamma{\not\models}{\lozenge}B$.
Consequently, by Existence Lemma for ${\lozenge}$, there exists prime $\L$-theories $\Delta,\Lambda$ such that $\Gamma{\supseteq}\Delta$, $\Delta{\bowtie}\Lambda$ and $B{\in}\Lambda$.
Hence, $\Gamma\geq_{\L}\Delta$ and $\Delta R_{\L}\Lambda$.
Moreover, by induction hypothesis, $\Lambda{\models}B$.
Thus, $\Gamma{\models}{\lozenge}B$: a contradiction.
From right to left, suppose $\Gamma{\models}{\lozenge}B$ and ${\lozenge}B{\not\in}\Gamma$.
Consequently, there exists $\Delta,\Lambda{\in}W_{\L}$ such that $\Gamma\geq_{\L}\Delta$, $\Delta R_{\L}\Lambda$ and $\Lambda{\models}B$.
Hence, $\Gamma{\supseteq}\Delta$ and $\Delta{\bowtie}\Lambda$.
Moreover, by induction hypothesis, $B{\in}\Lambda$.
Since ${\lozenge}B{\not\in}\Gamma$, then $B{\not\in}\Lambda$: a contradiction.
\medskip
\end{proof}
\begin{lemma}\label{fc:canonical:frame:is:forward:confluent}
If $(\Axiom\mathbf{f}){\in}\L$ then $(W_{\L},{\leq_{\L}},{R_{\L}})$ is forward confluent.
\end{lemma}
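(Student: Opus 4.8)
The plan is to show that the canonical frame \emph{reflects} axiom $(\Axiom\mathbf{f})$, by a Zorn's Lemma construction in the spirit of the Existence Lemmas of Section~\ref{section:existence:lemmas:and:lindenbaum:lemma}. Assume $(\Axiom\mathbf{f}){\in}\L$ and let $\Gamma,\Gamma^{\prime},\Lambda{\in}W_{\L}$ be such that $\Gamma{\geq_{\L}}{\circ}{R_{\L}}\Lambda$ is witnessed by $\Gamma^{\prime}$, i.e.\ $\Gamma^{\prime}{\subseteq}\Gamma$ and $\Gamma^{\prime}{\bowtie}\Lambda$. I must produce a prime $\L$-theory $\Theta$ with $\Gamma{\bowtie}\Theta$ and $\Lambda{\subseteq}\Theta$; this will give $\Gamma{R_{\L}}{\circ}{\geq_{\L}}\Lambda$, which is what forward confluence demands. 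To this end I would let ${\mathcal S}$ be the set of all $\L$-theories $\Theta$ such that $\Lambda{\cup}{\square}\Gamma{\subseteq}\Theta$ and, for all formulas $C$, if $C{\in}\Theta$ then ${\lozenge}C{\in}\Gamma$, and apply Lemma~\ref{lemma:Zorn:lemma} to ${\mathcal S}$.

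The main step, and the one I expect to be the real obstacle, is to check that ${\mathcal S}$ is nonempty, namely that $\L{+}(\Lambda{\cup}{\square}\Gamma){\in}{\mathcal S}$; only the second clause is non-routine. So suppose $C{\in}\L{+}(\Lambda{\cup}{\square}\Gamma)$: there are finitely many formulas in $\Lambda{\cup}{\square}\Gamma$ whose conjunction implies $C$ in $\L$. Grouping them, write this conjunction as $D{\wedge}E$, where $D$ is the conjunction of those lying in $\Lambda$ (so $D{\in}\Lambda$, taking $D{=}{\top}$ if there are none) and $E$ is the conjunction of those lying in ${\square}\Gamma$ (so, iterating $(\Axiom5)$ and using $(\Axiom6)$ if there are none, ${\square}E{\in}\Gamma$). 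From $D{\wedge}E{\rightarrow}C{\in}\L$ and $\IPL$-reasoning I get $E{\rightarrow}(D{\rightarrow}C){\in}\L$, hence ${\square}(D{\rightarrow}C){\in}\Gamma$ by $(\Rule4)$. Also $D{\in}\Lambda$ and $\Gamma^{\prime}{\bowtie}\Lambda$ give ${\lozenge}D{\in}\Gamma^{\prime}{\subseteq}\Gamma$. Now apply $(\Rule2)$ to the $\IPL$-theorem $D{\rightarrow}((D{\rightarrow}C){\rightarrow}C)$ to obtain ${\lozenge}D{\rightarrow}{\lozenge}((D{\rightarrow}C){\rightarrow}C){\in}\L$, and instantiate $(\Axiom\mathbf{f})$ at $p{:=}D{\rightarrow}C$, $q{:=}C$ to obtain ${\lozenge}((D{\rightarrow}C){\rightarrow}C){\rightarrow}({\square}(D{\rightarrow}C){\rightarrow}{\lozenge}C){\in}\L$. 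Chaining these by $\IPL$-reasoning yields ${\lozenge}D{\rightarrow}({\square}(D{\rightarrow}C){\rightarrow}{\lozenge}C){\in}\L$, and since ${\lozenge}D,{\square}(D{\rightarrow}C){\in}\Gamma$, we conclude ${\lozenge}C{\in}\Gamma$. This is exactly where $(\Axiom\mathbf{f})$ is indispensable, and the bookkeeping of the conjunction split together with the right instantiation of $(\Axiom\mathbf{f})$ is the delicate part.

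It is then routine that unions of nonempty chains in ${\mathcal S}$ stay in ${\mathcal S}$, so Lemma~\ref{lemma:Zorn:lemma} yields a maximal $\Theta{\in}{\mathcal S}$. I would check that $\Theta$ is proper and prime exactly as in the Existence Lemmas: if ${\bot}{\in}\Theta$ then ${\lozenge}{\bot}{\in}\Gamma$, contradicting $(\Axiom4)$ and the properness of $\Gamma$; and if $\Theta$ were not prime, a witness $D{\vee}E{\in}\Theta$ with $D,E{\notin}\Theta$ would, by maximality of $\Theta$ in ${\mathcal S}$, force $D{\rightarrow}F{\in}\Theta$ and $E{\rightarrow}G{\in}\Theta$ for some $F,G$ with ${\lozenge}F,{\lozenge}G{\notin}\Gamma$, whence $F{\vee}G{\in}\Theta$, hence ${\lozenge}(F{\vee}G){\in}\Gamma$, hence ${\lozenge}F{\in}\Gamma$ or ${\lozenge}G{\in}\Gamma$ by $(\Axiom3)$ --- a contradiction. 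Finally, $\Theta$ is a prime $\L$-theory with ${\square}\Gamma{\subseteq}\Theta$ and, for all $B{\in}\Theta$, ${\lozenge}B{\in}\Gamma$; that is, $\Gamma{\bowtie}\Theta$, so $\Gamma{R_{\L}}\Theta$, while $\Lambda{\subseteq}\Theta$ gives $\Theta{\geq_{\L}}\Lambda$. Hence $\Gamma{R_{\L}}{\circ}{\geq_{\L}}\Lambda$, which establishes the forward confluence of $(W_{\L},{\leq_{\L}},{R_{\L}})$.
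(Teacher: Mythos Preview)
Your proposal is correct and follows essentially the same route as the paper: both set up the same family ${\mathcal S}$ of $\L$-theories (your conditions $\Lambda\cup{\square}\Gamma\subseteq\Theta$ and $\forall C\,(C{\in}\Theta\Rightarrow{\lozenge}C{\in}\Gamma)$ are exactly the paper's $\Gamma{\bowtie}\Theta$ together with $\Theta{\supseteq}\Lambda$), both show the same starting theory $\L{+}(\Lambda\cup{\square}\Gamma)$ lies in ${\mathcal S}$ via $(\Axiom\mathbf{f})$, and both extract a prime maximal element by Zorn and $(\Axiom3)$. The only cosmetic differences are that the paper presents the starting theory as the union of an enumerated chain $\Theta_n$ (which coincides with your $\L{+}(\Lambda\cup{\square}\Gamma)$), and instantiates $(\Axiom\mathbf{f})$ at $p:=E$, $q:=C$ directly rather than your slightly longer detour through $p:=D{\rightarrow}C$.
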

\begin{proof}
Suppose $(\Axiom\mathbf{f}){\in}\L$.
Let $\Gamma,\Delta,\Lambda{\in}W_{\L}$ be such that $\Gamma\geq_{\L}\Delta$ and $\Delta R_{\L}\Lambda$.
Hence, $\Gamma{\supseteq}\Delta$ and $\Delta{\bowtie}\Lambda$.
Let $A_{1},A_{2},\ldots$ be an enumeration of ${\square}\Gamma$ and $B_{1},B_{2},\ldots$ be an enumeration of $\Lambda$.
Obviously, using axioms and inference rules of $\IPL$, for all $n{\in}\N$, ${\square}(A_{1}{\wedge}\ldots{\wedge}A_{n}){\in}\Gamma$ and $B_{1}{\wedge}\ldots{\wedge}B_{n}{\in}\Lambda$.
Since $\Delta{\bowtie}\Lambda$, then for all $n{\in}\N$, ${\lozenge}(B_{1}{\wedge}\ldots{\wedge}
$\linebreak$
B_{n}){\in}\Delta$.
For all $n{\in}\N$, let $\Theta_{n}{=}\L{+}A_{1}{\wedge}\ldots{\wedge}A_{n}{\wedge}B_{1}{\wedge}\ldots{\wedge}B_{n}$.
Obviously, using axioms and inference rules of $\IPL$, $(\Theta_{n})_{n{\in}\N}$ is a nonempty chain of $\L$-theories such that $\bigcup\{\Theta_{n}\ :\ n{\in}\N\}{\supseteq}\Lambda$.
We claim that $(\ast)$~for all formulas $C$, if ${\square}C{\in}\Gamma$ then $C{\in}\bigcup\{\Theta_{n}\ :\ n{\in}\N\}$.
If not, there exists a formula $C$ such that ${\square}C{\in}\Gamma$ and $C{\not\in}\bigcup\{\Theta_{n}\ :\ n{\in}\N\}$.
Thus, $C{\in}{\square}\Gamma$.
Consequently, let $n{\in}\N$ be such that $A_{n}{=}C$.
Hence, using axioms and inference rules of $\IPL$, $A_{1}{\wedge}\ldots{\wedge}A_{n}{\wedge}B_{1}{\wedge}\ldots{\wedge}B_{n}{\rightarrow}C{\in}\L$.
Thus, $C{\in}\Theta_{n}$.
Consequently, $C{\in}\bigcup\{\Theta_{n}\ :\ n{\in}\N\}$: a contradiction.
We claim that $(\ast\ast)$~for all formulas $C$, if $C{\in}\bigcup\{\Theta_{n}\ :\ n{\in}\N\}$ then ${\lozenge}C{\in}\Gamma$.
If not, there exists $n{\in}\N$ and there exists a formula $C$ such that $C{\in}\Theta_{n}$ and ${\lozenge}C{\not\in}\Gamma$.
Thus, $A_{1}{\wedge}\ldots{\wedge}A_{n}{\wedge}B_{1}{\wedge}\ldots{\wedge}B_{n}{\rightarrow}C{\in}\L$.
Consequently, using axioms and inference rules of $\IPL$, $B_{1}{\wedge}\ldots{\wedge}B_{n}{\rightarrow}(A_{1}{\wedge}\ldots{\wedge}A_{n}{\rightarrow}C){\in}\L$.
Hence, using inference rule $(\Rule2)$, ${\lozenge}(B_{1}{\wedge}\ldots{\wedge}B_{n}){\rightarrow}{\lozenge}(A_{1}{\wedge}\ldots{\wedge}A_{n}{\rightarrow}C){\in}\L$.
Since ${\lozenge}(B_{1}{\wedge}\ldots{\wedge}B_{n}){\in}\Delta$, then ${\lozenge}(A_{1}{\wedge}
$\linebreak$
\ldots{\wedge}A_{n}{\rightarrow}C){\in}\Delta$.
Since $\Gamma{\supseteq}\Delta$, then ${\lozenge}(A_{1}{\wedge}\ldots{\wedge}A_{n}{\rightarrow}C){\in}\Gamma$.
Thus, using the fact that $(\Axiom\mathbf{f}){\in}\L$, ${\square}(A_{1}{\wedge}\ldots{\wedge}A_{n}){\rightarrow}{\lozenge}C{\in}\Gamma$.
Since ${\square}(A_{1}{\wedge}\ldots{\wedge}A_{n}){\in}\Gamma$, then ${\lozenge}C{\in}\Gamma$: a contradiction.
Let ${\mathcal S}{=}\{\Theta\ :\ \Theta$ is a $\L$-theory such that {\bf (1)}~$\Gamma{\bowtie}\Theta$ and {\bf (2)}~$\Theta{\supseteq}\Lambda\}$.
Obviously, by~$(\ast)$ and~$(\ast\ast)$, $\bigcup\{\Theta_{n}\ :\ n{\in}\N\}\in{\mathcal S}$.
Hence, ${\mathcal S}$ is nonempty.
Moreover, for all nonempty chains $(\Pi_{i})_{i{\in}I}$ of elements of ${\mathcal S}$, $\bigcup\{\Pi_{i}\ :\ i{\in}I\}$ is an element of ${\mathcal S}$.
Thus, by Lemma~\ref{lemma:Zorn:lemma}, ${\mathcal S}$ possesses a maximal element $\Theta$.
Consequently, $\Theta$ is a $\L$-theory such that $\Gamma{\bowtie}\Theta$ and $\Theta{\supseteq}\Lambda$.
Hence, it only remains to be proved that $\Theta$ is proper and prime.
We claim that $\Theta$ is proper.
If not, ${\bot}{\in}\Theta$.
Since $\Gamma{\bowtie}\Theta$, then ${\lozenge}{\bot}{\in}\Gamma$: a contradiction.
We claim that $\Theta$ is prime.
If not, there exists formulas $C,D$ such that $C{\vee}D{\in}\Theta$, $C{\not\in}\Theta$ and $D{\not\in}\Theta$.
Consequently, by the maximality of $\Theta$ in ${\mathcal S}$, $\Theta{+}C{\not\in}{\mathcal S}$ and $\Theta{+}D{\not\in}{\mathcal S}$.
Hence, there exists a formula $E$ such that $E{\in}\Theta{+}C$ and ${\lozenge}E{\not\in}\Gamma$ and there exists a formula $F$ such that $F{\in}\Theta{+}D$ and ${\lozenge}F{\not\in}\Gamma$.
Thus, $C{\rightarrow}E{\in}\Theta$ and $D{\rightarrow}F{\in}\Theta$.
Consequently, using axioms and inference rules of $\IPL$, $C{\vee}D{\rightarrow}E{\vee}F{\in}\Theta$.
Since $C{\vee}D{\in}\Theta$, then $E{\vee}F{\in}\Theta$.
Since $\Gamma{\bowtie}\Theta$, then ${\lozenge}(E{\vee}F){\in}\Gamma$.
Hence, using axiom $(\Axiom3)$, either ${\lozenge}E{\in}\Gamma$, or ${\lozenge}F{\in}\Gamma$: a contradiction.
%
%
%\\
%\\
%
%
%All in all, we have proved that $\Theta{\in}W_{\L}$ is such that $\Gamma R_{\L}\Theta$ and $\Theta\geq_{\L}\Lambda$.
%
%
\medskip
\end{proof}
\begin{lemma}\label{bc:canonical:frame:is:backward:confluent}
If $(\Axiom\mathbf{b}){\in}\L$ then $(W_{\L},{\leq_{\L}},{R_{\L}})$ is backward confluent.
\end{lemma}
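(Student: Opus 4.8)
The plan is to follow the canonical-model pattern already used in the Existence Lemmas and in Lemma~\ref{fc:canonical:frame:is:forward:confluent}. Assume $(\Axiom\mathbf{b}){\in}\L$ and let $\Gamma,\Delta,\Lambda{\in}W_{\L}$ be such that $\Gamma{R_{\L}}\Delta$ and $\Delta{\leq_{\L}}\Lambda$, i.e. $\Gamma{\bowtie}\Delta$ and $\Delta{\subseteq}\Lambda$. It suffices to build a prime $\L$-theory $\Theta$ with $\Gamma{\subseteq}\Theta$ and $\Theta{\bowtie}\Lambda$, for then $\Gamma{\leq_{\L}}\Theta$ and $\Theta{R_{\L}}\Lambda$, which is exactly what is needed to witness $\Gamma({\leq_{\L}}{\circ}{R_{\L}})\Lambda$ and hence backward confluence of the canonical frame.

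First I would consider $\Gamma^{\prime}{=}\Gamma{+}\{{\lozenge}B\ :\ B{\in}\Lambda\}$ and establish the key claim $(\ast)$: for every formula $C$, if ${\square}C{\in}\Gamma^{\prime}$ then $C{\in}\Lambda$. If ${\square}C{\in}\Gamma^{\prime}$ there are $B_{1},\ldots,B_{m}{\in}\Lambda$ with ${\lozenge}B_{1}{\wedge}\ldots{\wedge}{\lozenge}B_{m}{\rightarrow}{\square}C{\in}\Gamma$ (the case $m{=}0$ reducing to ${\square}C{\in}\Gamma$, which already gives $C{\in}\Delta{\subseteq}\Lambda$). Put $B{=}B_{1}{\wedge}\ldots{\wedge}B_{m}{\in}\Lambda$; applying $(\Rule2)$ to the $\IPL$-theorems $B{\rightarrow}B_{i}$ yields ${\lozenge}B{\rightarrow}{\lozenge}B_{i}{\in}\L$ for each $i$, so ${\lozenge}B{\rightarrow}{\square}C{\in}\Gamma$ by $\IPL$-reasoning; the substitution instance $({\lozenge}B{\rightarrow}{\square}C){\rightarrow}{\square}(B{\rightarrow}C)$ of $(\Axiom\mathbf{b})$ then gives ${\square}(B{\rightarrow}C){\in}\Gamma$, hence $B{\rightarrow}C{\in}\Delta{\subseteq}\Lambda$ because $\Gamma{\bowtie}\Delta$, and finally $C{\in}\Lambda$ since $B{\in}\Lambda$ and $\Lambda$ is closed under modus ponens. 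Together with $\Gamma{\subseteq}\Gamma^{\prime}$ and ${\lozenge}B{\in}\Gamma^{\prime}$ for all $B{\in}\Lambda$, claim $(\ast)$ says $\Gamma^{\prime}{\bowtie}\Lambda$; in particular $\Gamma^{\prime}$ is proper, since ${\bot}{\in}\Gamma^{\prime}$ would force ${\square}{\bot}{\in}\Gamma^{\prime}$ and hence ${\bot}{\in}\Lambda$, contradicting primeness of $\Lambda$.

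Next I would set ${\mathcal S}{=}\{\Theta\ :\ \Theta$ is a $\L$-theory such that $\Gamma{\subseteq}\Theta$ and $\Theta{\bowtie}\Lambda\}$. By the previous step $\Gamma^{\prime}{\in}{\mathcal S}$, so ${\mathcal S}$ is nonempty, and it is closed under unions of nonempty chains, since both clauses in the definition of ${\bowtie}$ are preserved by such unions. By Lemma~\ref{lemma:Zorn:lemma}, ${\mathcal S}$ has a maximal element $\Theta$. Then $\Theta$ is proper (${\bot}{\in}\Theta$ would give $\Theta{=}\Fo$, which is not in ${\mathcal S}$ because $\Lambda{\neq}\Fo$), and $\Theta$ is prime: if $C{\vee}D{\in}\Theta$ with $C,D{\notin}\Theta$, then by maximality $\Theta{+}C{\notin}{\mathcal S}$ and $\Theta{+}D{\notin}{\mathcal S}$; both still contain $\Gamma$ and every ${\lozenge}B$ with $B{\in}\Lambda$, so the clause of ${\bowtie}$ that fails is the ${\square}$-clause, giving $E,F$ with ${\square}E{\in}\Theta{+}C$, $E{\notin}\Lambda$, ${\square}F{\in}\Theta{+}D$, $F{\notin}\Lambda$; then $C{\rightarrow}{\square}E{\in}\Theta$ and $D{\rightarrow}{\square}F{\in}\Theta$, hence $C{\vee}D{\rightarrow}{\square}E{\vee}{\square}F{\in}\Theta$ by $\IPL$-reasoning, and since ${\square}E{\vee}{\square}F{\rightarrow}{\square}(E{\vee}F){\in}\L$ (using $(\Axiom1)$ and $(\Rule1)$) we obtain ${\square}(E{\vee}F){\in}\Theta$, whence $E{\vee}F{\in}\Lambda$ and, by primeness of $\Lambda$, $E{\in}\Lambda$ or $F{\in}\Lambda$ --- a contradiction. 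Thus $\Theta$ is a prime $\L$-theory with $\Gamma{\leq_{\L}}\Theta$ and $\Theta{R_{\L}}\Lambda$, as required.

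The only genuinely load-bearing step is claim $(\ast)$, and it is the single point at which axiom $(\Axiom\mathbf{b})$ is used: it lets one turn ${\lozenge}B{\rightarrow}{\square}C{\in}\Gamma$ into ${\square}(B{\rightarrow}C){\in}\Gamma$, after which the hypotheses $\Gamma{\bowtie}\Delta$ and $\Delta{\subseteq}\Lambda$ do the rest. The surrounding Zorn-and-primeness bookkeeping is entirely parallel to that of the Existence Lemmas already established.
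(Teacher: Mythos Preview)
Your proof is correct and follows essentially the same approach as the paper's. The only cosmetic difference is that the paper constructs the initial element of ${\mathcal S}$ via an enumeration $A_{1},A_{2},\ldots$ of $\Lambda$ and the chain $\Theta_{n}{=}\Gamma{+}{\lozenge}(A_{1}{\wedge}\ldots{\wedge}A_{n})$, but $\bigcup_{n}\Theta_{n}$ coincides with your $\Gamma^{\prime}{=}\Gamma{+}\{{\lozenge}B\ :\ B{\in}\Lambda\}$, and from there the use of $(\Axiom\mathbf{b})$ in claim~$(\ast)$, the Zorn step, and the properness/primeness verifications are identical.
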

\begin{proof}
Suppose $(\Axiom\mathbf{b}){\in}\L$.
Let $\Gamma,\Delta,\Lambda{\in}W_{\L}$ be such that $\Gamma R_{\L}\Delta$ and $\Delta\leq_{\L}\Lambda$.
Hence, $\Gamma{\bowtie}\Delta$ and $\Delta{\subseteq}\Lambda$.
Let $A_{1},A_{2},\ldots$ be an enumeration of $\Lambda$.
Obviously, using axioms and inference rules of $\IPL$, for all $n{\in}\N$, $A_{1}{\wedge}\ldots{\wedge}A_{n}{\in}\Lambda$.
For all $n{\in}\N$, let $\Theta_{n}{=}\Gamma{+}{\lozenge}(A_{1}{\wedge}\ldots{\wedge}A_{n})$.
Obviously, using inference rule $(\Rule2)$, $(\Theta_{n})_{n{\in}\N}$ is a nonempty chain of $\L$-theories such that $\Gamma{\subseteq}\bigcup\{\Theta_{n}\ :\ n{\in}\N\}$.
We claim that $(\ast)$~for all formulas $B$, if ${\square}B{\in}\bigcup\{\Theta_{n}\ :\ n{\in}\N\}$ then $B{\in}\Lambda$.
If not, there exists $n{\in}\N$ and there exists a formula $B$ such that ${\square}B{\in}\Theta_{n}$ and $B{\not\in}\Lambda$.
Thus, ${\lozenge}(A_{1}{\wedge}\ldots{\wedge}A_{n}){\rightarrow}{\square}B{\in}\Gamma$.
Consequently, using the fact that $(\Axiom\mathbf{b}){\in}\L$, ${\square}(A_{1}{\wedge}\ldots{\wedge}
$\linebreak$
A_{n}{\rightarrow}B){\in}\Gamma$.
Since $\Gamma{\bowtie}\Delta$, then $A_{1}{\wedge}\ldots{\wedge}A_{n}{\rightarrow}B{\in}\Delta$.
Since $\Delta{\subseteq}\Lambda$, then $A_{1}{\wedge}\ldots{\wedge}A_{n}
$\linebreak$
{\rightarrow}B{\in}\Lambda$.
Sin\-ce $A_{1}{\wedge}\ldots{\wedge}A_{n}{\in}\Lambda$, then $B{\in}\Lambda$: a contradiction.
We claim that $(\ast\ast)$~for all formulas $B$, if $B{\in}\Lambda$ then ${\lozenge}B{\in}\bigcup\{\Theta_{n}\ :\ n{\in}\N\}$.
If not, there exists a formula $B$ such that $B{\in}\Lambda$ and ${\lozenge}B{\not\in}\bigcup\{\Theta_{n}\ :\ n{\in}\N\}$.
Thus, let $n{\in}\N$ be such that $A_{n}{=}B$.
Consequently, using axioms and inference rules of $\IPL$, $A_{1}{\wedge}\ldots{\wedge}A_{n}{\rightarrow}B{\in}\L$.
Hence, using inference rule $(\Rule2)$, ${\lozenge}(A_{1}{\wedge}\ldots{\wedge}A_{n}){\rightarrow}{\lozenge}B{\in}\L$.
Thus, ${\lozenge}B{\in}\Theta_{n}$.
Consequently, ${\lozenge}B{\in}\bigcup\{\Theta_{n}\ :\ n{\in}\N\}$: a contradiction.
Let ${\mathcal S}{=}\{\Theta\ :\ \Theta$ is a $\L$-theory such that {\bf (1)}~$\Gamma{\subseteq}\Theta$ and {\bf (2)}~$\Theta{\bowtie}\Lambda\}$.
Obviously, by~$(\ast)$ and~$(\ast\ast)$, $\bigcup\{\Theta_{n}\ :\ n{\in}\N\}\in{\mathcal S}$.
Thus, ${\mathcal S}$ is nonempty.
Moreover, for all nonempty chains $(\Pi_{i})_{i{\in}I}$ of elements of ${\mathcal S}$, $\bigcup\{\Pi_{i}\ :\ i{\in}I\}$ is an element of ${\mathcal S}$.
Consequently, by Lemma~\ref{lemma:Zorn:lemma}, ${\mathcal S}$ possesses a maximal element $\Theta$.
Hence, $\Theta$ is a $\L$-theory such that $\Gamma{\subseteq}\Theta$ and $\Theta{\bowtie}\Lambda$.
Thus, it only remains to be proved that $\Theta$ is proper and prime.
We claim that $\Theta$ is proper.
If not, $\Theta{=}\Fo$.
Consequently ${\square}{\bot}{\in}\Theta$.
Since $\Theta{\bowtie}\Lambda$, then ${\bot}{\in}\Lambda$: a contradiction.
We claim that $\Theta$ is prime.
If not, there exists formulas $B,C$ such that $B{\vee}C{\in}\Theta$, $B{\not\in}\Theta$ and $C{\not\in}\Theta$.
Thus, by the maximality of $\Theta$ in ${\mathcal S}$, $\Theta{+}B{\not\in}{\mathcal S}$ and $\Theta{+}C{\not\in}{\mathcal S}$.
Consequently, there exists a formula $D$ such that ${\square}D{\in}\Theta{+}B$ and $D{\not\in}\Lambda$ and there exists a formula $E$ such that ${\square}E{\in}\Theta{+}C$ and $E{\not\in}\Lambda$.
Hence, $B{\rightarrow}{\square}D{\in}\Theta$ and $C{\rightarrow}{\square}E{\in}\Theta$.
Thus, using axioms and inference rules of $\IPL$, $B{\vee}C{\rightarrow}{\square}D{\vee}{\square}E{\in}\Theta$.
Since $B{\vee}C{\in}\Theta$, then ${\square}D{\vee}{\square}E{\in}\Theta$.
Obviously, using axiom $(\Axiom1)$ and inference rule $(\Rule1)$, ${\square}D{\vee}{\square}E{\rightarrow}
$\linebreak$
{\square}(D{\vee}E){\in}\L$.
Since ${\square}D{\vee}{\square}E{\in}\Theta$, then ${\square}(D{\vee}E){\in}\Theta$.
Since $\Theta{\bowtie}\Lambda$, then $D{\vee}E{\in}\Lambda$.
Since $D{\not\in}\Lambda$ and $E{\not\in}\Lambda$, then $D{\vee}E{\not\in}\Lambda$: a contradiction.
%
%
%\\
%\\
%
%
%All in all, we have proved that $\Theta{\in}W_{\L}$ is such that $\Gamma\leq_{\L}\Theta$ and $\Theta R_{\L}\Lambda$.
%
%
\medskip
\end{proof}
\begin{lemma}\label{dc:canonical:frame:is:downward:confluent}
If $(\Axiom\mathbf{d}){\in}\L$ then $(W_{\L},{\leq_{\L}},{R_{\L}})$ is downward confluent.
\end{lemma}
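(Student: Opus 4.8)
The plan is to follow the canonical-model pattern of Lemmas~\ref{fc:canonical:frame:is:forward:confluent} and~\ref{bc:canonical:frame:is:backward:confluent}. Assume $(\Axiom\mathbf{d}){\in}\L$ and let $\Gamma,\Delta,\Lambda{\in}W_{\L}$ with $\Gamma{\leq_{\L}}\Delta$ and $\Delta{R_{\L}}\Lambda$, i.e.\ $\Gamma{\subseteq}\Delta$ and $\Delta{\bowtie}\Lambda$; I must produce a prime $\L$-theory $\Theta$ such that $\Gamma{\bowtie}\Theta$ and $\Theta{\subseteq}\Lambda$, for then $\Gamma{R_{\L}}\Theta$ and $\Theta{\leq_{\L}}\Lambda$, which is exactly $\Gamma({R_{\L}}{\circ}{\leq_{\L}})\Lambda$, as downward confluence of the canonical frame requires. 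The naive move~---~take a maximal $\L$-theory satisfying $\Gamma{\bowtie}\Theta$ separately from $\Theta{\subseteq}\Lambda$~---~breaks at the primeness step: the ``forbidden'' sets $\{B:{\lozenge}B{\not\in}\Gamma\}$ and $\Fo{\setminus}\Lambda$ are each closed under finite disjunctions, but their union is not, so the usual merging of witnesses used in the existence lemmas fails. The fix I propose is to apply Zorn's Lemma to the set ${\mathcal S}$ of all $\L$-theories $\Theta$ such that ${\square}\Gamma{\subseteq}\Theta$ and, for all formulas $P,Q$, if ${\lozenge}P{\not\in}\Gamma$ and $P{\vee}Q{\in}\Theta$ then $Q{\in}\Lambda$. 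This single clause simultaneously forces $\Theta{\subseteq}\Lambda$ (instantiate $P{:=}{\bot}$, using ${\lozenge}{\bot}{\not\in}\Gamma$, which holds by $(\Axiom4)$) and the ${\bowtie}$-diamond condition $B{\in}\Theta{\Rightarrow}{\lozenge}B{\in}\Gamma$ (instantiate $Q{:=}{\bot}$ and use that $\Lambda$ is proper), so that every $\Theta{\in}{\mathcal S}$ already satisfies $\Gamma{\bowtie}\Theta$ and $\Theta{\subseteq}\Lambda$; and it is tailored to the ``${\lozenge}p{\vee}{\square}q$'' shape of $(\Axiom\mathbf{d})$.

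For nonemptiness I would check that ${\L}{+}{\square}\Gamma{\in}{\mathcal S}$. Certainly ${\square}\Gamma{\subseteq}{\L}{+}{\square}\Gamma$. If ${\lozenge}P{\not\in}\Gamma$ and $P{\vee}Q{\in}{\L}{+}{\square}\Gamma$, then by the usual $\square$-monotonicity argument (axiom $(\Axiom1)$, inference rule $(\Rule1)$, and $\IPL$-reasoning) ${\square}(P{\vee}Q){\in}\Gamma$; by the instance ${\square}(P{\vee}Q){\rightarrow}{\lozenge}P{\vee}{\square}Q$ of $(\Axiom\mathbf{d})$ together with primeness of $\Gamma$ we obtain ${\lozenge}P{\in}\Gamma$ or ${\square}Q{\in}\Gamma$, hence ${\square}Q{\in}\Gamma$; since $\Gamma{\subseteq}\Delta$ this gives ${\square}Q{\in}\Delta$, and since $\Delta{\bowtie}\Lambda$ we conclude $Q{\in}\Lambda$, as required. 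As ${\mathcal S}$ is plainly closed under unions of nonempty chains, Lemma~\ref{lemma:Zorn:lemma} yields a maximal element $\Theta$ of ${\mathcal S}$; and $\Theta$ is proper because $\Theta{\subseteq}\Lambda$ and $\Lambda$ is proper.

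The crux is primeness of $\Theta$. Suppose $C{\vee}D{\in}\Theta$ with $C{\not\in}\Theta$ and $D{\not\in}\Theta$. By maximality, $\Theta{+}C{\not\in}{\mathcal S}$ and $\Theta{+}D{\not\in}{\mathcal S}$; since ${\square}\Gamma{\subseteq}\Theta{+}C$, the defining clause must fail, so there are formulas $P_{1},Q_{1}$ with ${\lozenge}P_{1}{\not\in}\Gamma$, $C{\rightarrow}(P_{1}{\vee}Q_{1}){\in}\Theta$ and $Q_{1}{\not\in}\Lambda$, and likewise $P_{2},Q_{2}$ for $D$. Then $C{\vee}D{\rightarrow}(P_{1}{\vee}P_{2}){\vee}(Q_{1}{\vee}Q_{2}){\in}\Theta$ by $\IPL$-reasoning, so $(P_{1}{\vee}P_{2}){\vee}(Q_{1}{\vee}Q_{2}){\in}\Theta$. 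By axiom $(\Axiom3)$ and primeness of $\Gamma$, ${\lozenge}(P_{1}{\vee}P_{2}){\not\in}\Gamma$ (otherwise ${\lozenge}P_{1}{\in}\Gamma$ or ${\lozenge}P_{2}{\in}\Gamma$). Applying the defining clause of ${\mathcal S}$ to $\Theta$ with $P{:=}P_{1}{\vee}P_{2}$ and $Q{:=}Q_{1}{\vee}Q_{2}$ then gives $Q_{1}{\vee}Q_{2}{\in}\Lambda$, contradicting primeness of $\Lambda$ since $Q_{1},Q_{2}{\not\in}\Lambda$. Hence $\Theta$ is prime, and $\Theta$ witnesses $\Gamma({R_{\L}}{\circ}{\leq_{\L}})\Lambda$. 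The step I expect to be the main obstacle is exactly the design of the combined clause in the definition of ${\mathcal S}$ so that this single-case primeness argument goes through (the axiom $(\Axiom\mathbf{d})$ being used precisely to handle the ``${\lozenge}P$ versus ${\square}Q$'' split); once that is in place, every remaining verification is the routine $\IPL$- and normality-reasoning already pervasive in Sections~\ref{section:existence:lemmas:and:lindenbaum:lemma} and~\ref{section:canonical:frame:and:canonical:model}.
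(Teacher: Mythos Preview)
Your proof is correct and follows essentially the same approach as the paper's: define ${\mathcal S}$ via the key clause ``${\lozenge}P{\not\in}\Gamma$ and $P{\vee}Q{\in}\Theta$ imply $Q{\in}\Lambda$'', use $(\Axiom\mathbf{d})$ to show ${\square}\Gamma{\in}{\mathcal S}$, take a maximal element by Lemma~\ref{lemma:Zorn:lemma}, and verify primeness. Your presentation is slightly more economical---the paper also lists $\Theta{\subseteq}\Lambda$ as a separate condition in ${\mathcal S}$ and consequently splits the primeness argument into four cases where you need only one---but since you correctly observe that this extra condition already follows from the main clause (via $P{:=}{\bot}$ and $(\Axiom4)$), the two families ${\mathcal S}$ coincide and the arguments are the same in substance.
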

\begin{proof}
Suppose $(\Axiom\mathbf{d}){\in}\L$.
Let $\Gamma,\Delta,\Lambda{\in}W_{\L}$ be such that $\Gamma\leq_{\L}\Delta$ and $\Delta R_{\L}\Lambda$.
Hence, $\Gamma{\subseteq}\Delta$ and $\Delta{\bowtie}\Lambda$.
We claim that $(\ast)$~${\square}\Gamma{\subseteq}\Lambda$.
If not, there exists a formula $A$ such that $A{\in}{\square}\Gamma$ and $A{\not\in}\Lambda$.
Thus, ${\square}A{\in}\Gamma$.
Since $\Gamma{\subseteq}\Delta$, then ${\square}A{\in}\Delta$.
Since $\Delta{\bowtie}\Lambda$, then $A{\in}\Lambda$: a contradiction.
We claim that $(\ast\ast)$~for all formulas $A,B$, if ${\lozenge}A{\not\in}\Gamma$ and $A{\vee}B{\in}{\square}\Gamma$ then $B{\in}\Lambda$.
If not, there exists formulas $A,B$ such that ${\lozenge}A{\not\in}\Gamma$, $A{\vee}B{\in}{\square}\Gamma$ and $B{\not\in}\Lambda$.
Hence, ${\square}(A{\vee}B){\in}\Gamma$.
Thus, using the fact that $(\Axiom\mathbf{d}){\in}\L$, ${\lozenge}A{\vee}{\square}B{\in}\Gamma$.
Consequently, either ${\lozenge}A{\in}\Gamma$, or ${\square}B{\in}\Gamma$.
Since ${\lozenge}A{\not\in}\Gamma$, then ${\square}B{\in}\Gamma$.
Since $\Gamma{\subseteq}\Delta$, then ${\square}B{\in}\Delta$.
Since $\Delta{\bowtie}\Lambda$, then $B{\in}\Lambda$: a contradiction.

Let ${\mathcal S}{=}\{\Theta\ :\ \Theta$ is a $\L$-theory such that {\bf (1)}~${\square}\Gamma{\subseteq}\Theta$, {\bf (2)}~$\Theta{\subseteq}\Lambda$ and {\bf (3)}~for all formulas $A,B$, if ${\lozenge}A{\not\in}\Gamma$ and $A{\vee}B{\in}\Theta$ then $B{\in}\Lambda\}$.
Obviously, by~$(\ast)$ and~$(\ast\ast)$, ${\square}\Gamma\in{\mathcal S}$.
Thus, ${\mathcal S}$ is nonempty.
Moreover, for all nonempty chains $(\Theta_{i})_{i{\in}I}$ of elements of ${\mathcal S}$, $\bigcup\{\Theta_{i}\ :\ i{\in}I\}$ is an element of ${\mathcal S}$.
Consequently, by Lemma~\ref{lemma:Zorn:lemma}, ${\mathcal S}$ possesses a maximal element $\Theta$.
Hence, $\Theta$ is a $\L$-theory such that ${\square}\Gamma{\subseteq}\Theta$, $\Theta{\subseteq}\Lambda$ and for all formulas $A,B$, if ${\lozenge}A{\not\in}\Gamma$ and $A{\vee}B{\in}\Theta$ then $B{\in}\Lambda$.
Thus, it only remains to be proved that $\Theta$ is proper and prime and $\Gamma{\bowtie}\Theta$.
We claim that $\Theta$ is proper.
If not, ${\bot}{\in}\Theta$.
Since $\Theta{\subseteq}\Lambda$, then ${\bot}{\in}\Lambda$: a contradiction.
We claim that $\Theta$ is prime.
If not, there exists formulas $A,B$ such that $A{\vee}B{\in}\Theta$, $A{\not\in}\Theta$ and $B{\not\in}\Theta$.
Hence, by the maximality of $\Theta$ in ${\mathcal S}$, $\Theta{+}A{\not\in}{\mathcal S}$ and $\Theta{+}B{\not\in}{\mathcal S}$.
Thus, either there exists a formula $C$ such that $C{\in}\Theta{+}A$ and $C{\not\in}\Lambda$, or there exists formulas $C,D$ such that ${\lozenge}C{\not\in}\Gamma$, $C{\vee}D{\in}\Theta{+}A$ and $D{\not\in}\Lambda$ and either there exists a formula $E$ such that $E{\in}\Theta{+}B$ and $E{\not\in}\Lambda$, or there exists formulas $E,F$ such that ${\lozenge}E{\not\in}\Gamma$, $E{\vee}F{\in}\Theta{+}B$ and $F{\not\in}\Lambda$.
Consequently, we have to consider the following $4$ cases.
$\mathbf{(1)}$ Case ``there exists a formula $C$ such that $C{\in}\Theta{+}A$ and $C{\not\in}\Lambda$ and there exists a formula $E$ such that $E{\in}\Theta{+}B$ and $E{\not\in}\Lambda$'':
Hence, $A{\rightarrow}C{\in}\Theta$ and $B{\rightarrow}E{\in}\Theta$.
Thus, using axioms and inference rules of $\IPL$, $A{\vee}B{\rightarrow}C{\vee}E{\in}\Theta$.
Since $A{\vee}B{\in}\Theta$, then $C{\vee}E{\in}\Theta$.
Since $\Theta{\subseteq}\Lambda$, then $C{\vee}E{\in}\Lambda$.
Since $C{\not\in}\Lambda$ and $E{\not\in}\Lambda$, then $C{\vee}E{\not\in}\Lambda$: a contradiction.
$\mathbf{(2)}$ Case ``there exists a formula $C$ such that $C{\in}\Theta{+}A$ and $C{\not\in}\Lambda$ and there exists formulas $E,F$ such that ${\lozenge}E{\not\in}\Gamma$, $E{\vee}F{\in}\Theta{+}B$ and $F{\not\in}\Lambda$'':
Consequently, $A{\rightarrow}C{\in}\Theta$ and $B{\rightarrow}E{\vee}F{\in}\Theta$.
Hence, using axioms and inference rules of $\IPL$, $A{\vee}B{\rightarrow}E{\vee}C{\vee}F
$\linebreak$
{\in}\Theta$.
Since $A{\vee}B{\in}\Theta$, then $E{\vee}C{\vee}F{\in}\Theta$.
Since ${\lozenge}E{\not\in}\Gamma$, then $C{\vee}F{\in}\Lambda$.
Since $C{\not\in}\Lambda$ and $F{\not\in}\Lambda$, then $C{\vee}F{\not\in}\Lambda$: a contradiction.
$\mathbf{(3)}$ Case ``there exists formulas $C,D$ such that ${\lozenge}C{\not\in}\Gamma$, $C{\vee}D{\in}\Theta{+}A$ and $D{\not\in}\Lambda$ and there exists a formula $E$ such that $E{\in}\Theta{+}B$ and $E{\not\in}\Lambda$'':
Thus, $A{\rightarrow}C{\vee}D{\in}\Theta$ and $B{\rightarrow}E{\in}\Theta$.
Consequently, using axioms and inference rules of $\IPL$, $A{\vee}B{\rightarrow}C{\vee}D{\vee}E
$\linebreak$
{\in}\Theta$.
Since $A{\vee}B{\in}\Theta$, then $C{\vee}D{\vee}E{\in}\Theta$.
Since ${\lozenge}C{\not\in}\Gamma$, then $D{\vee}E{\in}\Lambda$.
Since $D{\not\in}\Lambda$ and $E{\not\in}\Lambda$, then $D{\vee}E{\not\in}\Lambda$: a contradiction.
$\mathbf{(4)}$ Case ``there exists formulas $C,D$ such that ${\lozenge}C{\not\in}\Gamma$, $C{\vee}D{\in}\Theta{+}A$ and $D{\not\in}\Lambda$ and there exists formulas $E,F$ such that ${\lozenge}E{\not\in}\Gamma$, $E{\vee}F{\in}\Theta{+}B$ and $F{\not\in}\Lambda$'':
Hence, $A{\rightarrow}C{\vee}D{\in}\Theta$ and $B{\rightarrow}E{\vee}F{\in}\Theta$.
Thus, using axioms and inference rules of $\IPL$, $A{\vee}B{\rightarrow}C{\vee}E{\vee}D{\vee}F{\in}\Theta$.
Since $A{\vee}B{\in}\Theta$, then $C{\vee}E{\vee}D{\vee}F{\in}\Theta$.
Since ${\lozenge}C{\not\in}\Gamma$ and ${\lozenge}E{\not\in}\Gamma$, then using axiom $(\Axiom3)$, ${\lozenge}(C{\vee}E){\not\in}\Gamma$.
Since $C{\vee}E{\vee}D{\vee}F{\in}\Theta$, then $D{\vee}F{\in}\Lambda$.
Since $D{\not\in}\Lambda$ and $F{\not\in}\Lambda$, then $D{\vee}F{\not\in}\Lambda$: a contradiction.
We claim that $\Gamma{\bowtie}\Theta$.
If not, there exists a formula $A$ such that $A{\in}\Theta$ and ${\lozenge}A{\not\in}\Gamma$.
Hence, using axioms and inference rules of $\IPL$, $A{\vee}{\bot}{\in}\Theta$.
Since ${\lozenge}A{\not\in}\Gamma$, then ${\bot}{\in}\Lambda$: a contradiction.
%
%
%\\
%\\
%
%
%All in all, we have proved that $\Theta{\in}W_{\L}$ is such that $\Gamma R_{\L}\Theta$ and $\Theta\leq_{\L}\Lambda$.
%
%
\medskip
\end{proof}
%
%
%Lemma~\ref{lemma:about:what:happens:if:T:square:is:in:the:logic} is used in Section~\ref{section:soundness:and:completeness} when we show that $\L_{\min}{\oplus}{\square}p{\rightarrow}p$ is the intuitionistic modal logic determined by the class of all frames $(W,{\leq},{R})$ such that for all $s{\in}W$, $s{\leq}{\circ}{R}{\circ}{\leq}s$.
%
%
\begin{lemma}\label{lemma:about:what:happens:if:T:square:is:in:the:logic}
If ${\square}p{\rightarrow}p{\in}\L$ then $(W_{\L},{\leq_{\L}},{R_{\L}})$ is up-reflexive.
\end{lemma}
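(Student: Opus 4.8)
The plan is to prove that the canonical frame $(W_{\L},{\leq_{\L}},{R_{\L}})$ is up-reflexive, i.e.\ that for every prime $\L$-theory $\Gamma$ there exist prime $\L$-theories $\Delta,\Lambda$ with $\Gamma{\subseteq}\Delta$, $\Delta{\bowtie}\Lambda$ and $\Lambda{\subseteq}\Gamma$; unfolding the definitions of ${\leq_{\L}}$ and ${R_{\L}}$, this is exactly ``$\Gamma\,({\leq_{\L}}{\circ}{R_{\L}}{\circ}{\leq_{\L}})\,\Gamma$''. Two ingredients make the axiom ${\square}p{\rightarrow}p$ bite: first, since ${\square}p{\rightarrow}p{\in}\L$, every $\L$-theory $\Theta$ satisfies ${\square}\Theta{\subseteq}\Theta$; second, axiom $(\Axiom2)$, which --- exactly as in the $(\ast)$-step of the proof of Lemma~\ref{lemma:prime:proper:for:square} --- turns ``${\square}(C{\vee}E){\in}\Theta$ and ${\lozenge}C{\rightarrow}{\square}E{\in}\Theta$'' into ``${\square}E{\in}\Theta$''.

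First I would construct $\Delta$. Let $\mathcal{S}$ be the set of all $\L$-theories $\Theta$ with $\Gamma{\subseteq}\Theta$ and ${\square}\Theta{\subseteq}\Gamma$. By the first ingredient $\Gamma{\in}\mathcal{S}$, so $\mathcal{S}$ is nonempty, and it is closed under unions of nonempty chains; by Lemma~\ref{lemma:Zorn:lemma} it has a maximal element $\Delta$. That $\Delta$ is prime follows by the usual maximality argument, using that ${\square}$ pushes into $\vee$ via $(\Axiom1)$ and $(\Rule1)$ (so ${\square}C{\vee}{\square}D{\rightarrow}{\square}(C{\vee}D){\in}\L$) together with primality of $\Gamma$: if $C{\vee}D{\in}\Delta$ but $C,D{\notin}\Delta$, maximality gives $C{\rightarrow}{\square}E,\ D{\rightarrow}{\square}F{\in}\Delta$ with $E,F{\notin}\Gamma$, hence ${\square}(E{\vee}F){\in}\Delta$, hence $E{\vee}F{\in}\Gamma$, contradicting primality of $\Gamma$. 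Next, using maximality of $\Delta$ and the second ingredient, I would establish $(\ast)$: ${\square}C{\in}\Delta$ implies ${\lozenge}C{\in}\Delta$ --- otherwise ${\lozenge}C{\notin}\Delta$ would, by maximality, give ${\lozenge}C{\rightarrow}{\square}E{\in}\Delta$ for some $E{\notin}\Gamma$, and since ${\square}C{\in}\Delta$ forces ${\square}(C{\vee}E){\in}\Delta$, axiom $(\Axiom2)$ yields ${\square}E{\in}\Delta$, whence $E{\in}\Gamma$: a contradiction.

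Then I would construct $\Lambda$ by a second maximality argument among $\L$-theories $\Theta$ with ${\square}\Delta{\subseteq}\Theta{\subseteq}\Gamma$ and ${\lozenge}\Theta{\subseteq}\Delta$; the theory ${\square}\Delta$ (a $\L$-theory by Lemma~\ref{lemma:theory:square:gamma}) lies in this family, since ${\square}\Delta{\subseteq}\Gamma$ by the choice of $\Delta$ and since ${\lozenge}({\square}\Delta){\subseteq}\Delta$ is exactly $(\ast)$, so the family is nonempty and, being closed under unions of chains, has a maximal element $\Lambda$. Then ${\square}\Delta{\subseteq}\Lambda$ and ${\lozenge}\Lambda{\subseteq}\Delta$ give $\Delta{\bowtie}\Lambda$, while $\Gamma{\subseteq}\Delta$ and $\Lambda{\subseteq}\Gamma$ complete the triangle, so $\Gamma\,({\leq_{\L}}{\circ}{R_{\L}}{\circ}{\leq_{\L}})\,\Gamma$. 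The step I expect to be the main obstacle is proving that this last maximal $\Lambda$ is prime: the case analysis splits according to which of the constraints ``${\subseteq}\Gamma$'' and ``${\lozenge}({\cdot}){\subseteq}\Delta$'' fails for $\Lambda{+}C$ and for $\Lambda{+}D$, and the delicate configuration is when one of $C,D$ lands outside $\Gamma$ while the other produces a formula whose ${\lozenge}$ is absent from $\Delta$. Resolving it should use primality of $\Gamma$, primality of $\Delta$, and axiom $(\Axiom3)$ (to split ${\lozenge}$ over $\vee$) in a coordinated way --- exploiting that both $\Fo{\setminus}\Gamma$ and $\{C\ :\ {\lozenge}C{\notin}\Delta\}$ are closed under $\vee$ --- and it may be cleanest to phrase this final step as a Lindenbaum-style extension of ${\square}\Delta$ avoiding a suitable $\vee$-closed set of forbidden formulas built from these two sets, which is the technical core of the argument.
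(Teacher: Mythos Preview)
Your construction of $\Delta$ and its primality proof are correct and match the paper verbatim; your claim $(\ast)$ (that ${\square}C{\in}\Delta$ implies ${\lozenge}C{\in}\Delta$) is also correct, but it is weaker than what is needed for the second step.

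The gap is precisely where you locate it: primality of $\Lambda$ in the mixed case. With your family (theories $\Theta$ with ${\square}\Delta{\subseteq}\Theta{\subseteq}\Gamma$ and ${\lozenge}\Theta{\subseteq}\Delta$), suppose $C{\vee}D{\in}\Lambda$ with $C,D{\notin}\Lambda$, and maximality yields $C{\rightarrow}E{\in}\Lambda$ with $E{\notin}\Gamma$ together with $D{\rightarrow}F{\in}\Lambda$ with ${\lozenge}F{\notin}\Delta$. Then $E{\vee}F{\in}\Lambda{\subseteq}\Gamma$ forces $F{\in}\Gamma$, and ${\lozenge}(E{\vee}F){\in}\Delta$ together with $(\Axiom3)$ forces ${\lozenge}E{\in}\Delta$; but neither of these contradicts anything you have. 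Your suggested remedy does not work either: the sets $\Fo{\setminus}\Gamma$ and $\{C:\ {\lozenge}C{\notin}\Delta\}$ are each closed under ${\vee}$, but their \emph{union} is not (the very $E$ and $F$ just produced give $E{\vee}F$ lying in neither set), so there is no single ${\vee}$-closed forbidden set to hand to a Lindenbaum extension.

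The paper's fix is to replace your two separate conditions on $\Lambda$ by the single coupled one: for all $C,D$, if $C{\vee}D{\in}\Lambda$ then ${\lozenge}C{\in}\Delta$ or $D{\in}\Gamma$. This condition (a)~holds of ${\square}\Delta$ --- this is the paper's $(\ast\ast)$, whose proof is your argument for $(\ast)$ with the extra disjunct $D$ carried along; (b)~is preserved under chains; (c)~already entails both $\Lambda{\subseteq}\Gamma$ (take $C{=}{\bot}$) and the ${\lozenge}$-half of $\Delta{\bowtie}\Lambda$ (take $D{=}{\bot}$); and, crucially, (d)~makes primality of the maximal $\Lambda$ go through uniformly: failure for $\Lambda{+}E$ and $\Lambda{+}F$ produces disjunctions $G{\vee}H$ and $I{\vee}J$ of the same shape, so that $(G{\vee}I){\vee}(H{\vee}J){\in}\Lambda$ can be fed straight back into the condition itself, collapsing your four cases into one.
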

\begin{proof}
Suppose ${\square}p{\rightarrow}p{\in}\L$.
Let $\Gamma{\in}W_{\L}$.
We claim that $(\ast)$~${\square}\Gamma{\subseteq}\Gamma$.
If not, there exists a formula $A$ such that ${\square}A{\in}\Gamma$ and $A{\not\in}\Gamma$.
Since ${\square}p{\rightarrow}p{\in}\L$, then $A{\in}\Gamma$: a contradiction.
Let ${\mathcal S}{=}\{\Delta:\ \Delta$ is a $\L$-theory such that {\bf (1)}~$\Gamma{\subseteq}\Delta$ and {\bf (2)}~${\square}\Delta{\subseteq}\Gamma\}$.
Obviously, by~$(\ast)$, $\Gamma{\in}{\mathcal S}$.
Thus, ${\mathcal S}$ is nonempty.
Moreover, for all nonempty chains $(\Delta_{i})_{i{\in}I}$ of elements of ${\mathcal S}$, $\bigcup\{\Delta_{i}\ :\ i{\in}I\}$ is an element of ${\mathcal S}$.
Consequently, by Lemma~\ref{lemma:Zorn:lemma}, ${\mathcal S}$ possesses a maximal element $\Delta$.
Hence, $\Delta$ is a $\L$-theory such that $\Gamma{\subseteq}\Delta$ and ${\square}\Delta{\subseteq}\Gamma$.
We claim that $\Delta$ is proper.
If not, $\Delta{=}\Fo$.
Thus, ${\square}{\bot}{\in}\Delta$.
Since ${\square}\Delta{\subseteq}\Gamma$, then ${\bot}{\in}\Gamma$: a contradiction.
We claim that $\Delta$ is prime.
If not, there exists formulas $A,B$ such that $A{\vee}B{\in}\Delta$, $A{\not\in}\Delta$ and $B{\not\in}\Delta$.
Hence, by the maximality of $\Delta$ in ${\mathcal S}$, $\Delta{+}A{\not\in}{\mathcal S}$ and $\Delta{+}B{\not\in}{\mathcal S}$.
Thus, there exists formulas $C,D$ such that $C{\in}{\square}(\Delta{+}A)$, $C{\not\in}\Gamma$, $D{\in}{\square}(\Delta{+}B)$ and $D{\not\in}\Gamma$.
Hence, $A{\rightarrow}{\square}C{\in}\Delta$ and $B{\rightarrow}{\square}D{\in}\Delta$.
Consequently, using axioms and inference rules of $\IPL$, $A{\vee}B{\rightarrow}{\square}C{\vee}{\square}D{\in}\Delta$.
Since $A{\vee}B{\in}\Delta$, then ${\square}C{\vee}{\square}D{\in}\Delta$.
Hence, using axiom $(\Axiom1)$ and inference rule $(\Rule1)$, ${\square}(C{\vee}D){\in}\Delta$.
Since ${\square}\Delta{\subseteq}\Gamma$, then $C{\vee}D{\in}\Gamma$.
Consequently, either $C{\in}\Gamma$, or $D{\in}\Gamma$: a contradiction.
We claim that $(\ast\ast)$~for all formulas $C,D$, if $C{\vee}D{\in}{\square}\Delta$ then either ${\lozenge}C{\in}\Delta$, or $D{\in}\Gamma$.
If not, there exists formulas $C,D$ such that $C{\vee}D{\in}{\square}\Delta$, ${\lozenge}C{\not\in}\Delta$ and $D{\not\in}\Gamma$.
Hence, by the maximality of $\Delta$ in ${\mathcal S}$, $\Delta{+}{\lozenge}C{\not\in}{\mathcal S}$.
Thus, ${\square}(\Delta{+}{\lozenge}C){\not\subseteq}\Gamma$.
Consequently, there exists a formula $E$ such that $E{\in}{\square}(\Delta{+}{\lozenge}C)$ and $E{\not\in}\Gamma$.
Hence, ${\lozenge}C{\rightarrow}{\square}E{\in}\Delta$.
Consequently, using axiom $(\Axiom1)$ and inference rule $(\Rule1)$, ${\lozenge}C{\rightarrow}{\square}(D{\vee}E){\in}\Delta$.
Since $C{\vee}D{\in}{\square}\Delta$, then ${\square}(C{\vee}D){\in}\Delta$.
Hence, using axiom $(\Axiom1)$ and inference rule $(\Rule1)$, ${\square}(C{\vee}D{\vee}E){\in}\Delta$.
Since ${\lozenge}C{\rightarrow}{\square}(D{\vee}E){\in}\Delta$, then using axiom $(\Axiom2)$, ${\square}(D{\vee}E){\in}\Delta$.
Since ${\square}\Delta{\subseteq}\Gamma$, then $D{\vee}E{\in}\Gamma$.
Consequently, either $D{\in}\Gamma$, or $E{\in}\Gamma$: a contradiction.
Let ${\mathcal T}{=}\{\Lambda\ :\ \Lambda$ is a $\L$-theory such that {\bf (1)}~${\square}\Delta{\subseteq}\Lambda$ and {\bf (2)}~for all formulas $C,D$, if $C{\vee}D{\in}\Lambda$ then either ${\lozenge}C{\in}\Delta$, or $D{\in}\Gamma\}$.
Obviously, by~$(\ast\ast)$, ${\square}\Delta\in{\mathcal T}$.
Hence, ${\mathcal T}$ is nonempty.
Moreover, for all nonempty chains $(\Lambda_{i})_{i{\in}I}$ of elements of ${\mathcal T}$, $\bigcup\{\Lambda_{i}\ :\ i{\in}I\}$ is an element of ${\mathcal T}$.
Thus, by Lemma~\ref{lemma:Zorn:lemma}, ${\mathcal T}$ possesses a maximal element $\Lambda$.
Consequently, $\Lambda$ is a $\L$-theory such that ${\square}\Delta{\subseteq}\Lambda$ and for all formulas $C,D$, if $C{\vee}D{\in}\Lambda$ then either ${\lozenge}C{\in}\Delta$, or $D{\in}\Gamma$.
Hence, it only remains to be proved that $\Lambda$ is proper and prime, $\Delta{\bowtie}\Lambda$ and $\Lambda{\subseteq}\Gamma$.
We claim that $\Lambda$ is proper.
If not, $\Lambda{=}\Fo$.
Thus, ${\bot}{\vee}{\bot}{\in}\Lambda$.
Consequently, either ${\lozenge}{\bot}{\in}\Delta$, or ${\bot}{\in}\Gamma$: a contradiction.
We claim that $\Lambda$ is prime.
If not, there exists formulas $E,F$ such that $E{\vee}F{\in}\Lambda$, $E{\not\in}\Lambda$ and $F{\not\in}\Lambda$.
Hence, by the maximality of $\Lambda$ in ${\mathcal T}$, $\Lambda{+}E{\not\in}{\mathcal T}$ and $\Lambda{+}F{\not\in}{\mathcal T}$.
Thus, there exists formulas $G,H$ such that $G{\vee}H{\in}\Lambda{+}E$, ${\lozenge}G{\not\in}\Delta$ and $H{\not\in}\Gamma$ and there exists formulas $I,J$ such that $I{\vee}J{\in}\Lambda{+}F$, ${\lozenge}I{\not\in}\Delta$ and $J{\not\in}\Gamma$.
Consequently, $E{\rightarrow}G{\vee}H{\in}\Lambda$ and $F{\rightarrow}I{\vee}J{\in}\Lambda$.
Hence, using axioms and inference rules of $\IPL$, $E{\vee}F{\rightarrow}G{\vee}I{\vee}H{\vee}
$\linebreak$
J{\in}\Lambda$.
Since $E{\vee}F{\in}\Lambda$, then $G{\vee}I{\vee}H{\vee}J{\in}\Lambda$.
Thus, either ${\lozenge}(G{\vee}I){\in}\Delta$, or $H{\vee}J{\in}\Gamma$.
In the former case, using axiom $(\Axiom3)$, ${\lozenge}G{\vee}{\lozenge}I{\in}\Delta$.
Consequently, either ${\lozenge}G{\in}\Delta$, or ${\lozenge}I{\in}\Delta$: a contradiction.
In the latter case, either $H{\in}\Gamma$, or $J{\in}\Gamma$: a contradiction.
We claim that $\Delta{\bowtie}\Lambda$.
If not, there exists a formula $E$ such that $E{\in}\Lambda$ and ${\lozenge}E{\not\in}\Delta$.
Hence, $E{\vee}{\bot}{\in}\Lambda$.
Thus, either ${\lozenge}E{\in}\Delta$, or ${\bot}{\in}\Gamma$.
Since ${\lozenge}E{\not\in}\Delta$, then ${\bot}{\in}\Gamma$: a contradiction.
We claim that $\Lambda{\subseteq}\Gamma$.
If not, there exists a formula $E$ such that $E{\in}\Lambda$ and $E{\not\in}\Gamma$.
Consequently, ${\bot}{\vee}E{\in}\Lambda$.
Hence, either ${\lozenge}{\bot}{\in}\Delta$, or $E{\in}\Gamma$.
Since $E{\not\in}\Gamma$, then ${\lozenge}{\bot}{\in}\Delta$: a contradiction.
\medskip
\end{proof}
\begin{lemma}\label{lemma:about:what:happens:if:T:lozenge:is:in:the:logic}
If $p{\rightarrow}{\lozenge}p{\in}\L$ then $(W_{\L},{\leq_{\L}},{R_{\L}})$ is down-reflexive.
\end{lemma}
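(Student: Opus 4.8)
The plan is to treat this as the ${\lozenge}$-analogue of Lemma~\ref{lemma:about:what:happens:if:T:square:is:in:the:logic}, recycling the $\Delta$-construction from the proof of the Existence Lemma for ${\lozenge}$ (Lemma~\ref{lemma:prime:proper:for:lozenge}). Fix a prime $\L$-theory $\Gamma$. It suffices to produce prime $\L$-theories $\Delta,\Lambda$ with $\Delta{\subseteq}\Gamma{\subseteq}\Lambda$ and $\Delta{\bowtie}\Lambda$: then $\Gamma{\geq_{\L}}\Delta$, $\Delta{R_{\L}}\Lambda$ and $\Lambda{\geq_{\L}}\Gamma$, that is $\Gamma(\geq_{\L}{\circ}{R_{\L}}{\circ}\geq_{\L})\Gamma$, and since $\Gamma$ is arbitrary the canonical frame is down-reflexive.

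First I would build $\Lambda$. Put ${\mathcal S}{=}\{\Lambda\ :\ \Lambda$ is a $\L$-theory such that {\bf (1)}~for all formulas $C$, if $C{\in}\Lambda$ then ${\lozenge}C{\in}\Gamma$, and {\bf (2)}~$\Gamma{\subseteq}\Lambda\}$. Since $\Gamma$ contains $\L$ and is closed under modus ponens, $p{\rightarrow}{\lozenge}p{\in}\L$ gives $C{\in}\Gamma{\Rightarrow}{\lozenge}C{\in}\Gamma$, so $\Gamma{\in}{\mathcal S}$ and ${\mathcal S}$ is nonempty; it is closed under unions of nonempty chains, so by Lemma~\ref{lemma:Zorn:lemma} it has a maximal element $\Lambda$. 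Exactly as for the set ${\mathcal S}$ in the proof of Lemma~\ref{lemma:prime:proper:for:lozenge}, $\Lambda$ is proper (if ${\bot}{\in}\Lambda$ then ${\lozenge}{\bot}{\in}\Gamma$, contradicting axiom $(\Axiom4)$ and properness of $\Gamma$) and prime (if $C{\vee}D{\in}\Lambda$ with $C{\notin}\Lambda$ and $D{\notin}\Lambda$, maximality gives $E{\in}\Lambda{+}C$ and $F{\in}\Lambda{+}D$ with ${\lozenge}E{\notin}\Gamma$ and ${\lozenge}F{\notin}\Gamma$, hence $E{\vee}F{\in}\Lambda$, hence ${\lozenge}(E{\vee}F){\in}\Gamma$, and axiom $(\Axiom3)$ yields a contradiction). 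Note that clause {\bf (2)} is preserved under $\Lambda{\mapsto}\Lambda{+}C$, so it never interferes with these arguments~---~only clause {\bf (1)} is used, precisely as in Lemma~\ref{lemma:prime:proper:for:lozenge}.

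Next I would produce $\Delta$ by transcribing the second half of the proof of Lemma~\ref{lemma:prime:proper:for:lozenge}, which needs of $\Lambda$ only that it is prime and satisfies clause {\bf (1)} above. Enumerate $\Lambda$ as $C_{1},C_{2},\ldots$, set $\Delta_{n}{=}\L{+}{\lozenge}(C_{1}{\wedge}\ldots{\wedge}C_{n})$ (a nonempty chain of $\L$-theories by inference rule $(\Rule2)$), and establish the three claims: $(\ast)$~$\Gamma{\supseteq}\bigcup\{\Delta_{n}\ :\ n{\in}\N\}$; $(\ast\ast)$~for all formulas $D,E$, if $D{\notin}\Gamma$ and $D{\vee}{\square}E{\in}\bigcup\{\Delta_{n}\ :\ n{\in}\N\}$ then $E{\in}\Lambda$; $(\ast\ast\ast)$~for all formulas $D$, if $D{\in}\Lambda$ then ${\lozenge}D{\in}\bigcup\{\Delta_{n}\ :\ n{\in}\N\}$. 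Their proofs are the ones already given in Lemma~\ref{lemma:prime:proper:for:lozenge}, using clause {\bf (1)}, primeness of $\Lambda$, axiom $(\Axiom1)$, inference rules $(\Rule1)$ and $(\Rule2)$, and~---~in $(\ast\ast)$~---~inference rule $(\Rule3)$ in a crucial way. Consequently $\bigcup\{\Delta_{n}\ :\ n{\in}\N\}$ lies in ${\mathcal T}{=}\{\Delta\ :\ \Delta$ is a $\L$-theory such that {\bf (1)}~$\Gamma{\supseteq}\Delta$, and {\bf (2)}~$\Delta{\bowtie^{\Gamma}}\Lambda\}$, so ${\mathcal T}$ is nonempty; being closed under unions of nonempty chains, Lemma~\ref{lemma:Zorn:lemma} supplies a maximal element $\Delta$, and the arguments of Lemma~\ref{lemma:prime:proper:for:lozenge} show $\Delta$ is proper, prime, and $\Delta{\bowtie}\Lambda$. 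Since $\Delta{\subseteq}\Gamma$ by clause {\bf (1)} of ${\mathcal T}$ and $\Gamma{\subseteq}\Lambda$ by clause {\bf (2)} of ${\mathcal S}$, the triple $\Delta,\Gamma,\Lambda$ is as required, and the lemma follows.

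The only genuinely new step~---~hence the main obstacle~---~is recognizing that the right $\Lambda$ to build is the maximal upward extension of $\Gamma$ all of whose elements have their ${\lozenge}$ in $\Gamma$: this is the very set ${\mathcal S}$ from the Existence Lemma for ${\lozenge}$, with the single seed formula $B$ there replaced by the requirement $\Gamma{\subseteq}\Lambda$, and it is exactly this requirement that $p{\rightarrow}{\lozenge}p{\in}\L$ is needed to make $\Gamma$ itself a member of ${\mathcal S}$. Once $\Lambda$ is fixed, nothing remains but to re-run, verbatim, the $\Delta$-part of that lemma; in particular the case analyses establishing primeness of $\Delta$~---~where enlarging $\Delta$ can break clause {\bf (2)} of ${\mathcal T}$ and one must split into the corresponding failure modes~---~are identical to those already carried out there, so the work is purely clerical.
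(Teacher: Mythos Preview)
Your argument is correct, and it constructs $\Lambda$ exactly as the paper does (the paper writes the condition as ${\lozenge}\Lambda{\subseteq}\Gamma$, which is your clause~{\bf (1)}). Where you diverge is in the construction of $\Delta$: you recycle the $\Delta$-machinery from Lemma~\ref{lemma:prime:proper:for:lozenge} verbatim, observing that only primeness of $\Lambda$, clause~{\bf (1)}, and the maximality consequence ``if $E{\notin}\Lambda$ then some $F{\in}\Lambda{+}E$ has ${\lozenge}F{\notin}\Gamma$'' are used there, and that all three survive replacing the seed condition $B{\in}\Lambda$ by $\Gamma{\subseteq}\Lambda$. The paper instead builds an auxiliary set $\Sigma$ of formulas and invokes Lemma~\ref{first:lemma:about:the:use:of:the:special:inference:rule} to show that $\L{+}(\Sigma{\cup}{\lozenge}\Lambda)$ already belongs to a differently parameterised ${\mathcal T}$ (defined by ${\lozenge}\Lambda{\subseteq}\Delta$ together with ``${\square}E{\vee}F{\in}\Delta\Rightarrow E{\in}\Lambda$ or $F{\in}\Gamma$''), from which a maximal prime $\Delta$ is extracted. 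Your route is more economical---it sidesteps Lemmas~\ref{lemma:about:a:useful:application:of:rule:R3} and~\ref{first:lemma:about:the:use:of:the:special:inference:rule} entirely and reduces the present lemma to the already-established Existence Lemma for ${\lozenge}$---whereas the paper's route is self-contained within this lemma at the price of a more intricate seed theory for $\Delta$ and a nineteen-step derivation.
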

\begin{proof}
Suppose $p{\rightarrow}{\lozenge}p{\in}\L$.
Let $\Gamma{\in}W_{\L}$.
We claim that $(\ast)$~${\lozenge}\Gamma{\subseteq}\Gamma$.
If not, there exists a formula $A$ such that $A{\in}\Gamma$ and ${\lozenge}A{\not\in}\Gamma$.
Since $p{\rightarrow}{\lozenge}p{\in}\L$, then ${\lozenge}A{\in}\Gamma$: a contradiction.
Let ${\mathcal S}{=}\{\Lambda:\ \Lambda$ is a $\L$-theory such that {\bf (1)}~$\Gamma{\subseteq}\Lambda$ and {\bf (2)}~${\lozenge}\Lambda{\subseteq}\Gamma\}$.
Obviously, by~$(\ast)$, $\Gamma{\in}{\mathcal S}$.
Thus, ${\mathcal S}$ is nonempty.
Moreover, for all nonempty chains $(\Lambda_{i})_{i{\in}I}$ of elements of ${\mathcal S}$, $\bigcup\{\Lambda_{i}\ :\ i{\in}I\}$ is an element of ${\mathcal S}$.
Consequently, by Lemma~\ref{lemma:Zorn:lemma}, ${\mathcal S}$ possesses a maximal element $\Lambda$.
Hence, $\Lambda$ is a $\L$-theory such that $\Gamma{\subseteq}\Lambda$ and ${\lozenge}\Lambda{\subseteq}\Gamma$.
We claim that $\Lambda$ is proper.
If not, ${\bot}{\in}\Lambda$.
Since ${\lozenge}\Lambda{\subseteq}\Gamma$, then ${\lozenge}{\bot}{\in}\Gamma$: a contradiction.
We claim that $\Lambda$ is prime.
If not, there exists formulas $A,B$ such that $A{\vee}B{\in}\Lambda$, $A{\not\in}\Lambda$ and $B{\not\in}\Lambda$.
Consequently, by the maximality of $\Lambda$ in ${\mathcal S}$, $\Lambda{+}A{\not\in}{\mathcal S}$ and $\Lambda{+}B{\not\in}{\mathcal S}$.
Hence, ${\lozenge}(\Lambda{+}A){\not\subseteq}\Gamma$ and ${\lozenge}(\Lambda{+}B){\not\subseteq}\Gamma$.
Thus, there exists formulas $C,D$ such that $C{\in}\Lambda{+}A$, ${\lozenge}C{\not\in}\Gamma$, $D{\in}\Lambda{+}B$ and ${\lozenge}D{\not\in}\Gamma$.
Consequently, $A{\rightarrow}C{\in}\Lambda$ and $B{\rightarrow}D{\in}\Lambda$.
Hence, using axioms and inference rules of $\IPL$, $A{\vee}B{\rightarrow}C{\vee}D{\in}\Lambda$.
Since $A{\vee}B{\in}\Lambda$, then $C{\vee}D{\in}\Lambda$.
Since ${\lozenge}\Lambda{\subseteq}\Gamma$, then ${\lozenge}(C{\vee}D){\in}\Gamma$.
Thus, using axiom $(\Axiom3)$, ${\lozenge}C{\vee}{\lozenge}D{\in}
$\linebreak$
\Gamma$.
Consequently, either ${\lozenge}C{\in}\Gamma$, or ${\lozenge}D{\in}\Gamma$: a contradiction.
Let $\Sigma{=}\{A{\in}\Fo:$ there exists $B,C,D{\in}\Fo$ such that $B{\rightarrow}C{\in}\Lambda$, ${\square}B{\vee}D{\in}\L{+}{\lozenge}\Lambda$, ${\lozenge}C{\rightarrow}A{\in}\L{+}{\lozenge}\Lambda$ and $D{\not\in}\Gamma\}$.
We claim that $(\ast\ast)$~for all $E,F{\in}\Fo$, if ${\square}E{\vee}F{\in}\L{+}(\Sigma{\cup}{\lozenge}\Lambda)$ then either $E{\in}\Lambda$, or $F{\in}\Gamma$.
If not, there exists $E,F{\in}\Fo$ such that ${\square}E{\vee}F{\in}\L{+}(\Sigma{\cup}{\lozenge}\Lambda)$, $E{\not\in}\Lambda$ and $F{\not\in}\Gamma$.
Hence, there exists $m,n{\in}\N$, there exists $A_{1},\ldots,A_{m}{\in}\Sigma$ and there exists $G_{1},\ldots,G_{n}{\in}
$\linebreak$
\Lambda$ such that $A_{1}{\wedge}\ldots{\wedge}A_{m}{\wedge}{\lozenge}G_{1}{\wedge}\ldots{\wedge}{\lozenge}G_{n}{\rightarrow}{\square}E{\vee}F{\in}\L$.
Thus, for all $i{\in}\{1,\ldots,m\}$, there exists $B_{i},C_{i},D_{i}{\in}\Fo$ such that $B_{i}{\rightarrow}C_{i}{\in}\Lambda$, ${\square}B_{i}{\vee}D_{i}{\in}\L{+}{\lozenge}\Lambda$, ${\lozenge}C_{i}{\rightarrow}A_{i}{\in}\L{+}{\lozenge}\Lambda$ and $D_{i}{\not\in}\Gamma$.
Let $\bar{A}{=}A_{1}{\wedge}\ldots{\wedge}A_{m}$ and $\bar{G}{=}G_{1}{\wedge}\ldots{\wedge}G_{m}$.
Since $G_{1},\ldots,G_{n}{\in}\Lambda$, then using axioms and inference rules of $\IPL$, $\bar{G}{\in}\Lambda$.
Since $A_{1}{\wedge}\ldots{\wedge}A_{m}{\wedge}{\lozenge}G_{1}{\wedge}\ldots{\wedge}
$\linebreak$
{\lozenge}G_{n}{\rightarrow}{\square}E{\vee}F{\in}\L$, then using inference rule $(\Rule2)$, $\bar{A}{\wedge}{\lozenge}\bar{G}{\rightarrow}{\square}E{\vee}F{\in}\L$.
Let $\bar{B}{=}B_{1}{\wedge}
$\linebreak$
\ldots{\wedge}B_{m}$, $\bar{C}{=}C_{1}{\wedge}\ldots{\wedge}C_{m}$ and $\bar{D}{=}D_{1}{\vee}\ldots{\vee}D_{m}$.
Since for all $i{\in}\{1,\ldots,m\}$, $B_{i}{\rightarrow}
$\linebreak$
C_{i}{\in}\Lambda$, then $\bar{B}{\rightarrow}\bar{C}{\in}\Lambda$.
Since for all $i{\in}\{1,\ldots,m\}$, ${\square}B_{i}{\vee}D_{i}{\in}\L{+}{\lozenge}\Lambda$, then using axiom $(\Axiom1)$ and inference rule $(\Rule1)$, ${\square}\bar{B}{\vee}\bar{D}{\in}\L{+}{\lozenge}\Lambda$.
Since for all $i{\in}\{1,\ldots,m\}$, ${\lozenge}C_{i}{\rightarrow}A_{i}{\in}\L{+}{\lozenge}\Lambda$, then using inference rule $(\Rule2)$, ${\lozenge}\bar{C}{\rightarrow}\bar{A}{\in}\L{+}{\lozenge}\Lambda$.
Since for all $i{\in}\{1,\ldots,m\}$, $D_{i}{\not\in}\Gamma$, then using axioms and inference rules of $\IPL$, $\bar{D}{\not\in}\Gamma$.
Since ${\square}\bar{B}{\vee}\bar{D}{\in}\L{+}{\lozenge}\Lambda$ and ${\lozenge}\bar{C}{\rightarrow}\bar{A}{\in}\L{+}{\lozenge}\Lambda$, then there exists $o,p{\in}\N$, there exists $H_{1},\ldots,
$\linebreak$
H_{o}{\in}\Lambda$ and there exists $I_{1},\ldots,I_{p}{\in}\Lambda$ such that ${\lozenge}H_{1}{\wedge}\ldots{\wedge}{\lozenge}H_{o}{\rightarrow}{\square}\bar{B}{\vee}\bar{D}{\in}\L$ and ${\lozenge}I_{1}
$\linebreak$
{\wedge}\ldots{\wedge}{\lozenge}I_{p}{\wedge}{\lozenge}\bar{C}{\rightarrow}\bar{A}{\in}\L$.
Let $\bar{H}{=}H_{1}{\wedge}\ldots{\wedge}H_{o}$ and $\bar{I}{=}I_{1}{\wedge}\ldots{\wedge}I_{p}$.
Since $H_{1},\ldots,H_{o}
$\linebreak$
{\in}\Lambda$ and $I_{1},\ldots,I_{p}{\in}\Lambda$, then using axioms and inference rules of $\IPL$, $\bar{H}{\in}\Lambda$ and $\bar{I}{\in}\Lambda$.
Since ${\lozenge}H_{1}{\wedge}\ldots{\wedge}{\lozenge}H_{o}{\rightarrow}{\square}\bar{B}{\vee}\bar{D}{\in}\L$ and ${\lozenge}I_{1}{\wedge}\ldots{\wedge}{\lozenge}I_{p}{\wedge}{\lozenge}\bar{C}{\rightarrow}\bar{A}{\in}\L$, then using inference rule $(\Rule2)$, ${\lozenge}\bar{H}{\rightarrow}{\square}\bar{B}{\vee}\bar{D}{\in}\L$ and ${\lozenge}\bar{I}{\wedge}{\lozenge}\bar{C}{\rightarrow}\bar{A}{\in}\L$.
Since $E{\not\in}\Lambda$, then by the maximality of $\Lambda$ in ${\mathcal S}$, $\Lambda{+}E{\not\in}{\mathcal S}$.
Consequently, ${\lozenge}(\Lambda{+}E){\not\subseteq}\Gamma$.
Hence, there exists $J{\in}\Fo$ such that $J{\in}\Lambda{+}E$ and ${\lozenge}J{\not\in}\Gamma$.
Thus, $E{\rightarrow}J{\in}\Lambda$.
Since $\bar{G}{\in}\Lambda$, $\bar{B}{\rightarrow}\bar{C}{\in}\Lambda$, $\bar{H}{\in}\Lambda$ and $\bar{I}{\in}\Lambda$, then $\bar{G}{\wedge}(\bar{B}{\rightarrow}\bar{C}){\wedge}\bar{H}{\wedge}\bar{I}{\wedge}(E{\rightarrow}J){\in}\Lambda$.
Since ${\lozenge}\Lambda{\subseteq}\Gamma$, then ${\lozenge}(\bar{G}{\wedge}(\bar{B}{\rightarrow}\bar{C}){\wedge}\bar{H}{\wedge}\bar{I}{\wedge}
$\linebreak$
(E{\rightarrow}J)){\in}\Gamma$.
Since $\bar{A}{\wedge}{\lozenge}\bar{G}{\rightarrow}{\square}E{\vee}F{\in}\L$, ${\lozenge}\bar{H}{\rightarrow}{\square}\bar{B}{\vee}\bar{D}{\in}\L$ and ${\lozenge}\bar{I}{\wedge}{\lozenge}\bar{C}{\rightarrow}\bar{A}{\in}\L$, then by Lemma~\ref{first:lemma:about:the:use:of:the:special:inference:rule}, ${\lozenge}(\bar{G}{\wedge}(\bar{B}{\rightarrow}\bar{C}){\wedge}\bar{H}{\wedge}\bar{I}{\wedge}(E{\rightarrow}J)){\rightarrow}F{\vee}\bar{D}{\vee}{\lozenge}J{\in}\L$.
Since ${\lozenge}(\bar{G}{\wedge}(\bar{B}{\rightarrow}\bar{C})
$\linebreak$
{\wedge}\bar{H}{\wedge}\bar{I}{\wedge}(E{\rightarrow}J)){\in}\Gamma$, then $F{\vee}\bar{D}{\vee}{\lozenge}J{\in}\Gamma$.
Consequently, either $F{\in}\Gamma$, or $\bar{D}{\in}\Gamma$, or ${\lozenge}J{\in}\Gamma$: a contradiction.
Let ${\mathcal T}{=}\{\Delta\ :\ \Delta$ is a $\L$-theory such that {\bf (1)}~${\lozenge}\Lambda{\subseteq}\Delta$ and {\bf (2)}~for all formulas $E,F$, if ${\square}E{\vee}F{\in}\Delta$ then either $E{\in}\Lambda$, or $F{\in}\Gamma\}$.
Obviously, by~$(\ast\ast)$, $\L{+}(\Sigma{\cup}{\lozenge}\Lambda){\in}{\mathcal T}$.
Hence, ${\mathcal T}$ is nonempty.
Moreover, for all nonempty chains $(\Delta_{i})_{i{\in}I}$ of elements of ${\mathcal T}$, $\bigcup\{\Delta_{i}\ :\ i{\in}I\}$ is an element of ${\mathcal T}$.
Consequently, by Lemma~\ref{lemma:Zorn:lemma}, ${\mathcal T}$ possesses a maximal element $\Delta$.
Hence, $\Delta$ is a $\L$-theory such that ${\lozenge}\Lambda{\subseteq}\Delta$ and for all formulas $E,F$, if ${\square}E{\vee}F{\in}\Delta$ then either $E{\in}\Lambda$, or $F{\in}\Gamma$.
Thus, it only remains to be proved that $\Delta$ is proper and prime, $\Delta{\subseteq}\Gamma$ and $\Delta{\bowtie}\Lambda$.
We claim that $\Delta$ is proper.
If not, $\Delta{=}\Fo$.
Consequently, ${\square}{\bot}{\vee}{\bot}{\in}\Delta$.
Hence, either ${\bot}{\in}\Lambda$, or ${\bot}{\in}\Gamma$: a contradiction.
We claim that $\Delta$ is prime.
If not, there exists formulas $A,B$ such that $A{\vee}B{\in}\Delta$, $A{\not\in}\Delta$ and $B{\not\in}\Delta$.
Thus, by the maximality of $\Delta$ in ${\mathcal T}$, $\Delta{+}A{\not\in}{\mathcal T}$ and $\Delta{+}B{\not\in}{\mathcal T}$.
Consequently, there exists formulas $E,F$ such that ${\square}E{\vee}F{\in}\Delta{+}A$, $E{\not\in}\Lambda$ and $F{\not\in}\Gamma$ and there exists formulas $G,H$ such that ${\square}G{\vee}H{\in}\Delta{+}B$, $G{\not\in}\Lambda$ and $H{\not\in}\Gamma$.
Hence, $A{\rightarrow}{\square}E{\vee}F{\in}
$\linebreak$
\Delta$ and $B{\rightarrow}{\square}G{\vee}H{\in}\Delta$.
Thus, using axioms and inference rules of $\IPL$, $A{\vee}B{\rightarrow}{\square}E{\vee}
$\linebreak$
{\square}G{\vee}F{\vee}H{\in}\Delta$.
Since $A{\vee}B{\in}\Theta$, then ${\square}E{\vee}{\square}G{\vee}F{\vee}H{\in}\Delta$.
Consequently, using axiom $(\Axiom1)$ and inference rule $(\Rule1)$, ${\square}(E{\vee}G){\vee}F{\vee}H{\in}\Delta$.
Hence, either $E{\vee}G{\in}\Lambda$, or $F{\vee}H{\in}\Gamma$.
In the former case, either $E{\in}\Lambda$, or $G{\in}\Lambda$: a contradiction.
In the latter case, either $F{\in}\Gamma$, or $H{\in}\Gamma$: a contradiction.
We claim that $\Delta{\subseteq}\Gamma$.
If not, there exists a formula $A$ such that $A{\in}\Delta$ and $A{\not\in}\Gamma$.
Thus, ${\square}{\bot}{\vee}A{\in}\Delta$.
Consequently, either ${\bot}{\in}\Lambda$, or $A{\in}\Gamma$.
Since $A{\not\in}\Gamma$, then ${\bot}{\in}\Lambda$: a contradiction.
We claim that $\Delta{\bowtie}\Lambda$.
If not, there exists a formula $A$ such that ${\square}A{\in}\Delta$ and $A{\not\in}\Lambda$.
Hence, ${\square}A{\vee}{\bot}{\in}\Delta$.
Thus, either $A{\in}\Lambda$, or ${\bot}{\in}\Gamma$.
Since $A{\not\in}\Lambda$, then ${\bot}{\in}\Gamma$: a contradiction.
\medskip
\end{proof}
%
%
%Lemma~\ref{lemma:about:what:happens:if:B:square:is:in:the:logic} is used in Section~\ref{section:soundness:and:completeness} when we show that $\L_{\min}{\oplus}{\lozenge}{\square}p{\rightarrow}p$ is the intuitionistic modal logic determined by the class of all frames $(W,{\leq},{R})$ such that for all $s,t{\in}W$, if $s{R}t$ then $t{\leq}{\circ}{R}{\circ}{\leq}s$.
%
%
\begin{lemma}\label{lemma:about:what:happens:if:B:square:is:in:the:logic}
If ${\lozenge}{\square}p{\rightarrow}p{\in}\L$ then $(W_{\L},{\leq_{\L}},{R_{\L}})$ is up-symmetric.
\end{lemma}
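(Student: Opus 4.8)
The plan is to run the canonical‑model argument of Lemma~\ref{lemma:about:what:happens:if:T:square:is:in:the:logic}, suitably adapted. Assume ${\lozenge}{\square}p{\rightarrow}p{\in}\L$ and let $\Gamma,\Delta{\in}W_{\L}$ be such that $\Gamma{R_{\L}}\Delta$, i.e. $\Gamma{\bowtie}\Delta$. It suffices to produce prime $\L$-theories $\Lambda,\Pi$ with $\Delta{\subseteq}\Lambda$, $\Lambda{\bowtie}\Pi$ and $\Pi{\subseteq}\Gamma$, since this gives $\Delta{\leq_{\L}}{\circ}{R_{\L}}{\circ}{\leq_{\L}}\Gamma$. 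The hypothesis will be used exactly once, in the following claim $(\ast)$: ${\square}\Delta{\subseteq}\Gamma$. Indeed, if ${\square}B{\in}\Delta$ then, since $\Gamma{\bowtie}\Delta$, ${\lozenge}{\square}B{\in}\Gamma$; as $\L$ is closed under uniform substitution, ${\lozenge}{\square}B{\rightarrow}B{\in}\L{\subseteq}\Gamma$, so $B{\in}\Gamma$.

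Using Lemma~\ref{lemma:Zorn:lemma}, pick a maximal element $\Lambda$ of ${\mathcal S}{=}\{\Theta\ :\ \Theta$ is a $\L$-theory such that $\Delta{\subseteq}\Theta$ and ${\square}\Theta{\subseteq}\Gamma\}$; by $(\ast)$ this set contains $\Delta$, and it is closed under unions of nonempty chains. As usual, $\Lambda$ is proper (else ${\square}{\bot}{\in}\Lambda$, so ${\bot}{\in}\Gamma$, contradicting properness of $\Gamma$) and prime (if $A{\vee}B{\in}\Lambda$ with $A,B{\not\in}\Lambda$, maximality yields $C,D{\not\in}\Gamma$ with $A{\rightarrow}{\square}C{\in}\Lambda$ and $B{\rightarrow}{\square}D{\in}\Lambda$, whence ${\square}C{\vee}{\square}D{\in}\Lambda$; using $(\Axiom1)$ and $(\Rule1)$, ${\square}(C{\vee}D){\in}\Lambda$, so $C{\vee}D{\in}{\square}\Lambda{\subseteq}\Gamma$, contradicting primeness of $\Gamma$). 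The key preparatory claim is $(\ast\ast)$: for all $C,D$, if ${\square}(C{\vee}D){\in}\Lambda$ then either ${\lozenge}C{\in}\Lambda$ or $D{\in}\Gamma$. This is proved as its counterpart in Lemma~\ref{lemma:about:what:happens:if:T:square:is:in:the:logic}: otherwise maximality of $\Lambda$ in ${\mathcal S}$ gives $E{\not\in}\Gamma$ with ${\lozenge}C{\rightarrow}{\square}E{\in}\Lambda$; using $(\Axiom1)$ and $(\Rule1)$ one gets ${\lozenge}C{\rightarrow}{\square}(D{\vee}E){\in}\Lambda$ and ${\square}(C{\vee}(D{\vee}E)){\in}\Lambda$, and then axiom $(\Axiom2)$ forces ${\square}(D{\vee}E){\in}\Lambda$, so $D{\vee}E{\in}{\square}\Lambda{\subseteq}\Gamma$, contradicting primeness of $\Gamma$. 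In particular, taking $D{=}{\bot}$, ${\square}\Lambda{\subseteq}\{C\ :\ {\lozenge}C{\in}\Lambda\}$.

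Now pick a maximal element $\Pi$ of ${\mathcal T}{=}\{\Theta\ :\ \Theta$ is a $\L$-theory such that ${\square}\Lambda{\subseteq}\Theta$ and for all $C,D$, if $C{\vee}D{\in}\Theta$ then ${\lozenge}C{\in}\Lambda$ or $D{\in}\Gamma\}$; by $(\ast\ast)$ the theory ${\square}\Lambda$ (a $\L$-theory by Lemma~\ref{lemma:theory:square:gamma}) lies in ${\mathcal T}$, and ${\mathcal T}$ is closed under unions of nonempty chains. The same bookkeeping as before shows $\Pi$ proper (using ${\bot}{\vee}{\bot}$, together with properness of $\Gamma$ and of $\Lambda$ and axiom $(\Axiom4)$ to discard ${\lozenge}{\bot}{\in}\Lambda$) and prime (splitting ${\lozenge}(C{\vee}E)$ via $(\Axiom3)$). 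Finally ${\square}\Lambda{\subseteq}\Pi$ gives the ${\square}$-clause of $\Lambda{\bowtie}\Pi$; applying the defining condition of ${\mathcal T}$ with $D{=}{\bot}$ gives $B{\in}\Pi{\Rightarrow}{\lozenge}B{\in}\Lambda$, the ${\lozenge}$-clause; and applying it with $C{=}{\bot}$ (again using $(\Axiom4)$ and properness of $\Lambda$) gives $\Pi{\subseteq}\Gamma$. Hence $\Delta{\subseteq}\Lambda$, $\Lambda{\bowtie}\Pi$, $\Pi{\subseteq}\Gamma$, as required.

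The only genuinely nonroutine points are locating the hybrid invariant defining ${\mathcal T}$ — the condition ``$C{\vee}D{\in}\Theta\Rightarrow{\lozenge}C{\in}\Lambda$ or $D{\in}\Gamma$'', which simultaneously enforces $\Pi{\subseteq}\Gamma$ and the ${\lozenge}$-half of $\Lambda{\bowtie}\Pi$ — and verifying $(\ast\ast)$, which is where axiom $(\Axiom2)$ is essential; everything else is the standard Lindenbaum/Zorn bookkeeping already carried out verbatim for the reflexivity lemmas.
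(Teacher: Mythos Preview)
Your proof is correct and follows essentially the same approach as the paper: the same claim $(\ast)$ using the axiom and $\Gamma{\bowtie}\Delta$, the same first Zorn step producing $\Lambda$, the same intermediate claim $(\ast\ast)$ via axiom $(\Axiom2)$, and the same second Zorn step with the hybrid invariant to produce what the paper calls $\Theta$ (your $\Pi$). The only differences are notational and in level of detail.
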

\begin{proof}
Suppose ${\lozenge}{\square}p{\rightarrow}p{\in}\L$.
Let $\Gamma,\Delta{\in}W_{\L}$ be such that $\Gamma{R_{\L}}\Delta$.
Hence, $\Gamma{\bowtie}\Delta$.
We claim that $(\ast)$~${\square}\Delta{\subseteq}\Gamma$.
If not, there exists a formula $A$ such that $A{\in}{\square}\Delta$ and $A{\not\in}\Gamma$.
Thus, ${\square}A{\in}\Delta$.
Since $\Gamma{\bowtie}\Delta$, then ${\lozenge}{\square}A{\in}\Gamma$.
Since ${\lozenge}{\square}p{\rightarrow}p{\in}\L$, then $A{\in}\Gamma$: a contradiction.
Let ${\mathcal S}{=}\{\Lambda:\ \Lambda$ is a $\L$-theory such that {\bf (1)}~$\Delta{\subseteq}\Lambda$ and {\bf (2)}~${\square}\Lambda{\subseteq}\Gamma\}$.
Obviously, by~$(\ast)$, $\Delta{\in}{\mathcal S}$.
Consequently, ${\mathcal S}$ is nonempty.
Moreover, for all nonempty chains $(\Lambda_{i})_{i{\in}I}$ of elements of ${\mathcal S}$, $\bigcup\{\Lambda_{i}\ :\ i{\in}I\}$ is an element of ${\mathcal S}$.
Hence, by Lemma~\ref{lemma:Zorn:lemma}, ${\mathcal S}$ possesses a maximal element $\Lambda$.
Thus, $\Lambda$ is a $\L$-theory such that $\Delta{\subseteq}\Lambda$ and ${\square}\Lambda{\subseteq}\Gamma$.
We claim that $\Lambda$ is proper.
If not, $\Lambda{=}\Fo$.
Consequently, ${\square}{\bot}{\in}\Lambda$.
Since ${\square}\Lambda{\subseteq}\Gamma$, then ${\bot}{\in}\Gamma$: a contradiction.
We claim that $\Lambda$ is prime.
If not, there exists formulas $A,B$ such that $A{\vee}B{\in}\Lambda$, $A{\not\in}\Lambda$ and $B{\not\in}\Lambda$.
Thus, by the maximality of $\Lambda$ in ${\mathcal S}$, $\Lambda{+}A{\not\in}{\mathcal S}$ and $\Lambda{+}B{\not\in}{\mathcal S}$.
Consequently, ${\square}(\Lambda{+}A){\not\subseteq}\Gamma$ and ${\square}(\Lambda{+}B){\not\subseteq}\Gamma$.
Hence, there exists formulas $C,D$ such that $C{\in}{\square}(\Lambda{+}A)$, $C{\not\in}\Gamma$, $D{\in}{\square}(\Lambda{+}B)$ and $D{\not\in}\Gamma$.
Consequently, $A{\rightarrow}{\square}C{\in}\Lambda$ and $B{\rightarrow}{\square}D
$\linebreak$
{\in}\Lambda$.
Hence, using axioms and inference rules of $\IPL$, $A{\vee}B{\rightarrow}{\square}C{\vee}{\square}D{\in}\Lambda$.
Since $A{\vee}B{\in}\Lambda$, then ${\square}C{\vee}{\square}D{\in}\Lambda$.
Thus, using axiom $(\Axiom1)$ and inference rule $(\Rule1)$, ${\square}(C{\vee}
$\linebreak$
D){\in}\Lambda$.
Since ${\square}\Lambda{\subseteq}\Gamma$, then $C{\vee}D{\in}\Gamma$.
Hence, either $C{\in}\Gamma$, or $D{\in}\Gamma$: a contradiction.
We claim that $(\ast\ast)$~for all formulas $C,D$, if $C{\vee}D{\in}{\square}\Lambda$ then either ${\lozenge}C{\in}\Lambda$, or $D{\in}\Gamma$.
If not, there exists formulas $C,D$ such that $C{\vee}D{\in}{\square}\Lambda$, ${\lozenge}C{\not\in}\Lambda$ and $D{\not\in}\Gamma$.
Thus, by the maximality of $\Lambda$ in ${\mathcal S}$, $\Lambda{+}{\lozenge}C{\not\in}{\mathcal S}$.
Consequently, ${\square}(\Lambda{+}{\lozenge}C){\not\subseteq}\Gamma$.
Hence, there exists a formula $E$ such that $E{\in}{\square}(\Lambda{+}{\lozenge}C)$ and $E{\not\in}\Gamma$.
%Thus, ${\square}E{\in}\Lambda{+}{\lozenge}C$.
Consequently, ${\lozenge}C{\rightarrow}{\square}E{\in}\Lambda$.
Hence, using axiom $(\Axiom1)$ and inference rule $(\Rule1)$, ${\lozenge}C{\rightarrow}{\square}(D{\vee}E){\in}\Lambda$.
Since $C{\vee}D{\in}{\square}\Lambda$, then ${\square}(C{\vee}D){\in}\Lambda$.
Thus, using axiom $(\Axiom1)$ and inference rule $(\Rule1)$, ${\square}(C{\vee}D{\vee}E){\in}\Lambda$.
Since ${\lozenge}C{\rightarrow}{\square}(D{\vee}E){\in}\Lambda$, then using axiom $(\Axiom2)$, ${\square}(D{\vee}E){\in}\Lambda$.
%Consequently, $D{\vee}E{\in}{\square}\Lambda$.
Since ${\square}\Lambda{\subseteq}\Gamma$, then $D{\vee}E{\in}\Gamma$.
Hence, either $D{\in}\Gamma$, or $E{\in}\Gamma$: a contradiction.
Let ${\mathcal T}{=}\{\Theta\ :\ \Theta$ is a $\L$-theory such that {\bf (1)}~${\square}\Lambda{\subseteq}\Theta$ and {\bf (2)}~for all formulas $C,D$, if $C{\vee}D{\in}\Theta$ then either ${\lozenge}C{\in}\Lambda$, or $D{\in}\Gamma\}$.
Obviously, by~$(\ast\ast)$, ${\square}\Lambda\in{\mathcal T}$.
Thus, ${\mathcal T}$ is nonempty.
Moreover, for all nonempty chains $(\Theta_{i})_{i{\in}I}$ of elements of ${\mathcal T}$, $\bigcup\{\Theta_{i}\ :\ i{\in}I\}$ is an element of ${\mathcal T}$.
Consequently, by Lemma~\ref{lemma:Zorn:lemma}, ${\mathcal T}$ possesses a maximal element $\Theta$.
Hence, $\Theta$ is a $\L$-theory such that ${\square}\Lambda{\subseteq}\Theta$ and for all formulas $C,D$, if $C{\vee}D{\in}\Theta$ then either ${\lozenge}C{\in}\Lambda$, or $D{\in}\Gamma$.
Thus, it only remains to be proved that $\Theta$ is proper and prime, $\Lambda{\bowtie}\Theta$ and $\Theta{\subseteq}\Gamma$.
We claim that $\Theta$ is proper.
If not, $\Theta{=}\Fo$.
Consequently, ${\bot}{\vee}{\bot}{\in}\Theta$.
Hence, either ${\lozenge}{\bot}{\in}\Lambda$, or ${\bot}{\in}\Gamma$: a contradiction.
We claim that $\Theta$ is prime.
If not, there exists formulas $E,F$ such that $E{\vee}F{\in}\Theta$, $E{\not\in}\Theta$ and $F{\not\in}\Theta$.
Thus, by the maximality of $\Theta$ in ${\mathcal T}$, $\Theta{+}E{\not\in}{\mathcal T}$ and $\Theta{+}F{\not\in}{\mathcal T}$.
Consequently, there exists formulas $G,H$ such that $G{\vee}H{\in}\Theta{+}E$, ${\lozenge}G{\not\in}\Lambda$ and $H{\not\in}\Gamma$ and there exists formulas $I,J$ such that $I{\vee}J{\in}\Theta{+}F$, ${\lozenge}I{\not\in}\Lambda$ and $J{\not\in}\Gamma$.
Hence, $E{\rightarrow}G{\vee}H{\in}\Theta$ and $F{\rightarrow}I{\vee}J{\in}\Theta$.
Thus, using axioms and inference rules of $\IPL$, $E{\vee}F{\rightarrow}G{\vee}I{\vee}H{\vee}J{\in}\Theta$.
Since $E{\vee}F{\in}\Theta$, then $G{\vee}I{\vee}H{\vee}J{\in}\Theta$.
Consequently, either ${\lozenge}(G{\vee}I){\in}\Lambda$, or $H{\vee}J{\in}\Gamma$.
In the former case, using axiom $(\Axiom3)$, ${\lozenge}G{\vee}{\lozenge}I{\in}\Lambda$.
Hence, either ${\lozenge}G{\in}\Lambda$, or ${\lozenge}I{\in}\Lambda$: a contradiction.
In the latter case, either $H{\in}\Gamma$, or $J{\in}\Gamma$: a contradiction.
We claim that $\Lambda{\bowtie}\Theta$.
If not, there exists a formula $E$ such that $E{\in}\Theta$ and ${\lozenge}E{\not\in}\Lambda$.
Thus, $E{\vee}{\bot}{\in}\Theta$.
Consequently, either ${\lozenge}E{\in}\Lambda$, or ${\bot}{\in}\Gamma$.
Since ${\lozenge}E{\not\in}\Lambda$, then ${\bot}{\in}\Gamma$: a contradiction.
We claim that $\Theta{\subseteq}\Gamma$.
If not, there exists a formula $E$ such that $E{\in}\Theta$ and $E{\not\in}\Gamma$.
Hence, ${\bot}{\vee}E{\in}\Theta$.
Thus, either ${\lozenge}{\bot}{\in}\Lambda$, or $E{\in}\Gamma$.
Since $E{\not\in}\Gamma$, then ${\lozenge}{\bot}{\in}\Lambda$: a contradiction.
\medskip
\end{proof}
\begin{lemma}\label{lemma:about:what:happens:if:B:lozenge:is:in:the:logic}
If $p{\rightarrow}{\square}{\lozenge}p{\in}\L$ then $(W_{\L},{\leq_{\L}},{R_{\L}})$ is down-symmetric.
\end{lemma}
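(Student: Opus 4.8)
I will prove that if $p{\rightarrow}{\square}{\lozenge}p{\in}\L$ then the canonical frame $(W_{\L},{\leq_{\L}},{R_{\L}})$ is down-symmetric, i.e. for all $\Gamma,\Delta{\in}W_{\L}$, if $\Gamma{R_{\L}}\Delta$ then $\Delta{\geq_{\L}}{\circ}{R_{\L}}{\circ}{\geq_{\L}}\Gamma$. Following the pattern of Lemma~\ref{lemma:about:what:happens:if:B:square:is:in:the:logic}, the task amounts to: starting from $\Gamma{\bowtie}\Delta$, first shrink $\Gamma$ to a prime subtheory $\Theta{\subseteq}\Gamma$ that is still suitably related to $\Delta$, and then, via an Existence-Lemma-style Zorn argument, produce a prime theory $\Pi$ with $\Pi{\subseteq}\Delta$ and $\Pi{\bowtie}\Theta$. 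That yields $\Delta{\geq_{\L}}\Pi$, $\Pi{R_{\L}}\Theta$ and $\Theta{\leq_{\L}}\Gamma$, hence $\Delta{\geq_{\L}}{\circ}{R_{\L}}{\circ}{\geq_{\L}}\Gamma$. The reasoning is the ``$\lozenge$-mirror'' of the up-symmetric case: where that proof used ${\square}\Lambda{\subseteq}\Gamma$ and axiom $(\Axiom2)$ together with ${\lozenge}{\square}p{\rightarrow}p$, here I will use $p{\rightarrow}{\square}{\lozenge}p$ together with $(\Axiom3)$, $(\Rule2)$ and $(\Rule3)$ (via Lemma~\ref{lemma:about:a:useful:application:of:rule:R3} and Lemma~\ref{first:lemma:about:the:use:of:the:special:inference:rule}), since the previous $\lozenge$-Existence-Lemma-flavoured results (Lemmas~\ref{lemma:prime:proper:for:lozenge} and~\ref{lemma:about:what:happens:if:T:lozenge:is:in:the:logic}) are exactly the ones that had to invoke those.

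\textbf{Key steps.} First, from $\Gamma{\bowtie}\Delta$ I would extract a candidate $\Theta$ below $\Gamma$: consider the set $\Sigma$ of formulas forced to lie in $\Theta$ by the requirement ``$\Theta{\bowtie}\Delta$ and $\Theta{\subseteq}\Gamma$'', and show that ${\lozenge}\Delta{=}\{A : {\lozenge}A{\in}\Delta\}$ (or rather an appropriate variant built from $\L$, ${\lozenge}\Delta$ and the $\Sigma$-type closure) is a proper $\L$-theory contained in $\Gamma$ with $\Theta{\bowtie}\Delta$; here the hypothesis $p{\rightarrow}{\square}{\lozenge}p{\in}\L$ is what guarantees $\Theta{\subseteq}\Gamma$, since $A{\in}\Delta$ gives ${\lozenge}A{\in}\Gamma$? — no: the relevant direction is that ${\square}A{\in}\Gamma$ should still transfer correctly, so I must be careful about which closure makes $\Theta$ land inside $\Gamma$. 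Concretely I expect to mimic the construction of $\Delta$ in Lemma~\ref{lemma:about:what:happens:if:T:lozenge:is:in:the:logic}: build the auxiliary set $\Sigma$ using Lemma~\ref{first:lemma:about:the:use:of:the:special:inference:rule}, prove a claim of the form ``for all $E,F$, if ${\square}E{\vee}F{\in}\L{+}(\Sigma{\cup}{\lozenge}\Delta)$ then either $E{\in}\Delta$, or $F{\in}\Gamma$'', then run Zorn's Lemma (Lemma~\ref{lemma:Zorn:lemma}) to get a maximal such theory $\Theta$, and verify it is proper, prime, satisfies $\Theta{\subseteq}\Gamma$ and $\Theta{\bowtie}\Delta$ using $(\Axiom3)$ and $(\Rule2)$ exactly as in that earlier lemma. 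Second — but actually the symmetry of the situation suggests it is cleaner to do the construction the other way round (enumerate $\Delta$, form the chain $\Theta_n{=}\L{+}{\lozenge}(C_1{\wedge}\cdots{\wedge}C_n)$, and push the ``$p{\rightarrow}{\square}{\lozenge}p$'' hypothesis through at the point where one needs ${\lozenge}C{\rightarrow}C$-style transfer). Once $\Theta$ is in hand, a second Zorn argument — the down-symmetry analogue of the construction in Lemma~\ref{lemma:about:what:happens:if:B:square:is:in:the:logic} but with $\square$ and $\lozenge$ interchanged — produces the prime $\Pi$ with $\Pi{\subseteq}\Delta$ and $\Pi{\bowtie}\Theta$; its properness and primality are checked by the usual four-case split using $(\Axiom3)$ to combine $\lozenge$'s, and $\Pi{\bowtie}\Theta$ follows from the defining membership condition by instantiating with $\bot$.

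\textbf{The main obstacle.} The genuinely delicate point is getting the hypothesis $p{\rightarrow}{\square}{\lozenge}p{\in}\L$ to do its work at the right place. In the up-symmetric proof the hypothesis ${\lozenge}{\square}p{\rightarrow}p$ is used \emph{once}, at the very start, to show ${\square}\Delta{\subseteq}\Gamma$; here the dual fact I want is something like ``$\Gamma$ contains the ${\lozenge}$-preimages of $\Delta$ up to $\square$'', and it is not immediate which inclusion this becomes, because $p{\rightarrow}{\square}{\lozenge}p$ only gives $A{\in}\Gamma\Rightarrow{\square}{\lozenge}A{\in}\Gamma$, which by $\Gamma{\bowtie}\Delta$ gives ${\lozenge}A{\in}\Delta$ — i.e. it controls what $\Delta$ contains in terms of $\Gamma$, not the reverse. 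So the construction must be arranged so that this is precisely the leverage needed to push $\Theta$ (or $\Pi$) into the correct theory. I expect that, just as Lemma~\ref{lemma:about:what:happens:if:T:lozenge:is:in:the:logic} needed the heavy combinatorial Lemma~\ref{first:lemma:about:the:use:of:the:special:inference:rule} to close the crucial $(\ast\ast)$ claim, the present proof will need the same lemma (or a minor variant of it) to close the analogous claim, and that bookkeeping — tracking the barred conjunctions/disjunctions $\bar A,\bar B,\bar C,\bar D$ through the application of Lemma~\ref{first:lemma:about:the:use:of:the:special:inference:rule} — is where the real work lies; everything else is routine Zorn-plus-$\IPL$ boilerplate identical in shape to the preceding lemmas.
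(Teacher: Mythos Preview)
Your overall architecture is right: a two-stage Zorn construction mirroring Lemma~\ref{lemma:about:what:happens:if:B:square:is:in:the:logic}, with the hard closure claim $(\ast\ast)$ established via Lemma~\ref{first:lemma:about:the:use:of:the:special:inference:rule}, just as in Lemma~\ref{lemma:about:what:happens:if:T:lozenge:is:in:the:logic}. That is exactly the paper's approach.

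However, you have the direction of the first extension backwards. You propose to \emph{shrink} $\Gamma$ to a prime $\Theta{\subseteq}\Gamma$, obtaining $\Theta{\leq_{\L}}\Gamma$. But down-symmetry requires $\Delta{\geq_{\L}}{\circ}{R_{\L}}{\circ}{\geq_{\L}}\Gamma$, i.e.\ a chain $\Delta{\supseteq}\Pi$, $\Pi{\bowtie}\Theta$, $\Theta{\supseteq}\Gamma$; the last link must be $\Theta{\geq_{\L}}\Gamma$, not $\Theta{\leq_{\L}}\Gamma$. With your orientation you would only get $\Delta{\geq_{\L}}{\circ}{R_{\L}}{\circ}{\leq_{\L}}\Gamma$, which is a different (and here useless) property.

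You actually diagnose the fix yourself in your ``main obstacle'' paragraph: the hypothesis $p{\rightarrow}{\square}{\lozenge}p$ together with $\Gamma{\bowtie}\Delta$ gives exactly ${\lozenge}\Gamma{\subseteq}\Delta$ (if $A{\in}\Gamma$ then ${\square}{\lozenge}A{\in}\Gamma$, hence ${\lozenge}A{\in}\Delta$). This is the seed for an \emph{upward} Zorn step: take ${\mathcal S}{=}\{\Theta:\Gamma{\subseteq}\Theta\ \text{and}\ {\lozenge}\Theta{\subseteq}\Delta\}$, note $\Gamma{\in}{\mathcal S}$, and maximize. The resulting prime $\Theta$ satisfies $\Theta{\supseteq}\Gamma$ and ${\lozenge}\Theta{\subseteq}\Delta$, and its maximality is precisely what drives the $(\ast\ast)$ claim (via Lemma~\ref{first:lemma:about:the:use:of:the:special:inference:rule}) in the second stage, where one builds a prime $\Lambda{\subseteq}\Delta$ with $\Lambda{\bowtie}\Theta$. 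Once you flip that one arrow, the rest of your sketch --- the $\Sigma$-set, the barred-conjunction bookkeeping, and the four-case primality checks using $(\Axiom1)$, $(\Axiom3)$, $(\Rule1)$, $(\Rule2)$ --- goes through exactly as in Lemmas~\ref{lemma:about:what:happens:if:T:lozenge:is:in:the:logic} and~\ref{lemma:about:what:happens:if:B:square:is:in:the:logic}.
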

\begin{proof}
Suppose $p{\rightarrow}{\square}{\lozenge}p{\in}\L$.
Let $\Gamma,\Delta{\in}W_{\L}$ be such that $\Gamma{R_{\L}}\Delta$.
Hence, $\Gamma{\bowtie}\Delta$.
We claim that $(\ast)$~${\lozenge}\Gamma{\subseteq}\Delta$.
If not, there exists a formula $A$ such that $A{\in}\Gamma$ and ${\lozenge}A{\not\in}\Delta$.
Since $\Gamma{\bowtie}\Delta$, then ${\square}{\lozenge}A{\not\in}\Gamma$.
Since $p{\rightarrow}{\square}{\lozenge}p{\in}\L$, then $A{\not\in}\Gamma$: a contradiction.
Let ${\mathcal S}{=}\{\Theta:\ \Theta$ is a $\L$-theory such that {\bf (1)}~$\Theta{\supseteq}\Gamma$ and {\bf (2)}~${\lozenge}\Theta{\subseteq}\Delta\}$.
Obviously, by~$(\ast)$, $\Gamma{\in}{\mathcal S}$.
Consequently, ${\mathcal S}$ is nonempty.
Moreover, for all nonempty chains $(\Theta_{i})_{i{\in}I}$ of elements of ${\mathcal S}$, $\bigcup\{\Theta_{i}\ :\ i{\in}I\}$ is an element of ${\mathcal S}$.
Hence, by Lemma~\ref{lemma:Zorn:lemma}, ${\mathcal S}$ possesses a maximal element $\Theta$.
Thus, $\Theta$ is a $\L$-theory such that $\Theta{\supseteq}\Gamma$ and ${\lozenge}\Theta{\subseteq}\Delta$.
We claim that $\Theta$ is proper.
If not, ${\bot}{\in}\Theta$.
Since ${\lozenge}\Theta{\subseteq}\Delta$, then ${\lozenge}{\bot}{\in}\Delta$: a contradiction.
We claim that $\Theta$ is prime.
If not, there exists formulas $A,B$ such that $A{\vee}B{\in}\Theta$, $A{\not\in}\Theta$ and $B{\not\in}\Theta$.
Hence, by the maximality of $\Theta$ in ${\mathcal S}$, $\Theta{+}A{\not\in}{\mathcal S}$ and $\Theta{+}B{\not\in}{\mathcal S}$.
Consequently, ${\lozenge}(\Theta{+}A){\not\subseteq}\Delta$ and ${\lozenge}(\Theta{+}B){\not\subseteq}\Delta$.
Thus, there exists formulas $C,D$ such that $C{\in}(\Theta{+}A)$, ${\lozenge}C{\not\in}\Delta$, $D{\in}(\Theta{+}B)$ and ${\lozenge}D{\not\in}\Delta$.
Consequently, $A{\rightarrow}C{\in}\Theta$ and $B{\rightarrow}D{\in}\Theta$.
Hence, using axioms and inference rules of $\IPL$, $A{\vee}B{\rightarrow}C{\vee}D{\in}\Theta$.
Since $A{\vee}B{\in}\Theta$, then $C{\vee}D{\in}\Theta$.
Since ${\lozenge}\Theta{\subseteq}\Delta$, then ${\lozenge}(C{\vee}D){\in}\Delta$.
Thus, using axiom $(\Axiom3)$, ${\lozenge}C{\vee}{\lozenge}D{\in}
$\linebreak$
\Delta$.
Consequently, either ${\lozenge}C{\in}\Delta$, or ${\lozenge}D{\in}\Delta$: a contradiction.
Let $\Sigma{=}\{A{\in}\Fo:$ there exists $B,C,D{\in}\Fo$ such that ${\lozenge}B{\rightarrow}A{\in}\L{+}{\lozenge}\Theta$, ${\square}C{\vee}D{\in}\L
$\linebreak$
{+}{\lozenge}\Theta$, $C{\rightarrow}B{\in}\Theta$ and $D{\not\in}\Delta\}$.
We claim that $(\ast\ast)$~for all $E,F{\in}\Fo$, if ${\square}E{\vee}F{\in}\L{+}(\Sigma{\cup}{\lozenge}\Theta)$ then either $E{\in}\Theta$, or $F{\in}\Delta$.
If not, there exists $E,F{\in}\Fo$ such that ${\square}E{\vee}F{\in}\L{+}(\Sigma{\cup}{\lozenge}\Theta)$, $E{\not\in}\Theta$ and $F{\not\in}\Delta$.
Hence, there exists $m,n{\in}\N$, there exists $A_{1},\ldots,A_{m}{\in}\Sigma$ and there exists $G_{1},\ldots,G_{n}{\in}
$\linebreak$
\Theta$ such that $A_{1}{\wedge}\ldots{\wedge}A_{m}{\wedge}{\lozenge}G_{1}{\wedge}\ldots{\wedge}{\lozenge}G_{n}{\rightarrow}{\square}E{\vee}F{\in}\L$.
Thus, for all $i{\in}\{1,\ldots,m\}$, there exists $B_{i},C_{i},D_{i}{\in}\Fo$ such that ${\lozenge}B_{i}{\rightarrow}A_{i}{\in}\L{+}{\lozenge}\Theta$, ${\square}C_{i}{\vee}D_{i}{\in}\L{+}{\lozenge}\Theta$, $C_{i}{\rightarrow}B_{i}{\in}
$\linebreak$
\Theta$ and $D_{i}{\not\in}\Delta$.
Let $\bar{A}{=}A_{1}{\wedge}\ldots{\wedge}A_{m}$ and $\bar{G}{=}G_{1}{\wedge}\ldots{\wedge}G_{m}$.
Since $G_{1},\ldots,G_{n}{\in}\Theta$, then using axioms and inference rules of $\IPL$, $\bar{G}{\in}\Theta$.
Since $A_{1}{\wedge}\ldots{\wedge}A_{m}{\wedge}{\lozenge}G_{1}{\wedge}\ldots
$\linebreak$
{\wedge}{\lozenge}G_{n}{\rightarrow}{\square}E{\vee}F{\in}\L$, then using inference rule $(\Rule2)$, $\bar{A}{\wedge}{\lozenge}\bar{G}{\rightarrow}{\square}E{\vee}F{\in}\L$.
Let $\bar{B}{=}B_{1}
$\linebreak$
{\wedge}\ldots{\wedge}B_{m}$, $\bar{C}{=}C_{1}{\wedge}\ldots{\wedge}C_{m}$ and $\bar{D}{=}D_{1}{\vee}\ldots{\vee}D_{m}$.
Since for all $i{\in}\{1,\ldots,m\}$, ${\lozenge}B_{i}
$\linebreak$
{\rightarrow}A_{i}{\in}\L{+}{\lozenge}\Theta$, then using inference rule $(\Rule2)$, ${\lozenge}\bar{B}{\rightarrow}\bar{A}{\in}\L{+}{\lozenge}\Theta$.
Since for all $i{\in}\{1,
$\linebreak$
\ldots,m\}$, ${\square}C_{i}{\vee}D_{i}{\in}\L{+}{\lozenge}\Theta$, then using axiom $(\Axiom1)$ and inference rule $(\Rule1)$, ${\square}\bar{C}{\vee}\bar{D}{\in}
$\linebreak$
\L{+}{\lozenge}\Theta$.
Since for all $i{\in}\{1,\ldots,m\}$, $C_{i}{\rightarrow}B_{i}{\in}\Theta$, then $\bar{C}{\rightarrow}\bar{B}{\in}\Theta$.
Since for all $i{\in}\{1,
$\linebreak$
\ldots,m\}$, $D_{i}{\not\in}\Delta$, then using axioms and inference rules of $\IPL$, $\bar{D}{\not\in}\Delta$.
Since ${\lozenge}\bar{B}{\rightarrow}\bar{A}{\in}
$\linebreak$
\L{+}{\lozenge}\Theta$ and ${\square}\bar{C}{\vee}\bar{D}{\in}\L{+}{\lozenge}\Theta$, then there exists $o,p{\in}\N$, there exists $H_{1},\ldots,H_{o}{\in}\Theta$ and there exists $I_{1},\ldots,I_{p}{\in}\Theta$ such that ${\lozenge}H_{1}{\wedge}\ldots{\wedge}{\lozenge}H_{o}{\wedge}{\lozenge}\bar{B}{\rightarrow}\bar{A}{\in}\L$ and ${\lozenge}I_{1}{\wedge}\ldots{\wedge}{\lozenge}I_{p}
$\linebreak$
{\rightarrow}{\square}\bar{C}{\vee}\bar{D}{\in}\L$.
Let $\bar{H}{=}H_{1}{\wedge}\ldots{\wedge}H_{o}$ and $\bar{I}{=}I_{1}{\wedge}\ldots{\wedge}I_{p}$.
Since $H_{1},\ldots,H_{o}{\in}\Theta$ and $I_{1},\ldots,I_{p}{\in}\Theta$, then using axioms and inference rules of $\IPL$, $\bar{H}{\in}\Theta$ and $\bar{I}{\in}\Theta$.
Since ${\lozenge}H_{1}{\wedge}\ldots{\wedge}{\lozenge}H_{o}{\wedge}{\lozenge}\bar{B}{\rightarrow}\bar{A}{\in}\L$ and ${\lozenge}I_{1}{\wedge}\ldots{\wedge}{\lozenge}I_{p}{\rightarrow}{\square}\bar{C}{\vee}\bar{D}{\in}\L$, then using inference rule $(\Rule2)$, ${\lozenge}\bar{H}{\wedge}{\lozenge}\bar{B}{\rightarrow}\bar{A}{\in}\L$ and ${\lozenge}\bar{I}{\rightarrow}{\square}\bar{C}{\vee}\bar{D}{\in}\L$.
Since $E{\not\in}\Theta$, then by the maximality of $\Theta$ in ${\mathcal S}$, $\Theta{+}E{\not\in}{\mathcal S}$.
Consequently, ${\lozenge}(\Theta{+}E){\not\subseteq}\Delta$.
Hence, there exists $J{\in}\Fo$ such that $J{\in}\Theta{+}E$ and ${\lozenge}J{\not\in}\Delta$.
Thus, $E{\rightarrow}J{\in}\Theta$.
Since $\bar{G}{\in}\Theta$, $\bar{C}{\rightarrow}\bar{B}{\in}\Theta$, $\bar{H}{\in}\Theta$ and $\bar{I}{\in}\Theta$, then $\bar{G}{\wedge}(\bar{C}{\rightarrow}\bar{B}){\wedge}\bar{H}{\wedge}\bar{I}{\wedge}(E{\rightarrow}J){\in}\Theta$.
Since ${\lozenge}\Theta{\subseteq}\Delta$, then ${\lozenge}(\bar{G}{\wedge}(\bar{C}{\rightarrow}\bar{B}){\wedge}\bar{H}{\wedge}\bar{I}{\wedge}(E{\rightarrow}J))
$\linebreak$
{\in}\Delta$.
Since $\bar{A}{\wedge}{\lozenge}\bar{G}{\rightarrow}{\square}E{\vee}F{\in}\L$, ${\lozenge}\bar{H}{\wedge}{\lozenge}\bar{B}{\rightarrow}\bar{A}{\in}\L$ and ${\lozenge}\bar{I}{\rightarrow}{\square}\bar{C}{\vee}\bar{D}{\in}\L$, then by
\linebreak
Lemma~\ref{first:lemma:about:the:use:of:the:special:inference:rule}, ${\lozenge}(\bar{G}{\wedge}(\bar{C}{\rightarrow}\bar{B}){\wedge}\bar{H}{\wedge}\bar{I}{\wedge}(E{\rightarrow}J)){\rightarrow}F{\vee}\bar{D}{\vee}{\lozenge}J{\in}\L$.
Since ${\lozenge}(\bar{G}{\wedge}(\bar{C}{\rightarrow}\bar{B}){\wedge}
$\linebreak$
\bar{H}{\wedge}\bar{I}{\wedge}(E{\rightarrow}J)){\in}\Delta$, then $F{\vee}\bar{D}{\vee}{\lozenge}J{\in}\Delta$.
Consequently, either $F{\in}\Delta$, or $\bar{D}{\in}\Delta$, or ${\lozenge}J{\in}\Delta$: a contradiction.
Let ${\mathcal T}{=}\{\Lambda\ :\ \Lambda$ is a $\L$-theory such that {\bf (1)}~${\lozenge}\Theta{\subseteq}\Lambda$ and {\bf (2)}~for all formulas $E,F{\in}\Fo$, if ${\square}E{\vee}F{\in}\Lambda$ then either $E{\in}\Theta$, or $F{\in}\Delta\}$.
Obviously, by~$(\ast\ast)$, $\L{+}(\Sigma{\cup}{\lozenge}\Theta){\in}{\mathcal T}$.
Hence, ${\mathcal T}$ is nonempty.
Moreover, for all nonempty chains $(\Lambda_{i})_{i{\in}I}$ of elements of ${\mathcal T}$, $\bigcup\{\Lambda_{i}\ :\ i{\in}I\}$ is an element of ${\mathcal T}$.
Consequently, by Lemma~\ref{lemma:Zorn:lemma}, ${\mathcal T}$ possesses a maximal element $\Lambda$.
Hence, $\Lambda$ is a $\L$-theory such that ${\lozenge}\Theta{\subseteq}\Lambda$ and for all formulas $E,F$, if ${\square}E{\vee}F{\in}\Lambda$ then either $E{\in}\Theta$, or $F{\in}\Delta$.
Thus, it only remains to be proved that $\Lambda$ is proper and prime, $\Delta{\supseteq}\Lambda$ and $\Lambda{\bowtie}\Theta$.
We claim that $\Lambda$ is proper.
If not, $\Lambda{=}\Fo$.
Consequently, ${\square}{\bot}{\vee}{\bot}{\in}\Lambda$.
Hence, either ${\bot}{\in}\Theta$, or ${\bot}{\in}\Delta$: a contradiction.
We claim that $\Lambda$ is prime.
If not, there exists formulas $A,B$ such that $A{\vee}B{\in}\Lambda$, $A{\not\in}\Lambda$ and $B{\not\in}\Lambda$.
Thus, by the maximality of $\Lambda$ in ${\mathcal T}$, $\Lambda{+}A{\not\in}{\mathcal T}$ and $\Lambda{+}B{\not\in}{\mathcal T}$.
Consequently, there exists formulas $E,F$ such that ${\square}E{\vee}F{\in}\Lambda{+}A$, $E{\not\in}\Theta$ and $F{\not\in}\Delta$ and there exists formulas $G,H$ such that ${\square}G{\vee}H{\in}\Lambda{+}B$, $G{\not\in}\Theta$ and $H{\not\in}\Delta$.
Hence, $A{\rightarrow}{\square}E{\vee}F{\in}
$\linebreak$
\Lambda$ and $B{\rightarrow}{\square}G{\vee}H{\in}\Lambda$.
Thus, using axioms and inference rules of $\IPL$, $A{\vee}B{\rightarrow}{\square}E{\vee}
$\linebreak$
{\square}G{\vee}F{\vee}H{\in}\Lambda$.
Since $A{\vee}B{\in}\Lambda$, then ${\square}E{\vee}{\square}G{\vee}F{\vee}H{\in}\Lambda$.
Consequently, using axiom $(\Axiom1)$ and inference rule $(\Rule1)$, ${\square}(E{\vee}G){\vee}F{\vee}H{\in}\Lambda$.
Hence, either $E{\vee}G{\in}\Theta$, or $F{\vee}H{\in}\Delta$.
In the former case, either $E{\in}\Theta$, or $G{\in}\Theta$: a contradiction.
In the latter case, either $F{\in}\Delta$, or $H{\in}\Delta$: a contradiction.
We claim that $\Delta{\supseteq}\Lambda$.
If not, there exists a formula $A$ such that $A{\not\in}\Delta$ and $A{\in}\Lambda$.
Thus, ${\square}{\bot}{\vee}A{\in}\Lambda$.
Consequently, either ${\bot}{\in}\Theta$, or $A{\in}\Delta$.
Since $A{\not\in}\Delta$, then ${\bot}{\in}\Theta$: a contradiction.
We claim that $\Lambda{\bowtie}\Theta$.
If not, there exists a formula $A$ such that ${\square}A{\in}\Lambda$ and $A{\not\in}\Theta$.
Hence, ${\square}A{\vee}{\bot}{\in}\Lambda$.
Thus, either $A{\in}\Theta$, or ${\bot}{\in}\Delta$.
Since $A{\not\in}\Theta$, then ${\bot}{\in}\Delta$: a contradiction.
\medskip
\end{proof}
%
%
%Lemma~\ref{lemma:about:what:happens:if:4:square:is:in:the:logic} is used in Section~\ref{section:soundness:and:completeness} when we show that $\L_{\min}{\oplus}{\square}p{\rightarrow}{\square}{\square}p$ is the intuitionistic modal logic determined by the class of all frames $(W,{\leq},{R})$ such that for all $s,t,u,v{\in}W$, if $s{R}t$, $t{\leq}u$ and $u{R}v$ then $s{\leq}{\circ}{R}{\circ}{\leq}v$.
%
%
\begin{lemma}\label{lemma:about:what:happens:if:4:square:is:in:the:logic}
If ${\square}p{\rightarrow}{\square}{\square}p{\in}\L$ then $(W_{\L},{\leq_{\L}},{R_{\L}})$ is up-transitive.
\end{lemma}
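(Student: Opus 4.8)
The plan is to follow closely the proof of Lemma~\ref{lemma:about:what:happens:if:T:square:is:in:the:logic}, the only substantial novelty being a preliminary claim that replaces the role played there by reflexivity and that is the single place where the axiom ${\square}p{\rightarrow}{\square}{\square}p$ is used.

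First I would spell out what must be proved. By the definition of up-transitivity applied to the canonical frame, we are given prime $\L$-theories $\Gamma,\Delta,\Theta,\Lambda$ with $\Gamma{\bowtie}\Delta$, $\Delta{\subseteq}\Theta$ and $\Theta{\bowtie}\Lambda$, and we must produce prime $\L$-theories $\Pi,\Xi$ with $\Gamma{\subseteq}\Pi$, $\Pi{\bowtie}\Xi$ and $\Xi{\subseteq}\Lambda$. The preliminary claim $(\ast)$ is that ${\square}\Gamma{\subseteq}\Lambda$: if ${\square}B{\in}\Gamma$, then, using ${\square}p{\rightarrow}{\square}{\square}p{\in}\L$ with $p$ replaced by $B$ and modus ponens, ${\square}{\square}B{\in}\Gamma$; since $\Gamma{\bowtie}\Delta$, ${\square}B{\in}\Delta$; since $\Delta{\subseteq}\Theta$, ${\square}B{\in}\Theta$; and since $\Theta{\bowtie}\Lambda$, $B{\in}\Lambda$. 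This claim is what makes $\Gamma$ an admissible starting point for the construction below; the intermediate theory $\Theta$ is used nowhere else.

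Having $(\ast)$, I would reproduce the two-stage Zorn argument of Lemma~\ref{lemma:about:what:happens:if:T:square:is:in:the:logic}, with $\Gamma$ in the role of its $\Gamma$ and $\Lambda$ in the role of its $\Gamma$ on the right-hand side. Stage one: put ${\mathcal S}{=}\{\Pi:\Pi$ is a $\L$-theory such that $\Gamma{\subseteq}\Pi$ and ${\square}\Pi{\subseteq}\Lambda\}$; by $(\ast)$ we have $\Gamma{\in}{\mathcal S}$, so ${\mathcal S}$ is nonempty, it is closed under unions of nonempty chains, and Lemma~\ref{lemma:Zorn:lemma} yields a maximal element $\Pi$, which is proper (otherwise ${\square}{\bot}{\in}\Pi$ forces ${\bot}{\in}\Lambda$) and prime (by the usual maximality argument, using ${\square}C{\vee}{\square}D{\rightarrow}{\square}(C{\vee}D){\in}\L$, a consequence of $(\Axiom1)$ and $(\Rule1)$). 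Then, exactly as in that lemma, I would establish $(\ast\ast)$: for all formulas $C,D$, if $C{\vee}D{\in}{\square}\Pi$ then either ${\lozenge}C{\in}\Pi$ or $D{\in}\Lambda$ --- from ${\lozenge}C{\notin}\Pi$ and the maximality of $\Pi$ one gets $E$ with ${\lozenge}C{\rightarrow}{\square}E{\in}\Pi$ and $E{\notin}\Lambda$, hence ${\lozenge}C{\rightarrow}{\square}(D{\vee}E){\in}\Pi$ and ${\square}(C{\vee}D{\vee}E){\in}\Pi$, hence by $(\Axiom2)$ ${\square}(D{\vee}E){\in}\Pi$, hence $D{\vee}E{\in}\Lambda$, a contradiction. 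Stage two: put ${\mathcal T}{=}\{\Xi:\Xi$ is a $\L$-theory such that ${\square}\Pi{\subseteq}\Xi$ and, for all formulas $C,D$, if $C{\vee}D{\in}\Xi$ then either ${\lozenge}C{\in}\Pi$ or $D{\in}\Lambda\}$; by $(\ast\ast)$, ${\square}\Pi{\in}{\mathcal T}$, and Lemma~\ref{lemma:Zorn:lemma} gives a maximal $\Xi{\in}{\mathcal T}$. One checks that $\Xi$ is proper, prime (splitting ${\lozenge}(G{\vee}I)$ by $(\Axiom3)$), that $\Pi{\bowtie}\Xi$ (one clause is immediate from ${\square}\Pi{\subseteq}\Xi$; for the other, $A{\in}\Xi$ gives $A{\vee}{\bot}{\in}\Xi$, so ${\lozenge}A{\in}\Pi$ because ${\bot}{\notin}\Lambda$), and that $\Xi{\subseteq}\Lambda$ (since $A{\in}\Xi$ gives ${\bot}{\vee}A{\in}\Xi$, so $A{\in}\Lambda$ because ${\lozenge}{\bot}{\notin}\Pi$). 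Since $\Gamma{\subseteq}\Pi$, $\Pi{\bowtie}\Xi$ and $\Xi{\subseteq}\Lambda$, i.e. $\Gamma{\leq_{\L}}\Pi$, $\Pi{R_{\L}}\Xi$ and $\Xi{\leq_{\L}}\Lambda$, we obtain $\Gamma({\leq_{\L}}{\circ}{R_{\L}}{\circ}{\leq_{\L}})\Lambda$, as required.

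The hard part will really be nothing more than bookkeeping: verifying $(\ast)$ correctly, which is where all the new content lies, and then transcribing the lengthy but routine maximality and primeness verifications from Lemma~\ref{lemma:about:what:happens:if:T:square:is:in:the:logic} without introducing slips about which defining property of ${\mathcal S}$ or ${\mathcal T}$ is being exploited at each step.
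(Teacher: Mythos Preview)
Your proposal is correct and follows essentially the same approach as the paper's proof: the paper also establishes the preliminary claim ${\square}\Gamma\subseteq\Theta$ (your $\Lambda$) via the axiom, then runs the identical two-stage Zorn construction you describe---first building a maximal $\Phi$ (your $\Pi$) in ${\mathcal S}=\{\Phi:\Gamma\subseteq\Phi,\ {\square}\Phi\subseteq\Theta\}$, then a maximal $\Psi$ (your $\Xi$) in ${\mathcal T}$, with the same uses of $(\Axiom1)$, $(\Rule1)$, $(\Axiom2)$ and $(\Axiom3)$ at the corresponding steps. The only differences are cosmetic: you swap the names $\Theta$ and $\Lambda$ relative to the paper and use $\Pi,\Xi$ in place of $\Phi,\Psi$.
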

\begin{proof}
Suppose ${\square}p{\rightarrow}{\square}{\square}p{\in}\L$.
Let $\Gamma,\Delta,\Lambda,\Theta{\in}W_{\L}$ be such that $\Gamma{R_{\L}}\Delta$, $\Delta{\leq_{\L}}\Lambda$ and $\Lambda{R_{\L}}\Theta$.
Hence, $\Gamma{\bowtie}\Delta$, $\Delta{\subseteq}\Lambda$ and $\Lambda{\bowtie}\Theta$.
We claim that $(\ast)$~${\square}\Gamma{\subseteq}\Theta$.
If not, there exists a formula $A$ such that $A{\in}{\square}\Gamma$ and $A{\not\in}\Theta$.
Thus, ${\square}A{\in}\Gamma$.
Since ${\square}p{\rightarrow}{\square}{\square}p{\in}\L$, then ${\square}{\square}A{\in}\Gamma$.
Since $\Gamma{\bowtie}\Delta$, then ${\square}A{\in}\Delta$.
Since $\Delta{\subseteq}\Lambda$, then ${\square}A{\in}\Lambda$.
Since $\Lambda{\bowtie}\Theta$, then $A{\in}\Theta$: a contradiction.
Let ${\mathcal S}{=}\{\Phi:\ \Phi$ is a $\L$-theory such that {\bf (1)}~$\Gamma{\subseteq}\Phi$ and {\bf (2)}~${\square}\Phi{\subseteq}\Theta\}$.
Obviously, by~$(\ast)$, $\Gamma{\in}{\mathcal S}$.
Consequently, ${\mathcal S}$ is nonempty.
Moreover, for all nonempty chains $(\Phi_{i})_{i{\in}I}$ of elements of ${\mathcal S}$, $\bigcup\{\Phi_{i}\ :\ i{\in}I\}$ is an element of ${\mathcal S}$.
Hence, by Lemma~\ref{lemma:Zorn:lemma}, ${\mathcal S}$ possesses a maximal element $\Phi$.
Thus, $\Phi$ is a $\L$-theory such that $\Gamma{\subseteq}\Phi$ and ${\square}\Phi{\subseteq}\Theta$.
We claim that $\Phi$ is proper.
If not, $\Phi{=}\Fo$.
Consequently, ${\square}{\bot}{\in}\Phi$.
%Hence, ${\bot}{\in}{\square}\Phi$.
Since ${\square}\Phi{\subseteq}\Theta$, then ${\bot}{\in}\Theta$: a contradiction.
We claim that $\Phi$ is prime.
If not, there exists formulas $A,B$ such that $A{\vee}B{\in}\Phi$, $A{\not\in}\Phi$ and $B{\not\in}\Phi$.
Thus, by the maximality of $\Phi$ in ${\mathcal S}$, $\Phi{+}A{\not\in}{\mathcal S}$ and $\Phi{+}B{\not\in}{\mathcal S}$.
Consequently, ${\square}(\Phi{+}A){\not\subseteq}\Theta$ and ${\square}(\Phi{+}B){\not\subseteq}\Theta$.
Hence, there exists formulas $C,D$ such that $C{\in}{\square}(\Phi{+}A)$, $C{\not\in}\Theta$, $D{\in}{\square}(\Phi{+}B)$ and $D{\not\in}\Theta$.
%Thus, ${\square}C{\in}\Phi{+}A$ and ${\square}D{\in}\Phi{+}B$.
Consequently, $A{\rightarrow}{\square}C{\in}\Phi$ and $B{\rightarrow}{\square}D{\in}\Phi$.
Hence, using axioms and inference rules of $\IPL$, $A{\vee}B{\rightarrow}{\square}C{\vee}{\square}D{\in}\Phi$.
Since $A{\vee}B{\in}\Phi$, then ${\square}C{\vee}{\square}D{\in}\Phi$.
Thus, using axiom $(\Axiom1)$ and inference rule $(\Rule1)$, ${\square}(C{\vee}D){\in}\Phi$.
%Consequently, $C{\vee}D{\in}{\square}\Phi$.
Since ${\square}\Phi{\subseteq}\Theta$, then $C{\vee}D{\in}\Theta$.
Hence, either $C{\in}\Theta$, or $D{\in}\Theta$: a contradiction.
We claim that $(\ast\ast)$~for all formulas $C,D$, if $C{\vee}D{\in}{\square}\Phi$ then either ${\lozenge}C{\in}\Phi$, or $D{\in}\Theta$.
If not, there exists formulas $C,D$ such that $C{\vee}D{\in}{\square}\Phi$, ${\lozenge}C{\not\in}\Phi$ and $D{\not\in}\Theta$.
Thus, by the maximality of $\Phi$ in ${\mathcal S}$, $\Phi{+}{\lozenge}C{\not\in}{\mathcal S}$.
Consequently, ${\square}(\Phi{+}{\lozenge}C){\not\subseteq}\Theta$.
Hence, there exists a formula $E$ such that $E{\in}{\square}(\Phi{+}{\lozenge}C)$ and $E{\not\in}\Theta$.
%Thus, ${\square}E{\in}\Phi{+}{\lozenge}C$.
Consequently, ${\lozenge}C{\rightarrow}{\square}E{\in}\Phi$.
Hence, using axiom $(\Axiom1)$ and inference rule $(\Rule1)$, ${\lozenge}C{\rightarrow}{\square}(D{\vee}E){\in}\Phi$.
Since $C{\vee}D{\in}
$\linebreak$
{\square}\Phi$, then ${\square}(C{\vee}D){\in}\Phi$.
Thus, using axiom $(\Axiom1)$ and inference rule $(\Rule1)$, ${\square}(C{\vee}D{\vee}
$\linebreak$
E){\in}\Phi$.
Since ${\lozenge}C{\rightarrow}{\square}(D{\vee}E){\in}\Phi$, then using axiom $(\Axiom2)$, ${\square}(D{\vee}E){\in}\Phi$.
%Consequently, $D{\vee}E{\in}{\square}\Phi$.
Since ${\square}\Phi{\subseteq}
$\linebreak$
\Theta$, then $D{\vee}E{\in}\Theta$.
Hence, either $D{\in}\Theta$, or $E{\in}\Theta$: a contradiction.
Let ${\mathcal T}{=}\{\Psi\ :\ \Psi$ is a $\L$-theory such that {\bf (1)}~${\square}\Phi{\subseteq}\Psi$ and {\bf (2)}~for all formulas $C,D$, if $C{\vee}D{\in}\Psi$ then either ${\lozenge}C{\in}\Phi$, or $D{\in}\Theta\}$.
Obviously, by~$(\ast\ast)$, ${\square}\Phi\in{\mathcal T}$.
Thus, ${\mathcal T}$ is nonempty.
Moreover, for all nonempty chains $(\Psi_{i})_{i{\in}I}$ of elements of ${\mathcal T}$, $\bigcup\{\Psi_{i}\ :\ i{\in}I\}$ is an element of ${\mathcal T}$.
Consequently, by Lemma~\ref{lemma:Zorn:lemma}, ${\mathcal T}$ possesses a maximal element $\Psi$.
Hence, $\Psi$ is a $\L$-theory such that ${\square}\Phi{\subseteq}\Psi$ and for all formulas $C,D$, if $C{\vee}D{\in}\Psi$ then either ${\lozenge}C{\in}\Phi$, or $D{\in}\Theta$.
Thus, it only remains to be proved that $\Psi$ is proper and prime, $\Phi{\bowtie}\Psi$ and $\Psi{\subseteq}\Theta$.
We claim that $\Psi$ is proper.
If not, $\Psi{=}\Fo$.
Consequently, ${\bot}{\vee}{\bot}{\in}\Psi$.
Hence, either ${\lozenge}{\bot}{\in}\Phi$, or ${\bot}{\in}\Theta$: a contradiction.
We claim that $\Psi$ is prime.
If not, there exists formulas $E,F$ such that $E{\vee}F{\in}\Psi$, $E{\not\in}\Psi$ and $F{\not\in}\Psi$.
Thus, by the maximality of $\Psi$ in ${\mathcal T}$, $\Psi{+}E{\not\in}{\mathcal T}$ and $\Psi{+}F{\not\in}{\mathcal T}$.
Consequently, there exists formulas $G,H$ such that $G{\vee}H{\in}\Psi{+}E$, ${\lozenge}G{\not\in}\Phi$ and $H{\not\in}\Theta$ and there exists formulas $I,J$ such that $I{\vee}J{\in}\Psi{+}F$, ${\lozenge}I{\not\in}\Phi$ and $J{\not\in}\Theta$.
Hence, $E{\rightarrow}G{\vee}H{\in}
$\linebreak$
\Psi$ and $F{\rightarrow}I{\vee}J{\in}\Psi$.
Thus, using axioms and inference rules of $\IPL$, $E{\vee}F{\rightarrow}G{\vee}I{\vee}H
$\linebreak$
{\vee}J{\in}\Psi$.
Since $E{\vee}F{\in}\Psi$, then $G{\vee}I{\vee}H{\vee}J{\in}\Psi$.
Consequently, either ${\lozenge}(G{\vee}I){\in}\Phi$, or $H{\vee}J{\in}\Theta$.
In the former case, using axiom $(\Axiom3)$, ${\lozenge}G{\vee}{\lozenge}I{\in}\Phi$.
Hence, either ${\lozenge}G{\in}\Phi$, or ${\lozenge}I{\in}\Phi$: a contradiction.
In the latter case, either $H{\in}\Theta$, or $J{\in}\Theta$: a contradiction.
We claim that $\Phi{\bowtie}\Psi$.
If not, there exists a formula $E$ such that $E{\in}\Psi$ and ${\lozenge}E{\not\in}\Phi$.
Thus, $E{\vee}{\bot}{\in}\Psi$.
Consequently, either ${\lozenge}E{\in}\Phi$, or ${\bot}{\in}\Theta$.
Since ${\lozenge}E{\not\in}\Phi$, then ${\bot}{\in}\Theta$: a contradiction.
We claim that $\Psi{\subseteq}\Theta$.
If not, there exists a formula $E$ such that $E{\in}\Psi$ and $E{\not\in}\Theta$.
Hence, ${\bot}{\vee}E{\in}\Psi$.
Thus, either ${\lozenge}{\bot}{\in}\Phi$, or $E{\in}\Theta$.
Since $E{\not\in}\Theta$, then ${\lozenge}{\bot}{\in}\Phi$: a contradiction.
\medskip
\end{proof}
\begin{lemma}\label{lemma:about:what:happens:if:4:lozenge:is:in:the:logic}
If ${\lozenge}{\lozenge}p{\rightarrow}{\lozenge}p{\in}\L$ then $(W_{\L},{\leq_{\L}},{R_{\L}})$ is down-transitive.
\end{lemma}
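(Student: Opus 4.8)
The plan is to follow the proof of Lemma~\ref{lemma:about:what:happens:if:B:lozenge:is:in:the:logic} (the down-symmetry case) almost word for word, the only genuinely new ingredient being the verification of the analogue of its claim $(\ast)$. So suppose ${\lozenge}{\lozenge}p{\rightarrow}{\lozenge}p{\in}\L$ and let $\Gamma,\Delta,\Lambda,\Theta{\in}W_{\L}$ be such that $\Gamma{R_{\L}}\Delta$, $\Delta{\geq_{\L}}\Lambda$ and $\Lambda{R_{\L}}\Theta$, that is to say $\Gamma{\bowtie}\Delta$, $\Lambda{\subseteq}\Delta$ and $\Lambda{\bowtie}\Theta$. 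It suffices to construct prime $\L$-theories $\Xi,\Pi$ with $\Xi{\subseteq}\Gamma$, $\Xi{\bowtie}\Pi$ and $\Theta{\subseteq}\Pi$: indeed, this gives $\Gamma{\geq_{\L}}\Xi$, $\Xi{R_{\L}}\Pi$ and $\Pi{\geq_{\L}}\Theta$, hence $\Gamma{\geq_{\L}}{\circ}{R_{\L}}{\circ}{\geq_{\L}}\Theta$, which is what down-transitivity of $(W_{\L},{\leq_{\L}},{R_{\L}})$ demands.

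First I would prove the claim $(\ast)$ that ${\lozenge}\Theta{\subseteq}\Gamma$, and this is exactly where the hypothesis ${\lozenge}{\lozenge}p{\rightarrow}{\lozenge}p{\in}\L$ is used. Let $B{\in}\Theta$. Since $\Lambda{\bowtie}\Theta$, then ${\lozenge}B{\in}\Lambda$; since $\Lambda{\subseteq}\Delta$, then ${\lozenge}B{\in}\Delta$; since $\Gamma{\bowtie}\Delta$, then ${\lozenge}{\lozenge}B{\in}\Gamma$; since ${\lozenge}{\lozenge}p{\rightarrow}{\lozenge}p{\in}\L$, then ${\lozenge}B{\in}\Gamma$. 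With $(\ast)$ available, I would apply Lemma~\ref{lemma:Zorn:lemma} to ${\mathcal S}{=}\{\Pi\ :\ \Pi$ is a $\L$-theory such that {\bf (1)}~$\Theta{\subseteq}\Pi$ and {\bf (2)}~${\lozenge}\Pi{\subseteq}\Gamma\}$, which is nonempty because $\Theta{\in}{\mathcal S}$ by $(\ast)$, to obtain a maximal element $\Pi$; that $\Pi$ is proper (else ${\lozenge}{\bot}{\in}\Gamma$, contradicting properness of $\Gamma$) and prime (standard, using axiom $(\Axiom3)$ together with the maximality of $\Pi$) is then routine.

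From here the construction of $\Xi$ is a transcription of the second half of Lemma~\ref{lemma:about:what:happens:if:B:lozenge:is:in:the:logic}, with $\Delta$ replaced by $\Gamma$ and $\Theta$ replaced by $\Pi$. I would set $\Sigma{=}\{A{\in}\Fo\ :$ there exists $B,C,D{\in}\Fo$ such that ${\lozenge}B{\rightarrow}A{\in}\L{+}{\lozenge}\Pi$, ${\square}C{\vee}D{\in}\L{+}{\lozenge}\Pi$, $C{\rightarrow}B{\in}\Pi$ and $D{\not\in}\Gamma\}$ and prove the claim $(\ast\ast)$: for all $E,F{\in}\Fo$, if ${\square}E{\vee}F{\in}\L{+}(\Sigma{\cup}{\lozenge}\Pi)$ then either $E{\in}\Pi$, or $F{\in}\Gamma$. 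The proof of $(\ast\ast)$ unfolds the finitary presentation of membership in $\L{+}(\Sigma{\cup}{\lozenge}\Pi)$, collects the witnessing $\Sigma$-data into ``barred'' formulas $\bar{A},\bar{B},\bar{C},\bar{D},\bar{G}$, strips the remaining occurrences of $\L{+}{\lozenge}\Pi$ by introducing $\bar{H},\bar{I}{\in}\Pi$, uses the maximality of $\Pi$ to produce $J{\in}\Pi{+}E$ with ${\lozenge}J{\not\in}\Gamma$, and then invokes Lemma~\ref{first:lemma:about:the:use:of:the:special:inference:rule} with $\bar{A}{\wedge}{\lozenge}\bar{G}{\rightarrow}{\square}E{\vee}F$, ${\lozenge}\bar{H}{\wedge}{\lozenge}\bar{B}{\rightarrow}\bar{A}$ and ${\lozenge}\bar{I}{\rightarrow}{\square}\bar{C}{\vee}\bar{D}$ to derive ${\lozenge}(\bar{G}{\wedge}(\bar{C}{\rightarrow}\bar{B}){\wedge}\bar{H}{\wedge}\bar{I}{\wedge}(E{\rightarrow}J)){\rightarrow}F{\vee}\bar{D}{\vee}{\lozenge}J{\in}\L$; since the argument of the antecedent lies in $\Pi$ and ${\lozenge}\Pi{\subseteq}\Gamma$, primeness of $\Gamma$ then yields the contradiction. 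A second application of Lemma~\ref{lemma:Zorn:lemma}, to ${\mathcal T}{=}\{\Xi\ :\ \Xi$ is a $\L$-theory such that {\bf (1)}~${\lozenge}\Pi{\subseteq}\Xi$ and {\bf (2)}~for all formulas $E,F$, if ${\square}E{\vee}F{\in}\Xi$ then either $E{\in}\Pi$, or $F{\in}\Gamma\}$, which is nonempty by $(\ast\ast)$, yields a maximal $\Xi$; checking that $\Xi$ is proper, prime, that $\Xi{\subseteq}\Gamma$ and that $\Xi{\bowtie}\Pi$ goes exactly as in the down-symmetry proof, plugging ${\bot}$ into the defining ``$E$ or $F$'' condition of ${\mathcal T}$ where appropriate.

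The only delicate point is $(\ast\ast)$, and there the difficulty is entirely bookkeeping: massaging the finite conjunctions and disjunctions into the precise three-premise form required by Lemma~\ref{first:lemma:about:the:use:of:the:special:inference:rule}, and not losing track of which barred formula plays which role. No new idea beyond $(\ast)$ is needed; the three frame hypotheses $\Gamma{\bowtie}\Delta$, $\Lambda{\subseteq}\Delta$ and $\Lambda{\bowtie}\Theta$ are used only to obtain $(\ast)$, and everything downstream depends on $\Gamma$, $\Theta$ and the axiom solely through $(\ast)$.
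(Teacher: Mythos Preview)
Your proposal is correct and follows essentially the same approach as the paper's proof: the paper uses the variable names $\Psi$ and $\Phi$ where you use $\Pi$ and $\Xi$, but the two-stage Zorn construction, the claim $(\ast)$ that ${\lozenge}\Theta{\subseteq}\Gamma$, the definition of $\Sigma$, the claim $(\ast\ast)$, and the invocation of Lemma~\ref{first:lemma:about:the:use:of:the:special:inference:rule} are all identical. Your observation that the argument is a verbatim transcription of the down-symmetry proof (Lemma~\ref{lemma:about:what:happens:if:B:lozenge:is:in:the:logic}) once $(\ast)$ is in hand is exactly right.
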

\begin{proof}
Suppose ${\lozenge}{\lozenge}p{\rightarrow}{\lozenge}p{\in}\L$.
Let $\Gamma,\Delta,\Lambda,\Theta{\in}W_{\L}$ be such that $\Gamma{R_{\L}}\Delta$, $\Delta{\geq_{\L}}\Lambda$ and $\Lambda{R_{\L}}\Theta$.
Hence, $\Gamma{\bowtie}\Delta$, $\Delta{\supseteq}\Lambda$ and $\Lambda{\bowtie}\Theta$.
We claim that $(\ast)$~${\lozenge}\Theta{\subseteq}\Gamma$.
If not, there exists a formula $A$ such that $A{\in}\Theta$ and ${\lozenge}A{\not\in}\Gamma$.
Since $\Lambda{\bowtie}\Theta$, then ${\lozenge}A{\in}\Lambda$.
Since $\Delta{\supseteq}\Lambda$, then ${\lozenge}A{\in}\Delta$.
Since $\Gamma{\bowtie}\Delta$, then ${\lozenge}{\lozenge}A{\in}\Gamma$.
Since ${\lozenge}{\lozenge}p{\rightarrow}{\lozenge}p{\in}\L$, then ${\lozenge}A{\in}\Gamma$: a contradiction.
Let ${\mathcal S}{=}\{\Psi:\ \Psi$ is a $\L$-theory such that {\bf (1)}~${\lozenge}\Psi{\subseteq}\Gamma$ and {\bf (2)}~$\Psi{\supseteq}\Theta\}$.
Obviously, by~$(\ast)$, $\Gamma{\in}{\mathcal S}$.
Consequently, ${\mathcal S}$ is nonempty.
Moreover, for all nonempty chains $(\Psi_{i})_{i{\in}I}$ of elements of ${\mathcal S}$, $\bigcup\{\Psi_{i}\ :\ i{\in}I\}$ is an element of ${\mathcal S}$.
Hence, by Lemma~\ref{lemma:Zorn:lemma}, ${\mathcal S}$ possesses a maximal element $\Psi$.
Thus, $\Psi$ is a $\L$-theory such that ${\lozenge}\Psi{\subseteq}\Gamma$ and $\Psi{\supseteq}\Theta$.
We claim that $\Psi$ is proper.
If not, ${\bot}{\in}\Psi$.
Since ${\lozenge}\Psi{\subseteq}\Gamma$, then ${\lozenge}{\bot}{\in}\Gamma$: a contradiction.
We claim that $\Psi$ is prime.
If not, there exists formulas $A,B$ such that $A{\vee}B{\in}\Psi$, $A{\not\in}\Psi$ and $B{\not\in}\Psi$.
Hence, by the maximality of $\Psi$ in ${\mathcal S}$, $\Psi{+}A{\not\in}{\mathcal S}$ and $\Psi{+}B{\not\in}{\mathcal S}$.
Consequently, ${\lozenge}(\Psi{+}A){\not\subseteq}\Gamma$ and ${\lozenge}(\Psi{+}B){\not\subseteq}\Gamma$.
Thus, there exists formulas $C,D$ such that $C{\in}(\Psi{+}A)$, ${\lozenge}C{\not\in}\Gamma$, $D{\in}(\Psi{+}B)$ and ${\lozenge}D{\not\in}\Gamma$.
Consequently, $A{\rightarrow}C{\in}\Psi$ and $B{\rightarrow}D{\in}\Psi$.
Hence, using axioms and inference rules of $\IPL$, $A{\vee}B{\rightarrow}C{\vee}D{\in}\Psi$.
Since $A{\vee}B{\in}\Psi$, then $C{\vee}D{\in}\Psi$.
Since ${\lozenge}\Psi{\subseteq}\Gamma$, then ${\lozenge}(C{\vee}D){\in}\Gamma$.
Thus, using axiom $(\Axiom3)$, ${\lozenge}C{\vee}{\lozenge}D{\in}
$\linebreak$
\Gamma$.
Consequently, either ${\lozenge}C{\in}\Gamma$, or ${\lozenge}D{\in}\Gamma$: a contradiction.
Let $\Sigma{=}\{A{\in}\Fo:$ there exists $B,C,D{\in}\Fo$ such that ${\lozenge}B{\rightarrow}A{\in}\L{+}{\lozenge}\Psi$, ${\square}C{\vee}D{\in}\L
$\linebreak$
{+}{\lozenge}\Psi$, $C{\rightarrow}B{\in}\Psi$ and $D{\not\in}\Gamma\}$.
We claim that $(\ast\ast)$~for all $E,F{\in}\Fo$, if ${\square}E{\vee}F{\in}\L{+}(\Sigma{\cup}{\lozenge}\Psi)$ then either $E{\in}\Psi$, or $F{\in}\Gamma$.
If not, there exists $E,F{\in}\Fo$ such that ${\square}E{\vee}F{\in}\L{+}(\Sigma{\cup}{\lozenge}\Psi)$, $E{\not\in}\Psi$ and $F{\not\in}\Gamma$.
Hence, there exists $m,n{\in}\N$, there exists $A_{1},\ldots,A_{m}{\in}\Sigma$ and there exists $G_{1},\ldots,G_{n}{\in}
$\linebreak$
\Psi$ such that $A_{1}{\wedge}\ldots{\wedge}A_{m}{\wedge}{\lozenge}G_{1}{\wedge}\ldots{\wedge}{\lozenge}G_{n}{\rightarrow}{\square}E{\vee}F{\in}\L$.
Thus, for all $i{\in}\{1,\ldots,m\}$, there exists $B_{i},C_{i},D_{i}{\in}\Fo$ such that ${\lozenge}B_{i}{\rightarrow}A_{i}{\in}\L{+}{\lozenge}\Psi$, ${\square}C_{i}{\vee}D_{i}{\in}\L{+}{\lozenge}\Psi$, $C_{i}{\rightarrow}B_{i}{\in}
$\linebreak$
\Psi$ and $D_{i}{\not\in}\Gamma$.
Let $\bar{A}{=}A_{1}{\wedge}\ldots{\wedge}A_{m}$ and $\bar{G}{=}G_{1}{\wedge}\ldots{\wedge}G_{m}$.
Since $G_{1},\ldots,G_{n}{\in}\Psi$, then using axioms and inference rules of $\IPL$, $\bar{G}{\in}\Psi$.
Since $A_{1}{\wedge}\ldots{\wedge}A_{m}{\wedge}{\lozenge}G_{1}{\wedge}\ldots
$\linebreak$
{\wedge}{\lozenge}G_{n}{\rightarrow}{\square}E{\vee}F{\in}\L$, then using inference rule $(\Rule2)$, $\bar{A}{\wedge}{\lozenge}\bar{G}{\rightarrow}{\square}E{\vee}F{\in}\L$.
Let $\bar{B}{=}B_{1}
$\linebreak$
{\wedge}\ldots{\wedge}B_{m}$, $\bar{C}{=}C_{1}{\wedge}\ldots{\wedge}C_{m}$ and $\bar{D}{=}D_{1}{\vee}\ldots{\vee}D_{m}$.
Since for all $i{\in}\{1,\ldots,m\}$, ${\lozenge}B_{i}
$\linebreak$
{\rightarrow}A_{i}{\in}\L{+}{\lozenge}\Psi$, then using inference rule $(\Rule2)$, ${\lozenge}\bar{B}{\rightarrow}\bar{A}{\in}\L{+}{\lozenge}\Psi$.
Since for all $i{\in}\{1,
$\linebreak$
\ldots,m\}$, ${\square}C_{i}{\vee}D_{i}{\in}\L{+}{\lozenge}\Psi$, then using axiom $(\Axiom1)$ and inference rule $(\Rule1)$, ${\square}\bar{C}{\vee}\bar{D}{\in}
$\linebreak$
\L{+}{\lozenge}\Psi$.
Since for all $i{\in}\{1,\ldots,m\}$, $C_{i}{\rightarrow}B_{i}{\in}\Psi$, then $\bar{C}{\rightarrow}\bar{B}{\in}\Psi$.
Since for all $i{\in}\{1,
$\linebreak$
\ldots,m\}$, $D_{i}{\not\in}\Gamma$, then using axioms and inference rules of $\IPL$, $\bar{D}{\not\in}\Gamma$.
Since ${\lozenge}\bar{B}{\rightarrow}\bar{A}{\in}
$\linebreak$
\L{+}{\lozenge}\Psi$ and ${\square}\bar{C}{\vee}\bar{D}{\in}\L{+}{\lozenge}\Psi$, then there exists $o,p{\in}\N$, there exists $H_{1},\ldots,H_{o}{\in}\Psi$ and there exists $I_{1},\ldots,I_{p}{\in}\Psi$ such that ${\lozenge}H_{1}{\wedge}\ldots{\wedge}{\lozenge}H_{o}{\wedge}{\lozenge}\bar{B}{\rightarrow}\bar{A}{\in}\L$ and ${\lozenge}I_{1}{\wedge}\ldots{\wedge}{\lozenge}I_{p}
$\linebreak$
{\rightarrow}{\square}\bar{C}{\vee}\bar{D}{\in}\L$.
Let $\bar{H}{=}H_{1}{\wedge}\ldots{\wedge}H_{o}$ and $\bar{I}{=}I_{1}{\wedge}\ldots{\wedge}I_{p}$.
Since $H_{1},\ldots,H_{o}{\in}\Psi$ and $I_{1},\ldots,I_{p}{\in}\Psi$, then using axioms and inference rules of $\IPL$, $\bar{H}{\in}\Psi$ and $\bar{I}{\in}\Psi$.
Since ${\lozenge}H_{1}{\wedge}\ldots{\wedge}{\lozenge}H_{o}{\wedge}{\lozenge}\bar{B}{\rightarrow}\bar{A}{\in}\L$ and ${\lozenge}I_{1}{\wedge}\ldots{\wedge}{\lozenge}I_{p}{\rightarrow}{\square}\bar{C}{\vee}\bar{D}{\in}\L$, then using inference rule $(\Rule2)$, ${\lozenge}\bar{H}{\wedge}{\lozenge}\bar{B}{\rightarrow}\bar{A}{\in}\L$ and ${\lozenge}\bar{I}{\rightarrow}{\square}\bar{C}{\vee}\bar{D}{\in}\L$.
Since $E{\not\in}\Psi$, then by the maximality of $\Psi$ in ${\mathcal S}$, $\Psi{+}E{\not\in}{\mathcal S}$.
Consequently, ${\lozenge}(\Psi{+}E){\not\subseteq}\Gamma$.
Hence, there exists $J{\in}\Fo$ such that $J{\in}\Psi{+}E$ and ${\lozenge}J{\not\in}\Gamma$.
Thus, $E{\rightarrow}J{\in}\Psi$.
Since $\bar{G}{\in}\Psi$, $\bar{C}{\rightarrow}\bar{B}{\in}\Psi$, $\bar{H}{\in}\Psi$ and $\bar{I}{\in}\Psi$, then $\bar{G}{\wedge}(\bar{C}{\rightarrow}\bar{B}){\wedge}\bar{H}{\wedge}\bar{I}{\wedge}(E{\rightarrow}J){\in}\Psi$.
Since ${\lozenge}\Psi{\subseteq}\Gamma$, then ${\lozenge}(\bar{G}{\wedge}(\bar{C}{\rightarrow}\bar{B}){\wedge}\bar{H}{\wedge}\bar{I}{\wedge}(E{\rightarrow}J))
$\linebreak$
{\in}\Gamma$.
Since $\bar{A}{\wedge}{\lozenge}\bar{G}{\rightarrow}{\square}E{\vee}F{\in}\L$, ${\lozenge}\bar{H}{\wedge}{\lozenge}\bar{B}{\rightarrow}\bar{A}{\in}\L$ and ${\lozenge}\bar{I}{\rightarrow}{\square}\bar{C}{\vee}\bar{D}{\in}\L$, then by
\linebreak
Lemma~\ref{first:lemma:about:the:use:of:the:special:inference:rule}, ${\lozenge}(\bar{G}{\wedge}(\bar{C}{\rightarrow}\bar{B}){\wedge}\bar{H}{\wedge}\bar{I}{\wedge}(E{\rightarrow}J)){\rightarrow}F{\vee}\bar{D}{\vee}{\lozenge}J{\in}\L$.
Since ${\lozenge}(\bar{G}{\wedge}(\bar{C}{\rightarrow}\bar{B}){\wedge}
$\linebreak$
\bar{H}{\wedge}\bar{I}{\wedge}(E{\rightarrow}J)){\in}\Gamma$, then $F{\vee}\bar{D}{\vee}{\lozenge}J{\in}\Gamma$.
Consequently, either $F{\in}\Gamma$, or $\bar{D}{\in}\Gamma$, or ${\lozenge}J{\in}
$\linebreak$
\Gamma$: a contradiction.
Let ${\mathcal T}{=}\{\Phi\ :\ \Phi$ is a $\L$-theory such that {\bf (1)}~${\lozenge}\Psi{\subseteq}\Phi$ and {\bf (2)}~for all formulas $E,F{\in}\Fo$, if ${\square}E{\vee}F{\in}\Phi$ then either $E{\in}\Psi$, or $F{\in}\Gamma\}$.
Obviously, by~$(\ast\ast)$, $\L{+}(\Sigma{\cup}{\lozenge}\Psi){\in}{\mathcal T}$.
Hence, ${\mathcal T}$ is nonempty.
Moreover, for all nonempty chains $(\Phi_{i})_{i{\in}I}$ of elements of ${\mathcal T}$, $\bigcup\{\Phi_{i}\ :\ i{\in}I\}$ is an element of ${\mathcal T}$.
Consequently, by Lemma~\ref{lemma:Zorn:lemma}, ${\mathcal T}$ possesses a maximal element $\Phi$.
Hence, $\Phi$ is a $\L$-theory such that ${\lozenge}\Psi{\subseteq}\Phi$ and for all formulas $E,F{\in}\Fo$, if ${\square}E{\vee}F{\in}\Phi$ then either $E{\in}\Psi$, or $F{\in}\Gamma$.
Thus, it only remains to be proved that $\Phi$ is proper and prime, $\Gamma{\supseteq}\Phi$ and $\Phi{\bowtie}\Psi$.
We claim that $\Phi$ is proper.
If not, $\Phi{=}\Fo$.
Consequently, ${\square}{\bot}{\vee}{\bot}{\in}\Phi$.
Hence, either ${\bot}{\in}\Psi$, or ${\bot}{\in}\Gamma$: a contradiction.
We claim that $\Phi$ is prime.
If not, there exists formulas $A,B$ such that $A{\vee}B{\in}\Phi$, $A{\not\in}\Phi$ and $B{\not\in}\Phi$.
Thus, by the maximality of $\Phi$ in ${\mathcal T}$, $\Phi{+}A{\not\in}{\mathcal T}$ and $\Phi{+}B{\not\in}{\mathcal T}$.
Consequently, there exists formulas $E,F$ such that ${\square}E{\vee}F{\in}\Phi{+}A$, $E{\not\in}\Psi$ and $F{\not\in}\Gamma$ and there exists formulas $G,H$ such that ${\square}G{\vee}H{\in}\Phi{+}B$, $G{\not\in}\Psi$ and $H{\not\in}\Gamma$.
Hence, $A{\rightarrow}{\square}E{\vee}F{\in}
$\linebreak$
\Phi$ and $B{\rightarrow}{\square}G{\vee}H{\in}\Phi$.
Thus, using axioms and inference rules of $\IPL$, $A{\vee}B{\rightarrow}{\square}E{\vee}
$\linebreak$
{\square}G{\vee}F{\vee}H{\in}\Phi$.
Since $A{\vee}B{\in}\Phi$, then ${\square}E{\vee}{\square}G{\vee}F{\vee}H{\in}\Phi$.
Consequently, using axiom $(\Axiom1)$ and inference rule $(\Rule1)$, ${\square}(E{\vee}G){\vee}F{\vee}H{\in}\Phi$.
Hence, either $E{\vee}G{\in}\Psi$, or $F{\vee}H{\in}\Gamma$.
In the former case, either $E{\in}\Psi$, or $G{\in}\Psi$: a contradiction.
In the latter case, either $F{\in}\Gamma$, or $H{\in}\Gamma$: a contradiction.
We claim that $\Gamma{\supseteq}\Phi$.
If not, there exists a formula $A$ such that $A{\not\in}\Gamma$ and $A{\in}\Phi$.
Thus, ${\square}{\bot}{\vee}A{\in}\Phi$.
Consequently, either ${\bot}{\in}\Psi$, or $A{\in}\Gamma$.
Since $A{\not\in}\Gamma$, then ${\bot}{\in}\Psi$: a contradiction.
We claim that $\Phi{\bowtie}\Psi$.
If not, there exists a formula $A$ such that ${\square}A{\in}\Phi$ and $A{\not\in}\Psi$.
Hence, ${\square}A{\vee}{\bot}{\in}\Phi$.
Thus, either $A{\in}\Psi$, or ${\bot}{\in}\Gamma$.
Since $A{\not\in}\Psi$, then ${\bot}{\in}\Gamma$: a contradiction.
\medskip
\end{proof}
\section{Soundness and completeness}\label{section:soundness:and:completeness}
\begin{definition}[Canonicity]
An intuitionistic modal logic $\L$ is {\em canonical}\/ if $(W_{\L},{\leq_{\L}},
$\linebreak$
{R_{\L}}){\models}\L$.
\end{definition}
\begin{proposition}\label{proposition:canonicity}
$\L_{\min}$ is canonical.
Moreover, the following intuitionistic modal logics are canonical: $\L_{\fcfra}$, $\L_{\bcfra}$, $\L_{\dcfra}$, $\L_{\fbcfra}$, $\L_{\fdcfra}$, $\L_{\bdcfra}$ and $\L_{\fbdcfra}$.
\end{proposition}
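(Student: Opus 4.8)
The plan is to obtain canonicity as a routine consequence of soundness, once the relevant confluence properties of the canonical frames are available. The central observation I would isolate first is that for \emph{every} frame $F{=}(W,{\leq},{R})$, the set $\Log(\{F\}){=}\{A{\in}\Fo\ :\ F{\models}A\}$ is an intuitionistic modal logic. Indeed, frame validity is preserved under uniform substitution; the standard axioms of $\IPL$ are valid in $F$ and the standard inference rules of $\IPL$ preserve validity in $F$ (this is the usual soundness of $\IPL$ with respect to intuitionistic Kripke semantics, for which the $\leq$-closedness of valuations and the Heredity Property of Lemma~\ref{lemma:HB:monotonicity} are exactly what is needed); axioms $(\Axiom1)$, $(\Axiom3)$ and $(\Axiom4)$ are valid in every frame, axiom $(\Axiom2)$ is valid in every frame by Lemma~\ref{lemma:about:axiom:2}; and inference rules $(\Rule1)$, $(\Rule2)$ preserve validity in every frame while $(\Rule3)$ does so by Lemma~\ref{lemma:about:rule:3}. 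Since $\L_{\min}$ is the \emph{least} intuitionistic modal logic, it follows that $\L_{\min}{\subseteq}\Log(\{F\})$ for every frame $F$; applying this with $F{=}(W_{\L_{\min}},{\leq_{\L_{\min}}},{R_{\L_{\min}}})$ yields $(W_{\L_{\min}},{\leq_{\L_{\min}}},{R_{\L_{\min}}}){\models}\L_{\min}$, i.e.\ $\L_{\min}$ is canonical.

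It is worth noting that this argument establishes precisely what the definition of canonicity demands~---~validity of $\L_{\min}$ on the canonical \emph{frame}, hence under every valuation~---~and that it does not invoke the Truth Lemma (Lemma~\ref{lemma:truth:lemma}), which concerns only the canonical model; this is why the argument is routed through $\Log(\{F\})$ rather than through Lemma~\ref{lemma:truth:lemma}. For the seven extensions I would argue uniformly. Take $\L_{\fcfra}{=}\L_{\min}{\oplus}(\Axiom\mathbf{f})$ and let $F{=}(W_{\L_{\fcfra}},{\leq_{\L_{\fcfra}}},{R_{\L_{\fcfra}}})$. Since $(\Axiom\mathbf{f}){\in}\L_{\fcfra}$, Lemma~\ref{fc:canonical:frame:is:forward:confluent} gives that $F$ is forward confluent, and then Lemma~\ref{lemma:about:axiom:fc:soundness} gives $F{\models}(\Axiom\mathbf{f})$, i.e.\ $(\Axiom\mathbf{f}){\in}\Log(\{F\})$. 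As $\Log(\{F\})$ is an intuitionistic modal logic containing $\L_{\min}$, it therefore contains $\L_{\min}{\oplus}(\Axiom\mathbf{f}){=}\L_{\fcfra}$, so $F{\models}\L_{\fcfra}$ and $\L_{\fcfra}$ is canonical. The cases of $\L_{\bcfra}$ and $\L_{\dcfra}$ are verbatim, replacing Lemma~\ref{fc:canonical:frame:is:forward:confluent} by Lemma~\ref{bc:canonical:frame:is:backward:confluent} (resp.\ Lemma~\ref{dc:canonical:frame:is:downward:confluent}) and using the second (resp.\ third) item of Lemma~\ref{lemma:about:axiom:fc:soundness}.

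For the combined logics $\L_{\fbcfra}$, $\L_{\fdcfra}$, $\L_{\bdcfra}$ and $\L_{\fbdcfra}$, one applies the appropriate subset of Lemmas~\ref{fc:canonical:frame:is:forward:confluent}, \ref{bc:canonical:frame:is:backward:confluent} and \ref{dc:canonical:frame:is:downward:confluent} to the corresponding canonical frame~---~concluding that it is simultaneously forward, backward and/or downward confluent as required~---~then Lemma~\ref{lemma:about:axiom:fc:soundness} to see that it validates the corresponding subset of $(\Axiom\mathbf{f})$, $(\Axiom\mathbf{b})$, $(\Axiom\mathbf{d})$, and then the same observation that $\Log(\{F\})$ is an intuitionistic modal logic containing $\L_{\min}$ together with those axioms. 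I do not anticipate a genuine obstacle here: all the substantive content has already been placed in the lemmas of Sections~\ref{section:axiomatization} and~\ref{section:canonical:frame:and:canonical:model}, and what remains is the bookkeeping that $\Log(\{F\})$ is always an intuitionistic modal logic and that each logic is matched with the lemma supplying the confluence of its canonical frame. The only point needing care is the reduction of canonicity to soundness on the canonical frame via $\Log(\{F\})$, as spelled out above.
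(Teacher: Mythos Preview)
Your proposal is correct and follows essentially the same approach as the paper: the paper's proof invokes precisely Lemmas~\ref{lemma:about:axiom:fc:soundness}, \ref{lemma:about:axiom:2}, \ref{lemma:about:rule:3}, \ref{fc:canonical:frame:is:forward:confluent}, \ref{bc:canonical:frame:is:backward:confluent} and~\ref{dc:canonical:frame:is:downward:confluent}, and illustrates with the case of $\L_{\fcfra}$ by noting that its canonical frame is forward confluent (Lemma~\ref{fc:canonical:frame:is:forward:confluent}) and hence validates $\L_{\fcfra}$ by the soundness lemmas. Your packaging via ``$\Log(\{F\})$ is an intuitionistic modal logic'' makes explicit the closure step that the paper leaves implicit, but the substance is identical.
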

\begin{proof}
By Lemmas~\ref{lemma:about:axiom:fc:soundness}, \ref{lemma:about:axiom:2}, \ref{lemma:about:rule:3}, \ref{fc:canonical:frame:is:forward:confluent}, \ref{bc:canonical:frame:is:backward:confluent} and~\ref{dc:canonical:frame:is:downward:confluent}.
Indeed, suppose for example that $\L_{\fcfra}$ is not canonical.
Hence, $(W_{\L_{\fcfra}},{\leq_{\L_{\fcfra}}},{R_{\L_{\fcfra}}}){\not\models}\L_{\fcfra}$.
Since by Lemma~\ref{fc:canonical:frame:is:forward:confluent}, $(W_{\L_{\fcfra}},{\leq_{\L_{\fcfra}}},
$\linebreak$
{R_{\L_{\fcfra}}})$ is forward confluent, then by Lemma~\ref{lemma:about:axiom:fc:soundness}, \ref{lemma:about:axiom:2} and~\ref{lemma:about:rule:3}, $(W_{\L_{\fcfra}},{\leq_{\L_{\fcfra}}},{R_{\L_{\fcfra}}}){\models}\L_{\fcfra}$: a contradiction.
\medskip
\end{proof}
\begin{proposition}
The following intuitionistic modal logics are canonical: $\L_{\reflexive}$, $\L_{\symmetric}$ and $\L_{\reflexive}{\oplus}\L_{\symmetric}$.
\end{proposition}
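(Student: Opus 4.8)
The plan is to show, for each $\L\in\{\L_{\reflexive},\L_{\symmetric},\L_{\reflexive}{\oplus}\L_{\symmetric}\}$, that the canonical frame $(W_{\L},{\leq_{\L}},{R_{\L}})$ validates every formula of $\L$. First I would record that all three logics are consistent, so that their canonical frames are defined: the one-state frame $(\{a\},\{(a,a)\},\{(a,a)\})$ lies in ${\mathcal C}_{\ureflexive}$, ${\mathcal C}_{\dreflexive}$, ${\mathcal C}_{\usymmetric}$ and ${\mathcal C}_{\dsymmetric}$, hence by Lemma~\ref{lemma:correspondence:formulas:T:B:4:elementary:conditions} it validates $(\Axiom\mathbf{uref})$, $(\Axiom\mathbf{dref})$, $(\Axiom\mathbf{usym})$ and $(\Axiom\mathbf{dsym})$; together with the facts that $(\Axiom1)$--$(\Axiom4)$ are valid in every frame (Lemma~\ref{lemma:about:axiom:2} for $(\Axiom2)$), that $(\Rule1)$--$(\Rule3)$ preserve validity in every frame (Lemma~\ref{lemma:about:rule:3} for $(\Rule3)$), and that frame validity is closed under uniform substitution and the rules of $\IPL$, this shows that $\bot$ belongs to none of the three logics.

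Next I would invoke the standard observation that, for any frame $(W,{\leq},{R})$, the set $\{A\in\Fo:(W,{\leq},{R}){\models}A\}$ is itself an intuitionistic modal logic: it is closed under uniform substitution, contains the axioms of $\IPL$ together with $(\Axiom1)$, $(\Axiom3)$, $(\Axiom4)$, contains $(\Axiom2)$ by Lemma~\ref{lemma:about:axiom:2}, and is closed under the rules of $\IPL$, under $(\Rule1)$, $(\Rule2)$, and under $(\Rule3)$ by Lemma~\ref{lemma:about:rule:3}. Applying this with $(W,{\leq},{R})=(W_{\L},{\leq_{\L}},{R_{\L}})$, and recalling that $\L$ is the \emph{least} intuitionistic modal logic containing its extra axioms, the canonicity of $\L$ reduces to verifying that $(W_{\L},{\leq_{\L}},{R_{\L}})$ validates precisely those extra axioms.

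Finally I would carry out that verification by combining the structural lemmas on the canonical frame with the modal definability results. For $\L_{\reflexive}$: since $(\Axiom\mathbf{uref})$ and $(\Axiom\mathbf{dref})$ are in $\L_{\reflexive}$, Lemmas~\ref{lemma:about:what:happens:if:T:square:is:in:the:logic} and~\ref{lemma:about:what:happens:if:T:lozenge:is:in:the:logic} show $(W_{\L_{\reflexive}},{\leq_{\L_{\reflexive}}},{R_{\L_{\reflexive}}})$ is up-reflexive and down-reflexive, whence by Lemma~\ref{lemma:correspondence:formulas:T:B:4:elementary:conditions} it validates $(\Axiom\mathbf{uref})$ and $(\Axiom\mathbf{dref})$. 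For $\L_{\symmetric}$: since $(\Axiom\mathbf{usym})$ and $(\Axiom\mathbf{dsym})$ are in $\L_{\symmetric}$, Lemmas~\ref{lemma:about:what:happens:if:B:square:is:in:the:logic} and~\ref{lemma:about:what:happens:if:B:lozenge:is:in:the:logic} show the canonical frame is up-symmetric and down-symmetric, whence by Lemma~\ref{lemma:correspondence:formulas:T:B:4:elementary:conditions} it validates $(\Axiom\mathbf{usym})$ and $(\Axiom\mathbf{dsym})$. For $\L_{\reflexive}{\oplus}\L_{\symmetric}$, all four axioms are present, so all of Lemmas~\ref{lemma:about:what:happens:if:T:square:is:in:the:logic}--\ref{lemma:about:what:happens:if:B:lozenge:is:in:the:logic} apply and the canonical frame is up- and down-reflexive and up- and down-symmetric, hence validates all four axioms. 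In each case the reduction of the preceding paragraph yields $(W_{\L},{\leq_{\L}},{R_{\L}}){\models}\L$, i.e.\ $\L$ is canonical. I do not expect a genuine obstacle here, since the substantive work is already done in the four ``what happens if$\,\ldots$'' lemmas; the only points demanding care are the well-definedness of the canonical frame (consistency) and the bookkeeping that certifies the set of frame-valid formulas to be an intuitionistic modal logic, so that the minimality of $\L$ may legitimately be invoked.
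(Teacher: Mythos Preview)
Your proposal is correct and follows essentially the same approach as the paper: establish that the canonical frame has the relevant structural properties (up/down-reflexivity, up/down-symmetry) via Lemmas~\ref{lemma:about:what:happens:if:T:square:is:in:the:logic}--\ref{lemma:about:what:happens:if:B:lozenge:is:in:the:logic}, then invoke Lemma~\ref{lemma:correspondence:formulas:T:B:4:elementary:conditions} to get validity of the extra axioms, and close under Lemmas~\ref{lemma:about:axiom:2} and~\ref{lemma:about:rule:3} (together with the obvious closure properties) to obtain validity of the whole logic. The paper's proof is simply a terse pointer to these same lemmas, modelled on the proof of Proposition~\ref{proposition:canonicity}; your explicit verification of consistency and your spelling out that the frame-valid formulas form an intuitionistic modal logic are details the paper leaves implicit.
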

\begin{proof}
Similar to the proof of Proposition~\ref{proposition:canonicity}, this time using Lemmas~\ref{lemma:correspondence:formulas:T:B:4:elementary:conditions}, \ref{lemma:about:axiom:2}, \ref{lemma:about:rule:3}, \ref{lemma:about:what:happens:if:T:square:is:in:the:logic}, \ref{lemma:about:what:happens:if:T:lozenge:is:in:the:logic}, \ref{lemma:about:what:happens:if:B:square:is:in:the:logic}, \ref{lemma:about:what:happens:if:B:lozenge:is:in:the:logic}, \ref{lemma:about:what:happens:if:4:square:is:in:the:logic} and~\ref{lemma:about:what:happens:if:4:lozenge:is:in:the:logic}.
\medskip
\end{proof}
\begin{proposition}
The following intuitionistic modal logics are canonical: $\L_{\ureflexive}$, $\L_{\dreflexive}$, $\L_{\usymmetric}$, $\L_{\dsymmetric}$, $\L_{\utransitive}$ and $\L_{\dtransitive}$.
\end{proposition}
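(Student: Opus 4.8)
The plan is to run exactly the argument used for Proposition~\ref{proposition:canonicity} and the immediately preceding proposition, now with the ``up/down reflexivity, symmetry, transitivity'' package. Fix one of the six logics, say $\L_{\ureflexive}{=}\L_{\min}{\oplus}(\Axiom\mathbf{uref})$, and recall that it is by definition the least set of formulas closed under uniform substitution, containing the standard axioms of $\IPL$ together with $(\Axiom1)$--$(\Axiom4)$ and $(\Axiom\mathbf{uref}){=}{\square}p{\rightarrow}p$, and closed under the standard $\IPL$ rules together with $(\Rule1)$, $(\Rule2)$, $(\Rule3)$. To prove canonicity, i.e.\ $(W_{\L_{\ureflexive}},{\leq_{\L_{\ureflexive}}},{R_{\L_{\ureflexive}}}){\models}\L_{\ureflexive}$, it therefore suffices to check that the set of formulas valid in this canonical frame contains every generator and is closed under every one of these closure operations; then it contains $\L_{\ureflexive}$.

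First I would dispose of the generators and closure operations coming from $\L_{\min}$: the $\IPL$ axioms and $(\Axiom1)$, $(\Axiom3)$, $(\Axiom4)$ are valid in any frame, $(\Axiom2)$ is valid in any frame by Lemma~\ref{lemma:about:axiom:2}, uniform substitution and the $\IPL$ rules and $(\Rule1)$, $(\Rule2)$ preserve validity in any frame, and $(\Rule3)$ preserves validity in any frame by Lemma~\ref{lemma:about:rule:3}. It then remains only to show that the extra axiom ${\square}p{\rightarrow}p$ is valid in the canonical frame of $\L_{\ureflexive}$. Since ${\square}p{\rightarrow}p{\in}\L_{\ureflexive}$, Lemma~\ref{lemma:about:what:happens:if:T:square:is:in:the:logic} gives that $(W_{\L_{\ureflexive}},{\leq_{\L_{\ureflexive}}},{R_{\L_{\ureflexive}}})$ is up-reflexive; and by item~$\mathbf{(1)}$ of Lemma~\ref{lemma:correspondence:formulas:T:B:4:elementary:conditions} the formula ${\square}p{\rightarrow}p$ modally defines ${\mathcal C}_{\ureflexive}$, so every up-reflexive frame validates it. Hence ${\square}p{\rightarrow}p$ is valid in the canonical frame of $\L_{\ureflexive}$, and $\L_{\ureflexive}$ is canonical.

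The remaining five logics are handled identically, each time replacing the pair (Lemma~\ref{lemma:about:what:happens:if:T:square:is:in:the:logic}, item~$\mathbf{(1)}$ of Lemma~\ref{lemma:correspondence:formulas:T:B:4:elementary:conditions}) by the matching pair for the axiom in question: for $\L_{\dreflexive}$ use Lemma~\ref{lemma:about:what:happens:if:T:lozenge:is:in:the:logic} and item~$\mathbf{(2)}$; for $\L_{\usymmetric}$ use Lemma~\ref{lemma:about:what:happens:if:B:square:is:in:the:logic} and item~$\mathbf{(3)}$; for $\L_{\dsymmetric}$ use Lemma~\ref{lemma:about:what:happens:if:B:lozenge:is:in:the:logic} and item~$\mathbf{(4)}$; for $\L_{\utransitive}$ use Lemma~\ref{lemma:about:what:happens:if:4:square:is:in:the:logic} and item~$\mathbf{(5)}$; and for $\L_{\dtransitive}$ use Lemma~\ref{lemma:about:what:happens:if:4:lozenge:is:in:the:logic} and item~$\mathbf{(6)}$.

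I do not expect a genuine obstacle: all the difficult work has already been carried out in the canonical-frame lemmas of Section~\ref{section:canonical:frame:and:canonical:model} and the modal-definability lemma of Section~\ref{section:definability}, so this proof is purely a matter of assembling them. The only point deserving a moment's attention is the bookkeeping observation that $\L_{\min}{\oplus}A$ is generated \emph{precisely} by the stated axioms and rules, which is exactly what lets ``valid in the canonical frame'' swallow the whole logic once the generators are validated and the rules shown to preserve validity~---~the same observation that underlies Proposition~\ref{proposition:canonicity}.
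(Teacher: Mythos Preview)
Your proposal is correct and follows exactly the same approach as the paper: the paper's proof simply says ``Similar to the proof of Proposition~\ref{proposition:canonicity}, this time using Lemmas~\ref{lemma:correspondence:formulas:T:B:4:elementary:conditions}, \ref{lemma:about:axiom:2}, \ref{lemma:about:rule:3}, \ref{lemma:about:what:happens:if:T:square:is:in:the:logic}, \ref{lemma:about:what:happens:if:T:lozenge:is:in:the:logic}, \ref{lemma:about:what:happens:if:B:square:is:in:the:logic}, \ref{lemma:about:what:happens:if:B:lozenge:is:in:the:logic}, \ref{lemma:about:what:happens:if:4:square:is:in:the:logic} and~\ref{lemma:about:what:happens:if:4:lozenge:is:in:the:logic}'', and your write-up unpacks precisely that sketch, pairing each canonical-frame lemma with the corresponding item of the modal-definability lemma.
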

\begin{proof}
Similar to the proof of Proposition~\ref{proposition:canonicity}, this time using Lemmas~\ref{lemma:correspondence:formulas:T:B:4:elementary:conditions}, \ref{lemma:about:axiom:2}, \ref{lemma:about:rule:3}, \ref{lemma:about:what:happens:if:T:square:is:in:the:logic}, \ref{lemma:about:what:happens:if:T:lozenge:is:in:the:logic}, \ref{lemma:about:what:happens:if:B:square:is:in:the:logic}, \ref{lemma:about:what:happens:if:B:lozenge:is:in:the:logic}, \ref{lemma:about:what:happens:if:4:square:is:in:the:logic} and~\ref{lemma:about:what:happens:if:4:lozenge:is:in:the:logic}.
\medskip
\end{proof}
\begin{proposition}\label{proposition:soundness:completeness}
$\L_{\min}$ is equal to $\Log({\mathcal C}_{\allfra})$.
Moreover,
\begin{itemize}
\item $\L_{\fcfra}$ is equal to $\Log({\mathcal C}_{\fcfra})$ and $\Log({\mathcal C}_{\qfcfra})$,
\item $\L_{\bcfra}$ is equal to $\Log({\mathcal C}_{\bcfra})$ and $\Log({\mathcal C}_{\qbcfra})$,
\item $\L_{\dcfra}$ is equal to $\Log({\mathcal C}_{\dcfra})$ and $\Log({\mathcal C}_{\qdcfra})$,
\item $\L_{\fbcfra}$ is equal to $\Log({\mathcal C}_{\fbcfra})$,
\item $\L_{\fdcfra}$ is equal to $\Log({\mathcal C}_{\fdcfra})$,
\item $\L_{\bdcfra}$ is equal to $\Log({\mathcal C}_{\bdcfra})$,
\item $\L_{\fbdcfra}$ is equal to $\Log({\mathcal C}_{\fbdcfra})$.
\end{itemize}
\end{proposition}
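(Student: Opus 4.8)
The plan is to prove, for each logic $\L$ appearing in the statement, the two inclusions $\L{\subseteq}\Log({\mathcal C})$ (soundness) and $\Log({\mathcal C}){\subseteq}\L$ (completeness), where $\mathcal C$ is the matching class of frames. The soundness half is assembled from results already at hand. Uniform substitution and the axioms and inference rules of $\IPL$ preserve validity in any frame (routine); axioms $(\Axiom1)$, $(\Axiom3)$, $(\Axiom4)$ are valid in any frame and rules $(\Rule1)$, $(\Rule2)$ preserve validity in any frame (noted in Section~\ref{section:axiomatization}); axiom $(\Axiom2)$ is valid in any frame by Lemma~\ref{lemma:about:axiom:2}; and rule $(\Rule3)$ preserves validity in any frame by Lemma~\ref{lemma:about:rule:3}. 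This gives $\L_{\min}{\subseteq}\Log({\mathcal C}_{\allfra})$. For the confluence extensions, Lemma~\ref{lemma:about:axiom:fc:soundness} provides that $(\Axiom\mathbf{f})$, $(\Axiom\mathbf{b})$, $(\Axiom\mathbf{d})$ are valid in ${\mathcal C}_{\fcfra}$, ${\mathcal C}_{\bcfra}$, ${\mathcal C}_{\dcfra}$ respectively; since ${\mathcal C}_{\fbcfra}$, ${\mathcal C}_{\fdcfra}$, ${\mathcal C}_{\bdcfra}$, ${\mathcal C}_{\fbdcfra}$ are contained in the relevant intersections, each defining axiom of $\L_{\fbcfra}$, $\L_{\fdcfra}$, $\L_{\bdcfra}$, $\L_{\fbdcfra}$ is valid in the corresponding class, so soundness holds for all seven logics. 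In particular each of these logics is consistent, since it is contained in $\Log$ of a nonempty class of frames (nonemptiness of even the smallest class ${\mathcal C}_{\fbdcfra}$ is witnessed in the proof of Lemma~\ref{lemma:some:specific:formulas:valid:non:valid}), so their canonical models are defined.

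For completeness I would run the canonical model argument uniformly. Fix one of the logics $\L$ and a formula $A{\not\in}\L$. By the Lindenbaum Lemma (Lemma~\ref{lemma:almost:completeness}) there is a prime $\L$-theory $\Gamma$ with $A{\not\in}\Gamma$, and by the Truth Lemma (Lemma~\ref{lemma:truth:lemma}), $(W_{\L},{\leq_{\L}},{R_{\L}},V_{\L}),\Gamma{\not\models}A$, hence $(W_{\L},{\leq_{\L}},{R_{\L}}){\not\models}A$. It then suffices to know that the canonical frame lies in $\mathcal C$: for $\L_{\min}$ this is immediate; for $\L_{\fcfra}$, $\L_{\bcfra}$, $\L_{\dcfra}$ it is Lemmas~\ref{fc:canonical:frame:is:forward:confluent}, \ref{bc:canonical:frame:is:backward:confluent}, \ref{dc:canonical:frame:is:downward:confluent}; and for $\L_{\fbcfra}$, $\L_{\fdcfra}$, $\L_{\bdcfra}$, $\L_{\fbdcfra}$ one applies several of these lemmas at once, which is legitimate because each has an independent hypothesis (the presence of its axiom) and all the relevant axioms belong to the logic in question. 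Consequently $A{\not\in}\Log({\mathcal C})$, which with soundness yields $\L{=}\Log({\mathcal C})$ for ${\mathcal C}$ among ${\mathcal C}_{\allfra}$, ${\mathcal C}_{\fcfra}$, ${\mathcal C}_{\bcfra}$, ${\mathcal C}_{\dcfra}$, ${\mathcal C}_{\fbcfra}$, ${\mathcal C}_{\fdcfra}$, ${\mathcal C}_{\bdcfra}$, ${\mathcal C}_{\fbdcfra}$. (This step is morally Proposition~\ref{proposition:canonicity}, which I could instead invoke verbatim.)

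Finally, the three quasi-confluence identities follow by sandwiching. Since ${\mathcal C}_{\fcfra}{\subseteq}{\mathcal C}_{\qfcfra}$ (and likewise for the backward and downward variants, as noted after the definition of quasi-confluences), we get $\Log({\mathcal C}_{\qfcfra}){\subseteq}\Log({\mathcal C}_{\fcfra}){=}\L_{\fcfra}$. Conversely, by Lemma~\ref{lemma:about:modal:definability:quasi:fbd:frames} the axiom $(\Axiom\mathbf{f})$ is valid in every frame of ${\mathcal C}_{\qfcfra}$, and since $(\Axiom1)$–$(\Axiom4)$ are valid in all frames while $(\Rule1)$–$(\Rule3)$, uniform substitution and $\IPL$-reasoning preserve validity in all frames, we obtain $\L_{\fcfra}{=}\L_{\min}{\oplus}(\Axiom\mathbf{f}){\subseteq}\Log({\mathcal C}_{\qfcfra})$, hence $\L_{\fcfra}{=}\Log({\mathcal C}_{\qfcfra})$; the same reasoning gives $\L_{\bcfra}{=}\Log({\mathcal C}_{\qbcfra})$ and $\L_{\dcfra}{=}\Log({\mathcal C}_{\qdcfra})$.

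The hard part has already been done: it is the three lemmas showing the canonical frame is forward, backward, and downward confluent when the corresponding axiom is present (Lemmas~\ref{fc:canonical:frame:is:forward:confluent}, \ref{bc:canonical:frame:is:backward:confluent}, \ref{dc:canonical:frame:is:downward:confluent}), together with the Truth Lemma. Given those, the only point requiring care in the present proposition is confirming that these canonical-frame lemmas combine correctly for the joint classes ${\mathcal C}_{\fbcfra}$, ${\mathcal C}_{\fdcfra}$, ${\mathcal C}_{\bdcfra}$, ${\mathcal C}_{\fbdcfra}$ — which poses no real obstacle, since their hypotheses are mutually independent — so the remainder is a straightforward assembly of the soundness and completeness halves for each logic.
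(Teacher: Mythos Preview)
Your proposal is correct and follows essentially the same approach as the paper: soundness via Lemmas~\ref{lemma:about:axiom:fc:soundness}, \ref{lemma:about:axiom:2}, \ref{lemma:about:rule:3}, completeness via the Lindenbaum Lemma, the Truth Lemma, and the canonical-frame confluence Lemmas~\ref{fc:canonical:frame:is:forward:confluent}--\ref{dc:canonical:frame:is:downward:confluent}, and the quasi-confluence identities via Lemma~\ref{lemma:about:modal:definability:quasi:fbd:frames} together with the containments ${\mathcal C}_{\fcfra}{\subseteq}{\mathcal C}_{\qfcfra}$ etc. Your version is in fact a bit more careful than the paper's, since you explicitly note that consistency of each logic (needed for the canonical model to exist) follows from soundness with respect to a nonempty class of frames.
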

\begin{proof}
By Lemmas~\ref{lemma:about:modal:definability:quasi:fbd:frames}, \ref{lemma:about:axiom:fc:soundness}, \ref{lemma:about:axiom:2} and~\ref{lemma:about:rule:3}, Lindenbaum Lemma and Lemmas~\ref{lemma:truth:lemma}, \ref{fc:canonical:frame:is:forward:confluent}, \ref{bc:canonical:frame:is:backward:confluent} and \ref{dc:canonical:frame:is:downward:confluent}.
Indeed, suppose for example that $\L_{\fcfra}$ is not $\Log({\mathcal C}_{\fcfra})$.
Hence, either $\L_{\fcfra}{\not\subseteq}\Log({\mathcal C}_{\fcfra})$, or $\L_{\fcfra}{\not\supseteq}\Log({\mathcal C}_{\fcfra})$.
In the former case, there exists a formula $A$ such that $A{\in}\L_{\fcfra}$ and $A{\not\in}\Log({\mathcal C}_{\fcfra})$.
Thus, by Lemmas~\ref{lemma:about:axiom:fc:soundness}, \ref{lemma:about:axiom:2} and~\ref{lemma:about:rule:3}, $A{\in}\Log({\mathcal C}_{\fcfra})$: a contradiction.
In the latter case, there exists a formula $A$ such that $A{\not\in}\L_{\fcfra}$ and $A{\in}\Log({\mathcal C}_{\fcfra})$.
Consequently, by Lindenbaum Lemma, there exists a prime $\L_{\fcfra}$-theory $\Gamma$ such that $A{\not\in}\Gamma$.
Hence, by Lemma~\ref{lemma:truth:lemma}, $\Gamma{\not\models}A$.
Thus, $(W_{\L_{\fcfra}},{\leq_{\L_{\fcfra}}},{R_{\L_{\fcfra}}},V_{\L_{\fcfra}}){\not\models}A$.
Consequently, $(W_{\L_{\fcfra}},{\leq_{\L_{\fcfra}}},{R_{\L_{\fcfra}}}){\not\models}A$.
Since by Lemma~\ref{fc:canonical:frame:is:forward:confluent}, $(W_{\L_{\fcfra}},{\leq_{\L_{\fcfra}}},{R_{\L_{\fcfra}}})$ is forward confluent, then $A{\not\in}\Log({\mathcal C}_{\fcfra})$: a contradiction.
\medskip
\end{proof}
By Lemma~\ref{lemma:some:specific:formulas:valid:non:valid} and Proposition~\ref{proposition:soundness:completeness}, we immediately obtain the following
\begin{lemma}\label{lemma:disjunction:property:does:not:hold:preliminary:result}
Let $\L$ be an intuitionistic modal logic.
If $\L$ contains $\L_{\dcfra}$ and $\L$ is contained in $\L_{\fbdcfra}$ then ${\square}{\bot}{\vee}{\lozenge}{\top}$ is in $\L$, ${\square}{\bot}$ is not in $\L$ and ${\lozenge}{\top}$ is not in $\L$.
\end{lemma}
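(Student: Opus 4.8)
The plan is to combine Lemma~\ref{lemma:some:specific:formulas:valid:non:valid} with the completeness results of Proposition~\ref{proposition:soundness:completeness}, which identify $\L_{\dcfra}$ with $\Log({\mathcal C}_{\dcfra})$ and $\L_{\fbdcfra}$ with $\Log({\mathcal C}_{\fbdcfra})$. The idea is to translate the hypotheses ``$\L$ contains $\L_{\dcfra}$'' and ``$\L$ is contained in $\L_{\fbdcfra}$'' into a statement about the class of frames defining $\L$, so that Lemma~\ref{lemma:some:specific:formulas:valid:non:valid} applies. Concretely, I would set ${\mathcal C}{=}\{(W,{\leq},{R})\ :\ (W,{\leq},{R}){\models}\L\}$ and verify that ${\mathcal C}$ is contained in ${\mathcal C}_{\dcfra}$ and contains ${\mathcal C}_{\fbducfra}$; then Lemma~\ref{lemma:some:specific:formulas:valid:non:valid} gives ${\square}{\bot}{\vee}{\lozenge}{\top}{\in}\Log({\mathcal C})$, ${\square}{\bot}{\not\in}\Log({\mathcal C})$ and ${\lozenge}{\top}{\not\in}\Log({\mathcal C})$, and the last step is to transfer these facts back from $\Log({\mathcal C})$ to $\L$.

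First I would establish the inclusions for ${\mathcal C}$. Since $\L$ contains $\L_{\dcfra}{=}\L_{\min}{\oplus}(\Axiom\mathbf{d})$, every frame validating $\L$ also validates $(\Axiom\mathbf{d})$; conversely, by Lemma~\ref{lemma:about:modal:definability:quasi:fbd:frames}, $(\Axiom\mathbf{d})$ modally defines ${\mathcal C}_{\qdcfra}$, and a downward confluent frame is quasi-downward confluent, so every frame in ${\mathcal C}$ lies in ${\mathcal C}_{\qdcfra}$. This is not quite ${\mathcal C}_{\dcfra}$, so instead of chasing the quasi-confluence classes I would argue more directly: it suffices to show ${\mathcal C}$ is contained in ${\mathcal C}_{\dcfra}$ in the sense needed by Lemma~\ref{lemma:some:specific:formulas:valid:non:valid}, which only uses downward confluence through the step ``$s{\leq}{\circ}{R}t$ implies $s{R}{\circ}{\leq}t$''. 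Actually the cleanest route is to invoke Proposition~\ref{proposition:soundness:completeness} directly: $\L_{\dcfra}{=}\Log({\mathcal C}_{\dcfra})$ and $\L_{\fbdcfra}{=}\Log({\mathcal C}_{\fbdcfra})$, and one notes that ${\mathcal C}_{\fbdcfra}$ contains ${\mathcal C}_{\fbducfra}$ (a frame that is forward, backward, downward \emph{and} upward confluent is in particular forward, backward and downward confluent). I would then take ${\mathcal C}$ to be precisely ${\mathcal C}_{\dcfra}$ and observe that $\Log({\mathcal C}_{\dcfra}){=}\L_{\dcfra}{\subseteq}\L$ while $\L{\subseteq}\L_{\fbdcfra}{=}\Log({\mathcal C}_{\fbdcfra}){\subseteq}\Log({\mathcal C}_{\fbducfra})$; but this last inclusion goes the wrong way for a clean application, so the real work is to pick the witnessing class correctly.

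The key realization is this: apply Lemma~\ref{lemma:some:specific:formulas:valid:non:valid} with ${\mathcal C}{=}{\mathcal C}_{\dcfra}$. Its hypotheses are that ${\mathcal C}$ is contained in ${\mathcal C}_{\dcfra}$ (trivially true) and contains ${\mathcal C}_{\fbducfra}$ (true since ${\mathcal C}_{\fbducfra}{\subseteq}{\mathcal C}_{\dcfra}$, as forward-backward-downward-upward confluence implies downward confluence). Hence ${\square}{\bot}{\vee}{\lozenge}{\top}{\in}\Log({\mathcal C}_{\dcfra}){=}\L_{\dcfra}{\subseteq}\L$, giving the first assertion. For ${\square}{\bot}$: Lemma~\ref{lemma:some:specific:formulas:valid:non:valid} gives ${\square}{\bot}{\not\in}\Log({\mathcal C}_{\dcfra})$, but I need ${\square}{\bot}{\not\in}\L$, and $\L$ is a priori larger. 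So I instead apply the lemma with ${\mathcal C}{=}{\mathcal C}_{\fbducfra}$: then ${\mathcal C}$ is contained in ${\mathcal C}_{\dcfra}$ and contains ${\mathcal C}_{\fbducfra}$, so ${\square}{\bot}{\not\in}\Log({\mathcal C}_{\fbducfra})$ and ${\lozenge}{\top}{\not\in}\Log({\mathcal C}_{\fbducfra})$. Since ${\mathcal C}_{\fbducfra}{\subseteq}{\mathcal C}_{\fbdcfra}$, we have $\Log({\mathcal C}_{\fbdcfra}){\subseteq}\Log({\mathcal C}_{\fbducfra})$, hence $\L{\subseteq}\L_{\fbdcfra}{=}\Log({\mathcal C}_{\fbdcfra}){\subseteq}\Log({\mathcal C}_{\fbducfra})$; as ${\square}{\bot}$ and ${\lozenge}{\top}$ are not in $\Log({\mathcal C}_{\fbducfra})$, they are not in $\L$ either. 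The main obstacle is thus purely bookkeeping: correctly choosing $\Log({\mathcal C}_{\dcfra})$ as the ``lower'' witness for ${\square}{\bot}{\vee}{\lozenge}{\top}$ (to exploit $\L_{\dcfra}{\subseteq}\L$) but $\Log({\mathcal C}_{\fbducfra})$ as the ``upper'' witness for the two non-membership claims (to exploit $\L{\subseteq}\Log({\mathcal C}_{\fbducfra})$), and checking the two trivial frame-class inclusions ${\mathcal C}_{\fbducfra}{\subseteq}{\mathcal C}_{\dcfra}$ and ${\mathcal C}_{\fbducfra}{\subseteq}{\mathcal C}_{\fbdcfra}$. No genuinely new argument is needed beyond these inclusions and the cited results.
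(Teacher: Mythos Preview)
Your proposal is correct and follows essentially the same approach as the paper, which simply cites Lemma~\ref{lemma:some:specific:formulas:valid:non:valid} and Proposition~\ref{proposition:soundness:completeness}. One minor simplification: for the two non-membership claims you can apply Lemma~\ref{lemma:some:specific:formulas:valid:non:valid} directly with ${\mathcal C}{=}{\mathcal C}_{\fbdcfra}$ (which is contained in ${\mathcal C}_{\dcfra}$ and contains ${\mathcal C}_{\fbducfra}$), giving ${\square}{\bot},{\lozenge}{\top}{\notin}\Log({\mathcal C}_{\fbdcfra}){=}\L_{\fbdcfra}{\supseteq}\L$ without the extra detour through $\Log({\mathcal C}_{\fbducfra})$.
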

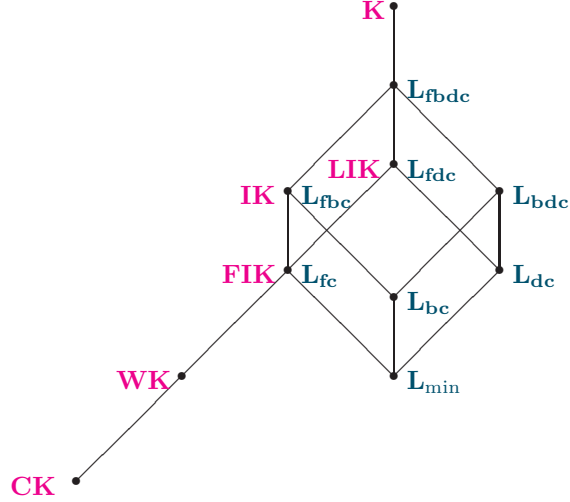
\begin{figure}[ht]
\begin{center}
\vspace{-1.10cm}
\begin{picture}(140,140)(-50,10)
\put(40.00,0.00){\circle*{3}}
\put(0.00,40.00){\circle*{3}}
\put(80.00,40.00){\circle*{3}}
\put(40.00,80.00){\circle*{3}}
\put(40.00,0.00){\line(-1,1){40.00}}
\put(40.00,0.00){\line(1,1){40.00}}
\put(0.00,40.00){\line(1,1){40.00}}
\put(80.00,40.00){\line(-1,1){40.00}}
\put(40.00,110.00){\circle*{3}}
\put(40.00,-30.00){\circle*{3}}
\put(00.00,10.00){\circle*{3}}
\put(80.00,10.00){\circle*{3}}
\put(40.00,50.00){\circle*{3}}
\put(40.00,-30.00){\line(-1,1){40.00}}
\put(40.00,-30.00){\line(1,1){40.00}}
\put(0.00,10.00){\line(1,1){40.00}}
\put(80.00,10.00){\line(-1,1){40.00}}
\put(40.00,-30.00){\line(0,1){30.00}}
\put(0.00,10.00){\line(0,1){30.00}}
\put(80.00,10.00){\line(0,1){30.00}}
\put(40.00,50.00){\line(0,1){30.00}}
%
%
%\put(0.00,70.00){\circle*{3}}
%\put(0.00,40.00){\line(0,1){30.00}}
%
%
\put(40.00,80.00){\line(0,1){30.00}}
\put(45.00,-35.00){\textcolor{monblu}{$\L_{\min}$}}
\put(5.00,5.00){\textcolor{monblu}{$\L_{\fcfra}$}}
\put(45.00,-5.00){\textcolor{monblu}{$\L_{\bcfra}$}}
\put(85.00,5.00){\textcolor{monblu}{$\L_{\dcfra}$}}
\put(5.00,35.00){\textcolor{monblu}{$\L_{\fbcfra}$}}
\put(45.00,45.00){\textcolor{monblu}{$\L_{\fdcfra}$}}
\put(85.00,35.00){\textcolor{monblu}{$\L_{\bdcfra}$}}
\put(45.00,75.00){\textcolor{monblu}{$\L_{\fbdcfra}$}}
\put(-18.00,35.00){\textcolor{monmag}{$\IK$}}
\put(-25.00,5.00){\textcolor{monmag}{$\FIK$}}
\put(+15.00,45.00){\textcolor{monmag}{$\LIK$}}
%
%
%\put(-21.00,65.00){$\IS4$}
%
%
\put(28.00,105.00){\textcolor{monmag}{$\K$}}
\put(-40.00,-30.00){\circle*{3}}
\put(-80.00,-70.00){\circle*{3}}
\put(-40.00,-30.00){\line(1,1){40.00}}
\put(-80.00,-70.00){\line(1,1){40.00}}
\put(-65.00,-35.00){\textcolor{monmag}{$\WK$}}
\put(-105.00,-75.00){\textcolor{monmag}{$\CK$}}
\end{picture}
\vspace{2.91cm}
\caption{The inclusion relationships considered in Propositions~\ref{fcfra:FIK}, \ref{proposition:LFCFRA:WK}, \ref{fbcfra:IK}, \ref{fdcfra:LIK}, \ref{strict:inclusions:between:logics} and~\ref{fbdcfra:K}}\label{figure:lattice:IML:so:far}
\end{center}
\end{figure}
%
%
%\vspace{-0.39cm}
%
%
$\FIK$ is an intuitionistic modal logic presented~---~axioms, inference rules and semantics~---~by Balbiani {\it et al.}~\cite{Balbiani:et:al:2024}.
By considering, on one hand, the axioms and inference rules defining $\L_{\fcfra}$ and, on the other hand, the axioms and inference rules defining $\FIK$, the reader may easily notice that all axioms of $\L_{\fcfra}$ are valid in the semantics of $\FIK$, all inference rules of $\L_{\fcfra}$ preserve validity in this semantics, all axioms of $\FIK$ are valid in the semantics of $\L_{\fcfra}$ and all inference rules of $\FIK$ preserve validity in this semantics.
As a result,
\begin{proposition}\label{fcfra:FIK}
$\L_{\fcfra}$ and $\FIK$ are equal.
\end{proposition}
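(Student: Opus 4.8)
The plan is to establish the equality $\L_{\fcfra}=\FIK$ by a double soundness-and-completeness argument that routes through the two semantics rather than attempting a direct syntactic translation of proofs. Since both $\L_{\fcfra}$ and $\FIK$ are, by construction, sound and complete with respect to their respective semantics (for $\L_{\fcfra}$ this is Proposition~\ref{proposition:soundness:completeness}, which gives $\L_{\fcfra}=\Log({\mathcal C}_{\fcfra})$; for $\FIK$ the analogous fact is part of the presentation in~\cite{Balbiani:et:al:2024}), it suffices to show that the two logics validate the same formulas. The key observation, already flagged in the paragraph preceding the statement, is that on the class ${\mathcal C}_{\fcfra}$ of forward confluent frames our truth condition for $\lozenge$-formulas coincides with the Fischer Servi truth condition $\models_{\mathtt{FS}}$ (this is exactly the remark made just after Lemma~\ref{lemma:satisfiability:does:not:change:if:intersectional:update:of:a:model}, that in the class of all forward confluent frames the Fischer Servi, Wijesekera, and present definitions of satisfiability all agree). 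Hence $\Log({\mathcal C}_{\fcfra})$ computed with our semantics and $\Log({\mathcal C}_{\fcfra})$ computed with the Fischer Servi semantics are literally the same set of formulas.

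Concretely, first I would recall that $\FIK$ is sound and complete for validity in ${\mathcal C}_{\fcfra}$ under $\models_{\mathtt{FS}}$ --- the forward confluent frames are precisely the frames on which the Fischer Servi Heredity Property holds, and this is the standard semantics for $\FIK$ in~\cite{Balbiani:et:al:2024}. Second, I would invoke the equivalence of the truth conditions on ${\mathcal C}_{\fcfra}$ to conclude that for every formula $A$, ${\mathcal C}_{\fcfra}\models A$ in our sense iff ${\mathcal C}_{\fcfra}\models_{\mathtt{FS}} A$; therefore $\Log({\mathcal C}_{\fcfra})=\FIK$. Third, I would combine this with Proposition~\ref{proposition:soundness:completeness}, which states $\L_{\fcfra}=\Log({\mathcal C}_{\fcfra})$, to obtain $\L_{\fcfra}=\FIK$. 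This is essentially the cleanest route and it reuses machinery already in hand.

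Alternatively --- and this is the route the excerpt's prose seems to gesture at (``all axioms of $\L_{\fcfra}$ are valid in the semantics of $\FIK$\dots''), so I would at least sketch it as well --- one can argue purely by mutual derivability at the level of axiom systems: show each axiom of $\L_{\fcfra}$ is $\FIK$-valid and each $\L_{\fcfra}$-rule preserves $\FIK$-validity (whence $\L_{\fcfra}\subseteq\FIK$ by completeness of $\FIK$, or directly by soundness), and symmetrically that each axiom of $\FIK$ is valid on ${\mathcal C}_{\fcfra}$ and each $\FIK$-rule preserves that validity (whence $\FIK\subseteq\L_{\fcfra}=\Log({\mathcal C}_{\fcfra})$). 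For the first inclusion one notes axioms $(\Axiom1)$--$(\Axiom4)$ and $(\Axiom\mathbf{f})$ are all forward-confluent-valid hence $\FIK$-valid (Lemmas~\ref{lemma:about:axiom:2}, \ref{lemma:about:rule:3}, \ref{lemma:about:axiom:fc:soundness} and the triviality of $(\Axiom1)$, $(\Axiom3)$, $(\Axiom4)$), and that rules $(\Rule1)$, $(\Rule2)$ preserve validity everywhere while $(\Rule3)$ is admissible once $(\Axiom\mathbf{f})$ is available (Lemma~\ref{lemma:inference:rule:R3:can:be:eliminated:if:AxiomFORWARDCONFLUENCE:is:used}); for the converse one checks the Fischer Servi axioms and rules against forward confluent models.

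The main obstacle is not conceptual but bookkeeping: one must be careful that the two presentations really do use the same underlying propositional base (standard $\IPL$) and the same normality package, and one must verify the equivalence-of-truth-conditions claim for $\lozenge$ on forward confluent frames with enough care that it applies to all formulas by induction --- the $\square$ clause is shared verbatim, and the $\lozenge$ clause needs the forward confluence hypothesis in exactly the step where one passes from ``there is $t$ with $s\geq\circ R\,t$ and $t\models A$'' to ``there is $u$ with $s R u$ and $u\models A$'' (using $\geq\circ R\subseteq R\circ\geq$ together with Heredity, Lemma~\ref{lemma:HB:monotonicity}). Since the excerpt essentially concedes this point as routine (``the reader may easily notice''), I expect the proof in the paper to be a one-paragraph appeal to Proposition~\ref{proposition:soundness:completeness} plus the soundness-completeness of $\FIK$ and the coincidence of semantics on ${\mathcal C}_{\fcfra}$, and I would write it that way.
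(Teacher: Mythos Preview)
Your proposal is correct. The paper's own argument is exactly your ``alternative'' route: the paragraph preceding the statement observes that each axiom of $\L_{\fcfra}$ is valid in the semantics of $\FIK$, each rule of $\L_{\fcfra}$ preserves validity there, and symmetrically for $\FIK$ vis-\`a-vis the semantics of $\L_{\fcfra}$; the proposition is then recorded without further proof. Your primary route---reducing everything to the coincidence of the three satisfaction relations on ${\mathcal C}_{\fcfra}$ and then invoking Proposition~\ref{proposition:soundness:completeness} together with the completeness of $\FIK$---is a slightly cleaner packaging of the same idea, since it replaces the axiom-by-axiom and rule-by-rule check with a single inductive equivalence of truth conditions (the remark following Lemma~\ref{lemma:satisfiability:does:not:change:if:intersectional:update:of:a:model}). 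Either route works; the paper chose the more hands-on one, and you correctly anticipated that.
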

$\WK$ is an intuitionistic modal logic presented~---~axioms, inference rules and semantics~---~by Wijesekera~\cite{Wijesekera:1990}.
By considering, on one hand, the axioms and inference rules defining $\WK$ and, on the other hand, the axioms and inference rules defining $\L_{\fcfra}$, the reader may easily notice that all axioms of $\WK$ are valid in the semantics of $\L_{\fcfra}$, all inference rules of $\WK$ preserve validity in this semantics and some formulas belonging to $\L_{\fcfra}$ are outside $\WK$.
Indeed, ${\lozenge}(p{\vee}q){\rightarrow}{\lozenge}p{\vee}{\lozenge}q$ belongs to $\L_{\fcfra}$ but the model $(W,{\leq},{R},V)$ defined as follows shows that ${\lozenge}(p{\vee}q){\rightarrow}{\lozenge}p{\vee}{\lozenge}q$ is outside $\WK$: $W{=}\{a,b,c,d\}$, $a{\leq}c$, $a{R}b$ and $c{R}d$ and $V(p){=}\{b\}$ and $V(q){=}\{d\}$.
As a result,
\begin{proposition}\label{proposition:LFCFRA:WK}
$\L_{\fcfra}$ strictly contains $\WK$.
\end{proposition}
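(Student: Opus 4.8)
The plan is to establish the two inclusions separately: $\WK\subseteq\L_{\fcfra}$ and $\L_{\fcfra}\not\subseteq\WK$. For the first one I would argue semantically. By Proposition~\ref{proposition:soundness:completeness}, $\L_{\fcfra}=\Log({\mathcal C}_{\fcfra})$, so it suffices to show that every theorem of $\WK$ is valid, under our truth conditions, in every forward confluent frame. As observed after Lemma~\ref{lemma:satisfiability:does:not:change:if:intersectional:update:of:a:model}, on the class ${\mathcal C}_{\fcfra}$ the Wijesekera truth conditions and ours coincide; hence a formula valid in all frames under Wijesekera's semantics is, a fortiori, valid in all forward confluent frames under our semantics. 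Since $\WK$ is sound for Wijesekera's semantics on the class of all frames~\cite{Wijesekera:1990}, this yields $\WK\subseteq\Log({\mathcal C}_{\fcfra})=\L_{\fcfra}$. (Equivalently, and this is the route hinted at in the surrounding discussion, one checks directly that each axiom of $\WK$ is valid in every forward confluent frame and each inference rule of $\WK$ preserves such validity.)

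For strictness I would use the formula $A={\lozenge}(p{\vee}q){\rightarrow}{\lozenge}p{\vee}{\lozenge}q$, which is axiom $(\Axiom3)$ and therefore belongs already to $\L_{\min}\subseteq\L_{\fcfra}$. To see $A\notin\WK$, I would invoke soundness of $\WK$ for Wijesekera's semantics and exhibit a Wijesekera model refuting $A$: take $W=\{a,b,c,d\}$, let ${\leq}$ be the reflexive closure of $\{(a,c)\}$, let ${R}=\{(a,b),(c,d)\}$, and put $V(p)=\{b\}$ and $V(q)=\{d\}$; both singletons are trivially $\leq$-closed, so this is a legitimate model. A short computation with the Wijesekera clause for $\lozenge$ shows that $a{\models_{\mathtt{W}}}{\lozenge}(p{\vee}q)$ — the only ${\leq}$-successors of $a$ are $a$ and $c$, with $aRb$ and $b{\models_{\mathtt{W}}}p$, respectively $cRd$ and $d{\models_{\mathtt{W}}}q$ — while $a{\not\models_{\mathtt{W}}}{\lozenge}p$ (the requirement fails at the successor $c$, whose only $R$-successor $d$ does not satisfy $p$) and $a{\not\models_{\mathtt{W}}}{\lozenge}q$ (the requirement fails at $a$ itself, whose only $R$-successor $b$ does not satisfy $q$). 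Hence $a{\not\models_{\mathtt{W}}}A$, so $A$ is not a theorem of $\WK$. Combining the two halves gives $\WK\subsetneq\L_{\fcfra}$.

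The only real subtlety is bookkeeping about \emph{which} semantics is in play at each step: the inclusion relies on the coincidence of the Wijesekera and the new truth conditions on forward confluent frames (so that $\WK$-soundness transfers to $\Log({\mathcal C}_{\fcfra})$), whereas the strictness relies on $\WK$-soundness for Wijesekera's own semantics on arbitrary frames, the refuting model deliberately \emph{not} being forward confluent. Once these are kept straight, the remaining verifications are routine, including the check that $V$ has $\leq$-closed values and that the refuting model indeed satisfies the heredity property required of Wijesekera models.
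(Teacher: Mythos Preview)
Your proposal is correct and follows essentially the same approach as the paper: the inclusion $\WK\subseteq\L_{\fcfra}$ is obtained by checking that the $\WK$ axioms and rules are sound for the semantics of $\L_{\fcfra}$ (your semantic-coincidence phrasing is just a repackaging of this, and you explicitly note the direct route), and strictness is witnessed by the very same formula ${\lozenge}(p{\vee}q){\rightarrow}{\lozenge}p{\vee}{\lozenge}q$ and the very same four-point Wijesekera countermodel $W=\{a,b,c,d\}$, $a{\leq}c$, $a{R}b$, $c{R}d$, $V(p)=\{b\}$, $V(q)=\{d\}$ that the paper uses.
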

$\IK$ is an intuitionistic modal logic presented~---~axioms, inference rules and semantics~---~by Fischer Servi~\cite{FischerServi:1984}.
By considering, on one hand, the axioms and inference rules defining $\L_{\fbcfra}$ and, on the other hand, the axioms and inference rules defining $\IK$, the reader may easily notice that all axioms of $\L_{\fbcfra}$ are valid in the semantics of $\IK$, all inference rules of $\L_{\fbcfra}$ preserve validity in this semantics, all axioms of $\IK$ are valid in the semantics of $\L_{\fbcfra}$ and all inference rules of $\IK$ preserve validity in this semantics.
As a result,
\begin{proposition}\label{fbcfra:IK}
$\L_{\fbcfra}$ and $\IK$ are equal.
\end{proposition}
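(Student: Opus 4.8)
The plan is to establish the equality $\L_{\fbcfra} = \IK$ by the standard technique of mutual containment at the level of axiomatic presentations, i.e. by checking that each system proves the axioms of the other and that each system's inference rules preserve the relevant notion of validity. Since the author has just used exactly this pattern for $\L_{\fcfra}=\FIK$ (Proposition~\ref{fcfra:FIK}), I expect the same four-part bookkeeping: (a) every axiom of $\L_{\fbcfra}$ is valid in the relational semantics of $\IK$; (b) every inference rule of $\L_{\fbcfra}$ preserves $\IK$-validity; (c) every axiom of $\IK$ is valid in the semantics of $\L_{\fbcfra}$; (d) every inference rule of $\IK$ preserves $\L_{\fbcfra}$-validity. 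Combined with the soundness/completeness results already in hand — $\L_{\fbcfra}=\Log({\mathcal C}_{\fbcfra})$ from Proposition~\ref{proposition:soundness:completeness} and the analogous characterization of $\IK$ via its own semantics — these four checks force the two logics to have the same theorems.

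The first concrete step is to recall that $\IK$ is sound and complete with respect to the class of frames $(W,{\leq},R)$ satisfying Fischer Servi's two ``connecting properties'' (forward and backward confluence), with the Fischer Servi truth condition $\models_{\mathtt{FS}}$ for $\lozenge$. The key semantic observation — already stated in the excerpt, in the remark following Lemma~\ref{lemma:satisfiability:does:not:change:if:intersectional:update:of:a:model} — is that \emph{on the class of forward confluent frames, the Fischer Servi satisfiability relation, the Wijesekera one, and the one used in this paper all coincide}. Since ${\mathcal C}_{\fbcfra}\subseteq{\mathcal C}_{\fcfra}$, on every forward-and-backward confluent frame our $\models$ agrees with $\models_{\mathtt{FS}}$. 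Hence $\Log({\mathcal C}_{\fbcfra})$ computed with our truth conditions equals the set of formulas $\IK$-valid in the Fischer Servi sense, which is $\IK$ itself by completeness of $\IK$. Together with $\L_{\fbcfra}=\Log({\mathcal C}_{\fbcfra})$ (Proposition~\ref{proposition:soundness:completeness}) this yields $\L_{\fbcfra}=\IK$. I would present the argument in this semantic form, since it is cleaner than grinding out syntactic derivations of each axiom in each system.

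For the syntactic cross-checks that the author's style seems to prefer, the content is: axioms $(\Axiom1)$, $(\Axiom3)$, $(\Axiom4)$, $(\Axiom\mathbf f)$, $(\Axiom\mathbf b)$ are all among or easily derivable from the standard $\IK$ axioms (distribution of $\square$ over $\rightarrow$, distribution of $\lozenge$ over $\vee$, $\neg\lozenge\bot$, and the two Fischer Servi interaction axioms $\lozenge(p\rightarrow q)\rightarrow(\square p\rightarrow\lozenge q)$ and $(\lozenge p\rightarrow\square q)\rightarrow\square(p\rightarrow q)$), while axiom $(\Axiom2)$ is valid in every frame by Lemma~\ref{lemma:about:axiom:2}, hence in the $\IK$-frames; and the rules $(\Rule1)$, $(\Rule2)$ are standard in $\IK$ while $(\Rule3)$ is eliminable in the presence of $(\Axiom\mathbf f)$ by Lemma~\ref{lemma:inference:rule:R3:can:be:eliminated:if:AxiomFORWARDCONFLUENCE:is:used} — so $(\Rule3)$ costs nothing once $(\Axiom\mathbf f)\in\L_{\fbcfra}$. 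Conversely, the non-normal-looking $\IK$ axioms are exactly $(\Axiom\mathbf f)$ and $(\Axiom\mathbf b)$, which are the defining extra axioms of $\L_{\fbcfra}$, and all $\IK$ rules are among $\L_{\fbcfra}$'s rules, so nothing further is needed.

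The main obstacle I anticipate is not any single hard lemma but the \emph{semantic reconciliation}: one must be careful that ``validity in the semantics of $\IK$'' and ``validity in ${\mathcal C}_{\fbcfra}$ under our truth conditions'' genuinely refer to the same sets of formulas, which hinges precisely on the coincidence-of-truth-conditions remark for forward confluent frames and on the fact that adding \emph{backward} confluence does not disturb that coincidence (it only further restricts the frame class, and the $\square$-condition is shared by all three approaches anyway). Once that identification is made explicit, the proposition reduces to citing Proposition~\ref{proposition:soundness:completeness} together with the known completeness of $\IK$, and the remaining axiom-by-axiom verification is routine and can be left to the reader in the same one-paragraph style as Proposition~\ref{fcfra:FIK}.
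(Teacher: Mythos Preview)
Your proposal is correct and matches the paper's approach: the paper's justification (given in the paragraph immediately preceding the proposition rather than in a formal proof environment) is exactly your four-part bookkeeping (a)--(d), checking that each system's axioms are valid in the other's semantics and that each system's rules preserve that validity. Your additional semantic shortcut via the coincidence of $\models$ and $\models_{\mathtt{FS}}$ on forward confluent frames, combined with Proposition~\ref{proposition:soundness:completeness} and the known completeness of $\IK$, is a cleaner alternative that the paper does not spell out but which is entirely in its spirit.
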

$\LIK$ is an intuitionistic modal logic presented~---~axioms, inference rules and semantics~---~by Balbiani {\it et al.}~\cite{Balbiani:et:al:2024b}.
By considering, on one hand, the axioms and inference rules defining $\L_{\fdcfra}$ and, on the other hand, the axioms and inference rules defining $\LIK$, the reader may easily notice that all axioms of $\L_{\fdcfra}$ are valid in the semantics of $\LIK$, all inference rules of $\L_{\fdcfra}$ preserve validity in this semantics, all axioms of $\LIK$ are valid in the semantics of $\L_{\fdcfra}$ and all inference rules of $\LIK$ preserve validity in this semantics.
As a result,
\begin{proposition}\label{fdcfra:LIK}
$\L_{\fdcfra}$ and $\LIK$ are equal.
\end{proposition}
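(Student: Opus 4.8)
The plan is to reduce the claim to two completeness facts and a finite, routine soundness check. On our side, Proposition~\ref{proposition:soundness:completeness} gives $\L_{\fdcfra}{=}\Log({\mathcal C}_{\fdcfra})$. On the $\LIK$ side, $\LIK$ is presented in~\cite{Balbiani:et:al:2024b} together with a relational semantics for which it is sound and complete; write $\Log_{\LIK}$ for the set of formulas valid in that semantics, so that $\LIK{=}\Log_{\LIK}$. In particular $\LIK$ is closed under uniform substitution, contains the standard axioms of $\IPL$ and is closed under the standard inference rules of $\IPL$, so to compare the two logics it suffices to compare their modal axioms and inference rules, and to use the fact that the underlying frame classes are such that our truth condition of $\lozenge$-formulas agrees with the one adopted in~\cite{Balbiani:et:al:2024b} (on forward confluent frames this coincidence is exactly the one recorded after Lemma~\ref{lemma:satisfiability:does:not:change:if:intersectional:update:of:a:model} for the Fischer Servi and Wijesekera clauses).

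First I would establish $\L_{\fdcfra}{\subseteq}\LIK$. Since $\L_{\fdcfra}$ is the least intuitionistic modal logic containing $(\Axiom\mathbf{f})$ and $(\Axiom\mathbf{d})$, it is enough to check that the axioms $(\Axiom1)$, $(\Axiom2)$, $(\Axiom3)$, $(\Axiom4)$, $(\Axiom\mathbf{f})$ and $(\Axiom\mathbf{d})$ all lie in $\Log_{\LIK}$ and that the inference rules $(\Rule1)$, $(\Rule2)$ and $(\Rule3)$ preserve membership in $\Log_{\LIK}$; this then forces $\L_{\fdcfra}{\subseteq}\Log_{\LIK}{=}\LIK$. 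Because the frames of the $\LIK$-semantics are forward and downward confluent and our $\lozenge$-clause coincides with the $\LIK$-clause on them, Lemmas~\ref{lemma:about:axiom:2}, \ref{lemma:about:rule:3} and~\ref{lemma:about:axiom:fc:soundness} transfer directly and deliver the required validities and rule preservations. Conversely, I would establish $\LIK{\subseteq}\L_{\fdcfra}$ by verifying that each axiom of $\LIK$ is valid in every forward and downward confluent frame and that each inference rule of $\LIK$ preserves validity in ${\mathcal C}_{\fdcfra}$; this yields $\LIK{\subseteq}\Log({\mathcal C}_{\fdcfra}){=}\L_{\fdcfra}$. Again this is a finite verification resting on Lemmas~\ref{lemma:about:axiom:2}, \ref{lemma:about:rule:3} and~\ref{lemma:about:axiom:fc:soundness} and on the coincidence of the two $\lozenge$-clauses. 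Putting the two inclusions together gives $\L_{\fdcfra}{=}\LIK$.

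The main obstacle is essentially bookkeeping: one must line up the particular list of axioms and inference rules chosen for $\LIK$ in~\cite{Balbiani:et:al:2024b} with ours and carry out the mutual soundness checks case by case. No genuinely new argument beyond the lemmas already proved is needed, but one must be careful to invoke the equivalence of the $\lozenge$-truth-conditions on the relevant class of frames at each point where a result stated for our semantics is applied to the semantics of $\LIK$ (and vice versa), so that the transfer of validity is legitimate.
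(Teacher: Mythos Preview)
Your proposal is correct and follows essentially the same approach as the paper: the paper argues (in the paragraph immediately preceding the proposition) that all axioms of $\L_{\fdcfra}$ are valid in the semantics of $\LIK$ and its inference rules preserve validity there, and conversely for the axioms and rules of $\LIK$ with respect to the semantics of $\L_{\fdcfra}$. You make explicit two ingredients the paper leaves to the reader---the appeal to completeness on each side (Proposition~\ref{proposition:soundness:completeness} for $\L_{\fdcfra}$, and the completeness theorem of~\cite{Balbiani:et:al:2024b} for $\LIK$) and the coincidence of the $\lozenge$-clauses on forward confluent frames---but the underlying argument is the same mutual soundness check.
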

Indeed,
\begin{proposition}\label{strict:inclusions:between:logics}
\begin{enumerate}
\item $\L_{\min}$ is strictly contained in $\L_{\fcfra}$,
\item $\L_{\min}$ is strictly contained in $\L_{\bcfra}$,
\item $\L_{\min}$ is strictly contained in $\L_{\dcfra}$,
\item $\L_{\fcfra}$ is strictly contained in $\L_{\fbcfra}$,
\item $\L_{\fcfra}$ is strictly contained in $\L_{\fdcfra}$,
\item $\L_{\bcfra}$ is strictly contained in $\L_{\fbcfra}$,
\item $\L_{\bcfra}$ is strictly contained in $\L_{\bdcfra}$,
\item $\L_{\dcfra}$ is strictly contained in $\L_{\fdcfra}$,
\item $\L_{\dcfra}$ is strictly contained in $\L_{\bdcfra}$,
\item $\L_{\fbcfra}$ is strictly contained in $\L_{\fbdcfra}$,
\item $\L_{\fdcfra}$ is strictly contained in $\L_{\fbdcfra}$,
\item $\L_{\bdcfra}$ is strictly contained in $\L_{\fbdcfra}$.
\end{enumerate}
\end{proposition}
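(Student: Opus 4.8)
The plan is to treat the inclusions and their strictness separately. The twelve inclusions are immediate from the definition of $\L{\oplus}\Sigma$ as the \emph{least}\/ intuitionistic modal logic containing $\L$ and $\Sigma$: for instance $\L_{\min}{\subseteq}\L_{\fcfra}{=}\L_{\min}{\oplus}(\Axiom\mathbf{f})$, while $\L_{\fcfra}{\subseteq}\L_{\fbcfra}{=}\L_{\min}{\oplus}(\Axiom\mathbf{f}){\oplus}(\Axiom\mathbf{b})$ because the latter contains both $\L_{\min}$ and $(\Axiom\mathbf{f})$, hence contains $\L_{\min}{\oplus}(\Axiom\mathbf{f})$; the ten remaining inclusions follow the very same pattern. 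So the content of the proposition lies entirely in the strictness statements.

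For strictness I would combine Proposition~\ref{proposition:soundness:completeness}, which identifies $\L_{\fcfra},\L_{\bcfra},\L_{\dcfra},\L_{\fbcfra},\L_{\fdcfra},\L_{\bdcfra},\L_{\fbdcfra}$ with $\Log({\mathcal C}_{\fcfra}),\Log({\mathcal C}_{\bcfra}),\Log({\mathcal C}_{\dcfra}),\ldots$, with Lemma~\ref{lemma:about:modal:definability:quasi:fbd:frames}, which says that the axioms $(\Axiom\mathbf{f})$, $(\Axiom\mathbf{b})$, $(\Axiom\mathbf{d})$ modally define ${\mathcal C}_{\qfcfra}$, ${\mathcal C}_{\qbcfra}$, ${\mathcal C}_{\qdcfra}$. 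In this way every strictness claim reduces to exhibiting a single frame: to separate, say, $\L_{\fdcfra}$ from $\L_{\fbdcfra}$ it suffices to find a forward and downward confluent frame that is not quasi-backward confluent, since such a frame lies in ${\mathcal C}_{\fdcfra}$ yet, by Lemma~\ref{lemma:about:modal:definability:quasi:fbd:frames}, refutes $(\Axiom\mathbf{b})$, which belongs to $\L_{\fbdcfra}$.

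The key observation is that \emph{three}\/ frames suffice, each enjoying two of the three non-quasi confluence properties while violating the quasi-version of the third. I would take three-state frames on $\{s,u,t\}$: the frame $F_{1}$ whose preorder is generated by $u{\leq}t$ and whose relation is $\{(s,u)\}$; the frame $F_{2}$ whose preorder is generated by $s{\leq}u$ and whose relation is $\{(u,t)\}$; and the frame $F_{3}$ whose preorder is generated by $u{\leq}s$ and whose relation is $\{(u,t)\}$. A short computation of the compositions ${\leq}{\circ}{R}$, ${R}{\circ}{\leq}$, ${\geq}{\circ}{R}{\circ}{\geq}$ and ${\leq}{\circ}{R}{\circ}{\leq}$ shows that $F_{1}$ is forward and downward confluent but not quasi-backward confluent, that $F_{2}$ is forward and backward confluent but not quasi-downward confluent, and that $F_{3}$ is backward and downward confluent but not quasi-forward confluent. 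By Lemma~\ref{lemma:about:modal:definability:quasi:fbd:frames} this gives $F_{1}{\not\models}(\Axiom\mathbf{b})$, $F_{2}{\not\models}(\Axiom\mathbf{d})$ and $F_{3}{\not\models}(\Axiom\mathbf{f})$. Since $F_{1}$ lies in ${\mathcal C}_{\allfra}$, ${\mathcal C}_{\fcfra}$, ${\mathcal C}_{\dcfra}$ and ${\mathcal C}_{\fdcfra}$, it settles items $\mathbf{(2)}$, $\mathbf{(4)}$, $\mathbf{(9)}$ and $\mathbf{(11)}$; since $F_{2}$ lies in ${\mathcal C}_{\allfra}$, ${\mathcal C}_{\fcfra}$, ${\mathcal C}_{\bcfra}$ and ${\mathcal C}_{\fbcfra}$, it settles items $\mathbf{(3)}$, $\mathbf{(5)}$, $\mathbf{(7)}$ and $\mathbf{(10)}$; and since $F_{3}$ lies in ${\mathcal C}_{\allfra}$, ${\mathcal C}_{\bcfra}$, ${\mathcal C}_{\dcfra}$ and ${\mathcal C}_{\bdcfra}$, it settles items $\mathbf{(1)}$, $\mathbf{(6)}$, $\mathbf{(8)}$ and $\mathbf{(12)}$. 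In each case the witnessing formula belongs to the larger logic because it is one of its defining axioms.

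The main obstacle is purely computational: verifying that $F_{1}$, $F_{2}$, $F_{3}$ really do satisfy the two stated confluence conditions and really do fail the relevant quasi-confluence condition. This amounts to inspecting a handful of relational compositions in three fixed three-element frames, so no genuinely new idea is required beyond the remark that the three ``defining'' axioms, being modal definitions of quasi-confluence classes, can each be refuted on a frame deliberately engineered to retain the other two confluence properties; I would set out the three verifications in turn and then collect the twelve items as above.
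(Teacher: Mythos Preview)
Your proposal is correct and takes essentially the same approach as the paper: the paper also uses exactly three three-element frames (its frames in items $\mathbf{(1)}$, $\mathbf{(2)}$, $\mathbf{(3)}$ are your $F_{3}$, $F_{1}$, $F_{2}$ up to relabelling), each satisfying two of the confluence conditions and refuting the remaining axiom, and then reuses them for items $\mathbf{(4)}$--$\mathbf{(12)}$. The only minor difference is that the paper refutes each axiom by exhibiting an explicit valuation on the frame, whereas you instead verify that the frame fails the corresponding quasi-confluence condition and invoke Lemma~\ref{lemma:about:modal:definability:quasi:fbd:frames}; both routes amount to the same work, and your use of the definability lemma is a tidy way to avoid writing out the valuations.
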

\begin{proof}
$\mathbf{(1)}$~For the sake of the contradiction, suppose $\L_{\min}$ is not strictly contained in $\L_{\fcfra}$.
Thus, ${\lozenge}(p{\rightarrow}q){\rightarrow}({\square}p{\rightarrow}{\lozenge}q)$ is in $\L_{\min}$.
Let $(W,{\leq},{R})$ be the frame defined by $W{=}\{a,b,c\}$, $a{\leq}c$ and $a{R}b$.
Let $V\ :\ \At{\longrightarrow}\wp(W)$ be a valuation on $(W,{\leq},{R})$ such that $V(p){=}\emptyset$ and $V(q){=}\emptyset$.
Obviously, $(W,{\leq},{R},V),c{\not\models}{\lozenge}(p{\rightarrow}q){\rightarrow}({\square}p{\rightarrow}{\lozenge}q)$.
Consequently, $(W,{\leq},{R},V){\not\models}{\lozenge}(p{\rightarrow}q){\rightarrow}({\square}p{\rightarrow}{\lozenge}q)$.
Hence, $(W,{\leq},{R}){\not\models}{\lozenge}(p{\rightarrow}q){\rightarrow}({\square}p{\rightarrow}
$\linebreak$
{\lozenge}q)$.
Thus, by the soundness of $\L_{\min}$, ${\lozenge}(p{\rightarrow}q){\rightarrow}({\square}p{\rightarrow}{\lozenge}q){\not\in}\L_{\min}$: a contradiction.
$\mathbf{(2)}$~For the sake of the contradiction, suppose $\L_{\min}$ is not strictly contained in $\L_{\bcfra}$.
Consequently, $({\lozenge}p{\rightarrow}{\square}q){\rightarrow}{\square}(p{\rightarrow}q)$ is in $\L_{\min}$.
Let $(W,{\leq},{R})$ be the frame defined by $W{=}\{a,b,d\}$, $b{\leq}d$ and $a{R}b$.
Let $V\ :\ \At{\longrightarrow}\wp(W)$ be a valuation on $(W,{\leq},{R})$ such that $V(p){=}\{d\}$ and $V(q){=}\emptyset$.
Obviously, $(W,{\leq},{R},V),a{\not\models}({\lozenge}p{\rightarrow}{\square}q){\rightarrow}{\square}(p{\rightarrow}q)$.
\linebreak
Hence, $(W,{\leq},{R},V){\not\models}({\lozenge}p{\rightarrow}{\square}q){\rightarrow}{\square}(p{\rightarrow}q)$.
Thus, $(W,{\leq},{R}){\not\models}({\lozenge}p{\rightarrow}{\square}q){\rightarrow}{\square}(p{\rightarrow}
$\linebreak$
q)$.
Consequently, by the soundness of $\L_{\min}$, $({\lozenge}p{\rightarrow}{\square}q){\rightarrow}{\square}(p{\rightarrow}q){\not\in}\L_{\min}$: a contradiction.
$\mathbf{(3)}$~For the sake of the contradiction, suppose $\L_{\min}$ is not strictly contained in $\L_{\dcfra}$.
Hence, ${\square}(p{\vee}q){\rightarrow}{\lozenge}p{\vee}{\square}q$ is in $\L_{\min}$.
Let $(W,{\leq},{R})$ be the frame defined by $W{=}\{a,c,
$\linebreak$
d\}$, $a{\leq}c$ and $c{R}d$.
Let $V\ :\ \At{\longrightarrow}\wp(W)$ be a valuation on $(W,{\leq},{R})$ such that $V(p){=}\{d\}$ and $V(q){=}\emptyset$.
Obviously, $(W,{\leq},{R},V),a{\not\models}{\square}(p{\vee}q){\rightarrow}{\lozenge}p{\vee}{\square}q$.
Thus, $(W,
$\linebreak$
{\leq},{R},V){\not\models}{\square}(p{\vee}q){\rightarrow}{\lozenge}p{\vee}{\square}q$.
Consequently, $(W,{\leq},{R}){\not\models}{\square}(p{\vee}q){\rightarrow}{\lozenge}p{\vee}{\square}q$.
Hence, by the soundness of $\L_{\min}$, ${\square}(p{\vee}q){\rightarrow}{\lozenge}p{\vee}{\square}q{\not\in}\L_{\min}$: a contradiction.
%
%
%\\
%\\
%
%
%All in all, we have proved that $\L_{\min}$ is strictly contained in $\L_{\fcfra}$, $\L_{\bcfra}$ and $\L_{\dcfra}$.
%
%

%
%
$\mathbf{(4)}$~For the sake of the contradiction, suppose $\L_{\fcfra}$ is not strictly contained in $\L_{\fbcfra}$.
Hence, $({\lozenge}p{\rightarrow}{\square}q){\rightarrow}{\square}(p{\rightarrow}q)$ is in $\L_{\fcfra}$.
Let $(W,{\leq},{R})$ be the frame defined above in $\mathbf{(2)}$.
As we already know, $(W,{\leq},{R}){\not\models}({\lozenge}p{\rightarrow}{\square}q){\rightarrow}{\square}(p{\rightarrow}q)$.
However, $(W,{\leq},{R})$ being forward confluent, by Lemma~\ref{lemma:about:axiom:fc:soundness}, $(W,{\leq},{R}){\models}\L_{\fcfra}$.
Since $(W,{\leq},{R}){\not\models}({\lozenge}p{\rightarrow}{\square}q){\rightarrow}
$\linebreak$
{\square}(p{\rightarrow}q)$, then $({\lozenge}p{\rightarrow}{\square}q){\rightarrow}{\square}(p{\rightarrow}q)$ is not in $\L_{\fcfra}$: a contradiction.
$\mathbf{(5)}$~For the sake of the contradiction, suppose $\L_{\fcfra}$ is not strictly contained in $\L_{\fdcfra}$.
Thus, ${\square}(p{\vee}q){\rightarrow}{\lozenge}p{\vee}{\square}q$ is in $\L_{\fcfra}$.
Let $(W,{\leq},{R})$ be the frame defined above in $\mathbf{(3)}$.
As we already know, $(W,{\leq},{R}){\not\models}{\square}(p{\vee}q){\rightarrow}{\lozenge}p{\vee}{\square}q$.
However, $(W,{\leq},{R})$ being forward confluent, by Lemma~\ref{lemma:about:axiom:fc:soundness}, $(W,{\leq},{R}){\models}\L_{\fcfra}$.
Since $(W,{\leq},{R}){\not\models}{\square}(p{\vee}q){\rightarrow}{\lozenge}p{\vee}{\square}q$, then ${\square}(p{\vee}q){\rightarrow}{\lozenge}p{\vee}{\square}q$ is not in $\L_{\fcfra}$: a contradiction.
%
%
%\\
%\\
%
%
%All in all, we have proved that $\L_{\fcfra}$ is strictly contained in $\L_{\fbcfra}$ and $\L_{\fdcfra}$.
%
%

%
%
$\mathbf{(6)}$~For the sake of the contradiction, suppose $\L_{\bcfra}$ is not strictly contained in $\L_{\fbcfra}$.
Thus, ${\lozenge}(p{\rightarrow}q){\rightarrow}({\square}p{\rightarrow}{\lozenge}q)$ is in $\L_{\bcfra}$.
Let $(W,{\leq},{R})$ be the frame defined above in $\mathbf{(1)}$.
As we already know, $(W,{\leq},{R}){\not\models}{\lozenge}(p{\rightarrow}q){\rightarrow}({\square}p{\rightarrow}{\lozenge}q)$.
However, $(W,{\leq},{R})$ being backward confluent, by Lemma~\ref{lemma:about:axiom:fc:soundness}, $(W,{\leq},{R}){\models}\L_{\bcfra}$.
Since $(W,{\leq},{R}){\not\models}{\lozenge}(p{\rightarrow}q){\rightarrow}
$\linebreak$
({\square}p{\rightarrow}{\lozenge}q)$, then ${\lozenge}(p{\rightarrow}q){\rightarrow}({\square}p{\rightarrow}{\lozenge}q)$ is not in $\L_{\bcfra}$: a contradiction.
$\mathbf{(7)}$~For the sake of the contradiction, suppose $\L_{\bcfra}$ is not strictly contained in $\L_{\bdcfra}$.
Consequently, ${\square}(p{\vee}q){\rightarrow}{\lozenge}p{\vee}{\square}q$ is in $\L_{\bcfra}$.
Let $(W,{\leq},{R})$ be the frame defined above in $\mathbf{(3)}$.
As we already know, $(W,{\leq},{R}){\not\models}{\square}(p{\vee}q){\rightarrow}{\lozenge}p{\vee}{\square}q$.
However, $(W,{\leq},{R})$ being backward confluent, by Lemma~\ref{lemma:about:axiom:fc:soundness}, $(W,{\leq},{R}){\models}\L_{\bcfra}$.
Since $(W,{\leq},{R}){\not\models}{\square}(p{\vee}q){\rightarrow}
$\linebreak$
{\lozenge}p{\vee}{\square}q$, then ${\square}(p{\vee}q){\rightarrow}{\lozenge}p{\vee}{\square}q$ is not in $\L_{\bcfra}$: a contradiction.
%
%
%\\
%\\
%
%
%All in all, we have proved that $\L_{\bcfra}$ is strictly contained in $\L_{\fbcfra}$ and $\L_{\bdcfra}$.
%
%

%
%
$\mathbf{(8)}$~For the sake of the contradiction, suppose $\L_{\dcfra}$ is not strictly contained in $\L_{\fdcfra}$.
Consequently, ${\lozenge}(p{\rightarrow}q){\rightarrow}({\square}p{\rightarrow}{\lozenge}q)$ is in $\L_{\dcfra}$.
Let $(W,{\leq},{R})$ be the frame defined above in $\mathbf{(1)}$.
As we already know, $(W,{\leq},{R}){\not\models}{\lozenge}(p{\rightarrow}q){\rightarrow}({\square}p{\rightarrow}{\lozenge}q)$.
However, $(W,
$\linebreak$
{\leq},{R})$ being downward confluent, by Lemma~\ref{lemma:about:axiom:fc:soundness}, $(W,{\leq},{R}){\models}\L_{\dcfra}$.
Since $(W,{\leq},{R})
$\linebreak$
{\not\models}{\lozenge}(p{\rightarrow}q){\rightarrow}({\square}p{\rightarrow}{\lozenge}q)$, then ${\lozenge}(p{\rightarrow}q){\rightarrow}({\square}p{\rightarrow}{\lozenge}q)$ is not in $\L_{\dcfra}$: a contradiction.
$\mathbf{(9)}$~For the sake of the contradiction, suppose $\L_{\dcfra}$ is not strictly contained in $\L_{\bdcfra}$.
Hence, $({\lozenge}p{\rightarrow}{\square}q){\rightarrow}{\square}(p{\rightarrow}q)$ is in $\L_{\dcfra}$.
Let $(W,{\leq},{R})$ be the frame defined above in $\mathbf{(2)}$.
As we already know, $(W,{\leq},{R}){\not\models}({\lozenge}p{\rightarrow}{\square}q){\rightarrow}{\square}(p{\rightarrow}q)$.
However, $(W,{\leq},{R})$ being downward confluent, by Lemma~\ref{lemma:about:axiom:fc:soundness}, $(W,{\leq},{R}){\models}\L_{\dcfra}$.
Since $(W,{\leq},{R}){\not\models}({\lozenge}p{\rightarrow}
$\linebreak$
{\square}q){\rightarrow}{\square}(p{\rightarrow}q)$, then $({\lozenge}p{\rightarrow}{\square}q){\rightarrow}{\square}(p{\rightarrow}q)$ is not in $\L_{\dcfra}$: a contradiction.
%
%
%\\
%\\
%
%
%All in all, we have proved that $\L_{\dcfra}$ is strictly contained in $\L_{\fdcfra}$ and $\L_{\bdcfra}$.
%
%

%
%
$\mathbf{(10)}$~For the sake of the contradiction, suppose $\L_{\fbcfra}$ is not strictly contained in $\L_{\fbdcfra}$.
Hence, ${\square}(p{\vee}q){\rightarrow}{\lozenge}p{\vee}{\square}q$ is in $\L_{\fbcfra}$.
Let $(W,{\leq},{R})$ be the frame defined above in $\mathbf{(3)}$.
As we already know, $(W,{\leq},{R}){\not\models}{\square}(p{\vee}q){\rightarrow}{\lozenge}p{\vee}{\square}q$.
However, $(W,{\leq},{R})$ being forward and backward confluent, by Lemma~\ref{lemma:about:axiom:fc:soundness}, $(W,{\leq},{R}){\models}\L_{\fbcfra}$.
Since $(W,{\leq},
$\linebreak$
{R}){\not\models}{\square}(p{\vee}q){\rightarrow}{\lozenge}p{\vee}{\square}q$, then ${\square}(p{\vee}q){\rightarrow}{\lozenge}p{\vee}{\square}q$ is not in $\L_{\fbcfra}$: a contradiction.
$\mathbf{(11)}$~For the sake of the contradiction, suppose $\L_{\fdcfra}$ is not strictly contained in $\L_{\fbdcfra}$.
Thus, $({\lozenge}p{\rightarrow}{\square}q){\rightarrow}{\square}(p{\rightarrow}q)$ is in $\L_{\fdcfra}$.
Let $(W,{\leq},{R})$ be the frame defined above in $\mathbf{(2)}$.
As we already know, $(W,{\leq},{R}){\not\models}({\lozenge}p{\rightarrow}{\square}q){\rightarrow}{\square}(p{\rightarrow}q)$.
However, $(W,{\leq},{R})$ being forward and downward confluent, by Lemma~\ref{lemma:about:axiom:fc:soundness}, $(W,{\leq},{R}){\models}\L_{\fdcfra}$.
Since $(W,{\leq},{R}){\not\models}({\lozenge}p{\rightarrow}{\square}q){\rightarrow}{\square}(p{\rightarrow}q)$, then $({\lozenge}p{\rightarrow}{\square}q){\rightarrow}{\square}(p{\rightarrow}q)$ is not in $\L_{\fdcfra}$: a contradiction.
$\mathbf{(12)}$~For the sake of the contradiction, suppose $\L_{\bdcfra}$ is not strictly contained in $\L_{\fbdcfra}$.
Consequently, ${\lozenge}(p{\rightarrow}q){\rightarrow}({\square}p{\rightarrow}{\lozenge}q)$ is in $\L_{\bdcfra}$.
Let $(W,{\leq},{R})$ be the frame defined above in $\mathbf{(1)}$.
As we already know, $(W,{\leq},{R}){\not\models}{\lozenge}(p{\rightarrow}q){\rightarrow}({\square}p{\rightarrow}{\lozenge}q)$.
However, $(W,{\leq},{R})$ being backward and downward confluent, by Lemma~\ref{lemma:about:axiom:fc:soundness}, $(W,{\leq},{R}){\models}\L_{\dcfra}$.
Since $(W,{\leq},{R}){\not\models}{\lozenge}(p{\rightarrow}q){\rightarrow}({\square}p{\rightarrow}{\lozenge}q)$, then ${\lozenge}(p{\rightarrow}q){\rightarrow}({\square}p{\rightarrow}{\lozenge}q)$ is not in $\L_{\bdcfra}$: a contradiction.
%
%
%\\
%\\
%
%
%All in all, we have proved that $\L_{\fbcfra}$, $\L_{\fdcfra}$ and $\L_{\bdcfra}$ are strictly contained in $\L_{\fbdcfra}$.
%
%
\medskip
\end{proof}
$\K$ is the least modal logic presented~---~axioms, inference rules and semantics~---~in~\cite[Chapter~$2$]{Blackburn:et:al:2001} and~\cite[Chapter~$3$]{Chagrov:Zakharyaschev:1997}.
By considering, on one hand, the axioms and inference rules defining $\L_{\fbdcfra}$ and, on the other hand, the axioms and inference rules defining $\K$, the reader may easily notice that all axioms of $\L_{\fbdcfra}$ are valid in the semantics of $\K$, all inference rules of $\L_{\fbdcfra}$ preserve validity in this semantics and some formulas belonging to $\K$ are outside $\L_{\fbdcfra}$.
Indeed, ${\neg}{\lozenge}{\neg}p{\rightarrow}{\square}p$ belongs to $\K$ but the model $(W,{\leq},{R},V)$ defined as follows shows that ${\neg}{\lozenge}{\neg}p{\rightarrow}{\square}p$ is outside $\L_{\fbdcfra}$: $W{=}\{a,b,c,d,e,f\}$, $a{\leq}c$, $b{\leq}d$, $c{\leq}e$ and $d{\leq}f$, $a{R}b$, $c{R}d$ and $e{R}f$ and $V(p){=}\{f\}$.
As a result,
\begin{proposition}\label{fbdcfra:K}
$\L_{\fbdcfra}$ is strictly contained in $\K$.
\end{proposition}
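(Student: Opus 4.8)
The plan is to establish the two halves of the statement separately: first $\L_{\fbdcfra}{\subseteq}\K$, then $\K{\not\subseteq}\L_{\fbdcfra}$.

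For the inclusion, I would view every classical Kripke model $(W,R,V)$ as one of our models, namely $(W,{\Id_{W}},R,V)$, where $\Id_{W}$ is the identity relation on $W$, which is a preorder. Since every subset of $W$ is trivially $\Id_{W}$-closed, $V$ is a legitimate valuation on $(W,{\Id_{W}},R)$; and since ${\Id_{W}}{\circ}R{=}R{=}R{\circ}{\Id_{W}}$, the frame $(W,{\Id_{W}},R)$ is forward, backward and downward confluent (in fact also upward confluent), hence lies in ${\mathcal C}_{\fbdcfra}$. A routine induction on formulas then shows that $(W,{\Id_{W}},R,V),s{\models}A$ in the sense of our semantics holds if and only if $A$ holds at $s$ in $(W,R,V)$ in the classical sense; the only clauses requiring attention are those for ${\rightarrow}$, ${\square}$ and ${\lozenge}$, and in each of them the preorder parameter collapses because ${\Id_{W}}{\circ}R{=}R$ and ${\geq}{\circ}R{=}R$ when $\leq$ is the identity. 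Invoking Proposition~\ref{proposition:soundness:completeness}, which identifies $\L_{\fbdcfra}$ with $\Log({\mathcal C}_{\fbdcfra})$, any $A{\in}\L_{\fbdcfra}$ is valid in every model of the form $(W,{\Id_{W}},R,V)$, hence classically valid, hence a member of $\K$.

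For strictness I would take the witness ${\neg}{\lozenge}{\neg}p{\rightarrow}{\square}p$, which belongs to $\K$ since ${\neg}{\lozenge}{\neg}p$ and ${\square}p$ are classically equivalent. To show it is not in $\L_{\fbdcfra}{=}\Log({\mathcal C}_{\fbdcfra})$, I would use the model $(W,{\leq},{R},V)$ from the statement, with $W{=}\{a,b,c,d,e,f\}$, the preorder generated by $a{\leq}c{\leq}e$ and $b{\leq}d{\leq}f$, the relation $R$ consisting of $a{R}b$, $c{R}d$ and $e{R}f$, and $V(p){=}\{f\}$, and carry out three checks. (i) $V(p)$ is ${\leq}$-closed, because $f$ is ${\leq}$-maximal. (ii) $(W,{\leq},{R})$ is forward, backward and downward confluent; this is a finite case analysis over the three $R$-edges (so that $(W,{\leq},{R}){\in}{\mathcal C}_{\fbdcfra}$). (iii) $a{\not\models}{\neg}{\lozenge}{\neg}p{\rightarrow}{\square}p$: one has $a{\not\models}{\square}p$ because $a({\leq}{\circ}R)b$ and $b{\notin}V(p)$, while $a{\models}{\neg}{\lozenge}{\neg}p$ because the states ${\geq}a$ are exactly $a,c,e$, and each of $a,c,e$ fails ${\lozenge}{\neg}p$ since every state reachable from $a$, $c$ or $e$ via ${\geq}{\circ}R$ lies in $\{b,d,f\}$, and none of $b,d,f$ satisfies ${\neg}p$ (each of them has $f$ among its ${\leq}$-successors and $f{\models}p$). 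Since $a{\geq}a$, it follows that $a{\not\models}{\neg}{\lozenge}{\neg}p{\rightarrow}{\square}p$, so by Proposition~\ref{proposition:soundness:completeness} the formula is not in $\L_{\fbdcfra}$, and the inclusion established above is strict.

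I expect the only genuine work to be the bookkeeping in the second half: the three confluence verifications on the six-element frame and, above all, the satisfaction computation for ${\neg}{\lozenge}{\neg}p$, whose nested ${\neg}$, ${\lozenge}$, ${\neg}$ forces one to chase first ${\leq}$-successors and then ${\geq}{\circ}R$-successors through the two chains. The inclusion $\L_{\fbdcfra}{\subseteq}\K$, by contrast, is a short collapse argument exploiting that on a frame with identity preorder all our truth conditions reduce to the classical ones.
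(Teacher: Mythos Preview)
Your proposal is correct; the countermodel for strictness and the witness formula ${\neg}{\lozenge}{\neg}p{\rightarrow}{\square}p$ are exactly those of the paper, and your verification of (i)--(iii) is sound.

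Where you diverge is in the inclusion $\L_{\fbdcfra}{\subseteq}\K$. The paper argues syntactically: it observes that every axiom of $\L_{\fbdcfra}$ is valid in the classical Kripke semantics of $\K$ and every inference rule of $\L_{\fbdcfra}$ preserves classical validity, so an induction on derivations gives the inclusion. You instead argue semantically, going through the completeness result (Proposition~\ref{proposition:soundness:completeness}): a classical frame $(W,R)$ becomes the frame $(W,\Id_{W},R){\in}{\mathcal C}_{\fbdcfra}$, satisfaction collapses, and hence $\Log({\mathcal C}_{\fbdcfra})$-validity implies $\K$-validity. Your route is arguably cleaner, since it avoids re-checking $(\Axiom2)$ and $(\Rule3)$ against classical semantics and instead reuses a theorem already in hand; the paper's route, on the other hand, is self-contained and does not rely on the completeness of $\L_{\fbdcfra}$. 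Both are standard and equally short.
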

\begin{proposition}\label{lemma:table:7:lemma}
\begin{itemize}
\item $\L_{\reflexive}$ is equal to $\Log({\mathcal C}_{\reflexive})$,
\item $\L_{\symmetric}$ is equal to $\Log({\mathcal C}_{\symmetric})$,
\item $\L_{\min}$ is equal to $\Log({\mathcal C}_{\transitive})$,
\item $\L_{\reflexive}{\oplus}\L_{\symmetric}$ is equal to $\Log({\mathcal C}_{\reflexive}{\cap}{\mathcal C}_{\symmetric})$,
\item $\L_{\reflexive}$ is equal to $\Log({\mathcal C}_{\reflexive}{\cap}{\mathcal C}_{\transitive})$,
\item $\L_{\reflexive}{\oplus}\L_{\symmetric}$ is equal to $\Log({\mathcal C}_{\partition})$,
\end{itemize}
\end{proposition}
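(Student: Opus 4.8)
The plan is to reduce every item to the soundness/completeness machinery already established, namely canonicity (Propositions above, showing $\L_{\reflexive}$, $\L_{\symmetric}$, $\L_{\reflexive}{\oplus}\L_{\symmetric}$ are canonical), the Lindenbaum Lemma~\ref{lemma:almost:completeness}, the Truth Lemma~\ref{lemma:truth:lemma}, the correspondence results of Lemma~\ref{lemma:correspondence:formulas:T:B:4:elementary:conditions}, and the ``collapse'' identities of Lemma~\ref{lemma:same:valid:formulas}. For each item, soundness ($\L\subseteq\Log(\mathcal C)$) follows because every axiom added to $\L_{\min}$ is, by Lemma~\ref{lemma:correspondence:formulas:T:B:4:elementary:conditions}, valid on the corresponding elementary class and hence on the intended class $\mathcal C$, while the base axioms and rules are sound on all frames (Lemmas~\ref{lemma:about:axiom:2}, \ref{lemma:about:rule:3} and the remarks around them); completeness ($\Log(\mathcal C)\subseteq\L$) follows from the canonical model: if $A\notin\L$, Lindenbaum gives a prime $\L$-theory $\Gamma$ with $A\notin\Gamma$, the Truth Lemma gives $\Gamma\not\models A$ in the canonical model, and canonicity plus the frame lemmas (\ref{lemma:about:what:happens:if:T:square:is:in:the:logic}, \ref{lemma:about:what:happens:if:T:lozenge:is:in:the:logic}, \ref{lemma:about:what:happens:if:B:square:is:in:the:logic}, \ref{lemma:about:what:happens:if:B:lozenge:is:in:the:logic}) show the canonical frame lies in $\mathcal C$, so $A\notin\Log(\mathcal C)$.

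Concretely: for the first item, $\L_{\reflexive}=\L_{\min}\oplus(\Axiom\mathbf{uref})\oplus(\Axiom\mathbf{dref})$; soundness on $\mathcal C_{\reflexive}$ holds since a reflexive frame is both up-reflexive and down-reflexive, so $(\Axiom\mathbf{uref})$ and $(\Axiom\mathbf{dref})$ are valid there by Lemma~\ref{lemma:correspondence:formulas:T:B:4:elementary:conditions}. For completeness one cannot directly use a reflexive canonical frame, but Lemma~\ref{lemma:same:valid:formulas}(1) gives $\Log(\mathcal C_{\reflexive})=\Log(\mathcal C_{\ureflexive}\cap\mathcal C_{\dreflexive})$, and the canonical frame of $\L_{\reflexive}$ is up-reflexive and down-reflexive by Lemmas~\ref{lemma:about:what:happens:if:T:square:is:in:the:logic} and~\ref{lemma:about:what:happens:if:T:lozenge:is:in:the:logic} (using that $(\Axiom\mathbf{uref})$, $(\Axiom\mathbf{dref})$ are in $\L_{\reflexive}$), so the argument closes against $\mathcal C_{\ureflexive}\cap\mathcal C_{\dreflexive}$. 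The item ``$\L_{\symmetric}$ is equal to $\Log(\mathcal C_{\symmetric})$'' is handled identically, using Lemma~\ref{lemma:same:valid:formulas}(2) and Lemmas~\ref{lemma:about:what:happens:if:B:square:is:in:the:logic}, \ref{lemma:about:what:happens:if:B:lozenge:is:in:the:logic}; ``$\L_{\min}$ is equal to $\Log(\mathcal C_{\transitive})$'' is immediate from Proposition~\ref{proposition:soundness:completeness} ($\L_{\min}=\Log(\mathcal C_{\allfra})$) together with Lemma~\ref{lemma:same:valid:formulas}(3). For ``$\L_{\reflexive}{\oplus}\L_{\symmetric}$ is equal to $\Log(\mathcal C_{\reflexive}\cap\mathcal C_{\symmetric})$'' one combines the two previous cases: soundness because each axiom is valid on reflexive-symmetric frames, completeness via Lemma~\ref{lemma:same:valid:formulas}(4) and the fact that the canonical frame of $\L_{\reflexive}{\oplus}\L_{\symmetric}$ is simultaneously up/down-reflexive and up/down-symmetric (all four axioms lie in the logic). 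For ``$\L_{\reflexive}$ is equal to $\Log(\mathcal C_{\reflexive}\cap\mathcal C_{\transitive})$'', Lemma~\ref{lemma:same:valid:formulas}(5) says $\Log(\mathcal C_{\reflexive}\cap\mathcal C_{\transitive})=\Log(\mathcal C_{\ureflexive}\cap\mathcal C_{\dreflexive})=\Log(\mathcal C_{\reflexive})$, which has already been identified with $\L_{\reflexive}$. Finally, ``$\L_{\reflexive}{\oplus}\L_{\symmetric}$ is equal to $\Log(\mathcal C_{\partition})$'' follows from Lemma~\ref{lemma:same:valid:formulas}(6), $\Log(\mathcal C_{\partition})=\Log(\mathcal C_{\ureflexive}\cap\mathcal C_{\dreflexive}\cap\mathcal C_{\usymmetric}\cap\mathcal C_{\dsymmetric})$, together with the identification of the latter with $\L_{\reflexive}{\oplus}\L_{\symmetric}$ from the fourth item.

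I expect no genuine obstacle here, since all the heavy lifting—the Lindenbaum and Truth Lemmas, the canonicity statements, the correspondences of Lemma~\ref{lemma:correspondence:formulas:T:B:4:elementary:conditions}, and the collapse identities of Lemma~\ref{lemma:same:valid:formulas}—is already in place; the only point requiring care is bookkeeping, i.e.\ making sure that for each class $\mathcal C$ I invoke the right reformulation from Lemma~\ref{lemma:same:valid:formulas} so that the canonical frame (which is merely up/down-reflexive, up/down-symmetric, not literally reflexive or symmetric) still witnesses completeness. A subtlety worth flagging is that $\L_{\symmetric}$ is defined in the excerpt as $\L_{\min}{\oplus}(\Axiom\mathbf{usym}){\oplus}(\Axiom\mathbf{usym})$, which appears to be a typo for $\L_{\min}{\oplus}(\Axiom\mathbf{usym}){\oplus}(\Axiom\mathbf{dsym})$; the proof uses both $(\Axiom\mathbf{usym})$ and $(\Axiom\mathbf{dsym})$ (so that Lemmas~\ref{lemma:about:what:happens:if:B:square:is:in:the:logic} and~\ref{lemma:about:what:happens:if:B:lozenge:is:in:the:logic} both apply) and should be read accordingly.
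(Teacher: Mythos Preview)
Your proposal is correct and follows essentially the same route as the paper's own proof, which simply says ``similar to the proof of Proposition~\ref{proposition:soundness:completeness}'' and then lists Lemmas~\ref{lemma:same:valid:formulas}, \ref{lemma:correspondence:formulas:T:B:4:elementary:conditions}, \ref{lemma:about:axiom:2}, \ref{lemma:about:rule:3}, the Lindenbaum and Truth Lemmas, and Lemmas~\ref{lemma:about:what:happens:if:T:square:is:in:the:logic}--\ref{lemma:about:what:happens:if:4:lozenge:is:in:the:logic}. Your decomposition (soundness via correspondence, completeness via the canonical frame landing in the up/down classes, then Lemma~\ref{lemma:same:valid:formulas} to collapse to the literal reflexive/symmetric/transitive classes) is exactly the intended argument, and your observation about the apparent typo in the definition of $\L_{\symmetric}$ is well taken.
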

\begin{proof}
Similar to the proof of Proposition~\ref{proposition:soundness:completeness}, this time using Lemmas~\ref{lemma:same:valid:formulas}, \ref{lemma:correspondence:formulas:T:B:4:elementary:conditions}, \ref{lemma:about:axiom:2} and~\ref{lemma:about:rule:3}, Lindenbaum Lemma and Lemmas~\ref{lemma:truth:lemma}, \ref{lemma:about:what:happens:if:T:square:is:in:the:logic}, \ref{lemma:about:what:happens:if:T:lozenge:is:in:the:logic}, \ref{lemma:about:what:happens:if:B:square:is:in:the:logic}, \ref{lemma:about:what:happens:if:B:lozenge:is:in:the:logic}, \ref{lemma:about:what:happens:if:4:square:is:in:the:logic} and~\ref{lemma:about:what:happens:if:4:lozenge:is:in:the:logic}.
\medskip
\end{proof}
\begin{proposition}\label{proposition:completeness:uref:dref:etc}
\begin{itemize}
\item $\L_{\ureflexive}$ is equal to $\Log({\mathcal C}_{\ureflexive})$,
\item $\L_{\dreflexive}$ is equal to $\Log({\mathcal C}_{\dreflexive})$,
\item $\L_{\usymmetric}$ is equal to $\Log({\mathcal C}_{\usymmetric})$,
\item $\L_{\dsymmetric}$ is equal to $\Log({\mathcal C}_{\dsymmetric})$,
\item $\L_{\utransitive}$ is equal to $\Log({\mathcal C}_{\utransitive})$,
\item $\L_{\dtransitive}$ is equal to $\Log({\mathcal C}_{\dtransitive})$.
\end{itemize}
\end{proposition}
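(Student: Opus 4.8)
The plan is to establish, for each of the six logics, the two inclusions $\L_{X}\subseteq\Log({\mathcal C}_{X})$ (soundness) and $\Log({\mathcal C}_{X})\subseteq\L_{X}$ (completeness), where $X$ ranges over $\ureflexive$, $\dreflexive$, $\usymmetric$, $\dsymmetric$, $\utransitive$, $\dtransitive$, and where $\L_{X}=\L_{\min}{\oplus}(\Axiom\mathbf{x})$ with $\Axiom\mathbf{x}$ the corresponding axiom among $(\Axiom\mathbf{uref})$, $(\Axiom\mathbf{dref})$, $(\Axiom\mathbf{usym})$, $(\Axiom\mathbf{dsym})$, $(\Axiom\mathbf{utra})$, $(\Axiom\mathbf{dtra})$. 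Since all the genuinely hard work has already been carried out in Sections~\ref{section:definability}, \ref{section:existence:lemmas:and:lindenbaum:lemma} and~\ref{section:canonical:frame:and:canonical:model}, the proof is a matter of assembling the right ingredients, exactly as in the proofs of Propositions~\ref{proposition:soundness:completeness} and~\ref{lemma:table:7:lemma}.

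For soundness, first recall that $\L_{\min}$ is sound with respect to every frame: axioms $(\Axiom1)$, $(\Axiom3)$, $(\Axiom4)$ are valid in any frame, axiom $(\Axiom2)$ is valid in any frame by Lemma~\ref{lemma:about:axiom:2}, inference rules $(\Rule1)$ and $(\Rule2)$ preserve validity in any frame, and inference rule $(\Rule3)$ preserves validity in any frame by Lemma~\ref{lemma:about:rule:3}. Hence $\L_{\min}\subseteq\Log({\mathcal C}_{X})$ for every class ${\mathcal C}_{X}$. Moreover, by Lemma~\ref{lemma:correspondence:formulas:T:B:4:elementary:conditions} the axiom $\Axiom\mathbf{x}$ modally defines ${\mathcal C}_{X}$, so in particular $\Axiom\mathbf{x}\in\Log({\mathcal C}_{X})$. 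Since $\Log({\mathcal C}_{X})$ is closed under uniform substitution, contains the standard axioms of $\IPL$, is closed under the standard inference rules of $\IPL$, and (by the preceding observations) contains $(\Axiom1)$--$(\Axiom4)$ and is closed under $(\Rule1)$--$(\Rule3)$, it is an intuitionistic modal logic containing $\L_{\min}$ and $\Axiom\mathbf{x}$; by minimality of $\L_{X}$ we get $\L_{X}\subseteq\Log({\mathcal C}_{X})$. As ${\mathcal C}_{X}$ is nonempty (the one-point reflexive frame belongs to each of the six classes), $\bot\not\in\Log({\mathcal C}_{X})$, so $\L_{X}$ is consistent and the canonical model $(W_{\L_{X}},{\leq_{\L_{X}}},{R_{\L_{X}}},V_{\L_{X}})$ is defined.

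For completeness, suppose $A\not\in\L_{X}$. By the Lindenbaum Lemma~\ref{lemma:almost:completeness} there is a prime $\L_{X}$-theory $\Gamma$ with $A\not\in\Gamma$, and by the Truth Lemma~\ref{lemma:truth:lemma} this gives $(W_{\L_{X}},{\leq_{\L_{X}}},{R_{\L_{X}}},V_{\L_{X}}),\Gamma\not\models A$, hence $(W_{\L_{X}},{\leq_{\L_{X}}},{R_{\L_{X}}})\not\models A$. Now pair each logic with the canonical frame lemma matching its axiom: Lemma~\ref{lemma:about:what:happens:if:T:square:is:in:the:logic} for $\L_{\ureflexive}$, Lemma~\ref{lemma:about:what:happens:if:T:lozenge:is:in:the:logic} for $\L_{\dreflexive}$, Lemma~\ref{lemma:about:what:happens:if:B:square:is:in:the:logic} for $\L_{\usymmetric}$, Lemma~\ref{lemma:about:what:happens:if:B:lozenge:is:in:the:logic} for $\L_{\dsymmetric}$, Lemma~\ref{lemma:about:what:happens:if:4:square:is:in:the:logic} for $\L_{\utransitive}$, and Lemma~\ref{lemma:about:what:happens:if:4:lozenge:is:in:the:logic} for $\L_{\dtransitive}$; since $\Axiom\mathbf{x}\in\L_{X}$, each of these lemmas tells us that $(W_{\L_{X}},{\leq_{\L_{X}}},{R_{\L_{X}}})$ belongs to ${\mathcal C}_{X}$. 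Therefore $A\not\in\Log({\mathcal C}_{X})$, which establishes $\Log({\mathcal C}_{X})\subseteq\L_{X}$ and completes the proof.

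The only point requiring any care — and it is bookkeeping rather than a genuine obstacle — is to match each axiom with the correct pair of lemmas (the definability lemma from Section~\ref{section:definability} for soundness and the canonical-frame lemma from Section~\ref{section:canonical:frame:and:canonical:model} for completeness); the substantive content, namely the intricate Zorn's-Lemma constructions showing canonicity of the up/down conditions, has already been discharged.~$\dashv$\\
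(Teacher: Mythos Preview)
Your proof is correct and follows essentially the same approach as the paper's own proof, which simply cites Lemmas~\ref{lemma:correspondence:formulas:T:B:4:elementary:conditions}, \ref{lemma:about:axiom:2}, \ref{lemma:about:rule:3}, the Lindenbaum Lemma, the Truth Lemma, and Lemmas~\ref{lemma:about:what:happens:if:T:square:is:in:the:logic}--\ref{lemma:about:what:happens:if:4:lozenge:is:in:the:logic} and points back to the template of Proposition~\ref{proposition:soundness:completeness}. You actually spell out more detail than the paper does (in particular the consistency check via the one-point reflexive frame), but the logical structure is identical.
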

\begin{proof}
Similar to the proof of Proposition~\ref{proposition:soundness:completeness}, this time using Lemmas~\ref{lemma:correspondence:formulas:T:B:4:elementary:conditions}, \ref{lemma:about:axiom:2} and~\ref{lemma:about:rule:3}, Lindenbaum Lemma and Lemmas~\ref{lemma:truth:lemma}, \ref{lemma:about:what:happens:if:T:square:is:in:the:logic}, \ref{lemma:about:what:happens:if:T:lozenge:is:in:the:logic}, \ref{lemma:about:what:happens:if:B:square:is:in:the:logic}, \ref{lemma:about:what:happens:if:B:lozenge:is:in:the:logic}, \ref{lemma:about:what:happens:if:4:square:is:in:the:logic} and~\ref{lemma:about:what:happens:if:4:lozenge:is:in:the:logic}.
\medskip
\end{proof}
%
%
%Unfortunately, we do not know whether $\L_{\min}{\oplus}p{\rightarrow}{\lozenge}p$ is the intuitionistic modal logic determined by the class of all frames $(W,{\leq},{R})$ such that for all $s{\in}W$, $s{\geq}{\circ}{R}{\circ}{\geq}s$, $\L_{\min}{\oplus}p{\rightarrow}{\square}{\lozenge}p$ is the intuitionistic modal logic determined by the class of all frames $(W,{\leq},{R})$ such that for all $s,t{\in}W$, if $s{R}t$ then $t{\geq}{\circ}{R}{\circ}{\geq}s$ and $\L_{\min}{\oplus}{\lozenge}p{\rightarrow}{\lozenge}{\lozenge}p$ is the intuitionistic modal logic determined by the class of all frames $(W,{\leq},{R})$ such that for all $s,t,u,v{\in}W$, if $s{R}t$, $t{\geq}u$ and $u{R}v$ then $s{\geq}{\circ}{R}{\circ}{\geq}v$.
%
%
%
%
\section{Decidability}\label{section:finite:frame:property}
In this section, we address the membership problem and the consistency problem.
\begin{definition}[Membership problem]
For all intuitionistic modal logics $\L$, the {\em membership problem in $\L$}\/ is the following decision problem:
\begin{description}
\item[input:] a formula $A$,
\item[output:] determine whether $A$ is in $\L$.
\end{description}
\end{definition}
\begin{definition}[Consistency problem]
For all intuitionistic modal logics $\L$, the {\em consistency problem in $\L$}\/ is the following decision problem:
\begin{description}
\item[input:] a formula $A$,
\item[output:] determine whether $\L{+}A$ is proper.
\end{description}
\end{definition}
Obviously, since for all formulas $A$, $\L{+}A$ is proper if and only if ${\neg}A$ is not in $\L$, then if the membership problem in $\L$ is decidable then the consistency problem in $\L$ is decidable.
The usual approach to solving these problems is to build finite models.
Filtration is a fundamental concept for building finite models.
As far as we are aware, with a few exceptions such as~\cite{Hasimoto:2001,Sotirov:1984,Takano:2003}, it has not been so much adapted to intuitionistic modal logics, probably because it does not easily work with conditions involving the composition of binary relations such as our elementary conditions of confluence.
In this section, we adopt a definition of filtrations which is as general as possible and which, nevertheless, allows us to prove the corresponding Filtration Lemma.
\begin{definition}[Filtrations]
The {\em equivalence setting determined by a model $(W,{\leq},{R},
$\linebreak$
V)$ and a closed set $\Sigma$ of formulas}\/ is the equivalence relation $\simeq$ on $W$ defined by
\begin{itemize}
\item $s\simeq t$ if and only if for all formulas $A$ in $\Sigma$, $s{\models}A$ if and only if $t{\models}A$.
\end{itemize}
For all models $(W,{\leq},{R},V)$, for all closed sets $\Sigma$ of formulas and for all $s{\in}W$, the equivalence class of $s$ modulo $\simeq$ is denoted $\lbrack s\rbrack$.
For all models $(W,{\leq},{R},V)$, for all closed sets $\Sigma$ of formulas and for all $X{\in}\wp(W)$, the quotient set of $X$ modulo $\simeq$ is denoted $X/{\simeq}$.
A model $(W^{\prime},{\leq^{\prime}},{R^{\prime}},V^{\prime})$ is a {\em filtration of a model $(W,{\leq},{R},V)$ with respect to a closed set $\Sigma$ of formulas}\/ if
\begin{itemize}
\item $W^{\prime}{=}W/{\simeq}$,
\item for all $s,t{\in}W$, if $s{\leq}t$ then $\lbrack s\rbrack\leq^{\prime}\lbrack t\rbrack$,
\item for all formulas $A,B$, if $A{\rightarrow}B{\in}\Sigma$ then for all $s,t{\in}W$, if $s{\models}A{\rightarrow}B$, $\lbrack s\rbrack\leq^{\prime}\lbrack t\rbrack$ and $t{\models}A$ then $t{\models}B$,
\item for all $s,t{\in}W$, if $s{\leq}{\circ}Rt$ then $\lbrack s\rbrack{\leq^{\prime}}{\circ}R^{\prime}{\circ}{\leq^{\prime}}\lbrack t\rbrack$,
\item for all formulas $A$, if ${\square}A{\in}\Sigma$ then for all $s,t{\in}W$, if $s{\models}{\square}A$ and $\lbrack s\rbrack{\leq^{\prime}}{\circ}R^{\prime}\lbrack t\rbrack$ then $t{\models}A$,
\item for all $s,t{\in}W$, if $s{\geq}{\circ}Rt$ then $\lbrack s\rbrack{\geq^{\prime}}{\circ}R^{\prime}{\circ}{\geq^{\prime}}\lbrack t\rbrack$,
\item for all formulas $A$, if ${\lozenge}A{\in}\Sigma$ then for all $s,t{\in}W$, if $t{\models}A$ and $\lbrack s\rbrack{\geq^{\prime}}{\circ}R^{\prime}\lbrack t\rbrack$ then $s{\models}{\lozenge}A$,
\item for all atoms $p$, if $p{\in}\Sigma$ then $V^{\prime}(p){=}V(p)/{\simeq}$.
\end{itemize}
\end{definition}
\begin{lemma}\label{lemma:finite:classes:filtration:contains}
If the model $(W^{\prime},{\leq^{\prime}},{R^{\prime}},V^{\prime})$ is a filtration of the model $(W,{\leq},{R},V)$ with respect to a finite closed set $\Sigma$ of formulas then $\card(W^{\prime})\leq2^{\card(\Sigma)}$.
\end{lemma}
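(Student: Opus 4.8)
The plan is to exhibit an injection from $W^{\prime}$ into the power set of $\Sigma$ and then count. First I would recall that, by the definition of filtration, $W^{\prime}{=}W/{\simeq}$, where $\simeq$ is the equivalence setting determined by $(W,{\leq},{R},V)$ and $\Sigma$, i.e.\ $s{\simeq}t$ if and only if for all formulas $A$ in $\Sigma$, $s{\models}A$ if and only if $t{\models}A$. So I would associate to each $s{\in}W$ the subset $\sigma(s){=}\{A{\in}\Sigma\ :\ s{\models}A\}$ of $\Sigma$.

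The key observation is that $\sigma$ factors through $\simeq$: by the very definition of $\simeq$, for all $s,t{\in}W$, $s{\simeq}t$ if and only if $\sigma(s){=}\sigma(t)$. Consequently, the function $\bar{\sigma}\ :\ W^{\prime}{\longrightarrow}\wp(\Sigma)$ defined by $\bar{\sigma}(\lbrack s\rbrack){=}\sigma(s)$ is well-defined and injective. Hence $\card(W^{\prime}){\leq}\card(\wp(\Sigma))$. Since $\Sigma$ is finite, $\card(\wp(\Sigma)){=}2^{\card(\Sigma)}$, and therefore $\card(W^{\prime}){\leq}2^{\card(\Sigma)}$.

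There is essentially no obstacle here: the statement is an immediate consequence of the definition of the equivalence setting $\simeq$, and the only point that deserves a line of justification is that $\bar{\sigma}$ is well-defined and injective, which is exactly the ``if and only if'' built into the definition of $\simeq$. The finiteness of $\Sigma$ is used only to replace $\card(\wp(\Sigma))$ by $2^{\card(\Sigma)}$.
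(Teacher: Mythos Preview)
Your proof is correct and matches the paper's approach exactly: define the map $[s]\mapsto\{A\in\Sigma:s\models A\}$ from $W^{\prime}$ to $\wp(\Sigma)$, observe it is injective (and well-defined) by the definition of $\simeq$, and conclude. The paper's proof is in fact terser than yours, merely stating that this map is ``obviously'' injective without separately discussing well-definedness.
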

\begin{proof}
Suppose the model $(W^{\prime},{\leq^{\prime}},{R^{\prime}},V^{\prime})$ is a filtration of the model $(W,{\leq},{R},V)$ with respect to a finite closed set $\Sigma$ of formulas.
Let $f\ :\ W^{\prime}{\longrightarrow}\wp(\Sigma)$ be the function such that for all $s{\in}W$, $f(\lbrack s\rbrack){=}\{A{\in}\Sigma\ :\ s{\models}A\}$.
Obviously, $f$ is injective.
Hence, $\card(W^{\prime})\leq2^{\card(\Sigma)}$.
\medskip
\end{proof}
\begin{lemma}\label{filtration:lemma}
Let the model $(W^{\prime},{\leq^{\prime}},{R^{\prime}},V^{\prime})$ be a filtration of the model $(W,{\leq},{R},V)$ with respect to a closed set $\Sigma$ of formulas.
For all formulas $A$, if $A{\in}\Sigma$ then for all $s{\in}W$, $\lbrack s\rbrack{\models}A$ if and only if $s{\models}A$.
\end{lemma}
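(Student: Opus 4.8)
The plan is to prove the statement by induction on $A$, relying throughout on the hypothesis that $\Sigma$ is closed, so that whenever a compound formula lies in $\Sigma$ all its immediate constituents lie in $\Sigma$ as well and the induction hypothesis is available for them. The cases $A{=}p$ with $p{\in}\Sigma$, $A{=}{\top}$ and $A{=}{\bot}$ are immediate: for an atom one unfolds $V^{\prime}(p){=}V(p)/{\simeq}$ and uses that $s{\simeq}t$ together with $p{\in}\Sigma$ forces $s$ and $t$ to agree on $p$. The cases $A{=}B{\vee}C$ and $A{=}B{\wedge}C$ reduce directly to the induction hypotheses for $B$ and $C$.

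The substantial cases are $A{=}B{\rightarrow}C$, $A{=}{\square}B$ and $A{=}{\lozenge}B$, each of which splits into two directions. For $B{\rightarrow}C{\in}\Sigma$: to pass from $s{\models}B{\rightarrow}C$ to $\lbrack s\rbrack{\models}B{\rightarrow}C$ one takes $\lbrack t\rbrack$ with $\lbrack s\rbrack{\leq^{\prime}}\lbrack t\rbrack$ and $\lbrack t\rbrack{\models}B$, uses the induction hypothesis to get $t{\models}B$, applies the filtration clause for ${\rightarrow}$ to obtain $t{\models}C$, and applies the induction hypothesis again; the converse direction takes $t$ with $s{\leq}t$ and $t{\models}B$, uses the clause ``$s{\leq}t$ implies $\lbrack s\rbrack{\leq^{\prime}}\lbrack t\rbrack$'' together with the induction hypothesis to get $\lbrack t\rbrack{\models}B$, hence $\lbrack t\rbrack{\models}C$, hence $t{\models}C$. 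For ${\square}B{\in}\Sigma$: the direction $s{\models}{\square}B{\Rightarrow}\lbrack s\rbrack{\models}{\square}B$ is handled by the filtration clause asserting that $s{\models}{\square}B$ and $\lbrack s\rbrack{\leq^{\prime}}{\circ}R^{\prime}\lbrack t\rbrack$ imply $t{\models}B$, followed by the induction hypothesis; the converse takes $t$ with $s{\leq}{\circ}Rt$, invokes the clause $s{\leq}{\circ}Rt{\Rightarrow}\lbrack s\rbrack{\leq^{\prime}}{\circ}R^{\prime}{\circ}{\leq^{\prime}}\lbrack t\rbrack$ to pick $\lbrack w\rbrack$ with $\lbrack s\rbrack{\leq^{\prime}}{\circ}R^{\prime}\lbrack w\rbrack$ and $\lbrack w\rbrack{\leq^{\prime}}\lbrack t\rbrack$, derives $\lbrack w\rbrack{\models}B$ from $\lbrack s\rbrack{\models}{\square}B$, promotes this to $\lbrack t\rbrack{\models}B$ by the Heredity Property (Lemma~\ref{lemma:HB:monotonicity}) applied in the model $(W^{\prime},{\leq^{\prime}},{R^{\prime}},V^{\prime})$, and concludes $t{\models}B$ by the induction hypothesis. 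The case ${\lozenge}B{\in}\Sigma$ is dual: from $s{\models}{\lozenge}B$ one picks $t$ with $s{\geq}{\circ}Rt$ and $t{\models}B$, uses the clause $s{\geq}{\circ}Rt{\Rightarrow}\lbrack s\rbrack{\geq^{\prime}}{\circ}R^{\prime}{\circ}{\geq^{\prime}}\lbrack t\rbrack$ to find $\lbrack w\rbrack$ with $\lbrack s\rbrack{\geq^{\prime}}{\circ}R^{\prime}\lbrack w\rbrack$ and $\lbrack w\rbrack{\geq^{\prime}}\lbrack t\rbrack$, transports $\lbrack t\rbrack{\models}B$ (induction hypothesis) up to $\lbrack w\rbrack$ by Heredity, and reads off $\lbrack s\rbrack{\models}{\lozenge}B$; the converse takes $\lbrack t\rbrack$ with $\lbrack s\rbrack{\geq^{\prime}}{\circ}R^{\prime}\lbrack t\rbrack$ and $\lbrack t\rbrack{\models}B$, uses the induction hypothesis to get $t{\models}B$, and applies the filtration clause for ${\lozenge}$ to conclude $s{\models}{\lozenge}B$.

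The only point that deserves emphasis is that two of the six modal directions use the Heredity Property inside $W^{\prime}$; this is legitimate because, by definition, a filtration is a model, so $V^{\prime}$ is a valuation and Lemma~\ref{lemma:HB:monotonicity} applies verbatim. No new combinatorial ingredient is needed: the proof is a disciplined matching of the clauses in the definition of filtration against the clauses in the definition of satisfiability, and I expect the bookkeeping of the compositions ${\leq^{\prime}}{\circ}R^{\prime}{\circ}{\leq^{\prime}}$ and ${\geq^{\prime}}{\circ}R^{\prime}{\circ}{\geq^{\prime}}$ in the modal cases to be the only place where any care is required.
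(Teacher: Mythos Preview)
Your proposal is correct and is precisely the induction on $A$ that the paper indicates; the paper's own proof consists of the single line ``By induction on $A$,'' and your write-up is a faithful and careful unpacking of that induction, matching each clause of the definition of filtration to the corresponding connective case and correctly invoking Lemma~\ref{lemma:HB:monotonicity} in the filtrated model for the two modal directions that need it.
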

\begin{proof}
By induction on $A$.
\medskip
\end{proof}
\begin{definition}[Smallest filtrations]
A {\em smallest filtration of a model $(W,{\leq},{R},V)$
\linebreak
with respect to a closed set $\Sigma$ of formulas}\/ is a structure $(W^{\prime},{\leq^{\prime}},{R^{\prime}},V^{\prime})$ such that
\begin{itemize}
\item $W^{\prime}{=}W/{\simeq}$,
\item for all $s,t{\in}W$, $\lbrack s\rbrack\leq^{\prime}\lbrack t\rbrack$ if and only if for all formulas $A$, if $A{\in}\Sigma$ and $s{\models}A$ then $t{\models}A$,
\item for all $s,t{\in}W$, $\lbrack s\rbrack R^{\prime}\lbrack t\rbrack$ if and only if $s{\simeq}{\circ}R{\circ}{\simeq}t$,
\item for all atoms $p$, if $p{\in}\Sigma$ then $V^{\prime}(p){=}V(p)/{\simeq}$.
\end{itemize}
\end{definition}
Obviously,
\begin{lemma}\label{lemma:smallest:filtrations}
The smallest filtrations of a model $(W,{\leq},{R},V)$ with respect to a closed set $\Sigma$ of formulas are filtrations of $(W,{\leq},{R},V)$ with respect to $\Sigma$.
\end{lemma}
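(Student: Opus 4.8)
The plan is to verify the four defining conditions of a filtration for the structure $(W^{\prime},{\leq^{\prime}},{R^{\prime}},V^{\prime})$ given in the definition of smallest filtrations. Three of these conditions — the one about $W^{\prime}$, the half-condition about $\leq^{\prime}$ that reads "if $s{\leq}t$ then $\lbrack s\rbrack\leq^{\prime}\lbrack t\rbrack$", and the one about $V^{\prime}$ — hold immediately by inspection: the definition of $\leq^{\prime}$ in the smallest filtration is an "if and only if" whose right-hand side is implied by $s{\leq}t$ via the Heredity Property (Lemma~\ref{lemma:HB:monotonicity}), and the $W^{\prime}$ and $V^{\prime}$ clauses are literally identical in the two definitions. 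So the real content is checking the remaining five clauses: the two "composition" clauses ($s{\leq}{\circ}Rt$ implies $\lbrack s\rbrack{\leq^{\prime}}{\circ}R^{\prime}{\circ}{\leq^{\prime}}\lbrack t\rbrack$, and the $\geq$-analogue), and the three "semantic coherence" clauses for ${\rightarrow}$, ${\square}$ and ${\lozenge}$.

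First I would dispatch the two composition clauses. Suppose $s{\leq}{\circ}Rt$, so there is $u$ with $s{\leq}u$ and $u{R}t$. Then $s{\simeq}{\circ}R{\circ}{\simeq}t$ holds (taking $u$ as the middle point, with the trivial $\simeq$-steps $s{\simeq}s$ and $t{\simeq}t$ — wait, we need $s{\leq}u$, not $s{\simeq}u$; instead use $\lbrack s\rbrack\leq^{\prime}\lbrack u\rbrack$, which we just established, then $\lbrack u\rbrack R^{\prime}\lbrack t\rbrack$ because $u{\simeq}{\circ}R{\circ}{\simeq}t$ via $u{\simeq}u$, $u{R}t$, $t{\simeq}t$, then $\lbrack t\rbrack\leq^{\prime}\lbrack t\rbrack$), giving $\lbrack s\rbrack{\leq^{\prime}}{\circ}R^{\prime}{\circ}{\leq^{\prime}}\lbrack t\rbrack$. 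The $\geq$-clause is symmetric, using that $\geq^{\prime}$ is the converse of $\leq^{\prime}$ and that $R^{\prime}$ is defined via the symmetric expression $s{\simeq}{\circ}R{\circ}{\simeq}t$.

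Next come the three semantic clauses, and here is where the work concentrates. For ${\square}$: assume ${\square}A{\in}\Sigma$, $s{\models}{\square}A$ and $\lbrack s\rbrack{\leq^{\prime}}{\circ}R^{\prime}\lbrack t\rbrack$, so there is $u$ with $\lbrack s\rbrack\leq^{\prime}\lbrack u\rbrack$ and $\lbrack u\rbrack R^{\prime}\lbrack t\rbrack$, i.e. $u{\simeq}{\circ}R{\circ}{\simeq}t$: pick $v,w$ with $u{\simeq}v$, $v{R}w$, $w{\simeq}t$. Since $\Sigma$ is closed, ${\square}A{\in}\Sigma$ gives $A{\in}\Sigma$. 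From $\lbrack s\rbrack\leq^{\prime}\lbrack u\rbrack$ and ${\square}A{\in}\Sigma$ and $s{\models}{\square}A$ we get $u{\models}{\square}A$; from $u{\simeq}v$ and ${\square}A{\in}\Sigma$ we get $v{\models}{\square}A$; since $v{R}w$ (hence $v{\leq}{\circ}Rw$ trivially), $w{\models}A$; and $w{\simeq}t$ with $A{\in}\Sigma$ yields $t{\models}A$. The ${\lozenge}$ case runs dually (using $A{\in}\Sigma$, transporting $t{\models}A$ backwards along the $\simeq$-and-$R$-and-$\geq^{\prime}$ chain to get $s{\models}{\lozenge}A$), and the ${\rightarrow}$ case is the most delicate because the relevant clause in the general definition speaks only of $\leq^{\prime}$, not a composition: assume $A{\rightarrow}B{\in}\Sigma$, $s{\models}A{\rightarrow}B$, $\lbrack s\rbrack\leq^{\prime}\lbrack t\rbrack$ and $t{\models}A$; since $\Sigma$ is closed, $A,B{\in}\Sigma$; from $\lbrack s\rbrack\leq^{\prime}\lbrack t\rbrack$, $A{\rightarrow}B{\in}\Sigma$ and $s{\models}A{\rightarrow}B$ we obtain $t{\models}A{\rightarrow}B$ directly by the definition of $\leq^{\prime}$, and then $t{\models}A$ forces $t{\models}B$. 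The main obstacle, and the only place demanding care, is making sure at each step that the formula being transported (e.g. $A$, $B$, ${\square}A$, $A{\rightarrow}B$) genuinely lies in $\Sigma$ — this is exactly what closedness of $\Sigma$ delivers, so I would state that reliance explicitly at the start. Once all eight clauses are verified, the structure is a filtration by definition, proving Lemma~\ref{lemma:smallest:filtrations}.
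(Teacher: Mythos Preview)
Your proposal is correct: it carefully verifies each of the eight clauses in the definition of a filtration, using closedness of $\Sigma$ exactly where needed and the Heredity Property for the $\leq$-to-$\leq'$ clause. The paper itself offers no argument beyond the word ``Obviously,'' so your approach is not different from the paper's---it is simply the explicit unpacking of what the paper leaves to the reader.
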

\begin{definition}[Largest filtrations]
A {\em largest filtration of a model $(W,{\leq},{R},V)$ with respect to a closed set $\Sigma$ of formulas}\/ is a structure $(W^{\prime},{\leq^{\prime}},{R^{\prime}},V^{\prime})$ such that
\begin{itemize}
\item $W^{\prime}{=}W/{\simeq}$,
\item for all $s,t{\in}W$, $\lbrack s\rbrack\leq^{\prime}\lbrack t\rbrack$ if and only if for all formulas $A$, if $A{\in}\Sigma$ and $s{\models}A$ then $t{\models}A$,
\item for all $s,t{\in}W$, $\lbrack s\rbrack R^{\prime}\lbrack t\rbrack$ if and only if
\begin{itemize}
\item for all formulas $A$, if ${\square}A{\in}\Sigma$ and $s{\models}{\square}A$ then $t{\models}A$,
\item for all formulas $A$, if ${\lozenge}A{\in}\Sigma$ and $t{\models}A$ then $s{\models}{\lozenge}A$,
\end{itemize}
\item for all atoms $p$, if $p{\in}\Sigma$ then $V^{\prime}(p){=}V(p)/{\simeq}$.
\end{itemize}
\end{definition}
Obviously,
\begin{lemma}\label{lemma:largest:filtrations}
The largest filtrations of a model $(W,{\leq},{R},V)$ with respect to a closed set $\Sigma$ of formulas are filtrations of $(W,{\leq},{R},V)$ with respect to $\Sigma$.
\end{lemma}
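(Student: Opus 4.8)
The plan is to verify, clause by clause, that any structure $(W^{\prime},{\leq^{\prime}},{R^{\prime}},V^{\prime})$ satisfying the definition of a largest filtration of $(W,{\leq},{R},V)$ with respect to $\Sigma$ is a model and meets the eight conditions in the definition of filtration. First I would check that it is a model. The relation ${\leq^{\prime}}$, being ``$\lbrack s\rbrack{\leq^{\prime}}\lbrack t\rbrack$ iff every formula of $\Sigma$ true at $s$ is true at $t$'', is visibly reflexive and transitive, hence a preorder on $W^{\prime}{=}W/{\simeq}$. And $V^{\prime}$ is a valuation: for each atom $p{\in}\Sigma$, if $\lbrack s\rbrack{\in}V^{\prime}(p){=}V(p)/{\simeq}$ and $\lbrack s\rbrack{\leq^{\prime}}\lbrack t\rbrack$ then, since $p{\in}\Sigma$ and $s{\models}p$, we get $t{\models}p$, so $\lbrack t\rbrack{\in}V^{\prime}(p)$; for atoms $p{\not\in}\Sigma$ the definition of largest filtration imposes nothing on $V^{\prime}(p)$, which we therefore take to be ${\leq^{\prime}}$-closed.

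Next I would dispatch the straightforward clauses. The identity $W^{\prime}{=}W/{\simeq}$ and the clause on $V^{\prime}$ hold by definition. If $s{\leq}t$ then, by the Heredity Property (Lemma~\ref{lemma:HB:monotonicity}), every formula true at $s$ is true at $t$, so $\lbrack s\rbrack{\leq^{\prime}}\lbrack t\rbrack$. For the ${\rightarrow}$-clause, if $A{\rightarrow}B{\in}\Sigma$, $s{\models}A{\rightarrow}B$, $\lbrack s\rbrack{\leq^{\prime}}\lbrack t\rbrack$ and $t{\models}A$, then $A{\rightarrow}B{\in}\Sigma$ gives $t{\models}A{\rightarrow}B$, and reflexivity of ${\leq}$ then forces $t{\models}B$. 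The ${\square}$-clause and the ${\lozenge}$-clause are essentially built into the definition of $R^{\prime}$: given $\lbrack s\rbrack{\leq^{\prime}}{\circ}R^{\prime}\lbrack t\rbrack$ with ${\square}A{\in}\Sigma$ and $s{\models}{\square}A$, choose a representative $u$ of the intermediate class; then $\lbrack s\rbrack{\leq^{\prime}}\lbrack u\rbrack$ yields $u{\models}{\square}A$ and $\lbrack u\rbrack R^{\prime}\lbrack t\rbrack$ yields $t{\models}A$. The ${\lozenge}$-clause is the mirror image, using $\lbrack s\rbrack{\geq^{\prime}}{\circ}R^{\prime}\lbrack t\rbrack$, choosing a representative $u$ of the intermediate class, getting $u{\models}{\lozenge}A$ from $\lbrack u\rbrack R^{\prime}\lbrack t\rbrack$ and then $s{\models}{\lozenge}A$ from $\lbrack u\rbrack{\leq^{\prime}}\lbrack s\rbrack$.

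The two clauses about ${\leq}{\circ}R$ and ${\geq}{\circ}R$ are where a little care is required, and this is the only mildly delicate point. The key observation is that $uRt$ already implies $\lbrack u\rbrack R^{\prime}\lbrack t\rbrack$ in a largest filtration: if ${\square}A{\in}\Sigma$ and $u{\models}{\square}A$, then $u{\leq}u$ together with $uRt$ gives $u{\leq}{\circ}Rt$, hence $t{\models}A$; and if ${\lozenge}A{\in}\Sigma$ and $t{\models}A$, then $u{\geq}u$ together with $uRt$ gives $u{\geq}{\circ}Rt$, hence $u{\models}{\lozenge}A$; so both defining requirements of $\lbrack u\rbrack R^{\prime}\lbrack t\rbrack$ hold. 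Consequently, if $s{\leq}{\circ}Rt$, say $s{\leq}u$ and $uRt$, then $\lbrack s\rbrack{\leq^{\prime}}\lbrack u\rbrack R^{\prime}\lbrack t\rbrack{\leq^{\prime}}\lbrack t\rbrack$, i.e.\ $\lbrack s\rbrack{\leq^{\prime}}{\circ}R^{\prime}{\circ}{\leq^{\prime}}\lbrack t\rbrack$; and if $s{\geq}{\circ}Rt$, say $u{\leq}s$ and $uRt$, then $\lbrack s\rbrack{\geq^{\prime}}\lbrack u\rbrack R^{\prime}\lbrack t\rbrack{\geq^{\prime}}\lbrack t\rbrack$, i.e.\ $\lbrack s\rbrack{\geq^{\prime}}{\circ}R^{\prime}{\circ}{\geq^{\prime}}\lbrack t\rbrack$. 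Everything else is routine bookkeeping; the one thing to remember is to produce the intermediate equivalence classes and to invoke reflexivity of ${\leq}$ when checking the needed $R^{\prime}$-edges.
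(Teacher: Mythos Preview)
Your verification is correct and complete: you check that $(W',\leq',R',V')$ is a model and then run through each clause of the definition of filtration, with the only nontrivial points being the observation that $uRt$ already forces $[u]R'[t]$ in the largest filtration and the use of $\leq'$ to push $\square A$ (resp.\ $\lozenge A$) across the intermediate class. The paper's own proof is simply the word ``Obviously,'' so your argument is exactly the routine unpacking the paper leaves to the reader.
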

By Lemma~\ref{size:of:least:closed:set:containing:some:given:formula}, Lindenbaum Lemma and Lemmas~\ref{lemma:about:consistent:logic:and:proper:theories}, \ref{lemma:truth:lemma}, \ref{lemma:finite:classes:filtration:contains}, \ref{filtration:lemma}, \ref{lemma:smallest:filtrations} and~\ref{lemma:largest:filtrations}, we obtain the
\begin{proposition}[Finite Frame Property of $\L_{\min}$]\label{proposition:ffp:of:Lmin}
For all formulas $A$, if $A{\not\in}\L_{\min}$
\linebreak
then there exists a frame $(W,{\leq},{R})$ such that $(W,{\leq},{R}){\not\models}A$ and $\card(W)\leq2^{{\parallel}A{\parallel}}$.
\end{proposition}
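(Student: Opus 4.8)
The plan is to prove the Finite Frame Property of $\L_{\min}$ by combining the completeness of $\L_{\min}$ with respect to $\Log({\mathcal C}_{\allfra})$ (Proposition~\ref{proposition:soundness:completeness}) with a filtration argument. Suppose $A{\not\in}\L_{\min}$. By Lindenbaum Lemma, there exists a prime $\L_{\min}$-theory $\Gamma$ such that $A{\not\in}\Gamma$. By the Truth Lemma (Lemma~\ref{lemma:truth:lemma}), the canonical model $(W_{\L_{\min}},{\leq_{\L_{\min}}},{R_{\L_{\min}}},V_{\L_{\min}})$ satisfies $\Gamma{\not\models}A$, so $(W_{\L_{\min}},{\leq_{\L_{\min}}},{R_{\L_{\min}}}){\not\models}A$. (Alternatively, one starts from any model $(W,{\leq},{R},V)$ and state $s{\in}W$ with $s{\not\models}A$, which exists by Proposition~\ref{proposition:soundness:completeness}.)

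Next I would take $\Sigma{=}\Sigma_{A}$, the least closed set of formulas containing $A$; by Lemma~\ref{size:of:least:closed:set:containing:some:given:formula}, $\card(\Sigma_{A}){\leq}{\parallel}A{\parallel}$. Let $(W^{\prime},{\leq^{\prime}},{R^{\prime}},V^{\prime})$ be a filtration of the model on which $A$ fails with respect to $\Sigma_{A}$ — for concreteness the smallest filtration, which is a genuine filtration by Lemma~\ref{lemma:smallest:filtrations} (Lemma~\ref{lemma:largest:filtrations} would serve equally well). By Lemma~\ref{lemma:finite:classes:filtration:contains}, $\card(W^{\prime}){\leq}2^{\card(\Sigma_{A})}{\leq}2^{{\parallel}A{\parallel}}$. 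By the Filtration Lemma (Lemma~\ref{filtration:lemma}), since $A{\in}\Sigma_{A}$, we have $\lbrack s\rbrack{\not\models}A$ in the filtrated model, where $s$ is the state witnessing the failure of $A$ in the original model. Hence $(W^{\prime},{\leq^{\prime}},{R^{\prime}},V^{\prime}){\not\models}A$, so $(W^{\prime},{\leq^{\prime}},{R^{\prime}}){\not\models}A$, and $(W^{\prime},{\leq^{\prime}},{R^{\prime}})$ is a frame with at most $2^{{\parallel}A{\parallel}}$ states on which $A$ is not valid, as required.

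One point that needs care is that the definition of filtration requires $W^{\prime}{=}W/{\simeq}$ and $\leq^{\prime}$ to be a preorder on $W^{\prime}$; for the smallest (and largest) filtration, reflexivity and transitivity of $\leq^{\prime}$ are immediate from its definition via preservation of $\Sigma$-formulas, so $(W^{\prime},{\leq^{\prime}},{R^{\prime}})$ is indeed a frame in the sense of the paper, and $V^{\prime}$ restricted to atoms in $\Sigma_{A}$ takes $\leq^{\prime}$-closed values while atoms outside $\Sigma_{A}$ may be assigned any $\leq^{\prime}$-closed value (e.g.\ $\emptyset$), so $V^{\prime}$ is a valuation. Since $\L_{\min}{=}\Log({\mathcal C}_{\allfra})$, membership in $\L_{\min}$ coincides with validity in the class of all frames, and no confluence condition needs to be preserved under filtration — this is precisely why the argument works smoothly for $\L_{\min}$. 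The main obstacle, such as it is, is not in the present proof (which is a routine assembly of the cited lemmas) but lies hidden in the Filtration Lemma itself, whose inductive step for $\square$- and $\lozenge$-formulas must be verified using exactly the compositional clauses ${\leq^{\prime}}{\circ}R^{\prime}{\circ}{\leq^{\prime}}$ and ${\geq^{\prime}}{\circ}R^{\prime}{\circ}{\geq^{\prime}}$ built into the definition of filtration; here we are entitled to invoke it as already established.
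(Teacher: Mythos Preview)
Your proposal is correct and follows essentially the same approach as the paper: the paper's own proof is merely the one-line citation ``By Lemma~\ref{size:of:least:closed:set:containing:some:given:formula}, Lindenbaum Lemma and Lemmas~\ref{lemma:about:consistent:logic:and:proper:theories}, \ref{lemma:truth:lemma}, \ref{lemma:finite:classes:filtration:contains}, \ref{filtration:lemma}, \ref{lemma:smallest:filtrations} and~\ref{lemma:largest:filtrations}'', and your argument is a faithful unpacking of exactly that chain (canonical model via Lindenbaum and Truth Lemmas, then filter through $\Sigma_{A}$ and bound the size). Your added remarks on why $\leq^{\prime}$ is a preorder and why no confluence condition needs preserving are correct and make explicit what the paper leaves implicit.
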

%
%
%The proof of Proposition~\ref{proposition:complexity:satisfiability:Kg:decidable} is similar to the proof of Theorem~$6.15$ in~\cite{Blackburn:et:al:2001}.
%
%
\begin{proposition}[Decidability of $\L_{\min}$]\label{proposition:complexity:satisfiability:Kg:decidable}
The membership problem in $\L_{\min}$ is decidable.
\end{proposition}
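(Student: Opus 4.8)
The plan is to reduce the membership problem to a finite search, using the Finite Frame Property established in Proposition~\ref{proposition:ffp:of:Lmin} together with the fact that $\L_{\min}{=}\Log({\mathcal C}_{\allfra})$ (Proposition~\ref{proposition:soundness:completeness}). Given an input formula $A$, the key observation is that by Proposition~\ref{proposition:ffp:of:Lmin}, $A{\not\in}\L_{\min}$ if and only if there is a frame $(W,{\leq},{R})$ refuting $A$ with $\card(W){\leq}2^{{\parallel}A{\parallel}}$. Since $\L_{\min}{=}\Log({\mathcal C}_{\allfra})$ by Proposition~\ref{proposition:soundness:completeness}, we also have $A{\in}\L_{\min}$ if and only if $A$ is valid in every frame, hence $A{\not\in}\L_{\min}$ if and only if $A$ is refuted in some finite frame of size at most $2^{{\parallel}A{\parallel}}$. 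Thus $A{\in}\L_{\min}$ if and only if $A$ holds in every model $(W,{\leq},{R},V)$ whose underlying set $W$ has at most $2^{{\parallel}A{\parallel}}$ elements.

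First I would make precise that the set of relevant structures is finite and effectively enumerable: up to isomorphism there are only finitely many triples $(W,{\leq},{R})$ with $\card(W){\leq}2^{{\parallel}A{\parallel}}$ (there are boundedly many preorders and boundedly many binary relations on a fixed finite carrier), and for each such frame there are only finitely many valuations that need to be checked, because only the atoms occurring in $A$ matter and each such atom is assigned a $\leq$-closed subset of $W$, of which there are finitely many. For each candidate model and each state $s{\in}W$, the relation $(W,{\leq},{R},V),s{\models}A$ is decidable by a straightforward recursion on the structure of $A$, following the clauses of the satisfiability definition; the $\square$- and $\lozenge$-clauses require computing the composition relations ${\leq}{\circ}{R}$ and ${\geq}{\circ}{R}$, which is a finite computation on a finite frame. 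Hence ``$A$ holds in every model of size at most $2^{{\parallel}A{\parallel}}$'' is a decidable predicate.

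Then I would conclude: the algorithm is to enumerate all frames $(W,{\leq},{R})$ with $\card(W){\leq}2^{{\parallel}A{\parallel}}$, all $\leq$-closed valuations of the atoms of $A$ on each, and all states; if some such model falsifies $A$ at some state, output ``$A{\not\in}\L_{\min}$'', otherwise output ``$A{\in}\L_{\min}$''. Soundness of this procedure is immediate: if a finite model refutes $A$ then $(W,{\leq},{R}){\not\models}A$, so $A{\not\in}\Log({\mathcal C}_{\allfra}){=}\L_{\min}$; conversely, if $A{\not\in}\L_{\min}$, Proposition~\ref{proposition:ffp:of:Lmin} furnishes exactly such a refuting frame within the size bound, and any valuation witnessing the refutation may be taken to assign to each atom a $\leq$-closed set, and only the atoms of $A$ are relevant, so the search finds it. Termination is clear since every loop ranges over a finite set. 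There is no real obstacle here: the entire content has been done in Propositions~\ref{proposition:soundness:completeness} and~\ref{proposition:ffp:of:Lmin}; the only mild point to dispatch carefully is that restricting attention to valuations of the finitely many atoms occurring in $A$ and to finitely many $\leq$-closed subsets does not lose any refuting model, which follows because the satisfiability of $A$ at a state depends only on the values of $V$ on $\var(A)$ and, by the Heredity Property (Lemma~\ref{lemma:HB:monotonicity}), witnessing valuations may be taken $\leq$-closed by definition of valuations.

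\begin{proof}
By Proposition~\ref{proposition:soundness:completeness}, $\L_{\min}{=}\Log({\mathcal C}_{\allfra})$, so for all formulas $A$, $A{\in}\L_{\min}$ if and only if ${\mathcal C}_{\allfra}{\models}A$. By Proposition~\ref{proposition:ffp:of:Lmin}, if $A{\not\in}\L_{\min}$ then there exists a frame $(W,{\leq},{R})$ such that $(W,{\leq},{R}){\not\models}A$ and $\card(W){\leq}2^{{\parallel}A{\parallel}}$. Consequently, $A{\in}\L_{\min}$ if and only if for all frames $(W,{\leq},{R})$ with $\card(W){\leq}2^{{\parallel}A{\parallel}}$, $(W,{\leq},{R}){\models}A$.

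Fix $A$. Up to isomorphism, there are only finitely many relational structures $(W,{\leq},{R})$ with $W{\subseteq}\{1,\ldots,2^{{\parallel}A{\parallel}}\}$, $\leq$ a preorder on $W$ and $R$ a binary relation on $W$; this set can be effectively enumerated. Given such a frame, $(W,{\leq},{R}){\models}A$ if and only if for all valuations $V$ on $(W,{\leq},{R})$, $(W,{\leq},{R},V){\models}A$. Since the satisfiability of $A$ at any state depends only on the restriction of $V$ to the atoms occurring in $A$, and since, by definition, a valuation assigns to each atom a $\leq$-closed subset of $W$, it suffices to range over the finitely many functions from the atoms of $A$ to the finite collection of $\leq$-closed subsets of $W$. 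For each such valuation $V$ and each $s{\in}W$, the relation $(W,{\leq},{R},V),s{\models}A$ is computed by recursion on $A$ following the clauses of the definition of satisfiability; the clauses for ${\square}$ and ${\lozenge}$ require computing the finite relations ${\leq}{\circ}{R}$ and ${\geq}{\circ}{R}$, which is effective on a finite frame. Hence ``$(W,{\leq},{R}){\models}A$ for every frame $(W,{\leq},{R})$ with $\card(W){\leq}2^{{\parallel}A{\parallel}}$'' is a decidable property of $A$.

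Therefore, the following procedure decides the membership problem in $\L_{\min}$: on input $A$, enumerate all frames $(W,{\leq},{R})$ with $\card(W){\leq}2^{{\parallel}A{\parallel}}$, all valuations of the atoms occurring in $A$ by $\leq$-closed subsets of $W$, and all states $s{\in}W$; if some such model satisfies $(W,{\leq},{R},V),s{\not\models}A$, output ``$A{\not\in}\L_{\min}$''; otherwise, output ``$A{\in}\L_{\min}$''. If the procedure finds a refuting model, then $(W,{\leq},{R}){\not\models}A$, so ${\mathcal C}_{\allfra}{\not\models}A$ and $A{\not\in}\L_{\min}$. Conversely, if $A{\not\in}\L_{\min}$, then by Proposition~\ref{proposition:ffp:of:Lmin} there is a frame $(W,{\leq},{R})$ with $\card(W){\leq}2^{{\parallel}A{\parallel}}$ and a valuation witnessing $(W,{\leq},{R}){\not\models}A$ at some state, and this witness is found by the search since only the atoms of $A$ and only $\leq$-closed assignments are relevant. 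All loops range over finite sets, so the procedure terminates. Hence the membership problem in $\L_{\min}$ is decidable.
\end{proof}
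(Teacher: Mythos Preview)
Your proof is correct and follows essentially the same approach as the paper, which simply cites \cite[Theorem~$6.15$]{Blackburn:et:al:2001} together with Proposition~\ref{proposition:ffp:of:Lmin}; you have in effect unpacked the content of that general decidability-from-finite-frame-property theorem in this specific case, with the explicit bounded-search algorithm. The only minor difference is that you invoke Proposition~\ref{proposition:soundness:completeness} separately for the soundness direction, whereas the cited theorem packages both directions once finite axiomatizability and the finite frame property are in hand.
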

\begin{proof}
By~\cite[Theorem~$6.15$]{Blackburn:et:al:2001} and Proposition~\ref{proposition:ffp:of:Lmin}.
\medskip
\end{proof}
Moreover, by Lemma~\ref{size:of:least:closed:set:containing:some:given:formula}, \ref{lemma:about:up:and:down:frames:and:the:corresponding:intersectional:frames}, \ref{lemma:satisfiability:does:not:change:if:intersectional:update:of:a:model}, Lindenbaum Lemma and Lemmas~\ref{lemma:about:consistent:logic:and:proper:theories}, \ref{lemma:truth:lemma}, \ref{lemma:about:what:happens:if:T:square:is:in:the:logic}, \ref{lemma:about:what:happens:if:T:lozenge:is:in:the:logic}, \ref{lemma:about:what:happens:if:B:square:is:in:the:logic}, \ref{lemma:about:what:happens:if:B:lozenge:is:in:the:logic}, \ref{lemma:finite:classes:filtration:contains}, \ref{filtration:lemma} and \ref{lemma:smallest:filtrations}, using the fact that the smallest filtrations of a model based on a reflexive frame are based on a reflexive frame and the smallest filtrations of a model based on a symmetric frame are based on a symmetric frame, we obtain the
\begin{proposition}[Finite Frame Property of $\L_{\reflexive}$, $\L_{\symmetric}$ and $\L_{\reflexive}{\oplus}\L_{\symmetric}$]\label{proposition:ffp:of:Lmin:plus:T:without:sans:up:and:down}
Let $A$ be a formula $A$.
\begin{enumerate}
\item If $A{\not\in}\L_{\reflexive}$ then there exists a frame $(W,{\leq},{R})$ such that $(W,{\leq},{R}){\models}\L_{\reflexive}$, $(W,
$\linebreak$
{\leq},{R}){\not\models}A$ and $\card(W)\leq2^{{\parallel}A{\parallel}}$,
\item if $A{\not\in}\L_{\symmetric}$ then there exists a frame $(W,{\leq},{R})$ such that $(W,{\leq},{R}){\models}\L_{\symmetric}$, $(W,{\leq},{R}){\not\models}A$ and $\card(W)\leq2^{{\parallel}A{\parallel}}$,
\item if $A{\not\in}\L_{\reflexive}{\oplus}\L_{\symmetric}$ then there exists a frame $(W,{\leq},{R})$ such that $(W,{\leq},{R}){\models}
$\linebreak$
\L_{\reflexive}{\oplus}\L_{\symmetric}$, $(W,{\leq},{R}){\not\models}A$ and $\card(W)\leq2^{{\parallel}A{\parallel}}$.
\end{enumerate}
\end{proposition}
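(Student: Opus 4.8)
The plan is to follow the blueprint already established for the Finite Frame Property of $\L_{\min}$ (Proposition~\ref{proposition:ffp:of:Lmin}), adapting it to the reflexive, symmetric, and combined cases by choosing \emph{smallest} filtrations rather than arbitrary ones. First I would fix a formula $A$ with $A{\not\in}\L_{\reflexive}$ (resp.\ $\L_{\symmetric}$, resp.\ $\L_{\reflexive}{\oplus}\L_{\symmetric}$). Since these logics are consistent, Lemma~\ref{lemma:about:consistent:logic:and:proper:theories} guarantees their canonical model exists, and by the Lindenbaum Lemma together with the Truth Lemma (Lemma~\ref{lemma:truth:lemma}) there is a prime $\L$-theory $\Gamma$ in the canonical model $(W_{\L},{\leq_{\L}},{R_{\L}},V_{\L})$ with $\Gamma{\not\models}A$. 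By the relevant canonicity lemmas (Lemmas~\ref{lemma:about:what:happens:if:T:square:is:in:the:logic}, \ref{lemma:about:what:happens:if:T:lozenge:is:in:the:logic}, \ref{lemma:about:what:happens:if:B:square:is:in:the:logic}, \ref{lemma:about:what:happens:if:B:lozenge:is:in:the:logic}) the canonical frame is up- and down-reflexive (resp.\ up- and down-symmetric, resp.\ both), so by Lemma~\ref{lemma:about:up:and:down:frames:and:the:corresponding:intersectional:frames} its intersectional update $R^{\prime}{=}({\leq_{\L}}{\circ}{R_{\L}}{\circ}{\leq_{\L}}){\cap}({\geq_{\L}}{\circ}{R_{\L}}{\circ}{\geq_{\L}})$ yields a genuinely reflexive (resp.\ symmetric, resp.\ reflexive and symmetric) frame, and by Lemma~\ref{lemma:satisfiability:does:not:change:if:intersectional:update:of:a:model} the satisfiability of $A$ is unchanged, so we still have a point refuting $A$ in a model based on a reflexive (resp.\ symmetric, resp.\ partition-like) frame that validates $\L$.

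Next I would set $\Sigma{=}\Sigma_{A}$, the least closed set of formulas containing $A$; by Lemma~\ref{size:of:least:closed:set:containing:some:given:formula} we have $\card(\Sigma){\leq}{\parallel}A{\parallel}$. Taking a smallest filtration $(W^{\prime},{\leq^{\prime}},{R^{\prime}},V^{\prime})$ of the (intersectional) canonical model with respect to $\Sigma$, Lemma~\ref{lemma:smallest:filtrations} tells us it is indeed a filtration, Lemma~\ref{filtration:lemma} (the Filtration Lemma) gives $\lbrack s\rbrack{\models}B\Leftrightarrow s{\models}B$ for all $B{\in}\Sigma$ and hence a point $\lbrack\Gamma\rbrack$ refuting $A$ in $W^{\prime}$, and Lemma~\ref{lemma:finite:classes:filtration:contains} bounds $\card(W^{\prime}){\leq}2^{\card(\Sigma)}{\leq}2^{{\parallel}A{\parallel}}$. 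The crucial extra ingredient — the part the proposition's preamble flags explicitly — is that the smallest filtration of a model based on a reflexive frame is again based on a reflexive frame, and likewise for symmetric frames. For reflexivity this is immediate: if $sRs$ for every $s$, then $s{\simeq}{\circ}R{\circ}{\simeq}s$ trivially, so $\lbrack s\rbrack R^{\prime}\lbrack s\rbrack$. For symmetry, if $\lbrack s\rbrack R^{\prime}\lbrack t\rbrack$ then $s{\simeq}u$, $uRv$, $v{\simeq}t$ for some $u,v$; symmetry of $R$ gives $vRu$, whence $t{\simeq}v R u{\simeq}s$, i.e.\ $\lbrack t\rbrack R^{\prime}\lbrack s\rbrack$. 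Combining, the smallest filtration of a model based on a partition is based on a reflexive and symmetric frame. Thus the filtered frame validates the appropriate logic, refutes $A$, and has the required cardinality.

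The main obstacle — modest, but worth care — is bookkeeping around \emph{which} frame the filtration is taken over. One must filter the \emph{intersectional update} of the canonical model (so that the starting frame is honestly reflexive/symmetric, not merely up-/down-reflexive/symmetric), since the preservation arguments above use genuine reflexivity/symmetry of $R$. A secondary subtlety is that smallest filtrations, unlike largest ones, are used here precisely because the reflexivity/symmetry preservation hinges on the ``$s{\simeq}{\circ}R{\circ}{\simeq}t$'' characterization of $R^{\prime}$; the largest filtration need not preserve these properties, which is why the proposition is stated only for $\L_{\reflexive}$, $\L_{\symmetric}$ and their join and not, say, for transitive confluence conditions. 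Once these choices are pinned down, each of the three items of Proposition~\ref{proposition:ffp:of:Lmin:plus:T:without:sans:up:and:down} follows by reading off the corresponding canonicity lemmas and invoking the chain of filtration lemmas verbatim, exactly as in the proof sketch already given for $\L_{\min}$.
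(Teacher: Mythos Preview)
Your proposal is correct and follows essentially the same approach as the paper: the paper's proof is just a list of exactly the lemmas you invoke (Lemmas~\ref{size:of:least:closed:set:containing:some:given:formula}, \ref{lemma:about:up:and:down:frames:and:the:corresponding:intersectional:frames}, \ref{lemma:satisfiability:does:not:change:if:intersectional:update:of:a:model}, the Lindenbaum Lemma, Lemmas~\ref{lemma:about:consistent:logic:and:proper:theories}, \ref{lemma:truth:lemma}, \ref{lemma:about:what:happens:if:T:square:is:in:the:logic}--\ref{lemma:about:what:happens:if:B:lozenge:is:in:the:logic}, \ref{lemma:finite:classes:filtration:contains}, \ref{filtration:lemma}, \ref{lemma:smallest:filtrations}) together with the observation that smallest filtrations preserve reflexivity and symmetry. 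You have in fact supplied more detail than the paper does, including the important bookkeeping point that one must pass to the intersectional update before filtering so that the starting frame is genuinely reflexive/symmetric rather than merely up-/down-reflexive/symmetric.
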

\begin{proposition}[Decidability of $\L_{\reflexive}$, $\L_{\symmetric}$ and $\L_{\reflexive}{\oplus}\L_{\symmetric}$]\label{proposition:complexity:satisfiability:Lmin:plus:B:lozenge:decidable:abc}
The membership problems in the following intuitionistic modal logics are decidable: $\L_{\reflexive}$, $\L_{\symmetric}$ and $\L_{\reflexive}{\oplus}\L_{\symmetric}$.
\end{proposition}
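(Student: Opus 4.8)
The plan is to imitate, in structure, the proof of Proposition~\ref{proposition:complexity:satisfiability:Kg:decidable}: derive decidability from a finite frame property carrying a computable size bound, via~\cite[Theorem~$6.15$]{Blackburn:et:al:2001}. Concretely, I would argue that for $\L$ one of $\L_{\reflexive}$, $\L_{\symmetric}$, $\L_{\reflexive}{\oplus}\L_{\symmetric}$, a formula $A$ is in $\L$ if and only if $A$ is valid in every frame of cardinality at most $2^{{\parallel}A{\parallel}}$ lying in the class for which $\L$ is sound and complete. The ``only if'' direction is soundness, supplied by Proposition~\ref{lemma:table:7:lemma} (e.g.\ $\L_{\reflexive}$ equals $\Log({\mathcal C}_{\reflexive})$, hence $A{\in}\L_{\reflexive}$ forces $A$ valid on all reflexive frames, in particular the small ones). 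The ``if'' direction is exactly Proposition~\ref{proposition:ffp:of:Lmin:plus:T:without:sans:up:and:down}: if $A{\not\in}\L$ then already some refuting frame validating $\L$ has size at most $2^{{\parallel}A{\parallel}}$.

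Then I would spell out the resulting decision procedure. Given $A$, enumerate (up to isomorphism) all frames $(W,{\leq},{R})$ with $\card(W){\leq}2^{{\parallel}A{\parallel}}$; for each, test whether it belongs to ${\mathcal C}_{\reflexive}$ (resp.\ ${\mathcal C}_{\symmetric}$, resp.\ ${\mathcal C}_{\reflexive}{\cap}{\mathcal C}_{\symmetric}$) and whether $(W,{\leq},{R}){\not\models}A$; answer ``$A{\not\in}\L$'' as soon as such a frame is found, and ``$A{\in}\L$'' once the finite search is exhausted. Correctness is immediate from the biconditional above, and the same three-way case distinction is handled uniformly.

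Finally I would discharge the two routine sub-claims on which the procedure rests: (i) for a finite frame, deciding $(W,{\leq},{R}){\models}A$ is effective, since only the atoms occurring in $A$ matter, there are finitely many ${\leq}$-closed valuations $V\ :\ \At{\longrightarrow}\wp(W)$ on those atoms, and for each, satisfiability of $A$ at each of the finitely many states is computed by recursion on $A$; and (ii) the elementary conditions defining ${\mathcal C}_{\reflexive}$, ${\mathcal C}_{\symmetric}$ and their intersection are first-order, hence decidable on a finite frame --- alternatively one may, via Lemma~\ref{lemma:same:valid:formulas}, replace these classes by ${\mathcal C}_{\ureflexive}{\cap}{\mathcal C}_{\dreflexive}$, ${\mathcal C}_{\usymmetric}{\cap}{\mathcal C}_{\dsymmetric}$, etc., which are equally checkable on a finite frame.

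I do not anticipate a genuine obstacle here: the substantive work --- establishing the finite frame property with the explicit exponential bound, which itself relies on the fact that smallest filtrations of models based on reflexive (resp.\ symmetric) frames remain based on reflexive (resp.\ symmetric) frames --- has already been carried out in Proposition~\ref{proposition:ffp:of:Lmin:plus:T:without:sans:up:and:down}. The only mild care required is bookkeeping: invoking Proposition~\ref{lemma:table:7:lemma} to license the reduction of ``membership in $\L$'' to ``validity on small frames of the appropriate class'', and checking that the three logics are treated by one and the same argument.
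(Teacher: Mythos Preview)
Your proposal is correct and follows exactly the paper's approach: the paper's proof consists of the single line ``By~\cite[Theorem~$6.15$]{Blackburn:et:al:2001} and Proposition~\ref{proposition:ffp:of:Lmin:plus:T:without:sans:up:and:down}.'' You have simply unpacked what that citation amounts to, including the soundness direction via Proposition~\ref{lemma:table:7:lemma} and the routine effectiveness checks.
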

\begin{proof}
By~\cite[Theorem~$6.15$]{Blackburn:et:al:2001} and Proposition~\ref{proposition:ffp:of:Lmin:plus:T:without:sans:up:and:down}.
\medskip
\end{proof}
In other respect, by Lemma~\ref{size:of:least:closed:set:containing:some:given:formula}, Lindenbaum Lemma and Lemmas~\ref{lemma:about:consistent:logic:and:proper:theories}, \ref{lemma:truth:lemma}, \ref{lemma:about:what:happens:if:T:square:is:in:the:logic}, \ref{lemma:about:what:happens:if:T:lozenge:is:in:the:logic}, \ref{lemma:about:what:happens:if:B:square:is:in:the:logic}, \ref{lemma:about:what:happens:if:B:lozenge:is:in:the:logic}, \ref{lemma:finite:classes:filtration:contains}, \ref{filtration:lemma} and \ref{lemma:smallest:filtrations}, using the fact that the smallest filtrations of a model based on an up-reflexive frame are based on an up-reflexive frame, the smallest filtrations of a model based on a down-reflexive frame are based on a down-reflexive frame, the smallest filtrations of a model based on an up-symmetric frame are based on an up-symmetric frame and the smallest filtrations of a model based on a down-symmetric frame are based on a down-symmetric frame, we obtain the
\begin{proposition}[Finite Frame Property of $\L_{\ureflexive}$, $\L_{\dreflexive}$, $\L_{\usymmetric}$ and $\L_{\dsymmetric}$]\label{proposition:ffp:of:Lmin:plus:T:square}
Let $A$ be a formula $A$.
\begin{enumerate}
\item If $A{\not\in}\L_{\ureflexive}$ then there exists a frame $(W,{\leq},{R})$ such that $(W,{\leq},{R}){\models}\L_{\ureflexive}$, $(W,{\leq},{R}){\not\models}A$ and $\card(W)\leq2^{{\parallel}A{\parallel}}$,
\item if $A{\not\in}\L_{\dreflexive}$ then there exists a frame $(W,{\leq},{R})$ such that $(W,{\leq},{R}){\models}\L_{\dreflexive}$, $(W,{\leq},{R}){\not\models}A$ and $\card(W)\leq2^{{\parallel}A{\parallel}}$,
\item if $A{\not\in}\L_{\usymmetric}$ then there exists a frame $(W,{\leq},{R})$ such that $(W,{\leq},{R}){\models}\L_{\usymmetric}$, $(W,{\leq},{R}){\not\models}A$ and $\card(W)\leq2^{{\parallel}A{\parallel}}$,
\item if $A{\not\in}\L_{\dsymmetric}$ then there exists a frame $(W,{\leq},{R})$ such that $(W,{\leq},{R}){\models}\L_{\dsymmetric}$, $(W,{\leq},{R}){\not\models}A$ and $\card(W)\leq2^{{\parallel}A{\parallel}}$.
\end{enumerate}
\end{proposition}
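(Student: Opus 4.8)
Proposition (Finite Frame Property of $\L_{\ureflexive}$, $\L_{\dreflexive}$, $\L_{\usymmetric}$ and $\L_{\dsymmetric}$): for each of these four logics $\L$, if $A \notin \L$ then there exists a frame $(W,{\leq},{R})$ with $(W,{\leq},{R}){\models}\L$, $(W,{\leq},{R}){\not\models}A$ and $\card(W) \leq 2^{{\parallel}A{\parallel}}$.

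**Overall approach.** The plan is to follow the same template already used for $\L_{\min}$ in Proposition~\ref{proposition:ffp:of:Lmin} and for $\L_{\reflexive}$, $\L_{\symmetric}$, $\L_{\reflexive}{\oplus}\L_{\symmetric}$ in Proposition~\ref{proposition:ffp:of:Lmin:plus:T:without:sans:up:and:down}, namely: start from the canonical model, find a point refuting $A$, then take a smallest filtration through $\Sigma_A$. Concretely, fix one of the four logics, say $\L = \L_{\ureflexive}$, and suppose $A \notin \L$. By Lindenbaum Lemma there is a prime $\L$-theory $\Gamma$ with $A \notin \Gamma$, and by Lemma~\ref{lemma:truth:lemma} (Truth Lemma) $\Gamma \not\models A$ in the canonical model $(W_{\L},{\leq_{\L}},{R_{\L}},V_{\L})$; since $\L$ is consistent (it is contained in $\Fo$ properly, as it is sound over a nonempty class of frames), the canonical model exists by Lemma~\ref{lemma:about:consistent:logic:and:proper:theories}. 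Moreover, by Lemma~\ref{lemma:about:what:happens:if:T:square:is:in:the:logic}, the canonical frame $(W_{\L},{\leq_{\L}},{R_{\L}})$ is up-reflexive, and it validates $\L$ by the corresponding canonicity proposition; equivalently one uses modal definability (Lemma~\ref{lemma:correspondence:formulas:T:B:4:elementary:conditions}, item~$\mathbf{(1)}$) to see that up-reflexivity is exactly the frame condition captured by axiom $(\Axiom\mathbf{uref})$.

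**Key steps in order.** First I would take $\Sigma = \Sigma_A$, the least closed set of formulas containing $A$, which is finite by Lemma~\ref{size:of:least:closed:set:containing:some:given:formula} with $\card(\Sigma_A) \leq {\parallel}A{\parallel}$. Second, let $(W^{\prime},{\leq^{\prime}},{R^{\prime}},V^{\prime})$ be a smallest filtration of the canonical model $(W_{\L},{\leq_{\L}},{R_{\L}},V_{\L})$ with respect to $\Sigma_A$; this is a filtration by Lemma~\ref{lemma:smallest:filtrations}, so by Lemma~\ref{filtration:lemma} (Filtration Lemma) $\lbrack\Gamma\rbrack \not\models A$ in it, whence $(W^{\prime},{\leq^{\prime}},{R^{\prime}},V^{\prime}) \not\models A$ and a fortiori $(W^{\prime},{\leq^{\prime}},{R^{\prime}}) \not\models A$. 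Third, by Lemma~\ref{lemma:finite:classes:filtration:contains}, $\card(W^{\prime}) \leq 2^{\card(\Sigma_A)} \leq 2^{{\parallel}A{\parallel}}$. The fourth and crucial step is to verify that $(W^{\prime},{\leq^{\prime}},{R^{\prime}})$ still satisfies the relevant elementary condition, i.e. is up-reflexive (respectively down-reflexive, up-symmetric, down-symmetric), so that by Lemma~\ref{lemma:correspondence:formulas:T:B:4:elementary:conditions} it validates the axiom $(\Axiom\mathbf{uref})$ (resp. $(\Axiom\mathbf{dref})$, $(\Axiom\mathbf{usym})$, $(\Axiom\mathbf{dsym})$) and hence all of $\L$ (the remaining axioms and rules of $\L_{\min}$ are valid in every frame, by Lemmas~\ref{lemma:about:axiom:2} and~\ref{lemma:about:rule:3}). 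Finally I would remark that the four cases are structurally identical, so it suffices to carry out one in detail and indicate the others are analogous.

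**The main obstacle.** The real work is step four: showing that a smallest filtration preserves these ``up/down'' composite conditions, e.g. that if for every $s \in W_{\L}$ we have $s \, ({\leq_{\L}}{\circ}{R_{\L}}{\circ}{\leq_{\L}}) \, s$, then for every $\lbrack s\rbrack \in W^{\prime}$ we have $\lbrack s\rbrack \, ({\leq^{\prime}}{\circ}{R^{\prime}}{\circ}{\leq^{\prime}}) \, \lbrack s\rbrack$. For the smallest filtration, $\lbrack s\rbrack{\leq^{\prime}}\lbrack t\rbrack$ iff $s$ satisfies (on $\Sigma$) everything implying it for $t$, and $\lbrack s\rbrack R^{\prime}\lbrack t\rbrack$ iff $s \, ({\simeq}{\circ}{R_{\L}}{\circ}{\simeq}) \, t$; the point is that $s{\leq_{\L}}u$ implies $\lbrack s\rbrack{\leq^{\prime}}\lbrack u\rbrack$ and $s{R_{\L}}v$ implies $\lbrack s\rbrack R^{\prime}\lbrack v\rbrack$ directly from the definitions, so from $s{\leq_{\L}}u$, $u{R_{\L}}v$, $v{\leq_{\L}}s$ one gets $\lbrack s\rbrack{\leq^{\prime}}\lbrack u\rbrack R^{\prime}\lbrack v\rbrack{\leq^{\prime}}\lbrack s\rbrack$, i.e. $\lbrack s\rbrack \, ({\leq^{\prime}}{\circ}{R^{\prime}}{\circ}{\leq^{\prime}}) \, \lbrack s\rbrack$; the up-symmetric and down-symmetric cases proceed the same way, inserting the extra $\leq_{\L}$'s of the composite before passing to equivalence classes. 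This is exactly the ``using the fact that the smallest filtrations of a model based on an up-reflexive frame are based on an up-reflexive frame'' clause cited in the statement, and once it is checked the rest is bookkeeping; I expect no difficulty beyond keeping the four parallel arguments organized.
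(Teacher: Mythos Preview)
Your proposal is correct and follows essentially the same approach as the paper: start from the canonical model (using Lindenbaum Lemma, Lemma~\ref{lemma:about:consistent:logic:and:proper:theories}, Lemma~\ref{lemma:truth:lemma}, and the canonicity Lemmas~\ref{lemma:about:what:happens:if:T:square:is:in:the:logic}--\ref{lemma:about:what:happens:if:B:lozenge:is:in:the:logic}), take a smallest filtration through $\Sigma_A$ (Lemmas~\ref{size:of:least:closed:set:containing:some:given:formula}, \ref{lemma:finite:classes:filtration:contains}, \ref{filtration:lemma}, \ref{lemma:smallest:filtrations}), and then use the fact---which you verify explicitly---that smallest filtrations preserve up-/down-reflexivity and up-/down-symmetry, together with Lemma~\ref{lemma:correspondence:formulas:T:B:4:elementary:conditions}. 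The paper's own proof is just this list of lemma citations plus the preservation fact stated without proof, so your write-up is in fact more detailed than the original.
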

\begin{proposition}[Decidability of $\L_{\ureflexive}$, $\L_{\dreflexive}$, $\L_{\usymmetric}$ and $\L_{\dsymmetric}$]\label{proposition:complexity:satisfiability:Lmin:plus:B:lozenge:decidable}
The membership
\linebreak
problems in the following intuitionistic modal logics are decidable: $\L_{\ureflexive}$, $\L_{\dreflexive}$, $\L_{\usymmetric}$ and $\L_{\dsymmetric}$.
\end{proposition}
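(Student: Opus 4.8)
The final statement to prove is Proposition~\ref{proposition:complexity:satisfiability:Lmin:plus:B:lozenge:decidable}, which asserts that the membership problems in $\L_{\ureflexive}$, $\L_{\dreflexive}$, $\L_{\usymmetric}$ and $\L_{\dsymmetric}$ are decidable. The plan is to derive this immediately from the finite frame property established in Proposition~\ref{proposition:ffp:of:Lmin:plus:T:square} together with the standard effective-enumeration argument encapsulated in \cite[Theorem~$6.15$]{Blackburn:et:al:2001}, exactly as was done for $\L_{\min}$ in Proposition~\ref{proposition:complexity:satisfiability:Kg:decidable} and for $\L_{\reflexive}$, $\L_{\symmetric}$, $\L_{\reflexive}{\oplus}\L_{\symmetric}$ in Proposition~\ref{proposition:complexity:satisfiability:Lmin:plus:B:lozenge:decidable:abc}.

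Concretely, I would argue as follows. Fix one of the four logics, say $\L_{\ureflexive}$ (the other three cases being literally identical in structure). Since $\L_{\ureflexive}$ is finitely axiomatizable (it is $\L_{\min}{\oplus}(\Axiom\mathbf{uref})$), the set of its theorems is recursively enumerable: one enumerates all proofs. For the complement, suppose $A{\not\in}\L_{\ureflexive}$. By Proposition~\ref{proposition:ffp:of:Lmin:plus:T:square}, there is a frame $(W,{\leq},{R})$ with $(W,{\leq},{R}){\models}\L_{\ureflexive}$, $(W,{\leq},{R}){\not\models}A$ and $\card(W)\leq2^{{\parallel}A{\parallel}}$. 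Since there are only finitely many frames (up to isomorphism) on a carrier of bounded size, and for each such finite frame one can effectively check both whether it validates the finitely many axioms of $\L_{\ureflexive}$ (equivalently, by Lemma~\ref{lemma:correspondence:formulas:T:B:4:elementary:conditions}, whether it is up-reflexive, which is a decidable first-order property of a finite structure) and whether it refutes $A$ (by inspecting all valuations, of which there are finitely many relevant ones because only the atoms occurring in $A$ matter), the set of non-theorems of $\L_{\ureflexive}$ is also recursively enumerable. A set that is r.e. together with its complement is decidable, so the membership problem in $\L_{\ureflexive}$ is decidable. The same reasoning applies verbatim to $\L_{\dreflexive}$, $\L_{\usymmetric}$ and $\L_{\dsymmetric}$, using the respective items of Proposition~\ref{proposition:ffp:of:Lmin:plus:T:square} and the corresponding modal definitions from Lemma~\ref{lemma:correspondence:formulas:T:B:4:elementary:conditions}.

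There is essentially no obstacle here: all the real work has already been done in proving the finite frame property (Proposition~\ref{proposition:ffp:of:Lmin:plus:T:square}), which itself rests on the filtration machinery of Section~\ref{section:finite:frame:property} — in particular Lemma~\ref{filtration:lemma} and the observation that smallest filtrations preserve up-reflexivity, down-reflexivity, up-symmetry and down-symmetry — together with the canonicity/completeness results (Lemmas~\ref{lemma:about:what:happens:if:T:square:is:in:the:logic}, \ref{lemma:about:what:happens:if:T:lozenge:is:in:the:logic}, \ref{lemma:about:what:happens:if:B:square:is:in:the:logic}, \ref{lemma:about:what:happens:if:B:lozenge:is:in:the:logic}). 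The only mild point worth being careful about is that the appeal to \cite[Theorem~$6.15$]{Blackburn:et:al:2001} presupposes effective recognizability of the class of finite frames for the logic; this is guaranteed because each of $\ureflexive$, $\dreflexive$, $\usymmetric$, $\dsymmetric$ is a decidable condition on finite frames. Consequently the proof reduces to a one-line citation, and I would write it as: ``\textbf{Proof:} By~\cite[Theorem~$6.15$]{Blackburn:et:al:2001} and Proposition~\ref{proposition:ffp:of:Lmin:plus:T:square}.~$\dashv$'', mirroring the surrounding propositions.
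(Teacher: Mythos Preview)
Your proposal is correct and matches the paper's proof exactly: the paper writes precisely ``By~\cite[Theorem~$6.15$]{Blackburn:et:al:2001} and Proposition~\ref{proposition:ffp:of:Lmin:plus:T:square}.'' Your expanded discussion of why the cited theorem applies (finite axiomatizability, effective recognizability of the frame classes) is accurate and fills in the details that the one-line citation leaves implicit.
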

\begin{proof}
By~\cite[Theorem~$6.15$]{Blackburn:et:al:2001} and Proposition~\ref{proposition:ffp:of:Lmin:plus:T:square}.
\medskip
\end{proof}
We do not know if the membership problems in $\L_{\utransitive}$ and $\L_{\dtransitive}$ are decidable.
\section{Miscellaneous}\label{section:miscellaneous}
Intuitionistic modal logics are modal logics whose underlying logic is $\IPL$.
There exists multifarious intuitionistic modal logics.
Therefore, one may ask how natural are the variants $\L_{\min}$, $\L_{\fcfra}$, $\L_{\bcfra}$, $\L_{\dcfra}$, $\L_{\fbcfra}$, $\L_{\fdcfra}$, $\L_{\bdcfra}$ and $\L_{\fbdcfra}$ we consider here.
In Chapter~$3$ of his doctoral thesis~\cite{Simpson:1994}, Simpson discusses the following formal features that might be expected of an intuitionistic modal logic, say $\L$:
\begin{itemize}
\item $\L$ is conservative over $\IPL$,
\item $\L$ contains all substitution instances of $\IPL$ and is closed under modus ponens,
\item for all formulas $A,B$, if $A{\vee}B$ is in $\L$ then either $A$ is in $\L$, or $B$ is in $\L$,
\item the addition of the law of excluded middle to $\L$ yields modal logic $\K$,
\item ${\square}$ and ${\lozenge}$ are independent in $\L$.
\end{itemize}
\begin{lemma}\label{lemma:l:min:C1}
For all intuitionistic modal logics $\L$, if $\L$ is contained in $\L_{\fbdcfra}$ then $\L$ is conservative over $\IPL$.
\end{lemma}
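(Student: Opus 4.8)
The plan is to reduce conservativity to the single logic $\L_{\fbdcfra}$ and then exploit the frame-completeness result of Proposition~\ref{proposition:soundness:completeness} together with the ordinary Kripke completeness of $\IPL$. First I would observe that one half of conservativity is free: every intuitionistic modal logic contains the standard axioms of $\IPL$ and is closed under its standard inference rules, so every theorem of $\IPL$ lies in $\L$; in particular every ${\square}$-${\lozenge}$-free theorem of $\IPL$ is in $\L$. Hence it suffices to prove the converse inclusion, namely that for every ${\square}$-${\lozenge}$-free formula $A$, if $A{\in}\L$ then $A{\in}\IPL$. Since $\L$ is contained in $\L_{\fbdcfra}$, and $\L_{\fbdcfra}$ equals $\Log({\mathcal C}_{\fbdcfra})$ by Proposition~\ref{proposition:soundness:completeness}, it is enough to show that $\Log({\mathcal C}_{\fbdcfra})$ contains no ${\square}$-${\lozenge}$-free formula lying outside $\IPL$.

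Next I would argue by contraposition. Suppose $A$ is ${\square}$-${\lozenge}$-free and $A{\not\in}\IPL$. By the Kripke completeness of $\IPL$ (intuitionistic Kripke models being exactly preorders $(W,{\leq})$ equipped with a $\leq$-closed valuation), there are a preorder $(W,{\leq})$, a valuation $V\ :\ \At{\longrightarrow}\wp(W)$ with every $V(p)$ being $\leq$-closed, and a state $s{\in}W$ refuting $A$ in the usual intuitionistic sense. Form the frame $(W,{\leq},{R})$ with ${R}{=}\emptyset$. Since ${\geq}{\circ}{R}{=}{R}{\circ}{\leq}{=}{\leq}{\circ}{R}{=}\emptyset$, the antecedents of the forward-, backward- and downward-confluence conditions are never satisfied, so $(W,{\leq},{R})$ is forward, backward and downward confluent, i.e.\ $(W,{\leq},{R}){\in}{\mathcal C}_{\fbdcfra}$. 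Moreover $V$ is a valuation on this frame, so $(W,{\leq},{R},V)$ is a model based on a frame in ${\mathcal C}_{\fbdcfra}$.

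Then I would verify, by a routine induction on the ${\square}$-${\lozenge}$-free formula $B$, that for every $t{\in}W$ one has $(W,{\leq},{R},V),t{\models}B$ (in the sense of the paper's satisfiability clauses) if and only if $t$ forces $B$ in the underlying intuitionistic Kripke model $(W,{\leq},V)$: the clauses for atoms, ${\top}$, ${\bot}$, ${\wedge}$, ${\vee}$ and ${\rightarrow}$ are literally the intuitionistic ones, and no ${\square}$- or ${\lozenge}$-case ever arises. Applying this at $s$ to $A$ yields $(W,{\leq},{R},V),s{\not\models}A$, hence $(W,{\leq},{R}){\not\models}A$, so $A{\not\in}\Log({\mathcal C}_{\fbdcfra}){=}\L_{\fbdcfra}$, and therefore $A{\not\in}\L$. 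Combined with the easy half above, this shows that for ${\square}$-${\lozenge}$-free $A$, $A{\in}\L$ if and only if $A{\in}\IPL$, i.e.\ $\L$ is conservative over $\IPL$. There is no genuine obstacle in this argument; the only points needing a sentence of care are the appeal to Proposition~\ref{proposition:soundness:completeness} to pass from $\L_{\fbdcfra}$ to $\Log({\mathcal C}_{\fbdcfra})$, the observation that the empty accessibility relation makes each confluence condition vacuously true, and the trivial induction matching the two notions of forcing on modal-free formulas (the Kripke completeness of $\IPL$ being quoted from the references already cited for its standard axiomatization).
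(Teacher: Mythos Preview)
Your proof is correct and follows essentially the same route as the paper: both arguments exploit that a frame with $R=\emptyset$ is vacuously forward, backward and downward confluent, so that intuitionistic Kripke countermodels lift directly to models of $\L_{\fbdcfra}$. The only minor refinement is that you invoke the full equality $\L_{\fbdcfra}=\Log({\mathcal C}_{\fbdcfra})$ from Proposition~\ref{proposition:soundness:completeness}, whereas the paper (and your argument in fact) needs only the soundness inclusion $\L_{\fbdcfra}\subseteq\Log({\mathcal C}_{\fbdcfra})$.
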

\begin{proof}
By the soundness of $\L_{\fbdcfra}$ and the fact that for all frames $(W,{\leq},{R})$, if $R{=}{\emptyset}$ then $(W,{\leq},{R}){\models}\L_{\fbdcfra}$.
\medskip
\end{proof}
\begin{lemma}\label{lemma:l:min:C2}
For all intuitionistic modal logics $\L$, $\L$ contains all substitution instances of $\IPL$ and is closed under modus ponens.
\end{lemma}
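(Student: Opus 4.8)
The plan is simply to unwind the definition of intuitionistic modal logic. Fix an intuitionistic modal logic $\L$. By the Definition of intuitionistic modal logics, $\L$ is closed under uniform substitution, contains the standard axioms of $\IPL$, and is closed under the standard inference rules of $\IPL$; in particular it is closed under modus ponens, since modus ponens is among the standard inference rules of $\IPL$. The second assertion of the lemma is therefore immediate. For the first assertion, recall that a \emph{substitution instance of $\IPL$} is a formula of the form $\sigma(A)$, where $A$ is a theorem of $\IPL$ and $\sigma$ maps atoms to arbitrary formulas of $\Fo$. I would prove $\sigma(A)\in\L$ by induction on the length of a Hilbert-style derivation of $A$ in the standard axiomatization of $\IPL$. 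In the base case $A$ is an $\IPL$-axiom, so $A\in\L$ because $\L$ contains the standard axioms of $\IPL$, and hence $\sigma(A)\in\L$ because $\L$ is closed under uniform substitution. In the inductive step $A$ is obtained from earlier formulas of the derivation by one of the standard inference rules of $\IPL$; transporting the whole derivation along $\sigma$ and invoking the induction hypothesis, $\sigma(A)$ is obtained by that same rule from formulas already known to lie in $\L$, hence $\sigma(A)\in\L$ since $\L$ is closed under the standard inference rules of $\IPL$. This closes the induction.

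There is no genuine obstacle here: the statement is a direct reading-off of the closure conditions built into the definition of intuitionistic modal logic. The only point deserving a word of care is making precise what is meant by ``substitution instance of $\IPL$'' and observing that any $\IPL$-derivation can be transported along a substitution (so that the inductive step goes through), which is routine.
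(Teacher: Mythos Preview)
Your proof is correct and follows the same approach as the paper, which simply says ``By definition of intuitionistic modal logics.'' You have merely unpacked what the paper leaves implicit: that containing the $\IPL$ axioms, being closed under the $\IPL$ inference rules, and being closed under uniform substitution together yield all substitution instances of $\IPL$ theorems.
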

\begin{proof}
By definition of intuitionistic modal logics.
\medskip
\end{proof}
\begin{lemma}[Disjunction Property]\label{lemma:l:min:C3}
For all formulas $A,B$,
\begin{itemize}
\item if $A{\vee}B$ is in $\L_{\min}$ then either $A$ is in $\L_{\min}$, or $B$ is in $\L_{\min}$,
\item if $A{\vee}B$ is in $\L_{\fcfra}$ then either $A$ is in $\L_{\fcfra}$, or $B$ is in $\L_{\fcfra}$,
\item if $A{\vee}B$ is in $\L_{\bcfra}$ then either $A$ is in $\L_{\bcfra}$, or $B$ is in $\L_{\bcfra}$,
\item if $A{\vee}B$ is in $\L_{\fbcfra}$ then either $A$ is in $\L_{\fbcfra}$, or $B$ is in $\L_{\fbcfra}$.
\end{itemize}
\end{lemma}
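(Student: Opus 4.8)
The plan is to treat the four logics uniformly. For $\L\in\{\L_{\min},\L_{\fcfra},\L_{\bcfra},\L_{\fbcfra}\}$ let $\mathcal{C}$ be the corresponding class among $\mathcal{C}_{\allfra}$, $\mathcal{C}_{\fcfra}$, $\mathcal{C}_{\bcfra}$, $\mathcal{C}_{\fbcfra}$, so that $\L{=}\Log(\mathcal{C})$ by Proposition~\ref{proposition:soundness:completeness}. Arguing by contradiction, I would assume $A{\vee}B{\in}\L$ while $A{\not\in}\L$ and $B{\not\in}\L$. By the completeness half of Proposition~\ref{proposition:soundness:completeness} there are models $(W_{1},{\leq_{1}},{R_{1}},V_{1})$ and $(W_{2},{\leq_{2}},{R_{2}},V_{2})$ based on frames in $\mathcal{C}$, together with states $s_{1}{\in}W_{1}$ and $s_{2}{\in}W_{2}$, such that $s_{1}{\not\models}A$ in the first model and $s_{2}{\not\models}B$ in the second. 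After renaming I may assume $W_{1}{\cap}W_{2}{=}\emptyset$ and I pick a fresh $r{\not\in}W_{1}{\cup}W_{2}$. Then I build the model $(W,{\leq},{R},V)$ with $W{=}W_{1}{\cup}W_{2}{\cup}\{r\}$, $\leq$ the transitive closure of ${\leq_{1}}{\cup}{\leq_{2}}{\cup}\{(r,r),(r,s_{1}),(r,s_{2})\}$, ${R}{=}{R_{1}}{\cup}{R_{2}}$ (so $r$ has neither an $R$-predecessor nor an $R$-successor), and $V(p){=}V_{1}(p){\cup}V_{2}(p)$ for every atom $p$. One checks routinely that $\leq$ is a preorder — the only new pairs are $(r,t)$ with $t{=}r$ or $s_{i}{\leq_{i}}t$, and the only state $\leq$-below $r$ is $r$ — and that each $V(p)$ is $\leq$-closed.

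The first key step is a truth-transfer claim: for every formula $C$ and every $x{\in}W_{i}$, $(W,{\leq},{R},V),x{\models}C$ if and only if $(W_{i},{\leq_{i}},{R_{i}},V_{i}),x{\models}C$, proved by induction on $C$. The point is that for $x{\in}W_{i}$ the $\leq$-successors of $x$ in $W$ are exactly its $\leq_{i}$-successors, the states reachable from $x$ by ${\leq}{\circ}{R}$ are exactly those reachable by ${\leq_{i}}{\circ}{R_{i}}$, and although $x$ may have acquired $r$ as an extra $\geq$-predecessor, $r$ has no $R$-successor and hence contributes nothing to the truth condition of $\lozenge$-formulas; the clauses for $\rightarrow,\wedge,\vee$ and for atoms go through because the $\leq$-cone above $x$ is unchanged, and the clause for $\square$ is handled by the second observation. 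Granting this, at the new root I argue: if $r{\models}A$ then by the Heredity Property (Lemma~\ref{lemma:HB:monotonicity}) and $r{\leq}s_{1}$ we get $s_{1}{\models}A$ in $(W,{\leq},{R},V)$, hence $s_{1}{\models}A$ in $(W_{1},{\leq_{1}},{R_{1}},V_{1})$ by the truth-transfer claim, contradicting the choice of $s_{1}$; so $r{\not\models}A$, and symmetrically $r{\not\models}B$, whence $r{\not\models}A{\vee}B$ and therefore $(W,{\leq},{R}){\not\models}A{\vee}B$.

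It remains to check that $(W,{\leq},{R})$ belongs to $\mathcal{C}$, and this is the step I expect to demand the most care. When $\mathcal{C}{=}\mathcal{C}_{\allfra}$ there is nothing to do. For the confluence classes I would show that forward confluence (resp.\ backward confluence) is inherited from the two component frames: any witness $s{\geq}{\circ}{R}t$ (resp.\ $s{R}{\circ}{\leq}t$) in $W$ forces $s$, the intermediate state, and $t$ to lie in a single $W_{i}$ — the root $r$ cannot occur, again because it carries no $R$-edge and is $\leq$-above nothing but itself — so the required $s{R}{\circ}{\geq}t$ (resp.\ $s{\leq}{\circ}{R}t$) is supplied by confluence of $(W_{i},{\leq_{i}},{R_{i}})$ and survives the passage to $W$ since $\leq$ and $R$ only grow; for $\L_{\fbcfra}$ one combines both arguments. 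Hence $(W,{\leq},{R}){\in}\mathcal{C}$ while $(W,{\leq},{R}){\not\models}A{\vee}B$, contradicting $A{\vee}B{\in}\L{=}\Log(\mathcal{C})$. The main obstacle is thus not any hard computation but the careful bookkeeping in these two inductions and case analyses, in particular verifying that the freshly added root is inert for the $\lozenge$-clause and for the confluence conditions alike.
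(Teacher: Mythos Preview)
Your proposal is correct and follows essentially the same route as the paper: both argue by contradiction, use completeness to obtain two countermodels, glue them under a fresh root with no $R$-edges, prove a truth-transfer lemma for the two components, and finish via heredity at the root. Your version is in fact slightly more explicit than the paper's in two places---you spell out why the glued frame remains forward/backward confluent (the paper merely says ``the proof is similar'' for $\L_{\fcfra}$, $\L_{\bcfra}$, $\L_{\fbcfra}$), and you include $(r,r)$ when defining $\leq$---but the underlying argument is the same.
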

\begin{proof}
Let $A,B$ be formulas.
Suppose $A{\vee}B$ is in $\L_{\min}$.
For the sake of the contradiction, suppose neither $A$ is in $\L_{\min}$, nor $B$ is in $\L_{\min}$.
Hence, by the completeness of $\L_{\min}$, there exists a frame $(W_{1},{\leq_{1}},{R_{1}})$ such that $(W_{1},{\leq_{1}},{R_{1}}){\not\models}A$ and there exists a frame $(W_{2},{\leq_{2}},{R_{2}})$ such that $(W_{2},{\leq_{2}},{R_{2}}){\not\models}B$.
Thus, there exists a model $(W_{1},{\leq_{1}},{R_{1}},V_{1})$ based on $(W_{1},{\leq_{1}},
$\linebreak$
{R_{1}})$ such that $(W_{1},{\leq_{1}},{R_{1}},V_{1}){\not\models}A$ and there exists a model $(W_{2},{\leq_{2}},{R_{2}},V_{2})$ based on $(W_{2},{\leq_{2}},{R_{2}})$ such that $(W_{2},{\leq_{2}},{R_{2}},V_{2}){\not\models}B$.
Consequently, there exists $s_{1}{\in}W_{1}$ such that $(W_{1},{\leq_{1}},{R_{1}},V_{1}),s_{1}{\not\models}A$ and there exists $s_{2}{\in}W_{2}$ such that $(W_{2},{\leq_{2}},{R_{2}},V_{2}),
$\linebreak$
s_{2}{\not\models}B$.
Let $s$ be a new state and $(W,{\leq},{R})$ be the frame defined as follows:
\begin{itemize}
\item $W{=}\{s\}{\cup}W_{1}{\cup}W_{2}$,
\item ${\leq}{=}\{(s,t_{1})$: $t_{1}{\in}W_{1}$ and $s_{1}{\leq}t_{1}\}{\cup}\{(s,t_{2})$: $t_{2}{\in}W_{2}$ and $s_{2}{\leq}t_{2}\}{\cup}{\leq_{1}}{\cup}{\leq_{2}}$,
\item ${R}{=}{R_{1}}{\cup}{R_{2}}$.
\end{itemize}
Let $V\ :\ \At{\longrightarrow}\wp(W)$ be the valuation on $(W,{\leq},{R})$ such that for all atoms $p$, $V(p){=}V_{1}(p){\cup}V_{2}(p)$.
Since $A{\vee}B$ is in $\L_{\min}$, then by the soundness of $\L_{\min}$, $(W,{\leq},{R})
$\linebreak$
{\models}A{\vee}B$.
Hence, $(W,{\leq},{R},V){\models}A{\vee}B$.
Thus, $(W,{\leq},{R},V),s{\models}A{\vee}B$.
Consequently, either $(W,{\leq},{R},V),s{\models}A$, or $(W,{\leq},{R},V),s{\models}B$.
In the former case, since $s{\leq}s_{1}$, then $(W,{\leq},{R},V),s_{1}{\models}A$.
\begin{claim}
For all formulas $C$ and for all $t_{1}{\in}W_{1}$, $(W,{\leq},{R},V),t_{1}{\models}C$ if and only if $(W_{1},{\leq_{1}},{R_{1}},V_{1}),t_{1}{\models}C$.
\end{claim}
\begin{proofclaim}
By induction on $C$.
\medskip
\end{proofclaim}
Since $(W,{\leq},{R},V),s_{1}{\models}A$, then $(W_{1},{\leq_{1}},{R_{1}},V_{1}),s_{1}{\models}A$: a contradiction.
In the latter case, since $s{\leq}s_{2}$, then $(W,{\leq},{R},V),s_{2}{\models}B$.
\begin{claim}
For all formulas $C$ and for all $t_{2}{\in}W_{2}$, $(W,{\leq},{R},V),t_{2}{\models}C$ if and only if $(W_{2},{\leq_{2}},{R_{2}},V_{2}),t_{2}{\models}C$.
\end{claim}
\begin{proofclaim}
By induction on $C$.
\medskip
\end{proofclaim}
Since $(W,{\leq},{R},V),s_{2}{\models}B$, then $(W_{2},{\leq_{2}},{R_{2}},V_{2}),s_{2}{\models}B$: a contradiction.
Concerning $\L_{\fcfra}$, $\L_{\bcfra}$ and $\L_{\fbcfra}$, the proof is similar.
\medskip
\end{proof}
\begin{lemma}\label{lemma:disjunction:property:does:not:hld:when:downward}
For all intuitionistic modal logics $\L$, if $\L$ contains $\L_{\dcfra}$ and $\L$ is contained in $\L_{\fbdcfra}$ then the Disjunction Property described in Lemma~\ref{lemma:l:min:C3} does not hold for $\L$.
\end{lemma}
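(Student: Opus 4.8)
The statement to prove is: if $\L$ contains $\L_{\dcfra}$ and $\L$ is contained in $\L_{\fbdcfra}$, then the Disjunction Property fails for $\L$. By Lemma~\ref{lemma:disjunction:property:does:not:hold:preliminary:result}, under exactly these hypotheses we have ${\square}{\bot}{\vee}{\lozenge}{\top}{\in}\L$, ${\square}{\bot}{\not\in}\L$ and ${\lozenge}{\top}{\not\in}\L$. These three facts immediately witness the failure of the Disjunction Property: the disjunction ${\square}{\bot}{\vee}{\lozenge}{\top}$ belongs to $\L$, yet neither disjunct does.

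**The argument in detail.** I would proceed as follows. First, assume $\L_{\dcfra}{\subseteq}\L$ and $\L{\subseteq}\L_{\fbdcfra}$. Invoke Lemma~\ref{lemma:disjunction:property:does:not:hold:preliminary:result} with this very $\L$: its hypotheses are met verbatim, so we obtain ${\square}{\bot}{\vee}{\lozenge}{\top}{\in}\L$, ${\square}{\bot}{\not\in}\L$, and ${\lozenge}{\top}{\not\in}\L$. Now suppose, for the sake of contradiction, that the Disjunction Property of Lemma~\ref{lemma:l:min:C3} holds for $\L$, i.e.\ for all formulas $A,B$, if $A{\vee}B{\in}\L$ then either $A{\in}\L$ or $B{\in}\L$. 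Applying this with $A{=}{\square}{\bot}$ and $B{=}{\lozenge}{\top}$: since ${\square}{\bot}{\vee}{\lozenge}{\top}{\in}\L$, we would get ${\square}{\bot}{\in}\L$ or ${\lozenge}{\top}{\in}\L$, contradicting the two negative facts just established. Hence the Disjunction Property does not hold for $\L$.

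**On the main obstacle.** There is essentially no obstacle: the work has already been done in Lemma~\ref{lemma:disjunction:property:does:not:hold:preliminary:result}, which in turn rests on Lemma~\ref{lemma:some:specific:formulas:valid:non:valid} and the completeness result Proposition~\ref{proposition:soundness:completeness}. The only point worth a moment's care is checking that the hypotheses match exactly — that the $\L$ in the statement of Lemma~\ref{lemma:disjunction:property:does:not:hold:preliminary:result} (``$\L$ contains $\L_{\dcfra}$ and $\L$ is contained in $\L_{\fbdcfra}$'') is precisely the same pair of conditions as in the statement to be proved — which it is. So the proof is a one-line instantiation followed by a one-line contradiction. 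Concretely, I would write:

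\begin{proof}
Suppose $\L$ contains $\L_{\dcfra}$ and $\L$ is contained in $\L_{\fbdcfra}$.
By Lemma~\ref{lemma:disjunction:property:does:not:hold:preliminary:result}, ${\square}{\bot}{\vee}{\lozenge}{\top}$ is in $\L$, ${\square}{\bot}$ is not in $\L$ and ${\lozenge}{\top}$ is not in $\L$.
For the sake of the contradiction, suppose the Disjunction Property described in Lemma~\ref{lemma:l:min:C3} holds for $\L$.
Since ${\square}{\bot}{\vee}{\lozenge}{\top}$ is in $\L$, then either ${\square}{\bot}$ is in $\L$, or ${\lozenge}{\top}$ is in $\L$: a contradiction.
\end{proof}
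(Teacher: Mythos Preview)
Your proposal is correct and takes essentially the same approach as the paper: the paper's proof is literally the single line ``By Lemma~\ref{lemma:disjunction:property:does:not:hold:preliminary:result}.'' You have simply spelled out the immediate contradiction that the paper leaves implicit.
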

\begin{proof}
By Lemma~\ref{lemma:disjunction:property:does:not:hold:preliminary:result}.
\medskip
\end{proof}
\begin{lemma}\label{lemma:l:min:C4}
For all intuitionistic modal logics $\L$, if $\L$ is contained in $\L_{\fbdcfra}$ then the addition of the law of excluded middle to $\L$ yields modal logic $\K$.
\end{lemma}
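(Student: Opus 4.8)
The plan is to argue by mutual inclusion, using the completeness of $\L_{\fbdcfra}$ with respect to $\Log({\mathcal C}_{\fbdcfra})$ (Proposition~\ref{proposition:soundness:completeness}) together with the classical analogue. Write $\L^{+}$ for the least set of formulas containing $\L$, containing all substitution instances of the law of excluded middle, and closed under the axioms and inference rules that make $\L$ an intuitionistic modal logic; I must show $\L^{+}{=}\K$ under the hypothesis $\L{\subseteq}\L_{\fbdcfra}$.

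First I would establish $\K{\subseteq}\L^{+}$. Since $\L^{+}$ contains the law of excluded middle on top of $\IPL$, it contains all of Classical Propositional Logic. Moreover $\L^{+}$ contains axioms $(\Axiom1)$, $(\Axiom3)$, $(\Axiom4)$ and is closed under $(\Rule1)$ and $(\Rule2)$; as noted right after the definition of intuitionistic modal logics, $(\Axiom1)$ and $(\Rule1)$ yield the usual normality package for $\square$. Using classical reasoning, $(\Axiom4)$ ($\neg{\lozenge}{\bot}$) together with $(\Axiom3)$ and $(\Rule2)$ lets one derive the classical interdefinability ${\lozenge}p{\leftrightarrow}{\neg}{\square}{\neg}p$, so $\lozenge$ becomes the classical dual of $\square$ and $\L^{+}$ contains the standard axiomatization of $\K$. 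Hence $\K{\subseteq}\L^{+}$.

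For the converse $\L^{+}{\subseteq}\K$ I would use soundness. Every axiom of $\L$ is an axiom of $\L_{\fbdcfra}$, hence by Proposition~\ref{proposition:soundness:completeness} is valid on ${\mathcal C}_{\fbdcfra}$; and the classical Kripke models for $\K$ are, up to the trivial identification, exactly the forward-and-backward-and-downward confluent intuitionistic frames in which $\leq$ is the identity (on such frames $s{\leq}{\circ}{R}t$, $s{\geq}{\circ}{R}t$ and $s{R}t$ all coincide, and the intuitionistic truth conditions for $\square$ and $\lozenge$ collapse to the classical ones). So every theorem of $\L$ is classically valid, i.e.\ lies in $\K$; the law of excluded middle is classically valid; and the inference rules of $\L$ ($(\Rule1)$, $(\Rule2)$, $(\Rule3)$, modus ponens, uniform substitution) all preserve classical validity — for $(\Rule3)$ this follows from Lemma~\ref{lemma:about:rule:3} restricted to these frames, or simply by a direct classical check. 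Therefore $\L^{+}$, being the least set closed under these operations and containing $\K$'s ingredients, is contained in $\K$.

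The main obstacle is making the identification ``classical $\K$-models $=$ certain intuitionistic frames'' precise enough to transfer validity in both directions cleanly: one direction needs that $\K$-theorems are derivable in $\L^{+}$ (a syntactic massage via the classically available duality of $\square$ and $\lozenge$, plus $(\Axiom4)$ to kill ${\lozenge}{\bot}$), and the other needs that $\L$-theorems are classically sound (which is immediate from $\L{\subseteq}\L_{\fbdcfra}$ and the fact that identity-$\leq$ frames are forward, backward and downward confluent and validate all of $\L_{\fbdcfra}$). Both halves are routine once the dictionary between the two semantics is written down, so I would spend most of the proof on that dictionary and on checking that $(\Rule3)$ causes no trouble classically.
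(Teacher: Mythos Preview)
Your overall plan is sound, and the direction $\L^{+}\subseteq\K$ is fine (it is essentially what Proposition~\ref{fbdcfra:K} already provides). The gap is in the direction $\K\subseteq\L^{+}$: you assert that classical reasoning together with $(\Axiom3)$, $(\Axiom4)$ and $(\Rule2)$ suffices to derive the duality ${\lozenge}p\leftrightarrow\neg{\square}\neg p$, but this is false. Over classical propositional logic, the axioms $(\Axiom1)$, $(\Axiom3)$, $(\Axiom4)$ and the rules $(\Rule1)$, $(\Rule2)$ merely say that $\square$ is a normal box and $\lozenge$ is a normal diamond, with no interaction whatsoever between them: any classical bimodal frame $(W,R_{\square},R_{\lozenge})$ with unrelated accessibility relations validates all of these, yet falsifies both directions of the duality as soon as $R_{\square}\neq R_{\lozenge}$. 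The link between $\square$ and $\lozenge$ in this system lives precisely in the two ingredients you omit, namely axiom $(\Axiom2)$ and rule $(\Rule3)$.

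The paper's proof makes this explicit. For ${\lozenge}p\to\neg{\square}\neg p$ one starts from the excluded-middle instance ${\square}\neg p\vee\neg{\square}\neg p$, rewrites it as ${\lozenge}p\to\neg{\square}\neg p\vee{\square}(p\to\bot)$, applies $(\Rule3)$ to obtain ${\lozenge}p\to\neg{\square}\neg p\vee{\lozenge}\bot$, and then uses $(\Axiom4)$. For $\neg{\square}\neg p\to{\lozenge}p$ one necessitates $p\vee\neg p$ to get ${\square}(p\vee\neg p)$, then applies $(\Axiom2)$ to obtain $({\lozenge}p\to{\square}\neg p)\to{\square}\neg p$, whose classical contrapositive is exactly $\neg{\square}\neg p\to{\lozenge}p$. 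So your sketch can be completed, but only by invoking $(\Axiom2)$ and $(\Rule3)$ --- which are of course available in $\L^{+}$ --- rather than the axioms you list.
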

\begin{proof}
Let $\L$ be an intuitionistic modal logic.
Suppose $\L$ is contained in $\L_{\fbdcfra}$.
Hence, by Proposition~\ref{fbdcfra:K}, $\L$ is contained in $\K$.
Let $\L^{+}{=}\L{\oplus}p{\vee}\neg p$.
Since $\L$ is contained in $\K$, then $\L^{+}$ is contained in $\K$.
Obviously, $\L^{+}$ contains all substitution instances of Classical Propositional Logic and is closed under modus ponens.
Moreover, it contains all substitution instances of axiom $(\Axiom1)$ and is closed under inference rule$(\Rule1)$.
Therefore, since $\L^{+}$ is contained in $\K$, then in order to demonstrate that the addition of the law of excluded middle to $\L$ yields modal logic $\K$, it suffices to demonstrate that the formulas ${\lozenge}p{\rightarrow}\neg{\square}\neg p$ and $\neg{\square}\neg p{\rightarrow}{\lozenge}p$ are in $\L^{+}$.
Obviously, ${\square}(p{\rightarrow}{\bot}){\vee}\neg{\square}\neg p$ is in $\L^{+}$.
Thus, ${\lozenge}p{\rightarrow}\neg{\square}\neg p{\vee}{\square}(p{\rightarrow}{\bot})$ is in $\L^{+}$.
Consequently, using $(\Rule3)$, ${\lozenge}p{\rightarrow}\neg{\square}\neg p{\vee}{\lozenge}{\bot}$ is in $\L^{+}$.
Hence, using axiom $(\Axiom4)$, ${\lozenge}p{\rightarrow}
$\linebreak$
\neg{\square}\neg p$ is in $\L^{+}$.
Obviously, $p{\vee}\neg p$ is in $\L^{+}$.
Thus, using $(\Rule1)$, ${\square}(p{\vee}\neg p)$ is in $\L^{+}$.
Consequently, using axiom $(\Axiom2)$, $({\lozenge}p{\rightarrow}{\square}\neg p){\rightarrow}{\square}\neg p$ is in $\L^{+}$.
Hence, $\neg{\square}\neg p{\rightarrow}{\lozenge}p$ is in $\L^{+}$.
\medskip
\end{proof}
\begin{lemma}\label{lemma:l:min:C5}
For all intuitionistic modal logics $\L$, if $\L$ is contained in $\L_{\fbdcfra}$ then ${\square}$ and ${\lozenge}$ are independent in $\L$.
\end{lemma}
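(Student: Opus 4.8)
The intended reading of ``${\square}$ and ${\lozenge}$ are independent in $\L$'' is that neither modality is definable in $\L$ from the other together with the Boolean connectives: there is no ${\square}$-free formula $A(p)$ with ${\lozenge}p{\leftrightarrow}A(p){\in}\L$, and no ${\lozenge}$-free formula $A(p)$ with ${\square}p{\leftrightarrow}A(p){\in}\L$. The plan is to prove both non-definabilities. Since $\L$ is contained in $\L_{\fbdcfra}$, a formula outside $\L_{\fbdcfra}$ is outside $\L$; and by Proposition~\ref{proposition:soundness:completeness}, $\L_{\fbdcfra}{=}\Log({\mathcal C}_{\fbdcfra})$, so it suffices to refute every candidate definition in some model based on a forward, backward and downward confluent frame. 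Concretely, for the ${\lozenge}$-half I would exhibit two models over such frames sharing a state at which they agree on all ${\lozenge}$-free formulas but disagree on ${\square}p$; and for the ${\square}$-half, two such models agreeing on all ${\square}$-free formulas but disagreeing on ${\lozenge}p$. In each case the conclusion is immediate: a putative equivalence, being valid over ${\mathcal C}_{\fbdcfra}$, would hold at the distinguished state of both models, forcing the defining formula to take two different truth values there.

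For the ${\square}$-half, take $W{=}\{a,b,c,d\}$ with $\leq$ the reflexive closure of $\{a{\leq}b,\ c{\leq}d\}$, $V(p){=}\{d\}$, and the two accessibility relations ${R_{1}}{=}\{(a,c),(b,d)\}$ and ${R_{2}}{=}\{(a,c),(b,d),(b,c)\}$. One checks that $(W,{\leq},{R_{1}})$ and $(W,{\leq},{R_{2}})$ are both forward, backward and downward confluent, and that ${\geq}{\circ}{R_{1}}$ and ${\geq}{\circ}{R_{2}}$ coincide (the extra pair $(b,c)$ is already witnessed at $b$ via $a$, since $a{\leq}b$ and $a{R_{1}}c$), so that — since a ${\square}$-free formula reads only $V$, $\leq$ and ${\geq}{\circ}{R}$ — the two models satisfy exactly the same ${\square}$-free formulas at every state; yet $b{\models}{\square}p$ in the first model and $b{\not\models}{\square}p$ in the second, because $c{\not\models}p$ and $c$ becomes a ${\leq}{\circ}{R}$-successor of $b$ only for ${R_{2}}$. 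For the ${\lozenge}$-half, take $W{=}\{s,t,b,c\}$ with $\leq$ the reflexive closure of $\{s{\leq}t,\ b{\leq}c\}$, $V(p){=}\{c\}$, and ${R_{1}}{=}\{(s,b),(t,b),(t,c),(s,c)\}$, ${R_{2}}{=}\{(s,b),(t,b),(t,c)\}$; again both frames are forward, backward and downward confluent, and ${\leq}{\circ}{R_{1}}{=}{\leq}{\circ}{R_{2}}$ (the pair $(s,c)$ is already absorbed through $t$, since $s{\leq}t$ and $t{R}c$), so the two models agree on all ${\lozenge}$-free formulas at every state, whereas $s{\models}{\lozenge}p$ for ${R_{1}}$ and $s{\not\models}{\lozenge}p$ for ${R_{2}}$, because ${\geq}{\circ}{R_{1}}$ reaches $c$ from $s$ while ${\geq}{\circ}{R_{2}}$ reaches only $b$. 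The ``same truth values'' claims are then settled by a routine induction on the fragment in question, using the Heredity Property and the observation above about which relations such formulas inspect.

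The main obstacle is the simultaneous verification of the three confluence conditions on the four frames: forward, backward and downward confluence strongly constrain one another — together they force ${\leq}{\circ}{R}{=}{R}{\circ}{\leq}$, so that in such frames ${\square}A$ and ${\lozenge}A$ reduce to quantifications over direct $R$-successors — and it is precisely this rigidity that dictates the shape of the frames, in particular why the seemingly spurious edge must be added (a ``downward'' ${\lozenge}$-witness of $p$ is automatically forced to be an ``upward'' ${\square}$-witness, and conversely, so the two models can be made to differ for one modality only by introducing an edge that is redundant for the composite relation governing the other). Once the confluence checks are carried out, the two separating pairs of models give the two non-definabilities, and hence the independence of ${\square}$ and ${\lozenge}$ in every $\L$ contained in $\L_{\fbdcfra}$.
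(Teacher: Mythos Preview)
Your two countermodel constructions are correct and do the job: in each pair the three confluence conditions hold, the relevant composite relations ${\geq}{\circ}R$ (resp.\ ${\leq}{\circ}R$) coincide, and the induction showing that ${\square}$-free (resp.\ ${\lozenge}$-free) formulas depend only on $V$, $\leq$ and ${\geq}{\circ}R$ (resp.\ ${\leq}{\circ}R$) goes through. So the mathematics is sound.

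There is, however, a labeling slip in your opening paragraph. You wrote ``no ${\square}$-free $A(p)$ with ${\lozenge}p{\leftrightarrow}A(p){\in}\L$, and no ${\lozenge}$-free $A(p)$ with ${\square}p{\leftrightarrow}A(p){\in}\L$'', which is vacuous (${\lozenge}p$ is itself ${\square}$-free). The intended statement --- and what your constructions actually establish --- is the one in the paper: no ${\square}$-free $A$ with ${\square}p{\leftrightarrow}A{\in}\L$, and no ${\lozenge}$-free $A$ with ${\lozenge}p{\leftrightarrow}A{\in}\L$. Your plan sentence inherits the same swap; the detailed paragraphs get it right.

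On the comparison with the paper: the paper uses a \emph{single} model and two states $a,c$ that are indistinguishable for the relevant fragment but disagree on ${\square}p$ (resp.\ ${\lozenge}p$), whereas you use \emph{two} models over the same carrier and a single state. Both are standard invariance arguments; yours has the minor advantage that the invariance claim (``${\square}$-free formulas only inspect ${\geq}{\circ}R$'') is stated once as a general principle and the induction is uniform over all states, while the paper's version requires verifying the specific bisimulation-like link between $a$ and $c$ inside one model. Conversely, the paper needs only two frames altogether rather than four, and does not need to re-check the confluence conditions for each of two $R$-relations per half.
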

\begin{proof}
Let $\L$ be an intuitionistic modal logic.
Suppose $\L$ is contained in $\L_{\fbdcfra}$.
In order to demonstrate that ${\square}$ and ${\lozenge}$ are independent in $\L$, it suffices to demonstrate that neither there exists a ${\square}$-free formula $A$ such that ${\square}p{\leftrightarrow}A$ is in $\L$, nor there exists a ${\lozenge}$-free formula $A$ such that ${\lozenge}p{\leftrightarrow}A$ is in $\L$.
For the sake of the contradiction, suppose either there exists a ${\square}$-free formula $A$ such that ${\square}p{\leftrightarrow}A$ is in $\L$, or there exists a ${\lozenge}$-free formula $A$ such that ${\lozenge}p{\leftrightarrow}A$ is in $\L$.
Hence, we have to consider the following $2$ cases.
$\mathbf{(1)}$ Case ``there exists a ${\square}$-free formula $A$ such that ${\square}p{\leftrightarrow}A$ is in $\L$'':
Without loss of generality, we may assume that $p$ is the only atom that may occur in $A$.
Since $\L$ is contained in $\L_{\fbdcfra}$ and ${\square}p{\leftrightarrow}A$ is in $\L$, then ${\square}p{\leftrightarrow}A$ is in $\L_{\fbdcfra}$.
Thus, by the soundness of $\L_{\fbdcfra}$, ${\square}p{\leftrightarrow}A$ is valid in the class of all forward, backward and downward confluent frames.
Let $(W,{\leq},{R},V)$ be the forward, backward and downward confluent model defined by $W{=}\{a,b,c,d\}$, $a\leq c$, $b\leq d$, $aRb$, $aRd$, $cRd$ and $V(p){=}\{d\}$.
\begin{claim}
For all ${\square}$-free formulas $B$, $a{\models}B$ if and only if $c{\models}B$.
\end{claim}
\begin{proofclaim}
By induction on $B$.
\medskip
\end{proofclaim}
Since $A$ is ${\square}$-free, then $a{\models}A$ if and only if $c{\models}A$.
Since ${\square}p{\leftrightarrow}A$ is valid in the class of all forward, backward and downward confluent frames, then $a{\models}{\square}p$ if and only if $c{\models}{\square}p$.
This contradicts the facts that $a{\not\models}{\square}p$ and $c{\models}{\square}p$.
$\mathbf{(2)}$ Case ``there exists a ${\lozenge}$-free formula $A$ such that ${\lozenge}p{\leftrightarrow}A$ is in $\L$'':
Without loss of generality, we may assume that $p$ is the only atom that may occur in $A$.
Since $\L$ is contained in $\L_{\fbdcfra}$ and ${\lozenge}p{\leftrightarrow}A$ is in $\L$, then ${\lozenge}p{\leftrightarrow}A$ is in $\L_{\fbdcfra}$.
Consequently, by the soundness of $\L_{\fbdcfra}$, ${\lozenge}p{\leftrightarrow}A$ is valid in the class of all forward, backward and downward confluent frames.
Let $(W,{\leq},{R},V)$ be the forward, backward and downward confluent model defined by $W{=}\{a,b,c,d\}$, $a\leq c$, $b\leq d$, $aRb$, $cRb$, $cRd$ and $V(p){=}\{d\}$.
\begin{claim}
For all ${\lozenge}$-free formulas $B$, $a{\models}B$ if and only if $c{\models}B$.
\end{claim}
\begin{proofclaim}
By induction on $B$.
\medskip
\end{proofclaim}
Since $A$ is $\Diamond$-free, then $a{\models}A$ if and only if $c{\models}A$.
Since ${\lozenge}p{\leftrightarrow}A$ is valid in the class of all forward, backward and downward confluent frames, then $a{\models}{\lozenge}p$ if and only if $c{\models}{\lozenge}p$.
This contradicts the facts that $a{\not\models}{\lozenge}p$ and $c{\models}{\lozenge}p$.
%
%
%\\
%\\
%
%
%All in all, we have proved that neither there exists a ${\square}$-free formula $A$ such that ${\square}p{\leftrightarrow}A$ is in $\L$, nor there exists a ${\lozenge}$-free formula $A$ such that ${\lozenge}p{\leftrightarrow}A$ is in $\L$.
%
%
\medskip
\end{proof}
\section{Conclusion}
Much remains to be done.
For example,
\begin{enumerate}
%
%
%\item determine whether ${\mathcal C}_{\fdcfra}$, ${\mathcal C}_{\fucfra}$, ${\mathcal C}_{\bdcfra}$, ${\mathcal C}_{\bucfra}$, ${\mathcal C}_{\fbdcfra}$, ${\mathcal C}_{\fbucfra}$, ${\mathcal C}_{\fducfra}$, ${\mathcal C}_{\bducfra}$, ${\mathcal C}_{\fbducfra}$ and ${\mathcal C}_{\qucfra}$ are modally definable,
\item determine whether ${\mathcal C}_{\qucfra}$ is modally definable,
\item determine whether $\Log({\mathcal C}_{\ucfra})$, $\Log({\mathcal C}_{\fucfra})$, $\Log({\mathcal C}_{\bucfra})$, $\Log({\mathcal C}_{\ducfra})$, $\Log({\mathcal C}_{\fbucfra})$, $\Log({\mathcal C}_{\fducfra})$, $\Log({\mathcal C}_{\bducfra})$, $\Log({\mathcal C}_{\fbducfra})$, $\Log({\mathcal C}_{\qucfra})$ and $\Log({\mathcal C}_{\symmetric}{\cap}{\mathcal C}_{\transitive})$ are finitely axiomatizable,
\item determine whether the membership problem in $\L_{\min}$ and the consistency problem in $\L_{\min}$ are $\PSPACE$-complete,
\item determine whether the membership problems in $\L_{\bcfra}$, $\L_{\dcfra}$, $\L_{\bdcfra}$, $\L_{\fbdcfra}$, $\L_{\utransitive}$ and $\L_{\dtransitive}$ and the consistency problems in $\L_{\bcfra}$, $\L_{\dcfra}$, $\L_{\bdcfra}$, $\L_{\fbdcfra}$, $\L_{\utransitive}$ and $\L_{\dtransitive}$ are decidable,
\item study the intuitionistic modal logic obtained by the addition of a connective $\lozenge^{\prime}$ {\it \`a la}\/ Wijesekera to our language,
\item axiomatize the $\square$-free fragments of our intuitionistic modal logics and the $\lozenge$-free fragments of our intuitionistic modal logics.
\end{enumerate}
In order to address~$\mathbf{(1)}$, one may use automated tools such as, for example, the one implementating the algorithm $\PEARL$~\cite{Conradie:et:al:2021}.
Regarding $\mathbf{(2)}$, we conjecture in particular that $\L_{\min}$, $\Log({\mathcal C}_{\ucfra})$ and $\Log({\mathcal C}_{\qucfra})$ are equal.
In order to show this conjecture, one may use, for example, an argument based on an adaptation of the unravelling transformation.
See~\cite[Chapter~$2$]{Blackburn:et:al:2001} and~\cite[Chapter~$3$]{Chagrov:Zakharyaschev:1997}.
With respect to $\mathbf{(3)}$, one may use, for example, translations {\em \`a la}\/ G\"odel-Tarski or tableaux-like methods.
See~\cite{Lin:Ma:2022,Mints:2012,Wolter:Zakharyaschev:1997} and~\cite{Ladner:1977,Spaan:1993}.
As for $\mathbf{(4)}$, we conjecture that the membership problems in $\L_{\bcfra}$, $\L_{\dcfra}$, $\L_{\bdcfra}$, $\L_{\fbdcfra}$, $\L_{\utransitive}$ and $\L_{\dtransitive}$ and the consistency problems in $\L_{\bcfra}$, $\L_{\dcfra}$, $\L_{\bdcfra}$, $\L_{\fbdcfra}$, $\L_{\utransitive}$ and $\L_{\dtransitive}$ are decidable.
In order to fix this conjecture, one may use, for example, a technique based on either the two-variable monadic guarded fragment, or selective filtration, or terminating sequent calculi~\cite{Alechina:Shkatov:2006,Grefe:1996,Iemhoff:2018,Lin:Ma:2019}.\footnote{The reader should be aware that it is by no means easy to determine whether the membership problem in such-or-such intuitionistic modal logic and the consistency problem in such-or-such intuitionistic modal logic are decidable, witness the fact that the decidability of the membership problem in
%$\IK{\oplus}\{{\square}p{\rightarrow}p,p{\rightarrow}{\lozenge}p,{\square}p{\rightarrow}{\square}{\square}p,{\lozenge}{\lozenge}p{\rightarrow}{\lozenge}p\}$ has only been proved recently~\cite{Girlando:et:al:2023}.}
$\IS4$~---~the intuitionistic modal logic obtained by adding to $\IK$ the formulas ${\square}p{\rightarrow}p$, $p{\rightarrow}{\lozenge}p$, ${\square}p{\rightarrow}{\square}{\square}p$ and ${\lozenge}{\lozenge}p{\rightarrow}{\lozenge}p$~---~has only been proved recently~\cite{Girlando:et:al:2023}.}
%Of course, the reader may easily verify that the membership problem in $\L_{\min}$ is in co-$\NEXPTIME$.
%Nevertheless, from now on, getting no further than Proposition~\ref{proposition:complexity:satisfiability:Kg:decidable}, we conjecture that the membership problem in $\L_{\min}$ is $\PSPACE$-complete.
%Translations {\em \`a la}\/ G\"odel-Tarski may be used for proving this conjecture~\cite{Lin:Ma:2022,Mints:2012,Wolter:Zakharyaschev:1997}.
%Tableaux-like methods can be used as well~\cite{Ladner:1977,Spaan:1993}.
%In other respect, as is well-known, the consistency problem in $\IPL$ is $\NPTIME$-complete.
%However, we conjecture that the consistency problem in $\L_{\min}$ is $\PSPACE$-complete.
%Again, translations {\em \`a la}\/ G\"odel-Tarski may be used for proving this conjecture.
%And tableaux-like methods can be used as well.
Concerning $\L_{\fcfra}$, $\L_{\bcfra}$, $\L_{\dcfra}$, $\L_{\fbcfra}$, $\L_{\fdcfra}$, $\L_{\bdcfra}$ and $\L_{\fbdcfra}$, we do not know whether filtration can be used in order to show that the membership problems in these intuitionistic modal logics are decidable.\footnote{Notice that the decidability of the membership problem in $\L_{\fcfra}$~---~remind Proposition~\ref{fcfra:FIK}~---~has been proved in~\cite{Balbiani:et:al:2024} by a technique based on nested sequents, the decidability of the membership problem in $\L_{\fbcfra}$~---~remind Proposition~\ref{fbcfra:IK}~---~has been proved in~\cite{Grefe:1996} by a technique based on selective filtration and the decidability of the membership problem in $\L_{\fdcfra}$~---~remind Proposition~\ref{fdcfra:LIK}~---~has been proved in~\cite{Balbiani:et:al:2024b} by a technique based on nested sequents.}
About $\mathbf{(5)}$, we conjecture that the intuitionistic modal logic obtained by the addition of a connective $\lozenge^{\prime}$ {\it \`a la}\/ Wijesekera to our language is completely axiomatized by adding to the axioms and inference rules of $\L_{\min}$~---~expressed in the language based on $\square$ and $\lozenge$~---~and $\WK$~---~expressed in the language based on $\square$ and $\lozenge^{\prime}$~---~the formulas ${\lozenge^{\prime}}p{\rightarrow}{\lozenge}p$ and ${\lozenge^{\prime}}(p{\vee}q){\rightarrow}(({\lozenge}p{\rightarrow}{\lozenge^{\prime}}q){\rightarrow}{\lozenge^{\prime}}q)$.
Concerning $\mathbf{(6)}$, we conjecture that either some $\square$-free fragments of our intuitionistic modal logics, or some $\lozenge$-free fragments of our intuitionistic modal logics are not finitely axiomatizable.
See~\cite{Das:Marin:2023} for a discussion about the $\lozenge$-free fragments of different intuitionistic modal logics.
%
%
%\\
%\\
%
%
%Other questions remain unsolved:
%
%
%\begin{itemize}
%
%
%\item determine whether $\L_{\min}{\oplus}p{\rightarrow}{\lozenge}p$ is canonical, $\L_{\min}{\oplus}p{\rightarrow}{\square}{\lozenge}p$ is canonical and $\L_{\min}{\oplus}{\lozenge}p{\rightarrow}{\lozenge}{\lozenge}p$ is canonical,
%
%
%\item determine whether $\L_{\min}{\oplus}p{\rightarrow}{\lozenge}p$ is the intuitionistic modal logic determined by the class of all frames $(W,{\leq},{R})$ such that for all $s{\in}W$, $s{\geq}{\circ}{R}{\circ}{\geq}s$, $\L_{\min}{\oplus}p{\rightarrow}
%
%
%
%
%
%
%
%
%
%
%
%
%$\linebreak$
%
%
%
%
%
%
%
%
%
%
%
%
%{\square}{\lozenge}p$ is the intuitionistic modal logic determined by the class of all frames $(W,{\leq},
%
%
%
%
%
%
%
%
%
%
%
%
%$\linebreak$
%
%
%
%
%
%
%
%
%
%
%
%
%{R})$ such that for all $s,t{\in}W$, if $s{R}t$ then $t{\geq}{\circ}{R}{\circ}{\geq}s$ and $\L_{\min}{\oplus}{\lozenge}p{\rightarrow}{\lozenge}{\lozenge}p$ is the intuitionistic modal logic determined by the class of all frames $(W,{\leq},{R})$ such that for all $s,t,u,v{\in}W$, if $s{R}t$, $t{\geq}u$ and $u{R}v$ then $s{\geq}{\circ}{R}{\circ}{\geq}v$.
%
%
%\end{itemize}
%
%
%
%
\section*{Acknowledgements}
We wish to thank Han Gao (Aix-Marseille University), Zhe Lin (Xiamen University), Nicola Olivetti (Aix-Marseille University) and Vladimir Sotirov (Bulgarian Academy of Sciences) for their valuable remarks.
Special acknowledgement is also granted to our colleagues of the Toulouse Institute of Computer Science Research for many stimulating discussions about the subject of this paper.
%
%
%
%
%
%We make a point of thanking as well the referees for their feedback: their useful suggestions have been essential for improving the readability of a preliminary version of this paper.
%
%
%
%
\bibliographystyle{named}

\begin{thebibliography}{}
%
%
%\bibitem{Agotnes:Wang:2017}
%{\AA}gotnes, T., W\'ang, Y.:
%{\it Resolving distributed knowledge.}
%Artificial Intelligence {\bf 252} (2017) 1--21.
%
%
\bibitem{Alechina:et:al:2001}
Alechina, N., Mendler, M., de Paiva, V., Ritter, E.:
{\it Categorical and Kripke semantics for constructive $\S4$ modal logic.}
In {\it CSL 2001.}
Springer (2001) 292--307.
%
%
\bibitem{Alechina:Shkatov:2006}
Alechina, N., Shkatov, D.:
{\it A general method for proving decidability of intuitionistic modal logics.}
Journal of Applied Logics {\bf 4} (2006) 219--230.
%
%
\bibitem{Amati:Pirri:1994}
Amati, G., Pirri, F.:
{\it A uniform tableau method for intuitionistic modal logics~I.}
Studia Logica {\bf 53} (1994) 29--60.
%
%
\bibitem{Arisaka:et:al:2015}
Arisaka, R., Das, A., Stra\ss burger, L.:
{\it On nested sequents for constructive modal logics.}
Logical Methods in Computer Science {\bf 11} (2015) 1--33.
%
%
%\bibitem{Balbiani:1998}
%Balbiani, P.:
%{\it The modal multilogic of geometry.}
%Journal of Applied Non-Classical Logics {\bf 8} (1998) 259--281.
%
%
%\bibitem{Balbiani:2004}
%Balbiani, P.:
%{\it Reasoning about vague concepts in the theory of property systems.}
%Logic \& Analyse {\bf 47} (2004) 445--460.
%
%
%\bibitem{Balbiani:2022}
%Balbiani, P.:
%{\it Parametrized modal logic II: the unidimensional case.}
%In {\it Dynamic Logic: New Trends and Applications.}
%Springer (to appear).
%
%
%\bibitem{Balbiani:et:al:2019}
%Balbiani, P., Boudou, J. Di\'eguez, M., Fern\'andez-Duque, D.:
%{\it Intuitionistic linear temporal logics.}
%ACM Transactions on Computational Logic {\bf 21} (2019) 1--32 (doi.org/10.1145/3365833).
%
%
\bibitem{Balbiani:et:al:2021}
Balbiani, P., Di\'eguez, M., Fern\'andez-Duque, D.:
{\it Some constructive variants of $\S4$ with the finite model property.}
In {\it LICS'21: Proceedings of the 36th Annual ACM/IEEE Symposium on Logic in Computer Science.}
IEEE (2021) 1--13 (doi: 10.1109/LICS52264.2021.9470643).
%
%
%\bibitem{Balbiani:SFG:2020}
%Balbiani, P., Fern\'andez Gonz\'alez, S.:
%{\it Indexed frames and hybrid logics.}
%In {\it Advances in Modal Logic.}
%College Publications (2020) 53--72.
%
%
%\bibitem{Balbiani:SFG:2022}
%Balbiani, P., Fern\'andez Gonz\'alez, S.:
%{\it Parametrized modal logic I: an introduction.}
%In {\it Advances in Modal Logic.}
%College Publications (2022) 97--117.
%
%
\bibitem{Balbiani:et:al:2024}
Balbiani, P., Gao, H., Gencer, \c{C}., Olivetti, N.:
{\it A natural intuitionistic modal logic: axiomatization and bi-nested calculus.}
In {\it 32nd EACSL Annual Conference on Computer Science Logic.}
LIPICS (2024) 13:1--13:21.
%
%
\bibitem{Balbiani:et:al:2024b}
Balbiani, P., Gao, H., Gencer, \c{C}., Olivetti, N.:
{\it Local intuitionistic modal logics and their calculi.}
In {\it Automated Reasoning.}
Springer (2024) 78--96.
%
%
%\bibitem{Balbiani:Orlowska:1999}
%Balbiani, P., Or\l oswka, E.:
%{\it A hierarchy of modal logics with relative accessibility relations.}
%Journal of Applied Non-Classical Logics {\bf 9} (1999) 303--328.
%
%
%\bibitem{Balbiani:Vakarelov:2001}
%Balbiani, P., Vakarelov, D.:
%{\it A modal logic for indiscernibility and complementarity in information systems.}
%Fundamenta Informatic\ae\/ {\bf 50} (2002) 243--263.
%
%
%\bibitem{BenNaim:et:al:2020}
%Ben-naim, J., Longin, D., Lorini, E.:
%{\it Formalization of cognitive-agent systems, trust, and emotions.}
%In {\it A Guided Tour of Artificial Intelligence Research.}
%Springer (2020) 629--650.
%
%
\bibitem{Bierman:dePaiva:2000}
Bierman, G., de Paiva, V.:
{\it On an intuitionistic modal logic.}
Studia Logica {\bf 65} (2000) 383--416.
%
%
\bibitem{Blackburn:et:al:2001}
Blackburn, P., de Rijke, M., Venema, Y.:
{\it Modal Logic.}
Cambridge University Press (2001).
%
%
\bibitem{Bozic:Dosen:1984}
Bo\v{z}i\'{c}, M., Do\v{s}en, K.:
{\it Models for normal intuitionistic modal logics.}
Studia Logica {\bf 43} (1984) 217--245.
%
%
\bibitem{Celani:2001}
Celani, S.:
{\it Remarks on intuitionistic modal logics.}
Divulgaciones Matem\'aticas {\bf 9} (2001) 137--147.
%
%
\bibitem{Chagrov:Zakharyaschev:1997}
Chagrov, A., Zakharyaschev, M.:
{\it Modal Logic.}
Oxford University Press (1997).
%
%
%\bibitem{Chellas:1980}
%Chellas, B.:
%{\it Modal Logic: an Introduction.}
%Cambridge University Press (1980).
%
%
\bibitem{Conradie:et:al:2021}
Conradie, W., Goranko, V., Jipsen, P.:
{\it Algorithmic correspondence for relevance logics, bunched implication logics, and relation algebras via an implementation of the algorithm $\PEARL$.}
In {\it Relational and Algebraic Methods in Computer Science.}
Springer (2021) 126--143.
%
%
%\bibitem{Crolard:2001}
%Crolard, T.:
%{\it Subtractive logic.}
%Theoretical Computer Science {\bf 254} (2001) 151--185.
%
%
%\bibitem{van:Dalen:1983}
%Van Dalen, D.:
%{\it Logic and Structure.}
%Springer (1983).
%
%
\bibitem{Dalmonte:et:al:2021}
Dalmonte, T., Grellois, C., Olivetti, N.:
{\it Terminating calculi and countermodels for constructive modal logics.}
In {\it Automated Reasoning with Analytic Tableaux and Related Methods.}
Springer (2021) 391--408.
%
%
\bibitem{Das:Marin:2023}
Das, A., Marin, S.:
{\it On intuitionistic diamonds (and lack thereof).}
In {\it Automated Reasoning with Analytic Tableaux and Related Methods.}
Springer (2023) 283--301.
%
%
%\bibitem{Davoren:2009}
%Davoren, J.:
%{\it On intuitionistic modal and tense logics and their classical companion logics: topological semantics and bisimulation.}
%Annals of Pure and Applied Logic {\bf 161} (2009) 349--367.
%
%
%\bibitem{Demri:2000}
%Demri, S.:
%{\it The nondeterministic information logic NIL is PSPACE-complete.}
%Fundamenta Informatic\ae\/ {\bf 42} (2000) 211--234.
%
%
%\bibitem{Demri:Gabbay:2000a}
%Demri, S., Gabbay, D.:
%{\it On modal logics characterized by models with relative accessibility relations: Part I.}
%Studia Logica {\bf 65} (2000) 323--353.
%
%
%\bibitem{Demri:Gabbay:2000b}
%Demri, S., Gabbay, D.:
%{\it On modal logics characterized by models with relative accessibility relations: Part II.}
%Studia Logica {\bf 66} (2000) 349--384.
%
%
%\bibitem{Demri:Orlowska:1998}
%Demri, S., Or\l owska, E.:
%{\it Logical analysis of indiscernibility.}
%In {\it Incomplete Information: Rough Set Analysis.}
%Springer (1998) 347--380.
%
%
%\bibitem{Demri:Orlowska:2002}
%Demri, S., Or\l owska, E.:
%{\it Incomplete Information: Structure, Inference, Complexity.}
%Springer (2002).
%
%
%\bibitem{vanDitmarsch:et:al:2008}
%Van Ditmarsch, H., Kooi, B., van der Hoek, W.:
%{\it Dynamic Epistemic Logic.}
%Springer (2008).
%
%
\bibitem{Dosen:1985}
Do\v{s}en, K.:
{\it Models for stronger normal intuitionistic modal logics.}
Studia Logica {\bf 44} (1985) 39--70.
%
%
%\bibitem{Ebbinghaus:Flum:1995}
%Ebbinghaus, H.-D., Flum, J.:
%{\it Finite Model Theory.}
%Springer (1995).
%
%
\bibitem{Ewald:1986}
Ewald, W.:
{\it Intuitionistic tense and modal logic.}
The Journal of Symbolic Logic {\bf 51} (1986) 166--179.
%
%
%\bibitem{Fagin:et:al:1995}
%Fagin, R., Halpern, J., Moses, Y., Vardi, M.:
%{\it Reasoning about Knowledge.}
%MIT Press (1995).
%
%
\bibitem{Fairtlough:Mendler:1997}
Fairtlough, M., Mendler, M.:
{\it Propositional Lax Logic.}
Information and Computation {\bf 137} (1997) 1--33.
%
%
%\bibitem{FarinasDelCerro:Raggio:1983}
%Fari\~nas del Cerro, L., Raggio, A.:
%{\it Some results in intuitionistic modal logic.}
%Logique et Analyse {\bf 26} (1983) 219--224.
%
%
%\bibitem{SaulThesis:2021}
%Fern\'andez Gonz\'alez, S.:
%{\it Logics for Social Networks.
%Asynchronous Announcements in Orthogonal Structures.}
%Doctoral thesis at the University Toulouse 3 Paul Sabatier (2021).
%
%
%\bibitem{Fine:Schurz:1991}
%Fine, K., Schurz, G.:
%{\it Transfer theorems for multimodal logics.}
%In {\it Logic and Reality.
%Essays on the Legacy of Arthur Prior.}
%Cambridge University Press (1991) 169--213.
%
%
\bibitem{FischerServi:1977}
Fischer Servi, G.:
{\it On modal logic with an intuitionistic base.}
Studia Logica {\bf 36} (1977) 141--149.
%
%
\bibitem{FischerServi:1978}
Fischer Servi, G.:
{\it Semantics for a class of intuitionistic modal calculi.}
Bulletin of the Section of Logic {\bf 7} (1978) 26--29.
%
%
\bibitem{FischerServi:1984}
Fischer Servi, G.:
{\it Axiomatizations for some intuitionistic modal logics.}
Rendiconti del Seminario Matematico Universit\`a e Politecnico di Torino {\bf 42} (1984) 179--194.
%
%
%\bibitem{Font:1986}
%Font, J.:
%{\it Modality and possibility in some intuitionistic modal logics.}
%Notre Dame Journal of Formal Logic {\bf 27} (1986) 533--546.
%
%
%\bibitem{Gabbay:Kurucz:Wolter:Zakharyaschev:2003}
%Gabbay, D., Kurucz, A., Wolter, F., Zakharyaschev, M.:
%{\it Many-Dimensional Modal Logics: Theory and Applications.}
%Elsevier Science (2003).
%
%
%\bibitem{Galmiche:Mery:2013}
%Galmiche, D., M\'ery, D.:
%{\it A connection-based characterization of bi-intuitionistic validity.}
%Journal of Automated Reasoning {\bf 51} (2013) 3--26.
%
%
%\bibitem{Gargov:Passy:1990}
%Gargov, G., Passy, S.:
%{\it A note on Boolean modal logic.}
%In {\it Mathematical Logic.}
%Plenum Press (1990) 299--309.
%
%
%\bibitem{Gargov:Passy:Tinchev:1987}
%Gargov, G., Passy, S., Tinchev, T.:
%{\it Modal environment for Boolean speculations.}
%In {\it Mathematical Logic and its Applications.}
%Plenum Press (1987) 253--263.
%
%
\bibitem{Girlando:et:al:2023}
Girlando, M., Kuznets, R., Marin, S., Morales, M., Stra\ss burger, L.:
{\it Intuitionistic $\S4$ is decidable.}
In {\it 38th Annual ACM/IEEE Symposium on Logic in Computer Science.}
IEEE (2023) 10.1109/LICS56636.2023.10175684.
%
%
%\bibitem{Gore:Shillito:2020}
%Gor\'e, R., Shillito, I.:
%{\it Bi-intuitionistic logics: a new instance of an old problem.}
%In {\it Advances in Modal Logic.}
%College Publications (2020) 269--288.
%
%
\bibitem{Grefe:1996}
Grefe, C.:
{\it Fischer Servi's intuitionistic modal logic has the finite model property.}
In {\it Advances in Modal Logic.}
CSLI Publications (1996) 85--98.
%
%
%\bibitem{Harel:et:al:2000}
%Harel, D., Kozen, D., Tiuryn, J.:
%{\it Dynamic Logic.}
%MIT Press (2000).
%
%
%\bibitem{Harel:et:al:2001}
%Harel, D., Kozen, D., Tiuryn, J.:
%{\it Dynamic Logic.}
%In {\it Handbook of Philosophical Logic.}
%Springer (2001) 99--217.
%
%
\bibitem{Hasimoto:2001}
Hasimoto, Y.:
{\it Finite model property for some intuitionistic modal logics.}
Bulletin of the Section of Logic {\bf 30} (2001) 87--97.
%
%
%\bibitem{Hodges:1993}
%Hodges, W.:
%{\it Model Theory.}
%Cambridge University Press (1993).
%
%
\bibitem{Hughes:Cresswell:1996}
Hughes, G., Cresswell, M.:
{\it A New Introduction to Modal Logic.}
Routledge (1996).
%
%
\bibitem{Iemhoff:2018}
Iemhoff, R.:
{\it Terminating sequent calculi for two intuitionistic modal logics.}
Journal of Logic and Computation {\bf 28} (2018) 1701--1712.
%
%
%\bibitem{Iturrioz:1975}
%Iturrioz, L.:
%{\it Les alg\`ebres de Heyting-Brouwer: point de rencontre de plusieurs structures.}
%Publications du D\'epartement de Math\'ematiques de Lyon {\bf 12} (1975) 91--113.
%
%
\bibitem{Kojima:IGPL:2012}
Kojima, K.:
{\it Which classical correspondence is valid in intuitionistic modal logic?}
Logic Journal of the IGPL {\bf 20} (2012) 331--348.
%
%
\bibitem{Kojima:Igarashi:2011}
Kojima, K., Igarashi, A.:
{\it Constructive linear-time temporal logic: proof systems and Kripke semantics.}
Information and Computation {\bf 209} (2011) 1491--1503.
%
%
%\bibitem{Kracht:1999}
%Kracht, M.:
%{\it Tools and Techniques in Modal Logic.}
%Elsevier (1999).
%
%
%\bibitem{Kracht:Wolter:1991}
%Kracht, M., Wolter, F.:
%{\it Properties of independently axiomatizable bimodal logics.}
%Journal of Symbolic Logic {\bf 56} (1991) 1469--1485.
%
%
%\bibitem{Kurucz:2007}
%Kurucz, A.:
%{\it Combining modal logics.}
%In {\it Handbook of Modal Logic.}
%Elsevier (2007) 869--924.
%
%
\bibitem{Ladner:1977}
Ladner, R.:
{\it The computational complexity of provability in systems of modal propositional logic.}
SIAM Journal on Computing {\bf 6} (1977) 467--480.
%
%
\bibitem{Lin:Ma:2019}
Lin, Z., Ma, M.:
{\it Gentzen sequent calculi for some intuitionistic modal logics.}
Logic Journal of the IGPL {\bf 27} (2019) 596--623.
%
%
\bibitem{Lin:Ma:2022}
Lin, Z., Ma, M.:
{\it A proof-theoretic approach to negative translations in intuitionistic tense logics.}
Studia Logica {\bf 110} (2022) 1255--1289.
%
%
%\bibitem{Liu:Lorini:2017}
%Liu, F., Lorini, E.:
%{\it Reasoning about belief, evidence and trust in a multi-agent setting.}
%In {\it PRIMA 2017: Principles and Practice of Multi-Agent Systems.}
%Springer (2017) 71--89.
%
%
%\bibitem{Liu:et:al:2014}
%Liu, F., Seligman, J., Girard, P.:
%{\it Logical dynamics of belief change in the community.}
%Synthese {\bf 191} (2014) 2403--2431.
%
%
%\bibitem{Ma:Palmigiano:Sadrzadeh:2014}
%Ma, M., Palmigiano, A., Sadrzadeh, M.:
%{\it Algebraic semantics and model completeness for intuitionistic public announcement logic.}
%Annals of Pure and Applied Logic {\bf 165} (2014) 963--995.
%
%
\bibitem{Marin:et:al:2021}
Marin, S., Morales, M., Stra\ss burger, L.:
{\it A fully labelled proof system for intuitionistic modal logics.}
Journal of Logic and Computation {\bf 31} (2021) 998--1022.
%
%
%\bibitem{Marx:1996}
%Marx, M.:
%{\it Dynamic arrow logic.}
%In {\it Arrow Logic and Multi-Modal Logic.}
%Center for the Study of Language and Information (1996) 109--123.
%
%
\bibitem{Mendler:Scheele:2014}
Mendler, M., Scheele, S.:
{\it On the computational interpretation of $\CK_{n}$ for contextual information processing.}
Fundamenta Informatic\ae\/ {\bf 130} (2014) 125--162.
%
%
%\bibitem{Meyer:VanDerHoeke:1995}
%Meyer, J.-J., van der Hoek, W.:
%{\it Epistemic Logic for AI and Computer Science.}
%Cambridge University Press (1995).
%
%
\bibitem{Mints:2012}
Mints, G.:
{\it The G\"odel-Tarski translations of intuitionistic propositional formulas.}
In {\it Correct Reasoning.}
Springer (2012) 487--491.
%
%
\bibitem{Murai:Sano:2022}
Murai, R., Sano, K.:
{\it Intuitionistic public announcement logic with distributed knowledge.}
Studia Logica (to appear).
%
%
\bibitem{Nomura:Sano:Tojo:2015}
Nomura, S., Sano, K., Tojo, S.:
{\it A labelled sequent calculus for intuitionistic public announcement logic.}
In {\it Logic for Programming, Artificial Intelligence, and Reasoning.}
Springer (2015) 187--202.
%
%
\bibitem{Olivetti:2022}
Olivetti, N.:
{\it A journey in intuitionistic modal logic: normal and non-normal modalities.}
In {\it LATD 2022 and MOSAIC Kick Off Conference.}
University of Salerno (2022) 12--13.
%
%
%\bibitem{Orlowska:1985}
%Or\l owska, E.:
%{\it Logic of nondeterministic information.}
%Studia Logica {\bf 44} (1985) 91--100.
%
%
%\bibitem{Orlowska:1988}
%Or\l owska, E.:
%{\it Kripke models with relative accessibility and their applications to inferences from incomplete information.}
%In {\it Mathematical Problems in Computation Theory.}
%PWN -- Polish Scientific, Banach Center (1988) 329--339.
%
%
%\bibitem{Orlowska:1990}
%Or\l owska, E.:
%{\it Kripke semantics for knowledge representation logics.}
%Studia Logica {\bf 49} (1990) 255--272.
%
%
%\bibitem{Orlowska:Pawlak:1984}
%Or\l owska, E., Pawlak, Z.:
%{\it Representation of nondeterministic information.}
%Theoretical Computer Science {\bf 29} (1984) 27--39.
%
%
\bibitem{IMLA:2017}
de Paiva, V., Artemov, S. (editors):
{\it Intuitionistic Modal Logic 2017.}
Journal of Applied Logics {\bf 8} (2021) special issue.
%
%
%\bibitem{Passy:Tinchev:1991}
%Passy, S., Tinchev, T.:
%{\it An essay in combinatory dynamic logic.}
%Information and Computation {\bf 93} (1991) 263--332.
%
%
%\bibitem{Pawlak:1981}
%Pawlak, Z.:
%{\it Information systems theoretical foundations.}
%Information Systems {\bf 6} (1981) 205--218.
%
%
%\bibitem{Pearce:1997}
%D. Pearce.
%A new logical characterisation of stable models and answer sets.
%In {\em Non-Monotonic Extensions of Logic Programming.}
%Springer (1997) 57--70.
%
%
%\bibitem{Pedersen:et:al:2021}
%Pedersen, M., Smets, S., {\AA}gotnes, T.:
%{\it Modal logics and group polarization.}
%Journal of Logic and Computation {\bf 31} (2021) 2240--2269.
%
%
%\bibitem{Perrotin:et:al:2019}
%Perrotin, E., Galimullin, R., Canu, Q., Alechina, N.:
%{\it Public group announcements and trust in doxastic logic.}
%In {\it Logic, Rationality, and Interaction.}
%Springer (2019) 199-213.
%
%
%\bibitem{Pinto:Uustalu:2009}
%Pinto, L., Uustalu, T.:
%{\it Proof search and counter-model construction for bi-intuitionistic propositional logic with labelled sequents.}
%In {\it TABLEAUX 2009.}
%Springer (2009) 295--309.
%
%
\bibitem{Plotkin:Stirling:1986}
Plotkin, G., Stirling, C.:
{\it A framework for intuitionistic modal logics.}
In {\it Theoretical Aspects of Reasoning About Knowledge.}
Morgan Kaufmann Publishers (1986) 399--406.
%
%
%\bibitem{Postniece:2010}
%Postniece, L.:
%{\it Proof Theory and Proof Search of Bi-Intuitionistic and Tense Logic.}
%Doctoral thesis at the Australian National University (2010).
%
%
\bibitem{Prenosil:2014}
P\v{r}enosil, A.:
{\it A duality for distributive unimodal logic.}
In {\it Advances in Modal Logic.
Volume 10.}
College Publications (2014) 423--438.
%
%
%\bibitem{Rauszer:1977}
%Rauszer, C.:
%{\it Model theory for an extension of intuitionistic logic.}
%Studia Logica {\bf 36} (1977) 73--87.
%
%
%\bibitem{Rauszer:1980}
%Rauszer, C.:
%{\it An Algebraic and Kripke-Style Approach to a Certain Extension of Intuitionistic Logic.}
%PWN~---~Polish Scientific Publishers (1980).
%
%
%\bibitem{DeRijke:1995}
%de Rijke, M.:
%{\it The logic of Peirce algebras.}
%Journal of Logic, Language and Information {\bf 4} (1995) 227--250.
%
%
%\bibitem{Sano:2017}
%Sano, K.:
%{\it Axiomatizing epistemic logic of friendship via tree sequent calculus.}
%In {\it Logic, Rationality, and Interaction.}
%Springer (2017) 224--239.
%
%
%\bibitem{Seligman:et:al:2011}
%Seligman, J., Liu, F., Girard, P.:
%{\it Logic in the community.}
%In {\it Logic and its Applications.}
%Springer (2011) 178--188.
%
%
%\bibitem{Shramko:2016}
%Shramko, Y.:
%{\it A modal translation for dual-intuitionistic logic.}
%The Review of Symbolic Logic {\bf 9} (2016) 251--265.
%
%
\bibitem{Simpson:1994}
Simpson, A.:
{\it The Proof Theory and Semantics of Intuitionistic Modal Logic.}
Doctoral thesis at the University of Edinburgh (1994).
%
%
\bibitem{Sotirov:1980}
Sotirov, V.:
{\it Nonfinitely approximable intuitionistic modal logics.}
Mathematical Notes of the Academy of Sciences of the USSR {\bf 27} (1980) 47--49.
%
%
\bibitem{Sotirov:1984}
Sotirov, V.:
{\it Modal theories with intuitionistic logic.}
In {\it Mathematical Logic.}
Publishing House of the Bulgarian Academy of Sciences (1984) 139--171.
%
%
\bibitem{Spaan:1993}
Spaan, E.:
{\it The complexity of propositional tense logics.}
In {\it Diamonds and Defaults: Studies in Pure and Applied Intensional Logic.}
Kluwer Academic Publishers (1993) 287--307.
%
%
%\bibitem{Stell:et:al:2016}
%Stell, J., Schlmidt, R., Rydeheard, D.:
%{\it A bi-intuitionistic modal logic: foundations and automation.}
%Journal of Logical and Algebraic Method in Programming {\bf 85} (2016) 500--519.
%
%
\bibitem{Stewart:et:al:2018}
Stewart, C., de Paiva, V., Alechina, N.:
{\it Intuitionistic modal logic: a $15$-year retrospective.}
Journal of Logic and Computation {\bf 28} (2018) 873--882.
%
%
\bibitem{Takano:2003}
Takano, M.:
{\it Finite model property for an intuitionistic modal logic.}
Nihonkai Mathematical Journal {\bf 14} (2003) 125--132.
%
%
%\bibitem{Thomason:1980}
%Thomason, S.:
%{\it Independent propositional modal logics.}
%Studia Logica {\bf 39} (1980) 143--144.
%
%
%\bibitem{Tranchini:2017}
%Tranchini, L.:
%{\it Natural deduction for bi-intuitionistic logic.}
%Journal of Applied Logic {\bf 25} (2017) S72--S96.
%
%
%\bibitem{Urbas:1996}
%Urbas, I.:
%{\it Dual-intuitionistic logic.}
%Notre Dame Journal of Formal Logic {\bf 37} (1996) 440--451.
%
%
\bibitem{Vakarelov:1981}
Vakarelov, D.:
{\it Intuitionistic modal logics incompatible with the law of the excluded middle.}
Studia Logica {\bf 40} (1981) 103--111.
%
%
%\bibitem{Vakarelov:1985}
%Vakarelov, D.:
%{\it An application of Rieger-Nishimura formulas to the intuitionistic modal logics.}
%Studia Logica {\bf 44} (1985) 79--85.
%
%
%\bibitem{Vakarelov:1991a}
%Vakarelov, D.:
%{\it A modal logic for similarity relations in Pawlak knowledge representation systems.}
%Fundamenta Informatic\ae\/ {\bf 15} (1991) 61--79.
%
%
%\bibitem{Vakarelov:1991b}
%Vakarelov, D.:
%{\it Modal logics for knowledge representation systems.}
%Theoretical Computer Science {\bf 90} (1991) 433--456.
%
%
%\bibitem{Vakarelov:1992}
%Vakarelov, D.:
%{\it Consequence relations and information systems.}
%In {\it Intelligent Decision Support.
%Handbook of Applications and Advances of the Rough Sets Theory.}
%Springer (1992) 391--399.
%
%
%\bibitem{Vakarelov:1995}
%Vakarelov, D.:
%{\it A duality between Pawlak's knowledge representation systems and BI-consequence systems.}
%Studia Logica {\bf 55} (1995) 205--228.
%
%
%\bibitem{Vakarelov:1998}
%Vakarelov, D.:
%{\it Information systems, similarity relations and modal logics.}
%In {\it Incomplete Information: Rough Set Analysis.}
%Springer (1998) 492--550.
%
%
%\bibitem{Vakarelov:2005}
%Vakarelov, D.:
%{\it A modal characterization of indiscernibility and similarity relations in Pawlak's information systems.}
%In {\it Rough Sets, Fuzzy Sets, Data Mining, and Granular Computing.}
%Springer (2005) 12--22.
%
%
%\bibitem{Venema:1999}
%Venema, Y.:
%{\it Points, lines and diamonds: a two-sorted modal logic for projective planes.}
%Journal of Logic and Computation {\bf 9} (1999) 601--621.
%
%
%\bibitem{Wansing:2008}
%Wansing, H.:
%{\it Constructive negation, implication, and co-implication.}
%Journal of Applied Non-Classical Logics {\bf 18} (2008) 341--364.
%
%
\bibitem{Wechler:1992}
Wechler, W.:
{\it Universal Algebra for Computer Scientists.}
Springer (1992).
%
%
\bibitem{Wijesekera:1990}
Wijesekera, D.:
{\it Constructive modal logics~I.}
Annals of Pure and Applied Logic {\bf 50} (1990) 271--301.
%
%
%\bibitem{Wolter:1998a}
%Wolter, F.:
%{\it On logics with coimplication.}
%Journal of Philosophical Logic {\bf 27} (1998) 353--387.
%
%
%\bibitem{Wolter:1998b}
%Wolter, F.:
%{\it Fusions of modal logics revisited.}
%In {\it Advances in Modal Logic.}
%CSLI Publications (1998) 361--379.
%
%
\bibitem{Wolter:Zakharyaschev:1997}
Wolter, F., Zakharyaschev, M.:
{\it The relation between intuitionistic and classical modal logics.}
Algebra and Logic {\bf 36} (1997) 73--92.
%
%
\end{thebibliography}
\end{document}